\documentclass[reqno, a4paper,12pt]{amsart}

\usepackage[capposition=bottom]{floatrow}
\usepackage{euscript,eufrak,verbatim}
\usepackage{graphicx}
\usepackage[usenames]{color}
\usepackage[colorlinks,linkcolor=red,anchorcolor=blue,citecolor=blue]{hyperref}
\usepackage{amsmath, mathtools}
\usepackage{mathtools}
\usepackage{amsthm}
\usepackage[all]{xy}
\usepackage{amssymb} 
\usepackage{bm}
\usepackage{booktabs,siunitx}

\usepackage[all]{xy}

\usepackage{mathrsfs}
\usepackage{amscd}

\usepackage{enumitem}
\usepackage[numbers]{natbib}

\usepackage{mathptmx}
\usepackage[T1]{fontenc}

\usepackage{lipsum}
\usepackage{accents}
\usepackage{titlesec}
\usepackage{titletoc}
\usepackage{calrsfs}

\makeatletter
\numberwithin{equation}{section}

\theoremstyle{cuplain}
\newtheorem{main theorem}{Main Theorem}
\newtheorem{theorem}{Theorem}[section]
\newtheorem{lemma}[theorem]{Lemma}

\newtheorem{proposition}[theorem]{Proposition}
\newtheorem{claim}[theorem]{Claim}

\newtheorem*{main theorem*}{Main Theorem}
\newtheorem*{theorem*}{``Theorem''}
\theoremstyle{definition}
\newtheorem{definition}[theorem]{Definition}
\newtheorem{remark}[theorem]{Remark}

\newtheorem*{example*}{Example}
\newtheorem*{remark*}{Remark}

\newtheoremstyle{break}
{\topsep}{\topsep}%
{\itshape}{}%
{\bfseries}{}%
{\newline}{}%
\theoremstyle{break}

\newtheoremstyle{break}
{\topsep}{\topsep}%
{\normalshape}{}%
{\bfseries}{}%
{\newline}{}%

\numberwithin{equation}{section}
\newcommand{\vep}{\varepsilon}

\newcommand{\setcond}{\;\middle|\;}

\DeclareFontFamily{U}{stix2bb}{\skewchar\font127 }
\DeclareFontShape{U}{stix2bb}{m}{n} {<-> stix2-mathbb}{}
\DeclareMathAlphabet{\mathbb}{U}{stix2bb}{m}{n}
\DeclareMathAlphabet{\pazocal}{OMS}{zplm}{m}{n}

\newcommand{\RN}[1]{%
\textup{\uppercase\expandafter{\romannumeral#1}}%
}

\newcommand{\SL}{\mathrm{SL}}
\newcommand{\GL}{\mathrm{GL}}

\newcommand{\supp}{\mathrm{supp}}
\newcommand{\RP}{\mathbb{RP}}
\newcommand{\closure}[1]{\overline{#1\hspace{-0.75pt}}\hspace{0.75pt}}
\newcommand{\Log}{\mathrm{Log}}

\titleformat{\section}
{\normalfont\bfseries\Large} % font for section titles
{\thesection} % section number
{1em} % space between number and title
{\bfseries} % makes the title bold
\titleformat{\subsection}
{\normalfont\bfseries\normalsize}
{\thesubsection} 
{1em} 
{\bfseries}

\titlecontents{section}
[2.0em]
{}
{\contentslabel{2.0em}}
{}
{\titlerule*[0.5pc]{.}\contentspage}

\titlecontents{subsection}
[4.7em]
{}
{\contentslabel{3.0em}}
{}
{\titlerule*[0.5pc]{.}\contentspage}

\begin{document}

%%%title decoration%%%%%%%%%%%%%%%%

\newcommand\titlelowercase[1]{\texorpdfstring{\lowercase{#1}}{#1}}

%defines the size of author name
\font\mathptmx=cmr12 at 12pt

%%%%%%%%%%%%%%%%%%%%%%%%%%%%%%%

% Use \protect{\\[5pt]} to break lines within title

\title[\fontsize{13}{12}\mathptmx {\it{L\titlelowercase{yapunov exponents for uniformly hyperbolic random matrix products}}}]{\LARGE L\titlelowercase{yapunov exponents for} \protect{\\[9pt]}\titlelowercase{uniformly hyperbolic random matrix products}}

%available font sizes for title
%\tiny\scriptsize\footnotesize\small\normalsize\footnotesize\large\Large\LARGE\huge\Huge

\author[\fontsize{13}{12}\mathptmx {\it{N\titlelowercase{ima} A\titlelowercase{libabaei}}}]{\fontsize{13}{12}\mathptmx N\titlelowercase{ima} A\titlelowercase{libabaei}}

\subjclass{37H15, 15B48, 28A80}

\keywords{Lyapunov exponent, random matrix products, uniform hyperbolicity}

\maketitle

\begin{abstract}
We consider a finite family of invertible $2 \times 2$ real matrices and a transitive Markov shift on the index set. Let $\lambda$ be the top Lyapunov exponent for random matrix products driven by the Markov shift. We prove that, if the matrices are projectively uniformly hyperbolic with respect to the Markov shift, then $\lambda$ admits an explicit representation in terms of an infinite matrix. This rapidly convergent representation yields a polynomial-time algorithm for approximating $\lambda$: only $O\big( (\log(1/\varepsilon))^3 \big)$ arithmetic operations are needed to achieve error $\varepsilon$. Furthermore, $\lambda$ depends real analytically on the matrix entries and the transition probabilities near a projectively uniformly hyperbolic system, and each Taylor coefficient can be approximated in polynomial time.
\end{abstract}

\tableofcontents

\section{Introduction}

\subsection{Preliminaries and main results}

In this article we study the top Lyapunov exponent of random matrix products along a Markov shift, and obtain an explicit representation for it in terms of an infinite matrix. This yields a method for its computation that is highly effective in practice: it has complexity that is only polynomial in $\log(1/\vep)$ to achieve error $\vep$. Moreover, the representation is robust enough that, under real-analytic perturbations of the underlying data, it yields effective computation of each Taylor coefficient of the Lyapunov exponent.

Let $\{A_i\}_{i \in X}$ be a finite family of invertible $2\times 2$ real matrices, let $\Sigma \subset X^{\mathbb N_0}$ be a transitive topological Markov shift, and let $\mu$ be a Markov measure on $\Sigma$ associated to a transition matrix on the alphabet. By Furstenberg--Kesten \cite{Furstenberg--Kesten}, the following limit exists and is constant for $\mu$-almost every $x = (x_n)_{n=0}^\infty \in \Sigma$.
\[
\lambda \;=\; \lim_{n \to \infty} \frac1n \log \| A_{x_{n-1}} \cdots A_{x_0} \|.
\]
This number is called the \emph{(top) Lyapunov exponent}.

Although its existence is classical, explicit calculation is notoriously difficult even for products of finitely many $2\times 2$ matrices. As Kingman \cite[p.897]{Kingman} wrote, “Pride of place among the unsolved problems of subadditive ergodic theory must go to the calculation of [the Lyapunov exponent]”. Nearly two decades later, Peres returned to what he called “the excruciating problem of the subject” and posed the challenge: “Devise reasonably general and effective algorithms for explicit calculation (or at least approximation) of Lyapunov exponents.'' \cite[Question 3]{Peres92}.

A major breakthrough was obtained by Pollicott \cite{Pollicott10} for i.i.d.\ products of (entrywise) \emph{positive} matrices. By using transfer operators, he expressed the Lyapunov exponent in terms of the associated Fredholm determinant. For $2\times 2$ positive matrices, Pollicott's method yields an algorithm with subexponential complexity:
\[
O\left(\exp\big(\alpha \sqrt{\log(1/\varepsilon)}\,\big)\right)
\]
arithmetic operations to achieve error $\varepsilon$, for some $\alpha>0$. The bounds were later refined by Jurga--Morris \cite{Jurga-Morris} and Pollicott \cite{Pollicott20}, with the same asymptotic complexity order.

One natural geometric extension of the positivity condition is \emph{projective uniform hyperbolicity}. Roughly speaking, this means that the cocycle admits a splitting into dominant/weak directions that vary continuously (Definition \ref{def: uniform hyperbolicity}). By the work of Avila--Bochi--Yoccoz \cite{Avila--Bochi--Yoccoz} and its recent continuation by Duarte--Dur\~aes--Graxinha--Klein \cite{DDGK25}, projective uniform hyperbolicity is equivalent to the existence of an invariant \emph{multicone} in projective dynamics (Theorem \ref{thm: multicone criterion}). If all matrices are strictly positive, then the projective positive cone gives a one-component multicone.

\medskip

The main purpose of this paper is to show that, under projective uniform hyperbolicity, the Lyapunov exponent admits an explicit and constructive representation. Let $\mathbb N = \{1,2,3,\ldots\}$ and $\mathbb N_0 = \mathbb N \cup \{0\}$. Also, let $\ell^\infty(\mathbb N_0)$ be the set of bounded sequences of complex numbers, indexed by $\mathbb N_0$.

\begin{theorem} \label{thm: explicit representation of the Lyapunov exponent in the introduction}
Suppose that $\{A_i\}_{i \in X}$ is projectively uniformly hyperbolic with respect to $\Sigma$, and that a multicone is given. Then, there are a finite set $C$, an infinite matrix operator \hspace{0.4pt}$\mathrm T$ acting on a subspace of \hspace{0.7pt}$\ell^\infty(\mathbb N_0)^C$, and a vector $\boldsymbol v \in \ell^\infty(\mathbb N_0)^C$ such that
\[
\lambda \;=\; \sum_{n = 0}^\infty \big[ \mathrm T^n \boldsymbol v \big]_{\boldsymbol 0},
\]
where $[(w_r)_{r\in C}]_{\boldsymbol 0}$ is a weighted average $\sum_{r \in C} \pi_r (w_r)_0$\, for some probability vector $(\pi_r)_{r \in C}$. All the data $C$, $\mathrm T$, $\boldsymbol v$, and $(\pi_r)_{r\in C}$ are explicitly defined and computable.
\end{theorem}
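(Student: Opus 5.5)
We will prove the statement by running the classical Furstenberg formula on the invariant multicone, and then turning the resulting transfer operator into an infinite matrix. By Theorem \ref{thm: multicone criterion}, the given multicone consists of finitely many closed projective intervals $\{I_r\}$, indexed by pairs (symbol, component). These intervals are mapped strictly inside one another: for each admissible transition $i\to j$ and each component $I_r$ attached to $i$, the map $A_j$ (or $A_i$, according to the convention) sends $I_r$ compactly into some component $I_{s}$ attached to $j$. We let $C$ be this finite set of components (refined by the alphabet as needed). The Markov chain on $X$ with transition matrix $P$ then lifts to a Markov chain on $C$ with transition probabilities $p_{rs}$. Its stationary vector is the probability vector $(\pi_r)_{r\in C}$, and transitivity of $\Sigma$ together with the structure of the multicone gives uniqueness where needed.

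Next we use the cone to obtain a stationary measure and the Furstenberg-type formula. The direction $u(x)$, obtained as the limit of pushforwards of the cone along the past, is a Hölder function. Hence the random projective process restricted to the multicone has a unique stationary family of measures $(\nu_r)_{r\in C}$ supported on the $I_r$. We then have
\[
\lambda=\sum_{r}\pi_r\sum_{s}p_{rs}\int_{I_r}\log\frac{\|A_{s}w\|}{\|w\|}\,d\nu_r(w).
\]
Equivalently, $\lambda=\sum_{n\ge0}[\mathcal L^n g]_0$, where $\mathcal L$ is the Markov transfer operator $(\mathcal Lf)_r(t)=\sum_s p_{rs} f_s(\phi_{rs}(t))$ acting on vector-valued functions on the intervals. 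Here $g$ is a suitably centered log-norm cocycle: we subtract a coboundary so that the series telescopes, or, more simply, we write the Birkhoff sum of the cocycle as a sum of iterates evaluated at a base point.

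The key step is the matrix representation. In an affine/projective coordinate, each $I_r$ is a real interval and each $\phi_{rs}$ is a Möbius map sending $I_r$ strictly inside $I_s$. We therefore choose, for each $r$, a disc $D_r\supset I_r$ in $\mathbb C$ such that the complexified Möbius maps send $\overline{D_r}$ into $D_s$ strictly; this is possible by strict contraction. Functions holomorphic on $D_r$ are expanded in the Taylor basis $((t-c_r)/\rho_r)^k$, $k\in\mathbb N_0$, and a bounded coefficient sequence is an element of $\ell^\infty(\mathbb N_0)$. Composition with a Möbius map sends $((t-c_s)/\rho_s)^k$ to an explicit rational function, whose Taylor coefficients at $c_r$ are explicit binomial-type expressions. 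This defines the infinite matrix $\mathrm T$ entrywise. The vector $\boldsymbol v$ is the coefficient sequence of $\log$ of the norm ratio, which is explicit since $\log|a+bt|$ and $\tfrac12\log(1+t^2)$ have explicit power series on suitable discs. Evaluating at the center $c_r$ picks out coordinate $0$, which gives $[\cdot]_{\boldsymbol 0}$.

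The main obstacle is convergence of $\sum_n \mathrm T^n\boldsymbol v$. Boundedness of $\mathrm T$ on $\ell^\infty$ is easy, but the spectral radius is $1$, because constants are fixed. To handle this, we let $\mathrm T$ act on the subspace of coefficient sequences decaying like $\theta^k$ for some $\theta<1$, which makes $\mathrm T$ compact (nuclear), as in Pollicott/Ruelle. We then remove the leading eigenvalue $1$ by centering: we replace $\boldsymbol v$ by $\boldsymbol v-\lambda'\mathbf 1$, or we work on the $\pi$-weighted mean-zero subspace. The iterates $\mathcal L^n g$ converge exponentially to the constant $\int g\,d\nu=\lambda$ by the uniform contraction in the Hilbert/Poincaré metric. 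The telescoping identity is therefore arranged by taking $\boldsymbol v=\mathcal L g-g$-type differences, or equivalently by summing differences $\mathcal L^{n+1}\mathbf{x}-\mathcal L^{n}\mathbf{x}$ of the base-point evaluation. We expect that verifying this geometric convergence in the chosen sequence norm, with explicit constants, is where most of the work lies.
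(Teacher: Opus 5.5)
Your overall architecture (lift to components, contraction on the cones, a Furstenberg-type integral, an explicit infinite matrix) is sound. The real difference from the paper is that you put the matrix on the \emph{function} side: $\mathcal L$ written in Taylor coefficients. The step you flag as the main obstacle is exactly where the proposal does not yet work.
\begin{itemize}
\item As written, $\sum_n[\mathcal L^n g]_{\boldsymbol 0}$ diverges, since $[\mathcal L^n g]_{\boldsymbol 0}\to\lambda$.
\item Subtracting $\lambda'\mathbf 1$ is circular, because $\lambda'$ is $\lambda$ itself.
\item The subspace $\{h:[h]_{\boldsymbol 0}=0\}$ is not $\mathcal L$-invariant. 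The invariant complement of the constants is $\{h:\sum_r\pi_r\int h_r\,d\nu_r=0\}$, which involves the unknown $\nu$.
\item Only your telescoping option is explicit, and it gives $\lambda=[g]_{\boldsymbol 0}+\sum_n[\mathcal L^n(\mathcal Lg-g)]_{\boldsymbol 0}$, not the stated form. Absorbing $[g]_{\boldsymbol 0}$ by adding a coboundary $h-\mathcal Lh$ works only if the point masses at the centres $c_r$ are not themselves stationary. If they are (e.g.\ one hyperbolic matrix with the cone centred at its attracting fixed point), every convergent $\sum_n[\mathcal L^n\boldsymbol v]_{\boldsymbol 0}$ equals $0$.
\item Your convergence mechanism, $\mathcal L^n g\to\lambda\mathbf 1$ geometrically in the sequence norm (a spectral gap after removing the eigenvalue $1$), is false when the lifted chain has period $d>1$, as for the system of Example 2. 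Then each $e^{2\pi i k/d}$ is an eigenvalue of $\mathcal L$, with eigenfunction constant on cyclic classes.
\end{itemize}

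The paper avoids all of this by working on the dual side. Its $T_r$ acts on vectors $v(x\,;\,c)$ that encode a point, and it satisfies $T_r v(x\,;\,c)=v(f_r(x)\,;\,\ell_r(x))-v(f_r(0)\,;\,\ell_r(0))$ (Lemma \ref{lemma: coboundary identity for Tr}). Column $0$ of $T_r$ vanishes, so the $0$-th coordinate records only increments. The telescoping is therefore built into $\mathrm T$, and the seed already carries the initial term $\sum_r\pi_r\ell_r(0)$. The partial sums are then exactly $\sum_r\pi_r\int\ell_r\,d\nu_r^{(n-1)}$ with $\boldsymbol\nu^{(n)}=\mathscr H^n(\delta^0)$ (Proposition \ref{prop: Sn identity} and \eqref{eq: partial sum formula for Lambda-C}). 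Convergence follows from the Wasserstein contraction of $\mathscr H$ (Proposition \ref{prop: contraction of H}), which needs neither spectral theory nor aperiodicity. You can import this into your setting: $[\mathcal L^N g]_{\boldsymbol 0}=\sum_s\pi_s\int g_s\,d\nu^{(N)}_s$, where $\nu^{(N)}$ is the $N$-th iterate of the measure-side operator started from point masses at the centres. With that, the ``sequence-norm'' obstacle you anticipate never arises.

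Several further points need repair.
\begin{itemize}
\item With the Euclidean norm, $\tfrac12\log\big((at+b)^2+(ct+d)^2\big)-\tfrac12\log(1+t^2)$ has non-real branch points, e.g.\ $t=\pm i$. These can lie inside the discs $D_r$ that the contraction requirement forces on you. Its Taylor coefficients are then unbounded and $\boldsymbol v\notin\ell^\infty(\mathbb N_0)^C$. Use a linear functional positive on each cone instead, as the paper does with $\|\cdot\|_1$ in the positive charts. There $\log\|B_r\psi(x)\|_1=\Log(\gamma_r x+\delta_r)$ is holomorphic near $\closure{\mathbb D_1}$ because $|\gamma_r/\delta_r|<1$ (Lemma \ref{lemma: Mobius contraction implies transpose contraction}).
\item The component chain need not be irreducible. ``Uniqueness where needed'' must be replaced by a choice of recurrent class, together with a check that the lifted Markov measure projects to $\mu$ (Proposition \ref{prop: branch-projection}).
\item The identity $\lambda=\sum_r\pi_r\int g_r\,d\nu_r$ does not follow from H\"older continuity of a direction. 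The paper gets it from the Malheiro--Viana variational principle, by identifying the dominated direction with the nested intersection of the cones along the past. In your setting a more elementary Ces\`aro argument would also do. Products of the positive local matrices have comparable columns, so cone vectors grow at the top rate, and $\frac1N\sum_{k<N}[\mathcal L^k g]_{\boldsymbol 0}\to\lambda$ by dominated convergence.
\end{itemize}
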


By truncating the infinite matrix $\mathrm T$, one can calculate $\lambda$ to arbitrary precision in \emph{polynomial} time.

\begin{theorem} \label{thm: polynomial computation of the Lyapunov exponent in introduction}
If $\{A_i\}_{i \in X}$ is projectively uniformly hyperbolic with respect to $\Sigma$, we can approximate \hspace{1pt}$\lambda$ within error $\vep > 0$ with
\[
O\Big( (\log(1/\vep))^3 \Big)
\]
arithmetic operations.
\end{theorem}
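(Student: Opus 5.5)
The plan is to deduce Theorem \ref{thm: polynomial computation of the Lyapunov exponent in introduction} from the representation in Theorem \ref{thm: explicit representation of the Lyapunov exponent in the introduction}. Two truncations are needed. First, cut the series $\sum_{n\ge 0}[\mathrm T^n\boldsymbol v]_{\boldsymbol 0}$ after $N$ terms. Second, replace $\mathrm T$ by its finite $M\times M$ corner on each of the $|C|$ components.

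The key quantitative inputs come from the construction of $\mathrm T$. I expect $\mathrm T$ to arise from a transfer-type operator acting on power-series coefficients of analytic functions on complex neighbourhoods of the multicone components. Because the multicone is strictly invariant (Theorem \ref{thm: multicone criterion}), the inverse branches contract these neighbourhoods strictly into themselves. Under this picture I would aim to establish three estimates, for constants $0<\theta,\rho<1$ and $K>0$ depending only on the data:
\[
\big|[\mathrm T^n\boldsymbol v]_{\boldsymbol 0}\big| \le K\theta^n, \qquad |(\mathrm T)_{jk}| \le K\rho^{\,j+k}, \qquad |(\boldsymbol v)_k|\le K\rho^{k}.
\]
The first bound (spectral gap after removing the leading eigenvalue) should already be implicit in the convergence claimed in Theorem \ref{thm: explicit representation of the Lyapunov exponent in the introduction}. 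The second expresses the geometric decay of Taylor coefficients that comes from strict nesting of complex discs.

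With these estimates in hand, the argument runs as follows.

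\textbf{Tail truncation.} The first bound gives a tail error of at most $K\theta^N/(1-\theta)$. So $N=O(\log(1/\vep))$ suffices to make this error $\le\vep/2$.

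\textbf{Matrix truncation.} Let $\mathrm T_M$ and $\boldsymbol v_M$ denote the truncations to indices $<M$. Write
\[
\mathrm T^n-\mathrm T_M^n=\sum_{k<n}\mathrm T^{k}(\mathrm T-\mathrm T_M)\mathrm T_M^{n-1-k}.
\]
Estimate this in a weighted $\ell^\infty$ norm with weights $\rho'^{-k}$ for some $\rho<\rho'<1$. In that norm $\mathrm T$ and $\mathrm T_M$ are uniformly bounded, and $\|\mathrm T-\mathrm T_M\|\le K'\rho''^{M}$. This gives $\big|[\mathrm T^n\boldsymbol v-\mathrm T_M^n\boldsymbol v_M]_{\boldsymbol 0}\big|\le nK''B^n\rho''^M$, where $B$ bounds the operator norms. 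Since $n\le N=O(\log 1/\vep)$, the factor $B^N$ is only polynomial in $1/\vep$. Choosing $M=c\log(1/\vep)$ with $c$ large therefore keeps the total error $\le\vep/2$. A cleaner route, which I would try first, is to show that $\mathrm T_M$ inherits the uniform decay $|[\mathrm T_M^n\boldsymbol v_M]_{\boldsymbol 0}|\le K\theta^n$ by a perturbation argument, since $\mathrm T_M\to\mathrm T$ in operator norm. Then $B$ can be taken $\le 1$ up to constants.

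\textbf{Cost.} Computing the entries of $\mathrm T_M$ should take $O(M^2)$ operations per block, assuming they are explicit finite sums (binomial-type coefficients from expanding the Möbius maps). Each product $\mathrm T_M\boldsymbol w$ costs $O(|C|^2M^2)$ operations. Running $N$ iterations therefore costs $O(NM^2)=O((\log 1/\vep)^3)$ in total.

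The main obstacle is the matrix-truncation step. The difficulty is controlling $\mathrm T_M^n$ uniformly in $n$ and $M$, so that we do not lose an exponential $B^N$ factor that would inflate $M$ beyond $O(\log 1/\vep)$. Resolving it requires a norm in which $\mathrm T$ is genuinely contracting on the relevant invariant subspace, namely the subspace with the leading direction removed, which the representation presumably already encodes. The plan is to find that norm using the nested-disc geometry of the multicone.
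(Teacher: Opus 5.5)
Your bookkeeping matches the paper: cut the series after $N$ terms, replace $\mathrm T$ by its $M\times M$ corner, and count the arithmetic operations. The gap is the decisive estimate, which you assert with an invalid justification. The bound $\big|[\mathrm T^n\boldsymbol v]_{\boldsymbol 0}\big|\le K\theta^n$ is \emph{not} implicit in Theorem~\ref{thm: explicit representation of the Lyapunov exponent in the introduction}. Convergence of $\sum_n[\mathrm T^n\boldsymbol v]_{\boldsymbol 0}$ carries no rate, and without a geometric rate you cannot take $N=O(\log(1/\vep))$. The whole complexity claim rests on this step.

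The missing idea is the contraction mechanism. By the coboundary identity $T_r v(x\,;\,c)=v\big(f_r(x)\,;\,\ell_r(x)\big)-v\big(f_r(0)\,;\,\ell_r(0)\big)$, the partial sums telescope to $\big[\sum_{j<n}\mathrm T^j\boldsymbol v\big]_{\boldsymbol 0}=\sum_{r\in C}\pi_r\int\ell_r\,d\nu_r^{(n-1)}$, where $\boldsymbol\nu^{(n)}=\mathscr H^n(\delta^0)$. Since $f_r(\mathbb D_1)\subset\mathbb D_\rho$, Schwarz--Pick makes each $f_r$ a $\rho$-contraction of $d_{\mathrm{hyp}}$. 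Hence $\mathscr H$ is a $\rho$-contraction for $\pazocal W$, and since $\ell_r$ is $d_{\mathrm{hyp}}$-Lipschitz, the tail is at most $E_C\rho^{n-1}$. Equivalently, $[\mathrm T^n\boldsymbol v]_{\boldsymbol 0}$ is a path average of differences of $\ell_{r_n}$ at two points that are $O(\rho^{n-1})$ apart hyperbolically. It is the forward maps $f_r$ that contract, and $\mathrm T$ acts on moment-type sequences, not on Taylor coefficients via inverse branches.

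Your entry bounds also need an argument. The same identity shows that $b^{(r)}_{k,n}/n$ are the Taylor coefficients of $y\mapsto\frac1k\big(-f_r(-y)\big)^k$. Cauchy estimates on a circle of radius slightly above $1$, where $|f_r|$ stays below $1$, then give $|b^{(r)}_{k,n}|\le K\theta^{k+n}$ for some $\theta<1$. This does not hold with the contraction rate itself: $f_r(x)=\rho x$ gives $b^{(r)}_{k,k}=\rho^k$.

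For the matrix truncation, the ``main obstacle'' you name is not one; your own crude estimate already finishes the job. Use the norm with weights $\rho'^{-k}$, where $\max\{\theta,\rho\}<\rho'<1$. The mixing by $\pi_rQ_{r,r'}/\pi_{r'}$ is a convex combination, $\mathrm T$ and $\mathrm T_M$ have norm at most some $B\ge 1$, and $\|\mathrm T-\mathrm T_M\|$ decays geometrically in $M$. The accumulated error over $n<N$ is then $O(N^2B^N\theta'^M)$ for some $\theta'<1$. Since $B^N=\vep^{-O(1)}$, this only multiplies the required $M$ by a constant. Once the entry bounds are proved, this is a legitimate and more elementary alternative to the paper's route.

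The paper instead represents $T_r$ and $T_{r,m}$ by contour integrals. It then works with operators $\pazocal L_r,\pazocal R_{r,m},\pazocal K_{r,m}$ on holomorphic functions on $\mathbb D_\rho$ under the oscillation seminorm. There $\pazocal L_r h=h\circ f_r-h(f_r(0))$ is non-expanding and $\pazocal R_{r,m}$ has norm at most $\rho^{m-1}K(\rho)$. The same telescoping then gives an error proportional to $(1+\rho_m)^{n}-1$, with no $B^n$ loss. This is precisely the contracting norm you were looking for in your last paragraph. It buys $m=O(\log(n/\vep))$ and fully explicit constants, which are used for the certified numerics and reused for the derivative bounds; your $M$ must instead grow like $N\log B$.

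A minor point: each $b^{(r)}_{k,n}$ is a sum of $\min\{k,n\}$ terms, so assembling $\mathrm T_M$ naively costs $O(M^3)$, not $O(M^2)$. The total is still cubic in $\log(1/\vep)$.
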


By combining with Mairesse's theory \cite{Mairesse} on traffic equations for nearest-neighbor random walks, we can even calculate the Lyapunov exponent of a certain system that is \emph{not} projectively uniformly hyperbolic. See Subsection \ref{subsection: example 1}. We remark that in this paper we do not address the separate problem of constructing multicones for a given system.

\medskip

The question of how $\lambda$ depends on the underlying data has its own long history. In a seminal paper, Ruelle \cite{Ruelle79} established analyticity of the top Lyapunov exponent with respect to the matrix entries for fixed base measure, under strict cone invariance. In a different direction, Peres \cite{Peres92} proved that for random positive matrices with Markovian dependence, the Lyapunov exponent depends real-analytically on the transition probabilities. Malheiro--Viana \cite{Malheiro--Viana} proved that, under a Markov law, the Lyapunov exponent is continuous when both matrix entries and transition probabilities vary.

Using the representation above, we obtain a joint real-analyticity statement near a projectively uniformly hyperbolic point in the Markov setting: both the matrix entries and the transition probabilities are allowed to vary.

\begin{theorem} \label{thm: real analyticity of Lyapunov exponent}
Let $\Sigma$ be a transitive topological Markov shift. Suppose that the matrices $A(t) = \{A_i(t)\}_{i\in X}$ and the transition probabilities $P(t) = \big(P_{ij}(t)\big)_{i,j\in X}$ vary real-analytically with $t$, without changing the underlying $\Sigma$. If the matrices are projectively uniformly hyperbolic with respect to $\Sigma$ at $t=t_0$, then the Lyapunov exponent
\[
t \;\; \longmapsto \;\; \lambda\big( A(t), P(t) \big)
\]
is real analytic around $t_0$. Furthermore, there is an explicit complex neighborhood $\Omega\ni t_0$ to which $\lambda$ extends holomorphically. Consequently, its Taylor expansion at $t_0$ converges uniformly on any closed disk $D\subset \Omega$ centered at $t_0$.
\end{theorem}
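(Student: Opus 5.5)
Uniform hyperbolicity is an open condition. So my plan is to run the construction behind Theorem \ref{thm: explicit representation of the Lyapunov exponent in the introduction} with holomorphic dependence on a complex parameter $t$, and to get the Taylor statements from uniform convergence of holomorphic functions.

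First, fix the multicone $M=\bigcup_{r\in C} M_r$ given by Theorem \ref{thm: multicone criterion} at $t=t_0$. The projective actions of the $A_i(t_0)$ map $M_r$ strictly inside $M_s$ for admissible transitions. I would shrink the components slightly so the inclusion is compact, and complexify them: choose small complex disks $D_r\subset \mathbb{C}\cup\{\infty\}$ containing the real intervals $M_r$, such that the Möbius maps $z\mapsto A_i(t_0)\cdot z$ send $\closure{D_r}$ into the interior of $D_s$. By continuity this compact inclusion persists for $t$ in a complex neighborhood $\Omega$ of $t_0$, since $A(t)$ extends holomorphically. The transition matrix $P(t)$ also extends holomorphically, with the same zero pattern (so $\Sigma$ is unchanged), and its entries stay close to $P(t_0)$. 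The neighborhood $\Omega$ is explicit: it is determined by the contraction margins of the Möbius maps and by the size of the perturbation.

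Next, I would rewrite $\lambda$ in the way the representation theorem does. Writing $\lambda$ as an integral of $\log\|A_i v\|/\|v\|$ against the stationary measure on $\bigsqcup_r M_r$, I get an expression through a transfer operator $\mathcal L_t f(r,z)=\sum_{i} P_{\cdot}(t)\,f(s, A_i(t)^{-1}\cdot z)\cdots$ acting on bounded holomorphic functions on $\bigsqcup_r D_r$. Expanding these functions in Taylor or Faber coefficients on the disks gives the sequence space $\ell^\infty(\mathbb N_0)^C$ and the infinite matrix $\mathrm T(t)$. The vector $\boldsymbol v(t)$ comes from the derivative data $\log|a z+b|$-type terms, and $\pi(t)$ is the stationary vector of $P(t)$. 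All three depend holomorphically on $t\in\Omega$: $\pi(t)$ because $1$ is a simple eigenvalue of $P(t)$, and the rest because they are built from holomorphic coefficients of holomorphic data.

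The main obstacle is to show that $\sum_n [\mathrm T(t)^n\boldsymbol v(t)]_{\boldsymbol 0}$ converges uniformly on compact subsets of $\Omega$. I would get this from the strict compact inclusions. They make composition operators between holomorphic function spaces on nested disks nuclear, with geometrically decaying matrix entries, uniformly in $t$. After subtracting the projection onto constants, which is needed because $\lambda$ is an average and $\boldsymbol v$ is normalized to have mean zero, the spectral radius of $\mathrm T(t)$ on the mean-zero subspace stays below some $\theta<1$ uniformly on a compact neighborhood. The argument is a perturbation of the spectral gap at $t_0$: a Hilbert-metric or Birkhoff-cone contraction at real $t$, together with upper semicontinuity of the spectrum for complex $t$. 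The point needing the most care is keeping the simple leading eigenvalue isolated under complex perturbation, including the perturbation of $P(t)$.

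With uniform geometric convergence established, each partial sum is holomorphic on $\Omega$, so the limit $\lambda(t)$ is holomorphic there. For real $t$ the formula agrees with the Lyapunov exponent by Theorem \ref{thm: explicit representation of the Lyapunov exponent in the introduction}, which gives real analyticity and the holomorphic extension. Convergence of the Taylor series on closed disks $D\subset\Omega$ centered at $t_0$ then follows from Cauchy's estimates.
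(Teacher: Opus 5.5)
Your route is the classical Ruelle-type one: a transfer operator on holomorphic functions whose spectral gap at $t_0$ survives complex perturbation. For the qualitative part of the statement (real analyticity, and holomorphy on \emph{some} neighborhood of $t_0$) this is a legitimate alternative, once the set-up is corrected.
\begin{itemize}
\item The operator must compose with the forward contractions, $g\mapsto\big(\sum_{r'}Q_{r,r'}(t)\,g_{r'}\circ f_r[t]\big)_{r\in C}$, which is the dual of $\mathscr H$. It should not compose with $A_i(t)^{-1}$.
\item The weights come from the stationary vector of $Q|_C$, not of $P$.
\item On the function side the paper's $T_r$ is $h\mapsto h\circ f_r-h(f_r(0))$ (cf.\ $\pazocal L_r$), so constants are already removed state by state. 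Also $\boldsymbol v$ is not mean-zero: $[\boldsymbol v]_{\boldsymbol 0}=\sum_r\pi_r\ell_r(0)$. Convergence of $\sum_n[\mathrm T^n\boldsymbol v]_{\boldsymbol 0}$ comes from the telescoping identity $\sum_{j<n}\mathrm T^j\boldsymbol v=S_n$, not from projecting off a leading eigenvalue.
\end{itemize}

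The genuine gap is the \emph{explicit} complex neighborhood, which is part of the statement. Upper semicontinuity of the spectrum only guarantees that the gap persists on some unspecified neighborhood. Its size depends on resolvent bounds for the unperturbed operator on a circle $|\zeta|=\theta$, and nothing in your argument controls these. So your assertion that $\Omega$ is determined by the contraction margins is not supported.

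The missing idea, which lets the paper avoid spectral theory entirely, is a direct pathwise bound. Define $\pi(z)$ through the bordered system $\pi(z)^\top M(z)=e_{r_*}^\top$. This keeps $\sum_r\pi_r(z)Q_{r,r'}(z)=\pi_{r'}(z)$ for complex $z$. Combining this stationarity with $\sum_{j<n}\mathrm T(z)^j\boldsymbol v(z)=S_n(z)$ gives, for $n\geq 1$,
\[
a_n(z)=\big[\mathrm T(z)^n\boldsymbol v(z)\big]_{\boldsymbol 0,z}
=\sum_{\mathbf r\in C^{n+1}}p_z(\mathbf r)\Big(\ell_{r_n}\big(z,f^{(n)}_{(r_0,\dots,r_n)}[z](0)\big)-\ell_{r_n}\big(z,f^{(n-1)}_{(r_1,\dots,r_n)}[z](0)\big)\Big).
\]
Each complex M\"obius map $f_r[z]$ sends $\mathbb D_1$ into $\mathbb D_{\bar\rho}$, so by Schwarz--Pick it is a $\bar\rho$-contraction of $d_{\mathrm{hyp}}$. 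Hence for $z$ in a compact $K\subset\Omega$, each bracket is at most $\overline M_\ell(K)\,\mathrm{artanh}(\bar\rho)\,\bar\rho^{\,n-1}$ path by path, without using any mixing of the chain.

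The complex weights are handled crudely by $\sum_{\mathbf r}|p_z(\mathbf r)|\le\overline M_{\Sigma\pi}(K)\,\widetilde Q^{\,n}$. Therefore $|a_n(z)|\lesssim(\bar\rho\widetilde Q)^n$ uniformly on $K$. Holomorphy then holds on any $\Omega$ satisfying the checkable conditions $(\Omega1)$--$(\Omega5)$ of Remark~\ref{rmk: condition on domain of holomorphic extension}, the decisive one being $\bar\rho\widetilde Q<1$. This is the sense in which $\Omega$ is explicit, and it is what is verified numerically in Example 2.
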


Moreover, the analytic theory obtained here is not merely qualitative. The infinite-matrix representation exposes not only the value of the Lyapunov exponent, but also its full local Taylor expansion, and it does so with effective control on the computational cost.

\begin{theorem} \label{thm: introduction: every derivative can be computed in polynomial time}
Under the setting of Theorem \ref{thm: real analyticity of Lyapunov exponent}, for every fixed integer $q\geq 1$,
\[
\left.\frac{d^q}{dt^q}\right|_{t=t_0} \lambda\big( A(t), P(t) \big)
\]
can be approximated to error less than $\vep>0$ with $O\big( (\log(1/\vep))^3 \big)$ arithmetic operations.
\end{theorem}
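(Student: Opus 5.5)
The plan is to differentiate the holomorphic representation supplied by Theorem \ref{thm: explicit representation of the Lyapunov exponent in the introduction} and Theorem \ref{thm: real analyticity of Lyapunov exponent}, and then apply Cauchy's integral formula rather than differentiating term by term. By Theorem \ref{thm: real analyticity of Lyapunov exponent}, $\lambda(t) := \lambda(A(t),P(t))$ extends holomorphically to an explicit neighborhood $\Omega \ni t_0$. Fix a closed disk $D = \{|t-t_0| \le \rho\} \subset \Omega$, with $\rho$ depending only on the data and not on $\vep$. For each $t \in D$ the representation reads
\[
\lambda(t) = \sum_{n\ge 0} \big[\mathrm T(t)^n \boldsymbol v(t)\big]_{\boldsymbol 0},
\]
and I expect the proof of Theorem \ref{thm: real analyticity of Lyapunov exponent} to show that this series, together with its truncations, converges geometrically and uniformly on $D$. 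Concretely, the target bound is that for $t \in D$ the truncated quantity $\lambda_{N,M}(t)$ satisfies
\[
|\lambda(t) - \lambda_{N,M}(t)| \le K\theta^{\min(N,M)}.
\]
Here $\lambda_{N,M}(t)$ keeps $N$ terms of the series and replaces $\mathrm T(t)$ by its $M\times M$ corner on each of the $|C|$ coordinates, and $K>0$, $\theta<1$ are uniform in $t\in D$.

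Given this uniform bound, the Cauchy formula gives
\[
\lambda^{(q)}(t_0) = \frac{q!}{2\pi i}\oint_{|t-t_0|=\rho} \frac{\lambda(t)}{(t-t_0)^{q+1}}\,dt.
\]
Discretize the contour at the $L$-th roots of unity: set $t_k = t_0 + \rho e^{2\pi i k/L}$ and
\[
\widehat{\lambda}^{(q)} = \frac{q!}{L\rho^{q}}\sum_{k=0}^{L-1} \lambda_{N,M}(t_k)\, e^{-2\pi i qk/L}.
\]
This trapezoid rule is exact for Taylor coefficients up to aliasing. The aliasing error is at most $\sum_{j\ge1} |a_{q+jL}|\rho^{jL}$, where $a_m$ are the Taylor coefficients of $\lambda$ at $t_0$. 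Choosing the circle slightly inside a larger disk $D'\subset\Omega$ of radius $\rho'>\rho$ makes this aliasing error $O((\rho/\rho')^{L})$. The truncation error is bounded by $q!\rho^{-q}K\theta^{\min(N,M)}$. So taking $L, N, M = O(\log(1/\vep))$ gives total error below $\vep$, with all constants depending on the fixed $q$.

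For the complexity count, at each of the $L$ nodes, computing $\lambda_{N,M}(t_k)$ costs $O(N M^2)$ operations, or $O(NM)$ if $\mathrm T$ is banded. This follows the same cost analysis used for Theorem \ref{thm: polynomial computation of the Lyapunov exponent in introduction}, which already gives $O((\log 1/\vep)^3)$. The extra factor $L$ would push the naive count to $(\log 1/\vep)^4$. To stay within $(\log 1/\vep)^3$, I would use that the entries of $\mathrm T(t)$ and $\boldsymbol v(t)$ are themselves holomorphic in $t$ with geometrically decaying Taylor coefficients. One then works with truncated power series in $(t-t_0)$ of fixed degree $q$: propagate jets $\mathrm T^n \boldsymbol v \bmod (t-t_0)^{q+1}$ through the same recursion, with exact differentiation and no contour at all. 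The jet arithmetic multiplies the cost by a constant depending only on $q$, which gives the claimed bound. The contour argument is kept only to justify that derivatives of the truncations converge, since by Cauchy estimates the uniform error on $D$ controls the error in the $q$-th derivative at $t_0$.

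The main obstacle I expect is establishing that the truncation error bound holds uniformly for complex $t$ in a fixed disk. This means checking that the multicone-derived contraction defining $\mathrm T$ persists under complex perturbation of the matrix entries and transition probabilities, so that $K$ and $\theta$ do not degenerate. I would extract this directly from the proof of Theorem \ref{thm: real analyticity of Lyapunov exponent}, where the neighborhood $\Omega$ is made explicit precisely by such bounds. I would also need to compute the Taylor coefficients of the entries of $\mathrm T(t)$ up to order $q$ in $O(1)$ operations per entry, which should follow from the explicit formulas for $\mathrm T$.
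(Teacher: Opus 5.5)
Your final architecture is the paper's (Proposition \ref{prop: precise bounds for higher order derivatives}). You compute $\Lambda^{(q)}_{n,m}(t_0)$ exactly by propagating $q$-jets through the recursion for $\mathrm T_{m\times m}(z)^n\boldsymbol v(z)$, which is the paper's $\boldsymbol w^{(q)}_n$ construction and \eqref{eq: inductive identity of finite approximation of general derivatives of lambda}. You control the error by the Cauchy estimate $|\lambda^{(q)}(t_0)-\Lambda^{(q)}_{n,m}(t_0)|\le \frac{q!}{c^q}\max_{|z-t_0|\le c}|\lambda(z)-\Lambda_{n,m}(z)|$. The trapezoid detour is not needed.

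The gap is the step you flag as the main obstacle, and the source you propose for it does not contain it. The proof of Theorem \ref{thm: analytic dependence on parameter} bounds only the terms of the untruncated series, $|a_n(z)|\lesssim(\bar{\rho}\,\overline Q)^n$. So it controls $\lambda-\Lambda_n$, and says nothing about replacing $\mathrm T(z)$ by $\mathrm T_{m\times m}(z)$. To bound $\Lambda_n(z)-\Lambda_{n,m}(z)$ uniformly on the disk, you must rerun Section \ref{section: Evaluation and Polynomial-time Convergence} at complex parameter. That means the contour representations of $T_r(z)$ and $T_{r,m}(z)$ over $\partial\mathbb D_{\bar{\rho}}$, which are legitimate by $(\Omega 3)$ and Lemma \ref{lemma: Mobius contraction implies transpose contraction}. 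It also means the operators $\pazocal{L}_r(z)$, $\pazocal{R}_{r,m}(z)$, $\pazocal{K}_{r,m}(z)$ on $\pazocal{H}(\mathbb D_{\bar{\rho}})$, and the telescoping bound $\big((1+\bar{\rho}_m)^n-1\big)\|h\|_{\mathrm{osc}}$.

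The genuinely new ingredient is the path sum. For complex $z$ the weights $c_{\mathbf r,s}(z)=p_z\big((\mathbf r,s)\big)/\pi_s(z)$ still sum to $1$, but they are no longer nonnegative. One only has $\sum_{\mathbf r\in C^{n+1}}|c_{\mathbf r,s}(z)|\le \overline M_{\Sigma\pi}\,\overline Q^{\,n+1}/\overline m_\pi$, which needs $\pi_r(z)\neq 0$ from $(\Omega 2)$. Moreover $\overline Q>1$ as soon as $z$ leaves the real axis; in Example 2, $\overline Q=\sqrt{29}/5$.

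So what this argument yields for the truncation error, once $(n-1)\bar{\rho}_m\le 1$, is of size $\overline Q^{\,n}\,\mathrm{poly}(n)\,\bar{\rho}^{\,m}$, not $K\theta^{\min(N,M)}$. It is not uniform in $N$ for fixed $M$, and the two truncation parameters must be coupled. The paper takes $m\ge n$ together with $(n-1)\bar{\rho}_m\le 1$, so that $\overline Q^{\,n}\bar{\rho}^{\,m}\le(\bar{\rho}\,\overline Q)^n$. It is condition $(\Omega 5)$, $\bar{\rho}\,\overline Q<1$, that makes both the tail and the truncation term exponentially small. Once this estimate is supplied, your choice $n,m=O(\log(1/\vep))$ and your operation count go through.
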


We remark that the previous results by Ruelle and Peres generally do not allow an explicit computation of derivatives.

\subsection{Roadmap of the paper}

We now describe the roadmap of this paper. In Section \ref{section: Preparation} we introduce the basic notions and metrics we will be working with. Projective uniform hyperbolicity is known to be equivalent to the existence of a \emph{multicone}, a family of subsets of the real projective line capturing the dynamics. This was proved for $\SL_2(\mathbb R)$ matrices in \cite{Avila--Bochi--Yoccoz}, and generalized to $\GL_2(\mathbb R)$ matrices in \cite{DDGK25}.

Theorem \ref{thm: explicit representation of the Lyapunov exponent in the introduction} is proven in Section \ref{section: Explicit representation of the Lyapunov exponent}. Using the multicone equivalence, we lift the Markov shift so that it remembers the location within the multicone (Subsection \ref{subsection: Branch-state extension}). This lifted shift is also Markov, and yields the same Lyapunov exponent. After passing to local charts on the lift, the matrices become positive (Subsection \ref{subsection: Local charts and Markov operator}).

We then apply the Malheiro--Viana formalism \cite{Malheiro--Viana} to the lift (Subsection \ref{subsection: Malheiro--Viana formalism for the lifted cocycle}). It is classically known that, under an i.i.d.\ law, the Lyapunov exponent is equal to the supremum of integrals against stationary measures \cite{Furstenberg--Kifer}. In \cite{Malheiro--Viana}, the direct analogue of this variational formula is proved in the Markov shift setting, where stationary measures are replaced by \emph{stationary measure vectors}. We prove that the maximizing measure vector for the variational formula corresponds to the unique fixed point of a Markov operator $\mathscr H$ acting on measure vectors.

In Subsection \ref{subsection: Kernel expansion}, we carry out \emph{Kernel expansion}, a method first introduced in \cite{Alibabaei26}. We consider the integrand in the variational formula for $\lambda$, and expand it using \emph{Kernel functions} that integrate to $0$ against stationary measure vectors. (See \cite[Section $2$]{Alibabaei26-2} for a heuristic explanation.) This allows us to rewrite the integral using an explicitly constructed infinite matrix $\mathrm T$, and the convergence to $\lambda$ follows by the contraction of the Markov operator $\mathscr H$. This completes the proof of Theorem \ref{thm: explicit representation of the Lyapunov exponent in the introduction}.

\smallskip

Theorem \ref{thm: polynomial computation of the Lyapunov exponent in introduction} is proven in Section \ref{section: Evaluation and Polynomial-time Convergence}. The infinite matrix $\mathrm T$ is truncated to a finite matrix $\mathrm T_{m\times m}$. Following the idea in \cite{Alibabaei26-2}, we obtain integral representations of $\mathrm T$ and $\mathrm T_{m\times m}$ (Subsection \ref{subsection: Integral representations}). The integrals preserve the structure well enough that they allow us to control the difference of iterates $\mathrm T^n - \mathrm T_{m\times m}^n$ (Subsection \ref{subsection: Integral representations for iterates}). We introduce associated integral operators in Subsection \ref{subsection: Integral operators and telescoping}, and this leads to a bound that is exponentially small in truncation parameters.

\smallskip

Section \ref{section: Analytic dependence on the underlying data} is devoted to the proof of real analyticity of $\lambda$ (Theorem \ref{thm: real analyticity of Lyapunov exponent}).  The proof proceeds by considering the holomorphic extension of the relevant functions, and the explicit domain of holomorphic extension is given.

\smallskip

Polynomial-time calculation of derivatives (Theorem \ref{thm: introduction: every derivative can be computed in polynomial time}) is proven in Section \ref{section: Calculating the derivatives}. For the first derivative, an error bound that does not depend on the domain of holomorphic extension is given in Subsections \ref{First-order derivative I: gathering bounds} and \ref{First-order derivative II: integral operators and final bounds}. An error bound for derivatives of arbitrary order is proven using Cauchy's integral formula in Subsection \ref{subsection: Higher-order derivatives}. Finally, Appendix \ref{section: appendix} records the proof that one can reduce to the aperiodic Markov case (Proposition \ref{prop: aperiodic reduction}).

\medskip

In the rest of this section we present examples that demonstrate the core ideas of the calculations. Natural applications such as random walks on surface groups will be treated in future work.

\subsection{Example 1} \label{subsection: example 1}

Let $X = \{x, y, \bar{x}, \bar{y}\}$, and denote by $\lambda_{\mathrm{orig}}$ the Lyapunov exponent of the uniform i.i.d.\ products of the following matrices:
\[
A_x = \begin{pmatrix} 1 & 0 \\ 1 & 4 \end{pmatrix},
\quad
A_y = \begin{pmatrix} 4 & 3 \\ 1 & 1 \end{pmatrix},
\quad
A_{\bar{x}} = A_x^{-1} = \begin{pmatrix} 1 & 0 \\ -\frac14 & \frac14 \end{pmatrix},
\quad
A_{\bar{y}} = A_y^{-1} = \begin{pmatrix} 1 & -3 \\ -1 & 4 \end{pmatrix}.
\]
Using our method, the rigorously certified value is
\[
\lambda_{\mathrm{orig}}
\;=\;
0.4426362721184141506898808096914817304882428124927663040078064914\cdots.
%0.442636272118414150689880809691481730488242812492766304007806491444419476769291792138941345140767676359623272
\]

\medskip

The original system is not projectively uniformly hyperbolic, as $A_x A_{\bar{x}} = I$ and $x\to \bar{x}$ is admissible. However, Mairesse's theory \cite{Mairesse} on Markovian harmonic measure allows us to apply our method, as follows. Denote by $\lambda_{\mathrm{red}}$ the Lyapunov exponent of the random products of $\{A_i\}_{i\in X}$ over reduced words; from any current letter you move with probability $1/3$ to each of the three letters different from its inverse. Then, we have
\[
\lambda_{\mathrm{orig}}
\;=\;
\frac12 \lambda_{\mathrm{red}}.
\]

Mairesse proved that the original i.i.d.\ random walk on $X$ can be encoded by a Markov process on infinite reduced words. This Markov process is characterized by a unique solution of the \emph{traffic equations} \cite[Theorem 4.5]{Mairesse}, and the \emph{drift} describes the average amount of cancellation in typical products. By solving the traffic equations in the present example, we obtain the above relation, where the factor $1/2$ is the drift.

\medskip

We now explain in detail how one computes $\lambda_{\mathrm{red}}$. The transition matrix $P$ for the reduced-word system is
\[
P=
\frac{1}{3}
\begin{pmatrix}
1 & 1 & 0 & 1 \\
1 & 1 & 1 & 0 \\
0 & 1 & 1 & 1 \\
1 & 0 & 1 & 1
\end{pmatrix},
\]
with rows and columns ordered as $(x, y, \bar{x}, \bar{y})$, and the transition probability from $i$ to $j$ is $P_{ij}$.

We employ the following slope coordinate for the real projective space $\RP^1$:
\[
\RP^1 \;\to\; \mathbb R \cup \{\infty\},
\qquad
\left[ \begin{pmatrix} v_1 \\ v_2 \end{pmatrix} \right] \; \longmapsto \; \frac{v_1}{v_2}.
\]
We define an open interval $M_i\subset \RP^1$ for each $i\in X$, written in the coordinate above.
\[
M_x=\left(-\frac{5}{12},\;\frac{31}{30}\right),
\hspace{16pt}
M_y=\left(\frac{121}{60},\;10\right),
\hspace{16pt}
M_{\bar{x}}=\left(-13,\;-\frac{13}{10}\right),
\hspace{16pt}
M_{\bar{y}}=\left(-1,\;-\frac{19}{30}\right).
\]
If one is currently at $x$, the only letters that can follow are $x, y, \bar{y}$. Writing $[A]$ for the projective action of $A\in \GL_2(\mathbb R)$, we have $\overline{[A_j] (M_x)} \subset M_j$ for $j\in \{x, y, \bar{y}\}$. Indeed,
\[
[A_x](M_x) = \left(-\frac{5}{43},\;\frac{31}{151}\right),
\qquad
[A_y](M_x) = \left(\frac{16}{7},\;\frac{214}{61}\right),
\qquad
[A_{\bar{y}}](M_x) = \left(-\frac{41}{53},\;-\frac{59}{89}\right).
\]
Likewise, for every $i,j\in X$ with $P_{ij} > 0$, one checks that $\overline{[A_j](M_i)} \subset M_j$. Therefore, $\{M_i\}_{i\in X}$ forms a multicone.

Since each $M_i$ has a single connected component, the branch-state set has exactly $12$ states. Namely,
\begin{align*}
R
&= \left\{(i,j)\in X^2 \setcond j \ne \bar{i}\, \right\}.
%\\ &= \{ (x,x), (x,y), (x,\bar{y}), (\bar{x},y), (\bar{x},\bar{x}), (\bar{x},\bar{y}), (y,x), (y,y), (y,\bar{x}), (\bar{y},x), (\bar{y},\bar{x}), (\bar{y},\bar{y}) \}.
\end{align*}
The transition matrix $Q=(Q_{r,r'})_{r,r'\in R}$ on $R$ is defined as
\[
Q_{(i,j),\,(k,\ell)}
=
\begin{cases}
\frac13 & \text{if } j=k,\\
0 & \text{otherwise.}
\end{cases}
\]
We pick a recurrent class $C\subset R$ (Definition \ref{def: recurrent class}). In this example the unique recurrent class is $C=R$. (It is also aperiodic.) Now define the following matrices:
\[
L_x
=
\begin{pmatrix}
-\frac{5}{12} & \frac{93}{200}\\[4pt]
1 & \frac{9}{20}
\end{pmatrix},
\qquad
L_y
=
\begin{pmatrix}
\frac{121}{60} & \frac{5}{2}\\[4pt]
1 & \frac{1}{4}
\end{pmatrix},
\qquad
L_{\bar{x}}
=
\begin{pmatrix}
-13 & -\frac{39}{5}\\[4pt]
1 & 6
\end{pmatrix},
\qquad
L_{\bar{y}}
=
\begin{pmatrix}
-1 & -\frac{19}{20}\\[4pt]
1 & \frac{3}{2}
\end{pmatrix}.
\]
Each $\big[L_i\big]$ is the local chart that sends the non-negative cone to $\closure{M_i}$. For each $r=(i,j)\in C$, define
\[
B_r
\;=\;
B_{i,j}
\;=\;
\vep(r)\,L_j^{-1}A_j L_i,
\]
where $\vep(r)\in \{\pm1\}$ is chosen so that $B_r$ is a positive matrix. Then the local positive matrices are
\[
\begin{alignedat}{3}
B_{x,x}
&=
\begin{pmatrix}
\frac{1483}{522} & \frac{3751}{2900}\\[6pt]
\frac{3875}{2349} & \frac{1127}{522}
\end{pmatrix},
\qquad
&
B_{x,y}
&=
\begin{pmatrix}
\frac{270}{479} & \frac{1782}{2395}\\[6pt]
\frac{113}{1437} & \frac{16377}{23950}
\end{pmatrix},
\qquad
&
B_{x,\bar{y}}
&=
\begin{pmatrix}
\frac{223}{132} & \frac{237}{2200}\\[6pt]
\frac{20}{11} & \frac{9}{11}
\end{pmatrix},
\\[6pt]
B_{\bar{x},y}
&=
\begin{pmatrix}
\frac{4260}{479} & \frac{288}{479}\\[6pt]
\frac{5952}{479} & \frac{11484}{2395}
\end{pmatrix},
\qquad
&
B_{\bar{x},\bar{x}}
&=
\begin{pmatrix}
\frac{13}{18} & \frac{17}{60}\\[6pt]
\frac{25}{54} & \frac{19}{36}
\end{pmatrix},
\qquad
&
B_{\bar{x},\bar{y}}
&=
\begin{pmatrix}
\frac{157}{11} & \frac{849}{55}\\[6pt]
\frac{20}{11} & \frac{120}{11}
\end{pmatrix},
\\[6pt]
B_{y,x}
&=
\begin{pmatrix}
\frac{7561}{2610} & \frac{67}{87}\\[6pt]
\frac{16285}{2349} & \frac{4750}{783}
\end{pmatrix},
\qquad
&
B_{y,y}
&=
\begin{pmatrix}
\frac{1146}{479} & \frac{1005}{479}\\[6pt]
\frac{17939}{7185} & \frac{1249}{479}
\end{pmatrix},
\qquad
&
B_{y,\bar{x}}
&=
\begin{pmatrix}
\frac{1349}{9360} & \frac{283}{1872}\\[6pt]
\frac{103}{5616} & \frac{385}{5616}
\end{pmatrix},
\\[6pt]
B_{\bar{y},x}
&=
\begin{pmatrix}
\frac{82}{29} & \frac{3701}{870}\\[6pt]
\frac{100}{261} & \frac{1385}{783}
\end{pmatrix},
\qquad
&
B_{\bar{y},\bar{x}}
&=
\begin{pmatrix}
\frac{7}{234} & \frac{41}{3120}\\[6pt]
\frac{55}{702} & \frac{187}{1872}
\end{pmatrix},
\qquad
&
B_{\bar{y},\bar{y}}
&=
\begin{pmatrix}
\frac{25}{11} & \frac{629}{220}\\[6pt]
\frac{20}{11} & \frac{30}{11}
\end{pmatrix}.
\end{alignedat}
\]

Write $\lambda_+(B,Q)$ to denote the Lyapunov exponent of random products of the matrices $\{B_r\}_{r\in R}$ along the Markov shift on $C=R$ with transition probability $Q$. Then, the following holds. (Lemma \ref{lemma: Lyapunov exponent identity for the lift})
\[
\lambda_{\mathrm{red}} \;=\; \lambda_+(B,Q).
\]

\medskip

From here, we perform Kernel expansion. First, we identify the non-negative cone with the following $1$-simplex.
\[
\Delta = \left\{ v = \begin{pmatrix} v_1 \\ v_2 \end{pmatrix} \in \mathbb{R}^2 \setcond v_1, v_2 \geq 0, v_1 + v_2 = 1 \right\}.
\]
We define the chart
\[
\psi:[-1, 1] \to \Delta, \qquad \psi(x) = \frac{1}{2} \begin{pmatrix} 1+x \\ 1-x \end{pmatrix}.
\]
Then, the projective action $[B_r]: \Delta \to \Delta$ for each $r\in C$ corresponds to the M\"obius transformation $f_r: [-1,1]\to [-1,1]$, where
\[
f_r(x) =
\frac{(a_r-b_r-c_r+d_r)x + (a_r+b_r-c_r-d_r)}{(a_r-b_r+c_r-d_r)x + (a_r+b_r+c_r+d_r)}
\quad \text{for} \quad
B_r
=
\begin{pmatrix} a_r & b_r \\ c_r & d_r \end{pmatrix}.
\]
Also, define the ``transposed'' M\"obius transformation $f_r^\top$ by
\[
f_r^\top(x)
=
\frac{
\alpha x + \gamma}{
\beta x + \delta}
\quad \text{where} \quad
f_r(x)
=
\frac{
\alpha x + \beta}{
\gamma x + \delta}.
\]
Let $f_r'$ denote the usual differentiation of $f_r$. We define an infinite matrix $T_r=\big(b^{(r)}_{k,n}\big)_{k,n\in\mathbb N_0}$ by
\begin{align*}
& b^{(r)}_{k,0} = 0 \quad \text{for } k \geq 0, \hspace{45pt}
b^{(r)}_{0,n} = {\big( f_r^{\top}(0) \big)}^n \quad \text{for } n\geq1,\\
& b^{(r)}_{k,n} = \sum_{\ell=1}^{\min\{k,n\}} \binom{n}{\ell}\binom{k-1}{\ell-1} \, \, 
{\big( f_r^{\top}(0) \big)}^{n-\ell}\big(-f_r(0)\big)^{k-\ell} {\big( f_r'(0) \big)}^{\ell}\quad \text{for } k,n\geq1.
\end{align*}
Then, the operator $\mathrm T$ is defined as follows: for $r'\in C$ and $\boldsymbol u =(u_r)_{r\in C}\in \ell^\infty(\mathbb N_0)^C$ belonging to the domain of $\mathrm T$ introduced later,
\[
\big( \mathrm T \boldsymbol u \big)_{r'}
\;=\;
\frac13
\sum_{r\in I(r')}
T_r u_r,
\]
where $I(r')$ is the set of states $(i,j)\in C$ with second letter equal to the first letter of $r'$. Finally, set the seed vector $\boldsymbol v = (\boldsymbol v_{r'})_{r' \in C} \in \ell^\infty(\mathbb N_0)^C$ as
\begin{equation*}
\big( \boldsymbol v_{r'} \big)_n =
\begin{dcases}
\frac13 \sum_{r\in I(r')} \log \left( \frac{a_r+b_r+c_r+d_r}{2} \right) & (n = 0), \\
-\frac13 \sum_{r\in I(r')} \frac{\big( -f_r(0) \big)^n}{n} & (n \geq 1).
\end{dcases}
\end{equation*}
\medskip
The stationary probability vector is $(\pi_r)_{r\in C} = \big(\frac{1}{12}\big)_{r\in C}$. Then, by Theorem \ref{thm: infinite sum equals lambda},
\[
\lambda_{\mathrm{red}}
\;=\;
\lambda_+(B,Q)
\;=\;
\sum_{n = 0}^\infty \big[ \mathrm T^n \boldsymbol v \big]_{\boldsymbol 0}
\;=\;
\frac{1}{12} \sum_{r\in C} \; \sum_{n = 0}^\infty \Big( \big( \mathrm T^n \boldsymbol v \big)_r \Big)_0
\]
By direct calculation we see that $f_r([-1,1]) \subset [-\rho,\rho]$ for every $r\in C$, where
\[
\rho=\frac{279}{359} = 0.777158\cdots.
\]
This $\rho$ governs the convergence rate for the algorithm.

\smallskip

We truncate each $T_r$ to its upper-left $m\times m$ matrix, and using the error bound from Theorem \ref{thm: precise bounds}, one computes $\lambda_{\mathrm{red}}$ with guaranteed precision as
\[
\lambda_{\mathrm{red}}
\;=\;
0.88527254423682830137976161938296346097648562498553260801561298288\cdots.
%0.885272544236828301379761619382963460976485624985532608015612982888838953538583584277882690281535352719246544
\]

\subsection{Example 2}

In this subsection we consider an analytic family of matrices and transition probabilities, and calculate the Taylor expansion of the Lyapunov exponent.

We use the representation of the free product $(\mathbb Z/3\mathbb Z) \ast (\mathbb Z/2\mathbb Z)$ considered in \cite[Subsection 5.5]{Bochi--Potrie--Sambarino}. Let
\[
A_x(t) \;=\; D(t)^{-1} \, R_{\pi/3} \, D(t),
\qquad
A_y(t) \;=\; R_{\pi/2},
\]
where
\[
D(t) = \begin{pmatrix} t & 0 \\ 0 & t^{-1} \end{pmatrix},
\qquad
R_{\theta} = \begin{pmatrix} \cos{\theta} & -\sin{\theta} \\ \sin{\theta} & \cos{\theta} \end{pmatrix}.
\]
We consider the random products over reduced words of $\{x,\bar{x},y\}$, where
\[
A_x(t)
=
\begin{pmatrix}
\frac12 & - \frac{\sqrt{3}}{2t^2} \\[6pt]
\frac{\sqrt{3}t^2}{2} & \frac12
\end{pmatrix},
\qquad
A_{\bar{x}}(t)
=
A_x(t)^{-1}
=
\begin{pmatrix}
\frac12 & \frac{\sqrt{3}}{2t^2} \\[6pt]
-\frac{\sqrt{3}t^2}{2} & \frac12
\end{pmatrix},
\qquad
A_y(t)
=
\begin{pmatrix}
0 & -1 \\
1 & 0
\end{pmatrix}.
\]
We consider $t\in (\frac52,\,\frac72)$. Let $t_0 = 3$, $p(t) = \frac12 + t-t_0$, and define the transition matrix as
\[
P(t)=
\begin{pmatrix}
0 & 0 & 1 \\
0 & 0 & 1 \\
p(t) & 1-p(t) & 0
\end{pmatrix},
\]
with rows and columns ordered as $(x, \bar{x}, y)$, and the transition probability from $i$ to $j$ is $P_{ij}$. 

Let $\lambda(t)$ be the Lyapunov exponent. Then, the Taylor expansion is
\[
\lambda(t)
\;=\;
\sum_{n=0}^\infty a_n (t-t_0)^n,
\]
where the rigorously certified values of the first $11$ coefficients are
\begin{align*}
a_0 &=
1.02668753502574322381306743078885698085584733242608611503819069056\cdots,
\\
a_1 &=
0.33334463276727235751442614547568014132159285983670947707698164046\cdots,
\\
a_2 &=
-0.0639419036457356072034216310930854348429261266864059723287519982\cdots,
\\
a_3 &=
0.02371366414237382344579888220429926092282058042663105456465261186\cdots,
\\
a_4 &=
-0.0128210355730842625187071826547727740167197573983063450581179771\cdots,
\\
a_5 &= 
0.00758376125675217409196978464888737758815497314787095627447351185\cdots,
\\
a_6 &=
-0.0044084215651143939474934420144888917028299007460083793701014980\cdots,
\\
a_7 &= 
0.00247785191099582781991277973432519071852103088719365535035708164\cdots,
\\
a_8 &=
-0.0013568710710027835740828389326212150195134363688067371840763738\cdots,
\\
a_9 &=
0.00073213986765680152283128293804416409308284665797792103205611317\cdots,
\\
a_{10} &=
-0.0003933322663856287259434519883711520360640978875502183841794051\cdots.
%\\
%a_{11} =
%0.0002122115819985248092596834597173064213513019482190432250351260784\cdots
%\\
%a_{12} =
%-0.000115757086148789859717129934558657403092359636400249873936208297\cdots
\end{align*}

\smallskip

The calculation is as follows. The matrix $P(t)$ has period $2$, so we apply ``aperiodic reduction'' (Proposition \ref{prop: aperiodic reduction}). Namely, we perform cyclic decomposition of $X$ and consider the $2$-step dynamics. The resulting Markov shift has alphabet $\widetilde X = \{xy, \bar{x}y\}$ with the transition probability
\[
\widetilde P(t)
=
\begin{pmatrix}
p(t) & 1-p(t) \\
p(t) & 1-p(t)
\end{pmatrix}.
\]
This system yields the doubled Lyapunov exponent (Proposition \ref{prop: aperiodic reduction}). In the slope coordinate, let
\[
I = \left[ - \frac{\sqrt{3}}{9}, \; \frac{\sqrt{3}}{9} \right],
\qquad
M_{xy} = M_{\bar{x}y} = \RP^1 \setminus I.
\]
Then, $\{M_i\}_{i\in \widetilde X}$ is a multicone for the $2$-step dynamics. The lifted branch-state set is
\[
R
=
\left\{ (u,v)\in \widetilde X^2 \setcond \widetilde P_{uv} > 0 \right\}
=
\{ (xy, xy), (xy, \bar{x}y), (\bar{x}y, xy), (\bar{x}y,\bar{x}y) \}.
\]
The transition matrix $Q=(Q_{r,r'})_{r,r'\in R}$ on $R$ is defined as
\[
Q(t)_{(i,j),\,(k,\ell)}
=
\begin{cases}
\big( \widetilde P(t) \big)_{j,\ell} & \text{if } j=k,\\
0 & \text{otherwise}
\end{cases}
\quad \text{\it{id est}} \quad
Q(t)
=
\begin{pmatrix}
p(t) & 1-p(t) & 0 & 0 \\
0 & 0 & p(t) & 1-p(t) \\
p(t) & 1-p(t) & 0 & 0 \\
0 & 0 & p(t) & 1-p(t)
\end{pmatrix}.
\]
Here, the rows and columns are ordered as $\big( (xy, xy), (xy, \bar{x}y), (\bar{x}y, xy), (\bar{x}y,\bar{x}y) \big)$. Then $R$ is itself a recurrent class, so let $C=R$. We define the local chart as
\[
L_{xy} = L_{\bar{x} y}
=
\begin{pmatrix}
\frac{\sqrt{3}}{9} & \frac{\sqrt{3}}{9} \\[4pt]
1 & -1
\end{pmatrix}.
\]
Then, the local positive matrices are
\begin{gather*}
B_{(xy, \; xy)} = B_{(\bar{x}y, \; xy)}
=
\frac{\sqrt{3}}{36t^2}
\begin{pmatrix}
9t^4 + 26t^2 + 9 & 9t^4 - 28t^2 - 9 \\
9t^4 + 28t^2 - 9 & 9t^4 - 26t^2 + 9
\end{pmatrix},
\\
B_{(xy, \; \bar{x}y)} = B_{(\bar{x}y, \; \bar{x}y)}
=
\frac{\sqrt{3}}{36t^2}
\begin{pmatrix}
9t^4 - 26t^2 + 9 & 9t^4 + 28t^2 - 9 \\
9t^4 - 28t^2 - 9 & 9t^4 + 26t^2 + 9
\end{pmatrix}.
\end{gather*}
The associated M\"obius transformations $f_r[t]: [-1,1]\to[-1,1]$ are
\[
f_{(xy, \; xy)}[t](u) = f_{(\bar{x}y, \; xy)}[t](u)
=
\frac{9u-t^2}{9t^2(3u+t^2)},
\qquad
f_{(xy, \; \bar{x}y)}[t](u) = f_{(\bar{x}y, \; \bar{x}y)}[t](u)
=
\frac{9u+t^2}{9t^2(-3u+t^2)}.
\]

Denote by $\pi(t)$ the stationary probability vector of $Q(t)$. Let $U = \{z\in \mathbb C \;:\; |z-3|\leq 1/5\}$ and $\Omega$ be an open neighborhood of $U$. As in Remark \ref{rmk: condition on domain of holomorphic extension}, we complexify the relevant data and denote by $Q(z)$, $\pi(z)$, and $f_r[z]$ for each $r\in C$. Then, by direct calculation we see that
\[
\max_{r\in C} \max_{z\in U} \sum_{r'\in C} |Q_{r,\,r'}(z)| = \frac{\sqrt{29}}{5},
\qquad
\max_{r\in C} \max_{z\in U} \max_{x\in \closure{\mathbb D_1}} |f_r[z](x)| = \frac{10525}{213444} = 0.0493\cdots.
\]
Letting $\bar{\rho} = 1/20$, we can confirm that the conditions in Remark \ref{rmk: condition on domain of holomorphic extension} are satisfied with small enough $\Omega \supset U$. Furthermore, the constants in Proposition \ref{prop: precise bounds for higher order derivatives} for this example are
\[
\overline Q  = \frac{\sqrt{29}}{5},
\qquad
\overline D = \frac{75}{196},
\qquad
\overline M_\ell = \frac{300}{769},
\qquad
\overline M_{\Sigma\pi} = \frac{29}{25},
\qquad
\overline m_\pi = \frac{9}{100}.
\]

\medskip

Then we can use the bounds in Proposition \ref{prop: precise bounds for higher order derivatives} to obtain rigorously certified values of the derivatives of the Lyapunov exponent $\lambda(t)$ of the original system, and hence its Taylor coefficients $a_n$. The coefficients are calculated using equation \eqref{eq: inductive identity of finite approximation of general derivatives of lambda} and certified by interval arithmetic.

\section{Preparation} \label{section: Preparation}

\subsection{Markov shifts and projective uniform hyperbolicity}

Let $X$ be a finite set. On the one-sided full shift $X^{\mathbb{N}_0}$, let $\sigma: X^{\mathbb{N}_0} \to X^{\mathbb{N}_0}$ be the shift map
\[
\sigma(x_0,x_1,x_2,\dots)=(x_1,x_2,\dots).
\]
Consider a \emph{row-stochastic} matrix $P=(P_{ij})_{i,j\in X}$, that is,
\[
P_{ij} \geq 0
\quad\text{for all } i,j\in X,
\qquad
\sum_{j\in X} P_{ij}=1
\quad\text{for all } i\in X.
\]
Define the subshift $\Sigma$ by
\[
\Sigma = \left\{ x=(x_n)_{n\geq 0}\in X^{\mathbb{N}_0} \setcond P_{x_n x_{n+1}}>0 \text{ for every } n\geq 0\right\}.
\]

\begin{definition}
The pair $(\Sigma, P)$ is called a \emph{Markov shift}, and $\Sigma$ is referred to as a \emph{topological Markov shift}. In addition, they are said to be \emph{transitive} if $P$ is \emph{irreducible}, meaning that for every $i,j\in X$ there exists $n\geq 1$ such that $(P^n)_{ij}>0$.
\end{definition}

From now on, assume that $P$ is irreducible. Then, $P$ determines a probability law on $\Sigma$: first, by Perron-Frobenius, there exists a unique \emph{stationary probability vector} $p=(p_i)_{i\in X}$ such that
\begin{equation*}
p_i>0 \quad\text{for all } i\in X,
\qquad
\sum_{i\in X} p_i=1,
\qquad
\sum_{i\in X} p_i P_{ij} = p_j
\quad\text{for all } j\in X.
\end{equation*}
For $i_0,\dots,i_n\in X$, the corresponding cylinder set is defined as
\[
[i_0,\dots,i_n]
=
\left\{x = (x_n)_n \in \Sigma \setcond x_k=i_k \text{ for all $0 \leq k \leq n$} \right\}.
\]
Then, there is a unique probability measure $\mu$ (called the \emph{Markov measure}) on $\Sigma$ such that
\begin{equation*}
\mu([i_0,\dots,i_n]) = p_{i_0} P_{i_0 i_1} \cdots P_{i_{n-1} i_n}
\qquad
\text{for every $n\geq 0$ and every $i_0,\dots,i_n\in X$.}
\end{equation*}

Let $\{A_i\}_{i\in X} \subset \GL_2(\mathbb R)$, and define a locally constant function
\begin{equation} \label{eq: definition of locally constant function A}
A: \Sigma \to \GL_2(\mathbb{R}), \qquad A(x) = A_{x_0}
\end{equation}
for any $x = (x_n)_n \in \Sigma$. Also write
\[
A^n(x) = A_{x_{n-1}} \cdots A_{x_0}.
\]
By \cite{Furstenberg--Kesten}, the following limit exists and is constant for $\mu$-almost every $x = (x_n)_{n=0}^\infty \in \Sigma$.
\[
\lambda_+(A,P) = \lim_{n\to\infty}\frac{1}{n}\log \|A^n(x)\|.
\]
This number is called the \emph{(top) Lyapunov exponent}. The limit is independent of the choice of norm.

\medskip

We consider the linear cocycle
\[
\boldsymbol{A}: \Sigma \times \mathbb R^2 \to \Sigma \times \mathbb R^2, \qquad (x, v) \longmapsto \big( \sigma(x), \; A(x)v \big),
\]
and the associated projective cocycle
\[
\mathbb{P}( \boldsymbol{A} ): \Sigma \times \RP^1 \to \Sigma \times \RP^1, \qquad \big( x, [v] \big) \longmapsto \big( \sigma(x), \; [A(x)v] \big).
\]
Here and in what follows, $\RP^1$ denotes the real projective line of $\mathbb R^2$, the projective class of $v \in \mathbb R^2 \setminus \{0\}$ is denoted by $[v]$, and for $G \in \GL_2(\mathbb R)$ we write
$[G] : \RP^1 \to \RP^1$ for the induced projective action.

\medskip

Based on \cite[Definition 3.1]{DDGK25}, we introduce the notion of projective uniform hyperbolicity. Let $\widehat{\Sigma}$ be the natural bi-infinite extension of $\Sigma$:
\[
\widehat{\Sigma}
=
\left\{ \hat x=(x_n)_{n\in \mathbb Z}\in X^{\mathbb Z} \setcond P_{x_n x_{n+1}}>0 \text{ for every } n\in \mathbb Z\right\}.
\]
and let
\[
\widehat{\boldsymbol A} : \widehat{\Sigma} \times \mathbb R^2 \to \widehat{\Sigma} \times \mathbb R^2
\qquad
(\widehat{\mathbf r},w)
\longmapsto
\big( \widehat\sigma(\widehat{\mathbf r}),\,A_{r_0}w \big).
\]
be the induced cocycle, where $\widehat \sigma$ is the left-shift map on $\widehat\Sigma$.
\begin{definition} \label{def: uniform hyperbolicity}
We say that $\boldsymbol A$ is \emph{projectively uniformly hyperbolic} with respect to $\Sigma$ if
$\widehat{\boldsymbol A}$ is projectively uniformly hyperbolic in the sense of \cite[Definition 3.1]{DDGK25}; that is, there exists a continuous $\widehat{\boldsymbol A}$-invariant decomposition
\[
\mathbb R^2 = E^d(\hat x) \oplus E^w(\hat x),
\qquad \hat x \in \widehat{\Sigma},
\]
into one-dimensional subspaces such that there exists $n \geq 1$ for which
\[
\big\| \widehat A^n(\hat x)\vert_{E^w(\hat x)} \big\|
<
\big\| \widehat A^n(\hat x)\vert_{E^d(\hat x)} \big\|
\qquad
\text{for every } \hat x \in \widehat{\Sigma}.
\]
\end{definition}

\begin{remark}
This property is also known as \emph{dominated splittings}. The dominant/weak directions $E^d$ and $E^w$ are not necessarily unstable/stable directions since $\boldsymbol A$ may not exhibit exponential growth/decay on these directions.
\end{remark}

The authors of \cite{DDGK25} extended the multicone equivalence of \cite{Avila--Bochi--Yoccoz}
from $\SL_2(\mathbb R)$ to $\GL_2(\mathbb R)$.

\begin{theorem}[{\cite[Theorem 2.1]{DDGK25}}] \label{thm: multicone criterion}
The cocycle $\boldsymbol A$ is projectively uniformly hyperbolic with respect to $\Sigma$ if and only if there exists a family
$\{M_i\}_{i\in X}$ such that:
\begin{enumerate}
\item $M_i \subset \RP^1$ is non-empty, proper, and a finite union of open intervals for each $i \in X$.
\item If $P_{ij} > 0$, then $\overline{[A_j](M_i)} \subset M_j$.
\end{enumerate}
\end{theorem}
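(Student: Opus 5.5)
The plan is to prove the two implications separately, working throughout on the bi-infinite extension $\widehat\Sigma$ of Definition \ref{def: uniform hyperbolicity}. Two facts are used repeatedly. First, $X$ is finite and every $[A_j]$ is a Möbius map of $\RP^1$. Second, $\widehat\Sigma$ has a local product structure: if $\dots x_{-2}x_{-1}$ is an admissible past and $x_0x_1\dots$ is an admissible future with $P_{x_{-1}x_0}>0$, then their concatenation lies in $\widehat\Sigma$.

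\emph{Multicone $\Rightarrow$ uniform hyperbolicity.} Suppose $\{M_i\}$ satisfies (1) and (2), and fix an admissible $\hat x$.
\begin{enumerate}
\item Consider the nested sets
\[
K_n(\hat x)=[A_{x_{-1}}\cdots A_{x_{-n}}]\big(\closure{M_{x_{-n-1}}}\big).
\]
Each component of $M_i$ is a proper open interval, so it carries a Hilbert (hyperbolic) metric.
\item Condition (2) says each $[A_j]$ maps $\closure{M_i}$ into $M_j$, with only finitely many pairs $(i,j)$. By compactness and the Schwarz--Pick/Birkhoff contraction, every admissible map is then a strict contraction with a uniform factor $\theta<1$ on each component.
\item Since the components of $M_i$ are finitely many, $K_n(\hat x)$ shrinks to a single point. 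I set $E^d(\hat x)$ to be that point.
\item Symmetrically, the complements $N_i=\RP^1\setminus \closure{M_i}$ satisfy $\closure{[A_j^{-1}](N_j)}\subset N_i$. Running the same argument for the inverse cocycle along the future defines $E^w(\hat x)\in N_{x_{-1}}$, so $E^w(\hat x)\neq E^d(\hat x)$.
\item Invariance of $E^d$ and $E^w$ is immediate from the construction. Continuity follows because the nested sets depend only on finitely many coordinates up to an error $\theta^n$.
\item For domination, the projective derivative of $[A^n(\hat x)]$ at $E^w$ equals $\det\cdot\|A^n|_{E^w}\|^{-2}$ up to bounded angle factors. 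Comparing this with the uniform contraction on the cone gives $\|A^n|_{E^w}\|/\|A^n|_{E^d}\|\le C\theta^n$, which is less than $1$ for large $n$.
\end{enumerate}

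\emph{Uniform hyperbolicity $\Rightarrow$ multicone.}
\begin{enumerate}
\item First show that $E^d(\hat x)$ depends only on the past $(x_n)_{n<0}$ and $E^w(\hat x)$ only on the future $(x_n)_{n\ge0}$. This is the standard characterization: $E^d$ is the limit of the most expanded directions of $A^n(\widehat\sigma^{-n}\hat x)$, and $E^w$ is the limit of the most contracted directions of $A^n(\hat x)$.
\item Let $D_i$ be the set of $E^d$-values over pasts ending with $x_{-1}=i$, and $W_i$ the set of $E^w$-values over futures with $P_{ix_0}>0$. Both sets are compact, by continuity and compactness of $\widehat\Sigma$.
\item By local product structure, any such past and future concatenate to a point of $\widehat\Sigma$, where $E^d\neq E^w$. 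Hence $D_i\cap W_i=\emptyset$.
\item Invariance gives $[A_j](D_i)\subset D_j$ whenever $P_{ij}>0$, and dually for $W$.
\item Domination yields an $N$ such that every admissible word of length $N$ maps the complement of a fixed $\delta$-neighborhood of $W_i$ into the $\delta/2$-neighborhood of $D_j$. The reason is that projective maps with large singular value gap collapse everything away from the most contracted direction toward the most expanded one.
\end{enumerate}

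\emph{Main obstacle.} The difficulty is passing from this $N$-step invariance to the one-step condition (2). I would try the usual averaging trick: let $U_i$ be a finite union of small open intervals around $D_i$, disjoint from a neighborhood of $W_i$, and define
\[
M_i=\bigcup_{k=0}^{N-1}\ \bigcup_{\text{admissible } i_0\cdots i_k=i}[A_{i_k}\cdots A_{i_1}](U_{i_0}),
\]
shrinking the intervals so that $M_i$ stays disjoint from $W_i$, hence proper.
\begin{itemize}
\item Each $M_i$ is a finite union of open intervals.
\item One step of $[A_j]$ maps every term with $k<N-1$ into a term of $M_j$.
\item The terms with $k=N-1$ land, by the $N$-step contraction, in the $\delta/2$-neighborhood of $D_j$, which lies inside $U_j$.
\end{itemize}
Getting closures strictly inside, rather than just inclusion, needs some care. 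I expect to handle it by using a slightly smaller $U$ at each level, or by perturbing the endpoints, since only finitely many maps are involved.
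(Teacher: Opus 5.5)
The paper does not prove this statement. It is imported from \cite[Theorem 2.1]{DDGK25}, which rescales the matrices into $\SL_2(\mathbb R)$ and invokes \cite[Theorem 2.3]{Avila--Bochi--Yoccoz}; that is also where Remark \ref{remark: remark for the multicone criterion} gets the finiteness of the components. Your direct argument on $\RP^1$ is therefore a genuinely different route. It has two advantages: it never needs to normalize determinants, so negative determinants cause no trouble, and in the direction ``uniform hyperbolicity $\Rightarrow$ multicone'' it produces finitely many components by construction.

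That direction is sound in outline. Two small points there:
\begin{itemize}
\item No shrinking is needed to keep $M_i$ off $W_i$. Since $[A_j^{-1}](W_j)\subset W_i$ whenever $P_{ij}>0$, the condition $U_{i_0}\cap W_{i_0}=\varnothing$ propagates along every admissible word.
\item The closure issue is settled as follows. Let the level-$k$ terms use sets $U^{(k)}_{i_0}$ with $\closure{U^{(k)}_i}\subset U^{(k+1)}_i$, so the sets get larger at higher levels. Require all of them to be disjoint from the $\delta$-neighbourhood of $W_i$, and require $U^{(0)}_i$ to contain the closed $\delta/2$-neighbourhood of $D_i$. Here $\delta<\frac12\min_i\mathrm{dist}(D_i,W_i)$ must be fixed before $N$ is chosen.
\end{itemize}

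There is, however, a genuine gap in step 3 of ``multicone $\Rightarrow$ uniform hyperbolicity''.

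The Hilbert-metric contraction works only component by component. It shows that $B_n=A_{x_{-1}}\cdots A_{x_{-n}}$ maps each component of $\closure{M_{x_{-n-1}}}$ into a short arc. Different components, however, follow different branches (different lifts of the past to the branch states of Subsection \ref{subsection: Branch-state extension}). So $K_n(\hat x)$ is a union of up to $\max_i m(i)$ short arcs, and all you obtain is that $\bigcap_n K_n(\hat x)$ is a finite set.

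Finiteness of the components cannot force a single point. For a circle diffeomorphism with two attracting fixed points, two small arcs around them satisfy the same strict nested invariance, yet the intersection consists of two points.

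What excludes this is that the maps are projective, and you have to use that. One way, with the angle metric on $\RP^1$ and singular values $\sigma_1(B)\geq\sigma_2(B)$:
\begin{itemize}
\item The projective derivative of $[B]$ at a unit vector $v$ is $|\det B|/\|Bv\|^2\geq\sigma_2(B)/\sigma_1(B)$.
\item Hence if $[B]$ maps an arc of length $\geq L$ into an arc of length $\leq\varepsilon$, then $\sigma_2(B)/\sigma_1(B)\leq\varepsilon/L$.
\item Consequently $[B]$ sends every point at distance $\geq\eta$ from the most contracted direction of $B$ to within $O(\varepsilon/(L\eta))$ of the most expanded image direction $u(B)$.
\item The components of the $M_i$ have length bounded below by some $L>0$. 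So each short arc in $K_n(\hat x)$ contains a point within $O(\theta^n)$ of $u(B_n)$, and has length $O(\theta^n)$.
\item Therefore $K_n(\hat x)$ lies in an $O(\theta^n)$-neighbourhood of $u(B_n)$.
\end{itemize}
Only this gives $\mathrm{diam}\,K_n(\hat x)\to0$. It is also what your continuity estimate actually needs, since $u(B_n)$ depends only on $x_{-n},\dots,x_{-1}$. The same addition is required for $E^w$ via the sets $N_i$. With it, the rest of that direction goes through.
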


\begin{remark} \label{remark: remark for the multicone criterion}
\, \hfill
\begin{enumerate}
\item Such a family $\{M_i\}_{i \in X}$ is called the \emph{multicone} for the cocycle $\boldsymbol{A}$ with respect to $\Sigma$. 
\item The finiteness of the connected components of $M_i$ for each $i \in X$ is not stated explicitly in \cite[Theorem 2.1]{DDGK25}. However, in their proof they invoke \cite[Theorem 2.3]{Avila--Bochi--Yoccoz} after lifting the matrices to $\SL_2(\mathbb R)$, and the finiteness follows from \cite[Theorem 2.3]{Avila--Bochi--Yoccoz}.
\end{enumerate}
\end{remark}

\begin{definition}
Let $P$ be an irreducible row-stochastic matrix, and let $i\in X$. The \emph{period} of $P$ is
\begin{equation*}
d = \gcd \left\{ n\geq 1 \setcond (P^n)_{ii}>0\right\},
\end{equation*}
where the right-hand side is independent of the choice of $i\in X$ by the irreducibility of $P$. When $d=1$, the matrix $P$ is said to be \emph{aperiodic}.
\end{definition}

The Malheiro--Viana formalism used later is proved under the assumption that the transition matrix is aperiodic. By the standard cyclic decomposition, we may reduce to the aperiodic case.

\begin{proposition} \label{prop: aperiodic reduction}
Suppose $(\Sigma, P)$ is a transitive Markov shift with period $d$. Then there exist a finite set $\widetilde X \subset X^d$, a topological Markov shift $\widetilde \Sigma \subset \widetilde X^{\mathbb N_0}$, a row-stochastic matrix $\widetilde P$ on $\widetilde X$, and a map $\widetilde A : \widetilde \Sigma \to \GL_2(\mathbb R)$ such that the following hold.
\begin{enumerate}
\item The matrix $\widetilde P$ is irreducible and aperiodic, and the shift $\widetilde \Sigma$ is the topological Markov shift associated to $\widetilde P$.
\item For every $\widetilde x = (\widetilde x_n)_n \in \widetilde \Sigma$ with $\widetilde x_0 = (i_0,\dots,i_{d-1}) \in \widetilde X$, we have
\[
\widetilde A(\widetilde x)
\;=\;
A_{i_{d-1}} \cdots A_{i_0}.
\]
\item Let $\widetilde \mu$ be the Markov measure on $\widetilde \Sigma$ associated to $\widetilde P$. Then for $\widetilde \mu$-a.e.\ $\widetilde x \in \widetilde \Sigma$,
\[
\lim_{n\to\infty} \frac{1}{n} \log \big\| \widetilde A^n(\widetilde x) \big\|
\;=\;
d\,\lambda_+(A,P).
\]
\end{enumerate}
\end{proposition}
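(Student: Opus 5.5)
We use the cyclic decomposition of $X$. Since $P$ is irreducible with period $d$, $X$ splits into cyclic classes $X_0,\dots,X_{d-1}$, where $P_{ij}>0$ forces $i\in X_k$ and $j\in X_{k+1 \bmod d}$, and $P^d$ restricted to $X_0$ is irreducible and aperiodic. We take
\[
\widetilde X=\{(i_0,\dots,i_{d-1})\in X^d : i_0\in X_0,\ P_{i_0i_1}\cdots P_{i_{d-2}i_{d-1}}>0\},
\]
the admissible $d$-blocks starting in $X_0$. For $u=(i_0,\dots,i_{d-1})$ and $v=(j_0,\dots,j_{d-1})$ we set $\widetilde P_{uv}=P_{i_{d-1}j_0}P_{j_0j_1}\cdots P_{j_{d-2}j_{d-1}}$. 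This matrix is row-stochastic, since summing over $v$ telescopes the row sums of $P$. We let $\widetilde\Sigma$ be the associated topological Markov shift and define $\widetilde A(\widetilde x)=A_{i_{d-1}}\cdots A_{i_0}$. This gives (2) by definition.

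For (1), irreducibility and aperiodicity follow by comparing $\widetilde P$ with $P^d|_{X_0}$. A path of length $n$ in $\widetilde P$ from $u$ to $v$ is the same as an admissible $P$-path from $i_{d-1}$ to $j_{d-1}$ of length $(n-1)d+1$ passing through $X_0$ at the prescribed times, and such paths exist because $P^d|_{X_0}$ is primitive. Concretely, $(P^{d})^N|_{X_0}$ is strictly positive for large $N$. Hence for large $n$ we can go from $i_{d-1}$ to $j_0$ in $(n-1)d+1$ steps and then append the block $v$. This shows $\widetilde P^n_{uv}>0$ for all large $n$, i.e. $\widetilde P$ is primitive, which gives irreducibility and aperiodicity at once.

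For (3), the map $\Phi:\Sigma_0:=\{x\in\Sigma: x_0\in X_0\}\to\widetilde\Sigma$, $x\mapsto\big((x_{nd},\dots,x_{nd+d-1})\big)_n$, is a bijection, and $\widetilde A^n(\Phi x)=A^{nd}(x)$. The key measure-theoretic point is that $\Phi$ pushes $\mu(\cdot\mid\Sigma_0)$ to a Markov measure with transition matrix $\widetilde P$. This is immediate from the cylinder formula: the conditional probabilities of consecutive blocks are products of $P$ entries, which is exactly $\widetilde P$. The initial distribution is some positive vector, not necessarily the stationary one; this is harmless because the a.e. statement only concerns null sets, and any positive initial vector gives a measure equivalent to $\widetilde\mu$ on $\widetilde\Sigma$. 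Since $\mu(\Sigma_0)=1/d>0$, for $\mu$-a.e. $x\in\Sigma_0$ we have $\frac1n\log\|A^{nd}(x)\|\to d\lambda_+(A,P)$, as a subsequence of the Furstenberg--Kesten limit. Transporting through $\Phi$ gives the limit for $\widetilde\mu$-a.e. $\widetilde x$.

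The main obstacle, though mild, is making the null-set transfer rigorous. Two routes are available. One is to argue that $\widetilde\mu$ is absolutely continuous with respect to $\Phi_*\mu(\cdot\mid\Sigma_0)$, because both are Markov with the same transitions and positive initial vectors. The other is to check directly that the stationary vector of $\widetilde P$ is $\tilde p_u=d\,p_{i_0}P_{i_0i_1}\cdots P_{i_{d-2}i_{d-1}}$; in that case $\widetilde\mu=\Phi_*\mu(\cdot\mid\Sigma_0)$ exactly. The second route is cleaner, and we would verify stationarity by a short summation using $pP=p$ and the fact that $p$ gives mass $1/d$ to each cyclic class.
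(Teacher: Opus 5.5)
Your proposal is correct and is essentially the paper's proof: the same alphabet $\widetilde X$ of admissible $d$-blocks starting in $X_0$ and the same $\widetilde P$. Primitivity comes, as in the paper, from primitivity of $P^d|_{X_0}$ by chopping a path into $d$-blocks, and (3) uses the same conjugacy $\Phi$ with $\widetilde A^n(\Phi x)=A^{nd}(x)$ and $\widetilde\mu=\Phi_*\mu(\cdot\mid\Sigma_0)$.

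The differences are minor. You check stationarity of $\tilde p_u=d\,p_{i_0}P_{i_0i_1}\cdots P_{i_{d-2}i_{d-1}}$ by direct summation, whereas the paper deduces it from the $\sigma^d$-invariance of $\mu(\cdot\mid E_0)$. Also, your phrase ``from $i_{d-1}$ to $j_{d-1}$ of length $(n-1)d+1$'' should read ``to $j_0$'', which is what your next sentence correctly uses.
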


The proof is included in Appendix \ref{section: appendix}.

\subsection{Metrics and contraction}

In this subsection we introduce the hyperbolic distance and Wasserstein metric built from it, and then prove the contraction property of some M\"obius maps. Let $\mathbb{D}_t = \left\{ z \in \mathbb{C} \, : \, |z| < t \right\}$ for $t > 0$.

\begin{definition}
Take the principal branch of $\Log$ with $\Log(1) = 0$. For $z, w \in \mathbb D_1$ define
\[
d_{\mathrm{hyp}} ( z, w ) = 2 \mathrm{artanh} \left( \left| \frac{ z - w }{ 1- \overline{w}z } \right| \right).
\qquad \left( \mathrm{artanh}(z) = \frac{1}{2} \Log \frac{1+z}{1-z}. \right)
\]
This $d_{\mathrm{hyp}}$ defines a distance on $\mathbb D_1$, and hence on $(-1,1)$. It is called the \textbf{hyperbolic distance}.
\end{definition}
Denote by $\mathrm{len}(\gamma)$ the hyperbolic length of a piecewise $C^1$-path $\gamma:[0,1]\to \mathbb D_1$. That is,
\begin{equation} \label{eq: definition of hyperbolic length}
\mathrm{len}(\gamma)
=
\int_0^1 \frac{2|\gamma'(t)|}{1 - |\gamma(t)|^2} dt.
\end{equation}
Then, for any $z,w\in \mathbb D_1$, we have
\[
d_{\mathrm{hyp}}(z, w) = \min \left\{ \mathrm{len}(\gamma) \setcond \text{$\gamma: [0,1]\to\mathbb D_1$ is piecewise $C^1$ and } \gamma(0) = z, \; \gamma(1) = w \right\}.
\]
Also, for real $x,y\in (-1,1)$, we have
\[
d_{\mathrm{hyp}}(x,y) = 2 \Big| \mathrm{artanh}(x) - \mathrm{artanh}(y) \Big|.
\]

\begin{definition}
For a Polish space $Y$, denote by $\mathscr{P}(Y)$ the set of Borel probability measures on $Y$. For $\nu_1, \nu_2 \in \mathscr{P}(-1,1)$, define the set of couplings as
\[
\Pi( \nu_1, \, \nu_2) = \left\{ \eta \in \mathscr{P} \Big( (-1, 1) \times (-1, 1) \Big) \setcond (\pi_1)_* \eta = \nu_1, \; (\pi_2)_* \eta = \nu_2 \right\}.
\]
Here, $\pi_j$ is the projection onto the $j$-th coordinate for $j = 1,2$, and $(\pi_j)_* \eta$ is the push-forward measure of $\eta$ by $\pi_j$. Define
\[
\mathscr{P}_1(-1,1) = \left\{ \nu \in \mathscr{P}(-1,1) \setcond \int_{(-1,1)} d_{\mathrm{hyp}}(x,0) d\nu(x) < \infty \right\}.
\]
We define the \textbf{Wasserstein-1 metric} $W_1^{d_{\mathrm{hyp}}}$ on $\mathscr{P}_1(-1,1)$ by
\[
W_1^{d_{\mathrm{hyp}}}(\nu_1, \nu_2) = \inf_{\eta \in \Pi(\nu_1, \nu_2)} \int_{(-1, 1) \times (-1, 1)} d_{\mathrm{hyp}}(x, y) d\eta(x, y). \qquad \big( \nu_1, \nu_2 \in \mathscr{P}_1(-1,1). \big)
\]
\end{definition}

It is immediate that $W_1^{d_{\mathrm{hyp}}}(\nu_1, \nu_2)$ is finite for $\nu_1, \nu_2 \in \mathscr{P}_1(-1,1)$, and the completeness of $\left( \mathscr{P}_1(-1,1), W_1^{d_{\mathrm{hyp}}} \right)$ follows from the completeness of $\left( (-1,1), d_{\mathrm{hyp}} \right)$ \cite[Theorem 6.18]{Villani}. In this paper we write $W_1$ for $W_1^{d_{\mathrm{hyp}}}$.

\bigskip

Since M\"obius transformations map generalized circles to generalized circles, we have the following.
\begin{lemma}[{\cite[Lemma 2.5]{Alibabaei26-2}}] \label{lemma: real contraction implies complex contraction}
Let $f$ be a M\"obius transformation with real coefficients, and suppose that there is $\rho > 0$ with $f([-1,1]) \subset [-\rho, \rho]$. Then, we have $f(\closure{\mathbb D_1} ) \subset \closure{\mathbb D_\rho}$ and $f(\mathbb{D}_1) \subset \mathbb{D}_\rho$.
\end{lemma}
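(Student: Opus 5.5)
The plan rests on two facts: a M\"obius map sends generalized circles to generalized circles, and a M\"obius map with real coefficients commutes with complex conjugation. Let $f(z)=(\alpha z+\beta)/(\gamma z+\delta)$ with $\alpha,\beta,\gamma,\delta\in\mathbb R$ and $\alpha\delta-\beta\gamma\neq 0$.

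\emph{Step 1: no pole on $\closure{\mathbb D_1}$.} We have $f(\pm 1)\in[-\rho,\rho]$, so $-\delta/\gamma\neq\pm1$. The hypothesis $f([-1,1])\subset[-\rho,\rho]$ gives boundedness of $f$ on $[-1,1]$, so the real pole $-\delta/\gamma$ (if $\gamma\neq0$) lies outside $[-1,1]$, i.e.\ $|\delta|>|\gamma|$. Since the pole is real, it cannot lie on the unit circle except at $\pm1$. Hence it lies outside $\closure{\mathbb D_1}$, and $f$ is holomorphic and injective on a neighborhood of $\closure{\mathbb D_1}$.

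\emph{Step 2: the image of the unit circle.} The image $\Gamma=f(\partial\mathbb D_1)$ is a bounded generalized circle, hence a genuine circle. Because $f$ commutes with conjugation, $\Gamma$ is symmetric about $\mathbb R$, so its center is real. It meets $\mathbb R$ exactly at $f(1)$ and $f(-1)$, since $f$ is injective and $\partial\mathbb D_1\cap\mathbb R=\{\pm1\}$ while $f^{-1}(\mathbb R)=\mathbb R\cup\{\infty\}$. Therefore $\Gamma$ is the circle with diameter $[f(-1),f(1)]$ (or the reversed segment), and this diameter lies inside $[-\rho,\rho]$.

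\emph{Step 3: the image of the disk.} The image $f(\mathbb D_1)$ is the bounded component of $\mathbb C\setminus\Gamma$, because the pole lies outside the closed disk and the image is bounded. So $f(\closure{\mathbb D_1})$ is the closed disk with diameter $[f(-1),f(1)]$. A closed disk whose real diameter lies in $[-\rho,\rho]$ is contained in $\closure{\mathbb D_\rho}$: its center $c$ and radius $s$ satisfy $|c|+s\le\rho$.

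For the open statement, $f(\mathbb D_1)$ is the open disk with that diameter, so every point has modulus strictly less than $|c|+s\le\rho$. Hence $f(\mathbb D_1)\subset\mathbb D_\rho$.

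\emph{Degenerate case.} If $f$ is affine ($\gamma=0$), the same argument applies directly. If $f(1)=f(-1)$, then $f$ is constant, which is impossible for an invertible map. So this case does not arise.

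The only step needing care is Step 1, the location of the pole. Once the pole is known to be off the closed disk, the rest is immediate.
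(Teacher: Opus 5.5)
Your proof is correct and follows essentially the same route as the paper's: you place the real pole outside $\overline{\mathbb D_1}$, identify $f(\partial\mathbb D_1)$ as the circle with diameter $[f(-1),f(1)]$, and conclude from $|c|+s=\max\{|f(-1)|,|f(1)|\}\le\rho$. The differences are only in the justifications. You get the real center and the two real intersection points from conjugation symmetry and $f^{-1}(\mathbb R)\subset\mathbb R\cup\{\infty\}$, where the paper appeals to conformality (orthogonal crossing of $\mathbb R$ at $f(\pm1)$). You identify $f(\mathbb D_1)$ as the bounded complementary component of the image circle, where the paper uses the maximum modulus principle and the open mapping theorem.
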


For the reader's convenience, we restate the proof in \cite{Alibabaei26-2}.

\begin{proof}
Since the unique pole $f^{-1}(\widehat{\infty})$ of $f$ is in $\widehat{\mathbb{R}} \setminus [-1,1]$, and by the correspondence between generalized circles, $f( \partial \mathbb{D}_1)$ is a circle. Also, $f$ is conformal since it is holomorphic on a neighborhood of $\overline{\mathbb{D}_1}$ and has non-vanishing derivative. Then the circle $f( \partial \mathbb{D}_1)$ crosses $\mathbb{R}$ orthogonally at $f(\pm1)$, and the center of the circle is $c = \frac{1}{2} \big( f(-1) + f(1) \big) \in [-\rho,\rho]$, with radius $q = \frac{1}{2}\left| f(-1) - f(1) \right|$. Then, 
\[
\left| c + q e^{i\theta} \right| \;\leq\; |c| + q \;=\; \max \{ |f(-1)|, |f(1)| \} \;\leq\; \rho.
\]
Thus $f(z) \in \overline{\mathbb{D}_\rho}$ for any $z \in \partial \mathbb{D}_1$. By the maximum modulus principle, $|f|$ attains a maximum at the boundary of $\mathbb{D}_1$. We conclude that $f( \overline{\mathbb{D}_1} ) \subset \overline{\mathbb{D}_\rho}$. The claim $f(\mathbb{D}_1) \subset \mathbb{D}_\rho$ follows by the open mapping theorem.
\end{proof}

\begin{lemma}[{\cite[Lemma 2.7]{Alibabaei26-2}}] \label{lemma: small image implies contraction}
Let $f$ be a M\"obius transformation satisfying $f(\mathbb{D}_1) \subset \mathbb{D}_\rho$ for some $\rho > 0$. Then for any $u,v \in \mathbb D_1$,
\[
d_{\mathrm{hyp}}\big(f(u),f(v)\big)
\;\leq\;
\rho \, d_{\mathrm{hyp}}(u,v).
\]
\end{lemma}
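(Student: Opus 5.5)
The plan is to reduce to the Schwarz--Pick lemma. Put $g = f/\rho$. Since $f(\mathbb D_1)\subset\mathbb D_\rho$, the map $g$ is a holomorphic self-map of $\mathbb D_1$; it is holomorphic because $f$ has no pole in $\mathbb D_1$. By Schwarz--Pick, $g$ does not increase $d_{\mathrm{hyp}}$. In infinitesimal form this reads
\[
\frac{|g'(z)|}{1-|g(z)|^2} \;\leq\; \frac{1}{1-|z|^2}, \qquad z\in\mathbb D_1 .
\]
Substituting $g = f/\rho$ gives
\[
|f'(z)| \;\leq\; \rho\,\frac{1-|f(z)|^2/\rho^2}{1-|z|^2}.
\]

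Next compare densities. We need the hyperbolic density at $f(z)$, namely $2/(1-|f(z)|^2)$, and we want
\[
\frac{2|f'(z)|}{1-|f(z)|^2} \;\leq\; \rho\,\frac{2}{1-|z|^2}.
\]
By the previous bound it suffices to show
\[
\frac{1-|w|^2/\rho^2}{1-|w|^2}\;\leq\;1, \qquad w=f(z).
\]
This holds exactly when $|w|^2/\rho^2 \geq |w|^2$, i.e. when $\rho\leq 1$.

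So I would first split on $\rho$. If $\rho\geq1$, the claim follows at once from Schwarz--Pick applied to $f$ itself: $f$ maps $\mathbb D_1$ into $\mathbb D_1$ only if $\rho\le 1$. When $\rho>1$ and $f(\mathbb D_1)\not\subset\mathbb D_1$, the right-hand side $d_{\mathrm{hyp}}(f(u),f(v))$ is not even defined. So the statement implicitly takes $\rho\le1$ (or $\rho<1$), and I would note this. The only genuine case is therefore $\rho\le 1$, which is handled by the density inequality above.

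Finally, integrate. Take a hyperbolic geodesic $\gamma$ from $u$ to $v$ and use the length formula \eqref{eq: definition of hyperbolic length}. Then $f\circ\gamma$ is a piecewise $C^1$ path in $\mathbb D_1$ from $f(u)$ to $f(v)$, and the pointwise density inequality gives $\mathrm{len}(f\circ\gamma)\le\rho\,\mathrm{len}(\gamma)$. The min characterization of $d_{\mathrm{hyp}}$ then yields the claim.

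The main step needing care is the infinitesimal Schwarz--Pick inequality for $g$ and the elementary comparison $1-|w|^2/\rho^2\le 1-|w|^2$. Both are standard. The Möbius structure is not really needed beyond holomorphy on $\mathbb D_1$.
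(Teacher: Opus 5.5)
Your proof is correct and is essentially the paper's argument. The paper also applies Schwarz--Pick to $f/\rho$ and shows that the dilation $z\mapsto\rho z$ shrinks hyperbolic length by the factor $\rho$ via exactly your inequality $\frac{1-|w|^2/\rho^2}{1-|w|^2}\le 1$ (written there as $\frac{1-|\gamma|^2}{1-\rho^2|\gamma|^2}\le 1$); it composes the two distance contractions rather than fusing them into a single pointwise density bound as you do. Your observation that the statement tacitly requires $\rho\le 1$ (or at least $f(\mathbb D_1)\subset\mathbb D_1$) is apt, since the paper's proof uses this implicitly as well. Two small fixes: rephrase the garbled sentence about the case $\rho\ge 1$, and note that $d_{\mathrm{hyp}}(f(u),f(v))$ is the left-hand side of the inequality, not the right.
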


Again, we record the proof from \cite{Alibabaei26-2}.

\begin{proof}
Define
\[
F:\mathbb D_1\to\mathbb C,
\qquad
F(z)=\frac{1}{\rho}f(z).
\]
Then $F$ is holomorphic on $\mathbb D_1$ and satisfies $F(\mathbb D_1)\subset \mathbb D_1$. Then, by the Schwarz--Pick theorem,
\[
d_{\mathrm{hyp}}\big(F(u),F(v)\big)
\;\leq\;
d_{\mathrm{hyp}}(u,v)
\qquad
\text{for every } u,v\in\mathbb D_1.
\]

Next, consider the map $m_\rho : \mathbb D_1 \to \mathbb D_1$ defined by $m_\rho(z)=\rho z$. We claim that
\[
d_{\mathrm{hyp}}\big(m_\rho(\xi),m_\rho(\eta)\big)
\;\leq\;
\rho\, d_{\mathrm{hyp}}(\xi,\eta)
\qquad
\text{for every } \xi,\eta\in\mathbb D_1.
\]
To see that, first note that
\[
\frac{1-|\gamma(t)|^2}{1-\rho^2|\gamma(t)|^2}\leq 1.
\]
Then, by the definition of hyperbolic length in equation \eqref{eq: definition of hyperbolic length},
\begin{align*}
\mathrm{len}(m_\rho\circ\gamma)
&=
\int_0^1 \frac{2\, |(m_\rho\circ\gamma)'(t)|}{1-|m_\rho(\gamma(t))|^2}\,dt
=
\int_0^1 \frac{2\rho\, |\gamma'(t)|}{1-\rho^2|\gamma(t)|^2}\,dt \\
&=
\int_0^1
\rho \left( \frac{1-|\gamma(t)|^2}{1-\rho^2|\gamma(t)|^2} \right)
\frac{2\,|\gamma'(t)|}{1-|\gamma(t)|^2}\,dt
\leq
\rho
\int_0^1 \frac{2\,|\gamma'(t)|}{1-|\gamma(t)|^2}\,dt
=
\rho\, \mathrm{len}(\gamma).
\end{align*}
Taking the infimum over all $\gamma$ joining $\xi$ to $\eta$ proves the claim.

Finally, since $f=m_\rho\circ F$, we obtain, for every $u,v\in\mathbb D_1$,
\[
d_{\mathrm{hyp}}\big(f(u),f(v)\big)
=
d_{\mathrm{hyp}}\big(m_\rho(F(u)),m_\rho(F(v))\big)
\leq
\rho\, d_{\mathrm{hyp}}\big(F(u),F(v)\big)
\leq
\rho\, d_{\mathrm{hyp}}(u,v). \qedhere
\]
\end{proof}

\section{Explicit representation of the Lyapunov exponent} \label{section: Explicit representation of the Lyapunov exponent}

In this section, we prove Theorem \ref{thm: explicit representation of the Lyapunov exponent in the introduction} while presenting the precise construction. Throughout this section, $(\Sigma, P)$ is a transitive Markov shift on alphabet $X$, and the cocycle $\boldsymbol A$ is assumed to be projectively uniformly hyperbolic with respect to $\Sigma$.

\subsection{Branch-state extension} \label{subsection: Branch-state extension}

We first lift $\Sigma$ to another Markov shift that carries enough local information. In the next subsection, the lifted cocycle will be shown to yield the same Lyapunov exponent.

By Theorem \ref{thm: multicone criterion}, the cocycle $\boldsymbol A$ admits a multicone $\{M_i\}_{i\in X}$. Let us write each $M_i$ as the finite union of disjoint connected components:
\[
M_i = \bigsqcup_{a=1}^{m(i)} M_{i,a} \quad \text{for each $i\in X$}.
\]

Fix $i,j\in X$ with $P_{ij}>0$, and fix $a \in \{1,\dots,m(i)\}$. Since $\big[A_j\big](M_{i,a})$ is connected and contained in $M_j$, there is a unique index $b=\beta(i,a,j) = \beta\big((i,a),j\big) \in \{1,\dots,m(j)\}$ such that
\[
\big[A_j\big]\big(\overline{M_{i,a}}\big)\subset M_{j,b}.
\]

Define the set of branch states by
\[
R
\;=\;
\left\{
\big( (i,a), (j,b) \big) \setcond \text{$i,j \in X$, \; $P_{ij}>0$, \; $1\leq a \leq m(i)$, \; $b=\beta(i,a,j)$}
\right\}.
\]
In words, an element of $R$ records the location $a$ within the current multicone $M_i$, and the target location $\beta(i,a,j)$ within the next multicone $M_j$, only if $i\to j$ is possible.

For $r= \big( (i,a), (j,b) \big) \in R$, we write
\[
s(r)\coloneqq(i,a),
\qquad
t(r)\coloneqq(j,b),
\qquad
\tau(r)\coloneqq j.
\]

The branch-state transition matrix $Q=(Q_{r,r'})_{r,r'\in R}$ is defined by
\[
Q_{r,r'}
=
\begin{cases}
P_{\tau(r),\,\tau(r')} & \text{if } t(r)=s(r'),\\
0 & \text{otherwise.}
\end{cases}
\]
The matrix $Q$ is row-stochastic. Indeed, for any $r$,
\[
\sum_{r' \in R} Q_{r, r'} = \sum_{k \in X: \; P_{\tau(r), k} > 0} P_{\tau(r), k}
= \sum_{k \in X} P_{\tau(r), k} = 1.
\]
Here, we used the fact that for every $k\in X$ with $P_{\tau(r),k}>0$, there is a unique $r'\in R$ such that $Q_{r,r'}>0$.

For $C \subset R$, define a matrix $Q|_C$ by $(Q|_C)_{r,r'} = Q_{r,r'}$ for each $r, r' \in C$.

\begin{definition} \label{def: recurrent class}
A set $C \subset R$ is said to be a \emph{recurrent class} if $Q|_C$ is irreducible and
\begin{equation*}
r\in C,\ Q_{r,r'}>0 \implies r'\in C.
\end{equation*}
\end{definition}

There is at least one recurrent class $C\subset R$. We fix such $C$, and denote by $\Sigma_C\subset C^{\mathbb N_0}$ the topological Markov shift associated to $Q|_C$,
\[
\Sigma_C
=
\left\{
(r_n)_{n\geq 0}\in C^{\mathbb N_0} \;\middle|\; Q_{r_n r_{n+1}}>0 \text{ for every } n\geq 0
\right\}.
\]
Let $(\pi_r)_{r\in C}$ be the unique stationary probability vector of $Q|_C$, and let $\mu_C$ be the Markov measure on $\Sigma_C$ associated to $Q|_C$ and $(\pi_r)_{r\in C}$. By the irreducibility of $Q|_C$, we have $\pi_r > 0$ for every $r \in C$.

\begin{proposition} \label{prop: branch-projection}
Define
\[
\vartheta: \Sigma_C\to \Sigma, \qquad
\vartheta \big( (r_n)_{n\geq 0} \big) = \big( \tau(r_n) \big)_{n\geq 0}
\]
Then $\vartheta$ is surjective, and $\vartheta_*\mu_C=\mu$.
\end{proposition}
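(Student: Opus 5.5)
The plan is to use a \emph{unique lifting} property of the branch-state shift. Fix $r\in R$ with $t(r)=(j,b)$ and $k\in X$ with $P_{jk}>0$. By construction of $\beta$, there is exactly one $r'\in R$ with $s(r')=(j,b)$ and $\tau(r')=k$, namely $r'=\big((j,b),(k,\beta(j,b,k))\big)$. For this $r'$ we have $Q_{r,r'}=P_{jk}$, and every other $r''$ with $\tau(r'')=k$ has $Q_{r,r''}=0$. Moreover, if $r\in C$ then $r'\in C$, because $C$ is closed under positive transitions.

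\emph{Well-definedness.} If $(r_n)\in\Sigma_C$, then $Q_{r_nr_{n+1}}>0$. By the definition of $Q$ this forces $P_{\tau(r_n)\tau(r_{n+1})}>0$, so $\vartheta$ indeed maps $\Sigma_C$ into $\Sigma$.

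\emph{Surjectivity.} First show that $\tau(C)=X$. Pick any $r\in C$ and put $j=\tau(r)$. For $i\in X$, irreducibility of $P$ gives an admissible path $j=i_0\to i_1\to\cdots\to i_m=i$. Lifting it step by step from $r$ with the unique-lifting property produces states of $C$ ending at some $r_m$ with $\tau(r_m)=i$. Now take $x\in\Sigma$. Choose $r_0\in C$ with $\tau(r_0)=x_0$, and lift inductively: $r_{n+1}$ is the unique successor of $r_n$ with $\tau(r_{n+1})=x_{n+1}$. This successor exists because $P_{x_nx_{n+1}}>0$, and it lies in $C$. The resulting sequence is in $\Sigma_C$ and maps to $x$.

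\emph{Measure identity.} Since cylinders form a $\pi$-system generating the Borel $\sigma$-algebra, it suffices to check $\vartheta_*\mu_C([i_0,\dots,i_n])=p_{i_0}P_{i_0i_1}\cdots P_{i_{n-1}i_n}$.

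\begin{enumerate}
\item Compute the preimage. If this word is admissible, $\vartheta^{-1}[i_0,\dots,i_n]$ is the disjoint union, over $r_0\in C$ with $\tau(r_0)=i_0$, of the cylinders $[r_0,\dots,r_n]$ given by the unique lifts. Each has $\mu_C$-mass $\pi_{r_0}P_{i_0i_1}\cdots P_{i_{n-1}i_n}$. Hence the pushforward mass is $q_{i_0}\prod P_{i_ki_{k+1}}$, where $q_i:=\sum_{r\in C,\,\tau(r)=i}\pi_r$. (Non-admissible words have measure zero on both sides.)
\item Show $q$ is $P$-stationary. Using the uniqueness above,
\[
\sum_i q_iP_{ij}=\sum_{r\in C}\pi_rP_{\tau(r)j}=\sum_{r\in C}\pi_r\sum_{r'\in C,\,\tau(r')=j}Q_{r,r'}=\sum_{r'\in C,\,\tau(r')=j}\pi_{r'}=q_j .
\]
The last step uses stationarity of $\pi$ for $Q|_C$.
\item Conclude. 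Since $q$ is a probability vector and $P$ is irreducible, uniqueness of the stationary vector (Perron--Frobenius) gives $q=p$, which is exactly the required identity.
\end{enumerate}

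The only delicate point is the bookkeeping in the stationarity computation of step 2. There one must check that, for fixed $r$, the sum of $Q_{r,r'}$ over $r'\in C$ with $\tau(r')=j$ equals $P_{\tau(r)j}$. This holds whether or not $P_{\tau(r)j}>0$: if it is positive, exactly one such $r'$ has $Q_{r,r'}=P_{\tau(r)j}$ and the rest are zero; if it is zero, all terms vanish.
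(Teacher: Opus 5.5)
Your proof is correct and follows the paper's argument essentially step for step: you show $\tau(C)=X$ by lifting admissible paths, construct preimages by unique inductive lifting, split $\vartheta^{-1}$ of a cylinder into lifted cylinders, and identify $q_i=\sum_{r\in C,\,\tau(r)=i}\pi_r$ with $p$ using the same stationarity computation and Perron--Frobenius uniqueness. The paper leaves two small points implicit that you check explicitly: that $\vartheta$ actually maps into $\Sigma$, and that cylinders form a generating $\pi$-system.
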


\begin{proof}
First, let $r_0 \in C$, $j\in X$ and suppose $\tau(r_0) \to j$ is admissible, that is, $P_{\tau(r_0), j} > 0$. Let $b = \beta(t(r_0), j)$. Then $r_1 = \big( t(r_0), (j, b) \big) \in R$ satisfies $\tau(r_1) = j$ and $s(r_1) = t(r_0)$. Thus, $Q_{r_0, r_1} = P_{\tau(r_0), j} > 0$. By the definition of a recurrent class, we have $r_1 \in C$, and the path $r_0 \to r_1$ projects to $\tau(r_0) \to j$. Next, take any $k \in X$. By the irreducibility of $P$, there is $n > 0$ with $(P^n)_{\tau(r_0), k} > 0$, so there is an admissible path connecting $\tau(r_0)$ and $k$. Then applying the same argument as above to the path, some $r\in C$ satisfies $\tau(r) = k$. Therefore,
\begin{equation} \label{eq: projection of the first coordinate is surjective}
\left\{ \tau(r) \setcond r \in C \right\} = X.
\end{equation}

Take any $(x_n)_{n \geq 0} \in \Sigma$. By equation \eqref{eq: projection of the first coordinate is surjective}, there is at least one $r_0 \in C$ with $\tau(r_0) = x_0$. By induction, at each $n \geq 0$ we define $b_{n+1} = \beta( t(r_n), x_{n+1})$ and $r_{n+1} = \big( t(r_n), (x_{n+1}, b_{n+1}) \big)$. We obtain $(r_n)_{n \geq 0} \in \Sigma_C$ with $\vartheta \big( (r_n)_{n\geq 0} \big) = (x_n)_{n \geq 0}$, and thus $\vartheta$ is a surjection. Moreover, once $r_0$ is chosen, the lift $(r_n)_{n\geq 0}$ is uniquely determined.

Next, let $[i_0,\dots,i_n]$ be a cylinder in $\Sigma$. For each $\gamma \in C$ with $\tau(\gamma) = i_0$, there is a unique $Q|_C$-admissible path,
\[
\big( \gamma, r_1(\gamma), \dots, r_n(\gamma) \big) \in C^{n+1},
\]
such that $\tau\big(r_k(\gamma)\big)=i_k$ for every $1\leq k\leq n$. Hence,
\[
\vartheta^{-1}([i_0,\dots,i_n])
=
\bigsqcup_{\substack{\gamma \in C \\ \tau(\gamma) = i_0}}
\Big\{ (s_n)_{n \geq 0} \in \Sigma_C \;\Big|\; \text{$s_0 = \gamma$, $s_k = r_k(\gamma)$ for every $1 \leq k \leq n$} \Big\}.
\]
The right-hand side is a disjoint union of cylinder sets in $\Sigma_C$, and by the definition of $\mu_C$,
\begin{equation} \label{eq: cylinder formula for the lifted measure}
\mu_C\big(\vartheta^{-1}([i_0,\dots,i_n])\big)
= \hspace{-3pt}
\sum_{\substack{\gamma \in C \\ \tau(\gamma) = i_0}}
\pi_{\gamma} Q_{\gamma, r_1(\gamma)} Q_{r_1(\gamma), r_2(\gamma)} \cdots Q_{r_{n-1}(\gamma), r_n(\gamma)}
= \hspace{-3pt}
\sum_{\substack{\gamma \in C \\ \tau(\gamma) = i_0}}
\pi_{\gamma} P_{i_0 i_1}\cdots P_{i_{n-1}i_n}.
\end{equation}

We prove that
\begin{equation} \label{eq: recovery of stationary prob vector from the lift}
\sum_{\substack{\gamma \in C \\ \tau(\gamma) = i_0}} \pi_{\gamma} = p_{i_0}.
\end{equation}
Write the left-hand side as $\hat{p}_{i_0}$. Then, $\hat{p}_{i_0} > 0$ for any $i_0 \in X$, and
\[
\sum_{i_0 \in X} \hat{p}_{i_0}
= \sum_{i_0 \in X} \sum_{\substack{\gamma \in C \\ \tau(\gamma) = i_0}} \pi_{\gamma}
= \sum_{\gamma \in C} \pi_\gamma = 1.
\]
Moreover, for each $i_0 \in X$, by the definition of the stationary probability vector $(\pi_r)_{r \in C}$,
\[
\hat{p}_{i_0}
= \sum_{\substack{\gamma \in C \\ \tau(\gamma) = i_0}} \pi_{\gamma}
= \sum_{\substack{\gamma \in C \\ \tau(\gamma) = i_0}} \sum_{r \in C} \pi_r Q_{r, \gamma}
= \sum_{r \in C} \pi_r \sum_{\substack{\gamma \in C \\ \tau(\gamma) = i_0}} Q_{r, \gamma}
= \sum_{r \in C} \pi_r P_{\tau(r), i_0}.
\]
Here, for each $r \in C$ with $P_{\tau(r), i_0} > 0$, we used the existence and uniqueness of $\gamma \in C$ satisfying $\tau(\gamma) = i_0$ and $Q_{r, \gamma} > 0$. We have
\[
\hat{p}_{i_0}
= \sum_{r \in C} \pi_r P_{\tau(r), i_0}
= \sum_{j \in X} \left( \sum_{\substack{r \in C \\ \tau(r) = j}} \pi_r \right) P_{j, i_0}
= \sum_{j \in X} \hat{p}_j P_{j, i_0}.
\]
Therefore, $(\hat{p}_j)_{j \in X}$ is a stationary probability vector of $P$, and by uniqueness, it is in fact $(p_j)_{j \in X}$, proving equation \eqref{eq: recovery of stationary prob vector from the lift}.

Thus, combining with equation \eqref{eq: cylinder formula for the lifted measure},
\[
\mu_C\big(\vartheta^{-1}([i_0,\dots,i_n])\big)
=
p_{i_0} P_{i_0 i_1}\cdots P_{i_{n-1}i_n}
=
\mu\big([i_0,\dots,i_n]\big).
\]
This proves $\vartheta_*\mu_C=\mu$.
\end{proof}

\subsection{Local charts and Markov operator} \label{subsection: Local charts and Markov operator}

Recall that $\{M_{i,a}\}_{i \in X, \; 1 \leq a\leq m(i)}$ consists of the connected components of a multicone for $\boldsymbol A$. Let $J_{i,a} = \overline{M_{i,a}}$ for each $i$ and $a$. By the definition of multicones, and since each projective action $\left[ A_j \right]$ is a homeomorphism, we have
\begin{equation} \label{eq: contraction of Aj}
\big[A_j\big](J_{i,a})\subset J_{j,\beta(i,a,j)}^\circ
\end{equation}
whenever $\big( (i,a), (j,\beta(i,a,j)) \big) \in R$. Here, $J^\circ$ denotes the interior of $J$.

Let $\pazocal V \subset \RP^1$ be the projective non-negative cone, that is,
\[
\pazocal V = \left\{ [v] \in \RP^1 \setcond \text{$ v = (v_1, v_2)^\top$, $v_k \geq 0$ for $k = 1,2$ and $v\ne0$} \right\}.
\]
For each $(i,a)$, take any matrix $L_{i,a} \in \GL_2(\mathbb{R})$ so that its projective action $\phi_{i,a} = \big[L_{i,a}\big] : \RP^1 \to \RP^1$ sends $\pazocal V$ to $J_{i,a}$.
\[
\phi_{i,a}(\pazocal V) = J_{i,a}.
\]
For each $r \in C$, define $B_r \in \GL_2(\mathbb R)$ by
\[
B_r \;=\; \vep(r) \, {L_{t(r)}}^{-1} A_{\tau(r)} L_{s(r)},
\]
where $\vep(r) \in \{\pm1\}$ is chosen so that $B_r$ is a positive matrix: to see that this is possible, define $g_r: \RP^1 \to \RP^1$ by
\[
g_r \;=\; {\phi_{t(r)}}^{-1} \, \circ \, \big[A_{\tau(r)}\big] \, \circ \, \phi_{s(r)}.
\]
By equation \eqref{eq: contraction of Aj}, $g_r$ maps $\pazocal V$ into the positive cone,
\begin{equation} \label{eq: g maps non-negative cone to positive cone}
g_r( \pazocal V ) \subset \pazocal V^\circ.
\end{equation}
Therefore, the matrix ${L_{t(r)}}^{-1} A_{\tau(r)} L_{s(r)}$ has a uniform sign across its entries.

We define
\[
B: \Sigma_C \to \GL_2(\mathbb{R}), \qquad B\big( (r_n)_{n \geq 0} \big) = B_{r_0}.
\]
As before, for $\mathbf r = (r_n)_{n\geq 0}\in \Sigma_C$ and $n\geq 1$, we write
\[
B^n(\mathbf r) = B_{r_{n-1}} \cdots B_{r_0}.
\]

\begin{lemma} \label{lemma: Lyapunov exponent identity for the lift}
We have
\begin{equation} \label{eq: Lyapunov exponent identity for the lift}
\lambda_+\big(B, Q|_C\big)
\;=\;
\lambda_+(A,P).
\end{equation}
\end{lemma}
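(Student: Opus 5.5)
The plan is to compare the two products pathwise via a telescoping identity, and then transfer almost-sure statements through the factor map $\vartheta$ of Proposition \ref{prop: branch-projection}.

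\textbf{Telescoping.} For $\mathbf r=(r_n)_{n\geq0}\in\Sigma_C$ we have $t(r_k)=s(r_{k+1})$ for every $k$, because $Q_{r_k r_{k+1}}>0$. Hence the interior charts cancel in the product:
\[
B^n(\mathbf r)=\Big(\prod_{k=0}^{n-1}\vep(r_k)\Big)\,L_{t(r_{n-1})}^{-1}\,A_{\tau(r_{n-1})}\cdots A_{\tau(r_0)}\,L_{s(r_0)}
=\pm L_{t(r_{n-1})}^{-1}\,A^n\big(\vartheta(\mathbf r)\big)\,L_{s(r_0)} .
\]

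\textbf{Uniform comparison of norms.} There are only finitely many charts $L_{i,a}$. Set $K=\max_{i,a}\max\big(\|L_{i,a}\|,\|L_{i,a}^{-1}\|\big)<\infty$. Submultiplicativity applied in both directions gives, for every $\mathbf r\in\Sigma_C$ and every $n\geq1$,
\[
K^{-2}\,\big\|A^n(\vartheta(\mathbf r))\big\|\;\le\;\|B^n(\mathbf r)\|\;\le\;K^{2}\,\big\|A^n(\vartheta(\mathbf r))\big\| .
\]
Taking $\frac1n\log$, the additive error $\frac{2\log K}{n}$ tends to $0$. Therefore, for every $\mathbf r$,
\[
\lim_n\tfrac1n\log\|B^n(\mathbf r)\|=\lim_n\tfrac1n\log\|A^n(\vartheta(\mathbf r))\|,
\]
whenever either limit exists.

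\textbf{Transfer of almost-sure limits.} By Furstenberg--Kesten applied to $(\Sigma,P)$, the set
\[
G=\big\{x\in\Sigma:\ \tfrac1n\log\|A^n(x)\|\to\lambda_+(A,P)\big\}
\]
has $\mu(G)=1$. Since $\vartheta_*\mu_C=\mu$ by Proposition \ref{prop: branch-projection}, we get $\mu_C(\vartheta^{-1}G)=1$. On $\vartheta^{-1}G$ the comparison above yields $\frac1n\log\|B^n(\mathbf r)\|\to\lambda_+(A,P)$. Furstenberg--Kesten applied to $(\Sigma_C,Q|_C)$, with $Q|_C$ irreducible and $\mu_C$ its Markov measure, says this limit equals $\lambda_+(B,Q|_C)$ for $\mu_C$-a.e.\ $\mathbf r$. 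The two full-measure sets intersect, which proves \eqref{eq: Lyapunov exponent identity for the lift}.

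The main point to check is the exact cancellation in the telescoping step. It relies on $B_r$ being defined with $L_{t(r)}^{-1}$ on the left and $L_{s(r)}$ on the right, together with admissibility $t(r_k)=s(r_{k+1})$. The signs $\vep(r)$ are harmless because they do not affect norms. Everything else is routine bookkeeping.
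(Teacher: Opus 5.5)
Your proposal is correct and follows the paper's own proof: cancel the interior charts using $t(r_k)=s(r_{k+1})$, use that there are only finitely many $L_{i,a}$ to control the two boundary charts, and transfer the full-measure set through $\vartheta_*\mu_C=\mu$. The explicit two-sided constant $K^{\pm 2}$ and the remark that the signs $\vep(r)$ do not affect norms simply spell out what the paper leaves implicit.
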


\begin{proof}
Let
\[
E = \left\{ x\in \Sigma \setcond \lim_{n\to\infty}\frac{1}{n}\log \|A^n(x)\| = \lambda_+(A,P) \right\}.
\]
Then $\mu(E)=1$. Since $\vartheta_*\mu_C=\mu$ by Proposition \ref{prop: branch-projection}, we have $\mu_C\big(\vartheta^{-1}(E)\big)=\mu(E)=1$.
Therefore it is enough to consider points in $\vartheta^{-1}(E)$. Fix $\mathbf r=(r_n)_{n\geq 0} \in \vartheta^{-1}(E)$, and let $x=\vartheta(\mathbf r) \in E$.

By definition of $B$, for any $n \geq 1$,
\[
\left\| B^n(\mathbf r) \right\|
=
\left\| B_{r_{n-1}}\cdots B_{r_0} \right\|
=
\left\| \big({L_{t(r_{n-1})}}^{\!\!-1}A_{\tau(r_{n-1})}L_{s(r_{n-1})}\big)
\cdots
\big({L_{t(r_0)}}^{\!\!-1}A_{\tau(r_0)}L_{s(r_0)}\big) \right\|.
\]
Because $s(r_k) = t(r_{k-1})$ for every $1\leq k\leq n-1$, the middle factors cancel, and we obtain
\[
\left\| B^n(\mathbf r) \right\|
=
\left\| {L_{t(r_{n-1})}}^{\!\!-1}
A_{\tau(r_{n-1})}\cdots A_{\tau(r_0)}
L_{s(r_0)} \right\|
=
\left\| {L_{t(r_{n-1})}}^{\!\!-1}A^n(x)L_{s(r_0)} \right\|.
\]
Since there are only finitely many $L_{i,a}$, the above identity implies equation \eqref{eq: Lyapunov exponent identity for the lift}.
\end{proof}

\bigskip

Now, we perform ``aperiodic reduction''. Denote by $d$ the period of $Q|_C$. If $d>1$, we apply Proposition \ref{prop: aperiodic reduction} to $(\Sigma_C, Q|_C)$ and reduce to the aperiodic case. For simplicity of notation, from this point on the following objects are replaced accordingly and denoted by the same symbols: the alphabet $C$, the matrix $Q|_C$, the subshift $\Sigma_C$, the positive matrices $B_r$ along with maps $g_r$, and the probability vector $(\pi_r)_{r\in C}$. Only in the final recovery of the Lyapunov exponent of the original system does one divide by $d$ (and we do so in Theorem \ref{thm: infinite sum equals lambda}). Since $Q|_C$ is irreducible and aperiodic, $Q|_C$ is primitive; i.e.\ there is $n \in \mathbb N$ such that $\big( (Q|_C)^n\big)_{r,r'} > 0$ for any $r,r' \in C$.

We identify the non-negative cone $\pazocal V$ with the following $1$-simplex
\[
\Delta = \left\{ v = \begin{pmatrix} v_1 \\ v_2 \end{pmatrix} \in \mathbb{R}^2 \setcond v_1, v_2 \geq 0, v_1 + v_2 = 1 \right\}.
\]
We define the chart
\[
\psi:[-1, 1] \to \Delta, \qquad \psi(x) = \frac{1}{2} \begin{pmatrix} 1+x \\ 1-x \end{pmatrix}.
\]
For each $r \in C$, the composition
\[
f_r \,=\, \psi^{-1} \circ g_r \circ \psi: \; [-1,1] \to [-1,1]
\]
is a M\"obius transformation. Indeed, we have
\[
f_r(x) =
\frac{(a_r-b_r-c_r+d_r)x + (a_r+b_r-c_r-d_r)}{(a_r-b_r+c_r-d_r)x + (a_r+b_r+c_r+d_r)}
\quad \text{where} \quad
B_r
=
\begin{pmatrix} a_r & b_r \\ c_r & d_r \end{pmatrix}.
\]
For later purposes, let us define the map $F: \mathrm{GL}_2(\mathbb{R}) \to \mathrm{GL}_2(\mathbb{R})$ by
\begin{equation} \label{eq: definition of F}
F
\begin{pmatrix}
a & b \\
c & d
\end{pmatrix}
\;=\;
\begin{pmatrix}
1 & -1 \\
1 & 1
\end{pmatrix}
\begin{pmatrix}
a & b \\
c & d
\end{pmatrix}
{
\begin{pmatrix}
1 & -1 \\
1 & 1
\end{pmatrix}
}^{-1}
=\;
\frac{1}{2}
\begin{pmatrix}
a-b-c+d & a+b-c-d \\
a-b+c-d & a+b+c+d
\end{pmatrix}.
\end{equation}

By equation \eqref{eq: g maps non-negative cone to positive cone}, there is $0 < \rho_r < 1$ such that $f_r([-1,1]) \subset [-\rho_r, \rho_r]$. Letting $\rho = \max_{r \in C} \rho_r$, for every $r \in C$,
\begin{equation} \label{eq: every f is a rho-contraction}
f_r([-1,1]) \subset [-\rho, \rho].
\end{equation}

\bigskip

Let $\mathscr P_1^C = \prod_{r\in C}\mathscr P_1(-1,1)$, and define a metric $\pazocal W$ on $\mathscr P_1^C$ by
\[
\pazocal W(\eta, \zeta)
=
\sum_{r\in C} \pi_r W_1(\eta_r,\zeta_r) \quad \text{for $\eta, \zeta \in \mathscr P_1^C$}.
\]
This is indeed a metric since $\pi_r > 0$ for every $r \in C$. The space $\big(\mathscr P_1^C, \pazocal W\big)$ is complete due to the completeness of each $\big( \mathscr P_1(-1,1), W_1 \big)$.

\begin{proposition} \label{prop: contraction of H}
The map $\mathscr H: \mathscr P_1^C \to \mathscr P_1^C$, defined by 
\[
(\mathscr H \eta)_{r'}
\;=\;
\sum_{r\in C} \frac{\pi_r Q_{r,r'}}{\pi_{r'}}\, (f_r)_*\eta_r,
\]
is a $\rho$-contraction with respect to $\pazocal W$. That is, for every $\eta, \zeta \in \mathscr P_1^C$,
\[
\pazocal W\big( \mathscr H \eta, \mathscr H \zeta \big)
\;\leq\;
\rho \, \pazocal W(\eta, \zeta).
\]
\end{proposition}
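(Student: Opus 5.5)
First I check that $\mathscr H$ maps $\mathscr P_1^C$ to itself. By stationarity of $(\pi_r)_{r\in C}$, the weights $w_{r,r'} = \pi_r Q_{r,r'}/\pi_{r'}$ satisfy $\sum_{r} w_{r,r'} = 1$, so each $(\mathscr H\eta)_{r'}$ is a convex combination of probability measures on $(-1,1)$. Each $f_r$ maps $(-1,1)$ into $[-\rho,\rho]$ by \eqref{eq: every f is a rho-contraction}, and $d_{\mathrm{hyp}}(\cdot,0)$ is bounded there. Hence $(f_r)_*\eta_r$ lies in $\mathscr P_1(-1,1)$, and so does any finite convex combination of such measures.

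The core estimate is a single-map contraction in $W_1$, followed by convexity. By Lemma \ref{lemma: real contraction implies complex contraction} and \eqref{eq: every f is a rho-contraction}, $f_r(\mathbb D_1)\subset \mathbb D_\rho$. Lemma \ref{lemma: small image implies contraction} then gives $d_{\mathrm{hyp}}(f_r(x),f_r(y))\le \rho\, d_{\mathrm{hyp}}(x,y)$ for real $x,y$.

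Given any coupling $\gamma\in\Pi(\eta_r,\zeta_r)$, the pushforward $(f_r\times f_r)_*\gamma$ is a coupling of $(f_r)_*\eta_r$ and $(f_r)_*\zeta_r$. Its cost is at most $\rho\int d_{\mathrm{hyp}}\,d\gamma$. Taking the infimum over $\gamma$ gives
\[
W_1\big((f_r)_*\eta_r,(f_r)_*\zeta_r\big)\le \rho\, W_1(\eta_r,\zeta_r).
\]
Next I use joint convexity of $W_1$ under mixtures with equal weights. If $\gamma_r$ is a coupling of $\nu_r,\nu'_r$, then $\sum_r w_r\gamma_r$ is a coupling of $\sum_r w_r\nu_r$ and $\sum_r w_r\nu'_r$. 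Therefore
\[
W_1\Big(\sum_r w_r\nu_r,\sum_r w_r\nu'_r\Big)\le \sum_r w_r W_1(\nu_r,\nu'_r).
\]
This requires only near-optimal couplings; I take $\epsilon$-optimal ones and let $\epsilon\to0$, so existence of optimal couplings is not needed.

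Combining the two estimates,
\[
\pazocal W(\mathscr H\eta,\mathscr H\zeta)\le \sum_{r'}\pi_{r'}\sum_r \frac{\pi_rQ_{r,r'}}{\pi_{r'}}\,\rho\, W_1(\eta_r,\zeta_r)=\rho\sum_r \pi_r\Big(\sum_{r'}Q_{r,r'}\Big)W_1(\eta_r,\zeta_r).
\]
Since $Q|_C$ is row-stochastic on the recurrent class $C$, the inner sum is $1$, and the right-hand side equals $\rho\,\pazocal W(\eta,\zeta)$.

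This bookkeeping explains the choice of $\pi$-weights in $\pazocal W$. The denominator $\pi_{r'}$ in the definition of $\mathscr H$ cancels against the weight in $\pazocal W$, and row-stochasticity closes the estimate. The main point needing care is the single-map contraction on real points, which follows from the two cited lemmas. The step $\sum_{r'\in C} Q_{r,r'}=1$ uses closedness of the recurrent class, because it rules out $Q$-mass leaving $C$.
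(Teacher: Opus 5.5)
Your proof is correct and follows essentially the same route as the paper: take $\epsilon$-optimal couplings, push them forward by $f_r\times f_r$ using the $\rho$-contraction from the two cited lemmas, mix them with the weights $\pi_rQ_{r,r'}/\pi_{r'}$, and then sum against $\pi_{r'}$ using row-stochasticity of $Q|_C$. The differences are cosmetic. You split the argument into a single-map contraction followed by joint convexity of $W_1$, where the paper builds one combined coupling. You also check $\mathscr H(\mathscr P_1^C)\subset\mathscr P_1^C$ via the bounded image $f_r((-1,1))\subset[-\rho,\rho]$, whereas the paper uses the triangle-inequality estimate $d_{\mathrm{hyp}}(f_r(x),0)\le\rho\,d_{\mathrm{hyp}}(x,0)+d_{\mathrm{hyp}}(f_r(0),0)$.
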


\begin{proof}
Take any $\eta \in \mathscr P_1^C$. Since $(\pi_r)_{r \in C}$ is the stationary vector for $Q|_C$, for any $r' \in C$, $\sum_{r\in C} \pi_r Q_{r,r'} = \pi_{r'}$. Thus $(\mathscr H \eta)_{r'}$ is a probability measure. Equation \eqref{eq: every f is a rho-contraction}, Lemma \ref{lemma: real contraction implies complex contraction}, and Lemma \ref{lemma: small image implies contraction} imply that each $f_r$ is a $\rho$-contraction for $d_{\mathrm{hyp}}$. Thus, for each $r \in C$ and $x \in (-1,1)$,
\[
d_{\mathrm{hyp}}\big( f_r(x),0 \big) \leq d_{\mathrm{hyp}}\big( f_r(x), f_r(0) \big) + d_{\mathrm{hyp}}\big( f_r(0),0 \big)
\leq \rho \, d_{\mathrm{hyp}}(x,0)+d_{\mathrm{hyp}}\big( f_r(0),0 \big).
\]
Therefore, for each $r' \in C$,
\[
\int d_{\mathrm{hyp}}(x,0) \, d(\mathscr{H}\eta)_{r'}
=
\sum_{r\in C} \frac{\pi_r Q_{r,r'}}{\pi_{r'}}\, \int d_{\mathrm{hyp}}\big( f_r(x),0 \big) \, d\eta_r < \infty
\]
This implies $\mathscr H(\mathscr P_1^C) \subset \mathscr P_1^C$.

Fix $\eta, \zeta \in \mathscr P_1^C$ and $\vep>0$. For each $r\in C$, take a coupling $\kappa_r \in \Pi(\eta_r, \zeta_r)$ so that
\[
\int d_{\mathrm{hyp}}(x,y)\,d\kappa_r(x,y)
\leq
W_1(\eta_r,\zeta_r)+\vep.
\]
Then using $\rho$-contraction of $f_r$, for every $r' \in C$,
\begin{align*}
\sum_{r \in C} \frac{\pi_rQ_{r,r'}}{\pi_{r'}} \hspace{-4pt} \int d_{\mathrm{hyp}}\big( f_r(x), f_r(y) \big)\,d\kappa_r(x,y)
&\leq
\sum_{r \in C} \frac{\pi_rQ_{r,r'}}{\pi_{r'}} \rho \int d_{\mathrm{hyp}}(x,y)\,d\kappa_r(x,y) \\
&\leq
\rho \sum_{r \in C} \frac{\pi_rQ_{r,r'}}{\pi_{r'}} W_1(\eta_r, \zeta_r) + \rho \vep.
\end{align*}
Since $\sum_{r \in C} \frac{\pi_rQ_{r,r'}}{\pi_{r'}} (f_r \times f_r)_* \kappa_r \in \Pi\big( (\mathscr H\eta)_{r'}, (\mathscr H\zeta)_{r'} \big)$, and $\vep > 0$ was arbitrary, this implies
\[
\pazocal W\big( \mathscr H \eta, \mathscr H \zeta \big)
=
\sum_{r' \in C} \pi_{r'} W_1\big( (\mathscr H \eta)_{r'},(\mathscr H \zeta)_{r'} \big)
\leq
\rho \pazocal W(\eta, \zeta). \qedhere
\]
\end{proof}

By applying Banach's fixed point theorem to $\mathscr H$ on a complete space $\big( \mathscr P_1^C, \pazocal W \big)$, the map $\mathscr H$ has a unique fixed point
\begin{equation} \label{eq: unique fixed point of the Markov operator}
\eta^C=(\eta_r^C)_{r\in C}.
\end{equation}

\subsection{Malheiro--Viana formalism for the lifted cocycle} \label{subsection: Malheiro--Viana formalism for the lifted cocycle}

In this subsection, we apply the Malheiro--Viana formalism to the lifted cocycle generated by $(B_r)_{r\in C}$. We then show that, after passing to the chart $\psi$, a maximizing $Q|_C$-stationary unit measure vector for the variational formula of $\lambda_+\big(B,Q|_C\big)$ corresponds exactly to the fixed point $\eta^C$ in \eqref{eq: unique fixed point of the Markov operator}.

We use the same symbol $\sigma$ to denote the shift map on $\Sigma_C$:
\[
\sigma:\Sigma_C \to \Sigma_C,
\qquad
\sigma\big((r_n)_{n\geq 0}\big)=(r_{n+1})_{n\geq 0}.
\]
Recall that $(\pi_r)_{r\in C}$ is the unique stationary probability vector of $Q|_C$, and $\mu_C$ is the Markov measure on $\Sigma_C$ associated to $Q|_C$.

We now apply the formalism of Malheiro--Viana \cite{Malheiro--Viana} to the lifted cocycle
generated by the matrices $(B_r)_{r\in C}$. We first recall the notions from the paper. Consider the linear cocycle
\[
\boldsymbol{B}: \Sigma_C \times \mathbb R^2 \to \Sigma_C \times \mathbb R^2,
\qquad
(\mathbf r,v)\longmapsto \big(\sigma(\mathbf r),\,B(\mathbf r)v\big).
\]
Its associated projective cocycle is
\[
\mathbb P(\boldsymbol{B}): \Sigma_C \times \mathbb{RP}^1 \to \Sigma_C \times \mathbb{RP}^1,
\qquad
(\mathbf r,[v]) \longmapsto \big( \sigma(\mathbf r), \, [B_{r_0}v] \big).
\]

For each $r\in C$, define the cylinder
\[
[r]
\coloneqq
\left\{
\mathbf s=(s_n)_{n\geq 0}\in \Sigma_C
\;\middle|\;
s_0=r
\right\}.
\]
A \emph{unit measure vector} (\emph{unit vector} in \cite[Section 3]{Malheiro--Viana}) on $C$ is a family
\[
\zeta=(\zeta_r)_{r\in C},
\]
where each $\zeta_r$ is a Borel probability measure on $\mathbb{RP}^1$. To such a unit measure vector they associate the skew-product measure $\mu_C\ltimes \zeta$ on $\Sigma_C\times \mathbb{RP}^1$, defined by
\[
\mu_C\ltimes \zeta
\;=\;
\sum_{r\in C}\big(\mu_C|_{[r]}\big)\times \zeta_r.
\]

Next, define the operator $\widehat{\mathscr H}$ on unit measure vectors by
\[
(\widehat{\mathscr H} \zeta)_{r'}
\;=\;
\sum_{r\in C}\frac{\pi_r Q_{r,r'}}{\pi_{r'}}\,
(g_r)_* \zeta_r
\qquad\text{for each } r'\in C.
\]
We say that a unit measure vector
$\zeta=(\zeta_r)_{r\in C}$ is \emph{$Q|_C$-stationary} if it is a fixed point of $\widehat{\mathscr H}$.

\begin{proposition}[{\cite[Proposition~3.1]{Malheiro--Viana}}] \label{prop MV characterization of stationary measure vectors}
A unit measure vector $\zeta$ is $Q|_C$-stationary if and only if the skew-product measure $\mu_C \ltimes \zeta$ is invariant under the projective cocycle
$\mathbb P(\boldsymbol{B})$.
\end{proposition}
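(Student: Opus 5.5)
The plan is to test invariance of $\mu_C\ltimes\zeta$ on product sets and compare cylinder by cylinder. Finite unions of sets $[r_0,\dots,r_n]\times E$, with $E\subset\RP^1$ Borel, form a $\pi$-system generating the Borel $\sigma$-algebra of $\Sigma_C\times\RP^1$. Two finite measures agreeing on it coincide. Writing $F=\mathbb P(\boldsymbol B)$ and $m=\mu_C\ltimes\zeta$, invariance $F_*m=m$ is therefore equivalent to
\[
m\big(F^{-1}([r_0,\dots,r_n]\times E)\big)=m\big([r_0,\dots,r_n]\times E\big)
\]
for every cylinder and every Borel $E$. (Here $g_r$ is identified with the projective action $[B_r]$; the sign $\vep(r)$ does not affect it.)

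Next I compute the preimage. We have $(\mathbf s,\xi)\in F^{-1}([r_0,\dots,r_n]\times E)$ if and only if $\sigma\mathbf s\in[r_0,\dots,r_n]$ and $g_{s_0}(\xi)\in E$. This decomposes as the disjoint union over $r\in C$ of $[r,r_0,\dots,r_n]\times g_r^{-1}(E)$. Using the Markov property, $\mu_C([r,r_0,\dots,r_n])=\pi_rQ_{r,r_0}\,\mu_C([r_0,\dots,r_n])/\pi_{r_0}$. Hence
\[
m\big(F^{-1}(\cdot)\big)=\mu_C([r_0,\dots,r_n])\sum_{r\in C}\frac{\pi_rQ_{r,r_0}}{\pi_{r_0}}\,\zeta_r\big(g_r^{-1}E\big)=\mu_C([r_0,\dots,r_n])\,(\widehat{\mathscr H}\zeta)_{r_0}(E).
\]
On the other side, $m([r_0,\dots,r_n]\times E)=\mu_C([r_0,\dots,r_n])\,\zeta_{r_0}(E)$.

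For the direction "stationary implies invariant", $\widehat{\mathscr H}\zeta=\zeta$ makes the two sides agree on the generating $\pi$-system, which gives invariance. For the converse, take $n=0$ and use $\mu_C([r_0])=\pi_{r_0}>0$. Dividing by it yields $(\widehat{\mathscr H}\zeta)_{r_0}(E)=\zeta_{r_0}(E)$ for all $r_0$ and $E$, which is stationarity.

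The only delicate step is the $\pi$-system/uniqueness-of-measures argument and the Markov cylinder identity. Both are routine. Throughout, measures on $\Sigma_C\times\RP^1$ are taken as Borel measures on a compact metric space.
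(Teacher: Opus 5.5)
Your proposal is correct and complete. The paper does not reprove this statement; it only cites \cite[Proposition~3.1]{Malheiro--Viana}, and your argument is the standard proof of that result: decompose $\mathbb P(\boldsymbol B)^{-1}([r_0,\dots,r_n]\times E)$ as $\bigsqcup_{r}[r,r_0,\dots,r_n]\times g_r^{-1}(E)$, apply the Markov cylinder identity to get the weights $\pi_rQ_{r,r_0}/\pi_{r_0}$ defining $\widehat{\mathscr H}$, and conclude by uniqueness of measures on a $\pi$-system (the rectangles together with $\varnothing$ already form one, so finite unions are unnecessary), with $g_r=[B_r]$ correctly linking this to the paper's notation.
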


The key variational statement is the following. It is the Markov-shift analogue of the
Furstenberg--Kifer integral formula \cite{Furstenberg--Kifer}, and applied here to the lifted cocycle.

\begin{proposition}[{\cite[Proposition~3.5]{Malheiro--Viana}}] \label{prop: MV variational principle}
We have
\[
\lambda_+\big(B,Q|_C\big)
=
\max\left\{
\sum_{r\in C}\pi_r
\int_{\mathbb{RP}^1}
\log \frac{\|B_r v\|_1}{\|v\|_1}
\, d\zeta_r([v])
\;\middle|\;
\zeta \text{ is a $Q|_C$-stationary unit measure vector}
\right\}.
\]
Here, $\|\cdot\|_1$ denotes the $\ell^1$ norm, the sum of absolute values of the entries.
\end{proposition}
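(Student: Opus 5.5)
This statement is quoted from Malheiro--Viana and could simply be cited. To make it self-contained in the present setting, I would prove it via the Furstenberg--Kifer style argument on the skew product $\Sigma_C\times\RP^1$.

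\textbf{Upper bound.} Let $\zeta$ be a $Q|_C$-stationary unit measure vector. By Proposition \ref{prop MV characterization of stationary measure vectors}, the measure $m=\mu_C\ltimes\zeta$ is $\mathbb P(\boldsymbol B)$-invariant. Set
\[
\Phi(\mathbf r,[v])=\log\frac{\|B_{r_0}v\|_1}{\|v\|_1},
\]
which is a bounded continuous cocycle generator. Then $\int\Phi\,dm$ equals the displayed sum $\sum_r\pi_r\int\log(\|B_rv\|_1/\|v\|_1)\,d\zeta_r$. Birkhoff sums of $\Phi$ along orbits are $\log\|B^n(\mathbf r)v\|_1/\|v\|_1\le\log\|B^n(\mathbf r)\|+O(1)$. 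Integrating against $m$, dividing by $n$, and using invariance together with Kingman/Furstenberg--Kesten (dominated convergence is available since $\frac1n\log\|B^n\|$ is bounded) gives $\int\Phi\,dm\le\lambda_+(B,Q|_C)$.

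\textbf{Attainment.} I would exhibit a maximizer. Here positivity simplifies matters: every $B_r$ maps the nonnegative cone $\pazocal V$ into its interior, so I take $\zeta_r=\psi_*\eta^C_r$ supported in $\pazocal V$. Since $g_r=\psi\circ f_r\circ\psi^{-1}$ on $\pazocal V$, the fixed point property $\mathscr H\eta^C=\eta^C$ translates exactly into $\widehat{\mathscr H}\zeta=\zeta$, so $\zeta$ is stationary.

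For nonnegative $v$ and positive matrices, $\|B^n(\mathbf r)v\|_1$ is comparable to $\|B^n(\mathbf r)\|$ uniformly in $v\in\pazocal V$, with constants depending only on the finitely many $B_r$ (via the minimum ratio of entries). Hence the Birkhoff averages of $\Phi$ at $m$-a.e.\ point converge to $\lambda_+(B,Q|_C)$ by Furstenberg--Kesten for $\mu_C$. Birkhoff's theorem, using bounded $\Phi$ and integrating the a.e.\ limit, then gives $\int\Phi\,dm=\lambda_+$. The maximum is therefore attained.

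\textbf{Main obstacle.} The delicate point is the upper bound for general stationary $\zeta$, which may charge directions outside the cone. There one needs the sub-multiplicative bound and the ergodic decomposition carefully, since invariance of $m$ does not give ergodicity, though the pointwise bound above suffices without it. In the end the exact identification of the maximizer with $\eta^C$ is what later sections use.
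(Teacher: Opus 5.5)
Your argument is correct, but it takes a different route from the paper. The paper proves nothing here; it cites \cite[Proposition~3.5]{Malheiro--Viana}. The part of that proof it reproduces, inside the proof of Proposition~\ref{prop: integral formula of lambda for the lift}, builds the maximizer through the natural extension:
\begin{itemize}
\item $\widehat m$ is taken concentrated on the graph of the dominated direction $E^d$;
\item its one-sided projection is disintegrated as $\mu_C\ltimes\zeta$, using that $E^d$ depends only on the past;
\item $\int\Phi\,dm=\lambda_+$ follows from Birkhoff and Oseledets;
\item only afterwards is $\zeta$ identified with $\psi_*\eta^C$, by proving $E^d=\xi$ via periodic points and density.
\end{itemize}

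You instead start from the candidate $\psi_*\eta^C$ and get stationarity directly from the conjugacy $g_r=\psi\circ f_r\circ\psi^{-1}$. You then replace the dominated-splitting and Oseledets machinery by an elementary cone estimate. For $v\ge 0$ with $\|v\|_1=1$, $\|B^n(\mathbf r)v\|_1$ is a convex combination of the column sums of $B^n(\mathbf r)$. These column sums are comparable to one another with a constant depending only on the entry ratios of $B_{r_0}$.

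Your upper bound is the standard Furstenberg--Kifer argument and is complete as written. The pointwise inequality $S_n\Phi(\mathbf r,[v])\le\log\|B^n(\mathbf r)\|$ (in the $\ell^1$ operator norm) holds for every $[v]$. Invariance of $m$ and $\frac1n\int\log\|B^n\|\,d\mu_C\to\lambda_+$ then finish it. So the worry in your last paragraph about stationary vectors charging directions outside the cone, or about ergodic decompositions, is unfounded.

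In the attainment step you also do not need almost-everywhere convergence. Integrating $\frac1n S_n\Phi$ against the invariant $m$ already gives $\int\Phi\,dm=\frac1n\int\log\|B^n\|\,d\mu_C+O(1/n)\to\lambda_+$.

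What each route buys: the Malheiro--Viana argument works for arbitrary locally constant $\GL_2$-cocycles over Markov shifts, with no positivity. Yours relies essentially on the positivity of the lifted matrices $B_r$. In exchange, yours is short and self-contained. It also directly establishes the identity of Proposition~\ref{prop: integral formula of lambda for the lift}, which is what the paper actually uses, so the natural-extension and $E^d=\xi$ detour becomes unnecessary in this setting.
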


\bigskip

The next proposition shows that, in the chart $\psi$, a maximizing $Q|_C$-stationary unit measure vector in Proposition \ref{prop: MV variational principle} corresponds to $\eta^C$ from \eqref{eq: unique fixed point of the Markov operator}.

\begin{proposition} \label{prop: integral formula of lambda for the lift}
We have
\begin{equation} \label{eq: integral formula of lambda for the lift}
\lambda_+\big(B,Q|_C\big)
=
\sum_{r\in C} \pi_r \int_{(-1,1)} \log {\|B_r \psi(x)\|}_1 \,d\eta_r^C(x).
\end{equation}
\end{proposition}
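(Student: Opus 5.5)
The plan is to show that $\widehat{\mathscr H}$ has exactly one $Q|_C$-stationary unit measure vector, and that it is supported in the cone $\pazocal V$ and corresponds to $\eta^C$ under $\psi$. Then the maximum in Proposition \ref{prop: MV variational principle} is taken over a single element. Finally, we rewrite the integrand in the chart.

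\textbf{Step 1 (existence and identification).} Define $\zeta^C_r = (\psi_*\eta^C_r)$, viewed as a measure on $\RP^1$ through $\Delta \cong \pazocal V$. By construction $f_r = \psi^{-1}\circ g_r\circ\psi$, so $(g_r)_*\psi_*\eta^C_r = \psi_*(f_r)_*\eta^C_r$. Hence the fixed-point equation $\mathscr H\eta^C=\eta^C$ gives $\widehat{\mathscr H}\zeta^C = \zeta^C$, and $\zeta^C$ is $Q|_C$-stationary. We should also check that $\eta^C_r$ gives no mass to $\{\pm1\}$. This is automatic, since $\mathscr P_1(-1,1)$ consists of measures on the open interval, and in any case $f_r$ maps into $[-\rho,\rho]$.

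\textbf{Step 2 (uniqueness; main obstacle).} Let $\zeta$ be any $Q|_C$-stationary unit measure vector on $\RP^1$. We must show $\zeta=\zeta^C$. The difficulty is that $\zeta_r$ may a priori charge the complement of $\pazocal V$, where the maps $g_r$ are not contracting.

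The first tool is primitivity of $Q|_C$. Iterating gives $\zeta = \widehat{\mathscr H}^n\zeta$, and each component is a convex combination of pushforwards $(g_{r_{n-1}}\circ\cdots\circ g_{r_0})_*\zeta_{r_0}$ along admissible words.

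The second tool is the dynamics of the positive matrices. For a product of positive matrices whose $\pazocal V$-images shrink to a point (the diameter shrinks like $\rho^n$ in $d_{\mathrm{hyp}}$), the projective action is a hyperbolic Möbius map. Its repelling fixed point lies outside $\pazocal V$, and everything except a small neighbourhood of that repeller is mapped close to the image of $\pazocal V$.

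A cleaner route is to avoid this analysis entirely. By Proposition \ref{prop MV characterization of stationary measure vectors}, $\mu_C\ltimes\zeta$ is $\mathbb P(\boldsymbol B)$-invariant, and its mass outside $\Sigma_C\times\pazocal V$ cannot increase under the cocycle. Instead, I would first try the following sufficient argument. Consider the stationary vectors supported in $\pazocal V$; these are unique by Proposition \ref{prop: contraction of H}, because a stationary $\zeta$ supported in $\pazocal V$ is a fixed point of $\mathscr H$. Then argue that the maximum in Proposition \ref{prop: MV variational principle} is attained at a measure supported in $\pazocal V$. Uniform hyperbolicity gives a dominated splitting, and the maximizing measure should be carried by the dominant direction $E^d$. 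One checks that $E^d(\hat x)$ lies in the multicone: it is the intersection of nested images of $\pazocal V$ under backward products. Indeed, every invariant measure projects to a combination of $\delta_{E^d}$ and $\delta_{E^w}$, and only $E^d$ achieves $\lambda_+$. So a maximizer can be taken with $\mu_C\ltimes\zeta$ supported on the graph of $E^d$. Its disintegration over the one-sided shift gives a stationary vector living in $\pazocal V$, because $E^d$ depends only on the past coordinates.

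\textbf{Step 3 (rewriting the integrand).} For $[v]\in\pazocal V$, choose the representative $v=\psi(x)\in\Delta$, so that $\|v\|_1=1$. Then $\log(\|B_rv\|_1/\|v\|_1)=\log\|B_r\psi(x)\|_1$. Inserting $\zeta^C$ into the variational formula of Proposition \ref{prop: MV variational principle} then yields \eqref{eq: integral formula of lambda for the lift}.

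I expect the identification of the maximizer with a $\pazocal V$-supported vector in Step 2 to be the main technical point.
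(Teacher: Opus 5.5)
Your opening plan is false in general, so do not pursue it: $\widehat{\mathscr H}$ need not have a unique $Q|_C$-stationary unit measure vector on $\RP^1$, and the goal in the first paragraph of Step 2 (``we must show $\zeta=\zeta^C$'' for an arbitrary stationary $\zeta$) cannot be reached.

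For a counterexample, take a single letter with one hyperbolic matrix and a one-interval multicone. Then $C$ is a single state, $Q|_C=(1)$, and $\widehat{\mathscr H}\zeta=(g_r)_*\zeta$. The Dirac mass at the non-Perron eigendirection of the positive matrix $B_r$ lies outside $\pazocal V$ and is stationary. It produces the smaller exponent in Proposition~\ref{prop: MV variational principle}. The same happens whenever all $B_r$ share an eigendirection outside $\pazocal V$. So the maximum there is genuinely taken over more than one element. Uniqueness can only be invoked among vectors supported in $\pazocal V$. By stationarity these are automatically supported in $\pazocal V_\rho$, hence give fixed points of $\mathscr H$ in $\mathscr P_1^C$.

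The fallback you then sketch is correct and is essentially the paper's proof. It runs as follows:
\begin{enumerate}
\item Put $\widehat m$ on the graph of $E^d$ over $\widehat\Sigma_C$ and project it to $\Sigma_C\times\RP^1$.
\item Use that $E^d$ depends only on the past, together with the Markov property, to write the projection as $\mu_C\ltimes\zeta$.
\item Obtain $\lambda_+(B,Q|_C)=\sum_r\pi_r\int\log(\|B_rv\|_1/\|v\|_1)\,d\zeta_r$ directly from Birkhoff and Oseledets. Maximality in Proposition~\ref{prop: MV variational principle} is never actually needed.
\item Identify $(\psi^{-1})_*\zeta$ with $\eta^C$ by uniqueness of the fixed point of $\mathscr H$.
\end{enumerate}

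The one step you only assert is $E^d(\widehat{\mathbf r})=\bigcap_n g_{r_{-1}}\circ\cdots\circ g_{r_{-n}}(\pazocal V_\rho)$. The nested intersection is clearly an invariant section, but its equality with $E^d$ needs an argument. The paper checks it at periodic points, where both are the attracting fixed point of the return map. It then extends the equality using density of periodic points (from primitivity) and continuity of both sections.

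You could in fact bypass the splitting altogether. For $x\in(-1,1)$ and a product of positive matrices,
\[
\min_j\psi(x)_j\sum_{i,j}\big(B^n(\mathbf r)\big)_{ij}\;\le\;\|B^n(\mathbf r)\psi(x)\|_1\;\le\;\sum_{i,j}\big(B^n(\mathbf r)\big)_{ij}.
\]
So Birkhoff's theorem applied to the invariant measure $\mu_C\ltimes\psi_*\eta^C$, combined with Furstenberg--Kesten, yields \eqref{eq: integral formula of lambda for the lift} directly.
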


\begin{proof}
Let
\begin{gather*}
\widehat\Sigma_C
=
\left\{
(r_n)_{n\in\mathbb Z}\in C^{\mathbb Z} \;\middle|\; Q_{r_n r_{n+1}}>0 \text{ for every } n\in\mathbb Z
\right\}, \\
\widehat\sigma:\widehat\Sigma_C\to\widehat\Sigma_C,
\qquad
\widehat\sigma\big((r_n)_{n\in\mathbb Z}\big)=(r_{n+1})_{n\in\mathbb Z}, \\
\Pi_C:\widehat\Sigma_C\to\Sigma_C,
\qquad
\Pi_C\big((r_n)_{n\in\mathbb Z}\big)=(r_n)_{n\geq 0},
\end{gather*}
and let $\widehat\mu_C$ be the two-sided Markov measure on $\widehat\Sigma_C$ associated to $Q|_C$ and
$(\pi_r)_{r\in C}$. Consider the natural extension of the cocycle $\boldsymbol{B}$:
\[
\widehat{\boldsymbol{B}}:\widehat\Sigma_C\times \mathbb R^2\to\widehat\Sigma_C\times \mathbb R^2,
\qquad
(\widehat{\mathbf r},w)
\longmapsto
\big( \widehat\sigma(\widehat{\mathbf r}),\,B_{r_0}w \big).
\]

\medskip

The positive cone $\pazocal V^\circ$ is strictly invariant under $g_r$ for every $r \in C$ by equation \eqref{eq: g maps non-negative cone to positive cone}, that is, $\overline{g_r(\pazocal V^\circ)} \subset \pazocal V^\circ$. Then by Theorem \ref{thm: multicone criterion}, $\boldsymbol{B}$ is projectively uniformly hyperbolic on $\Sigma_C$. Therefore, there exists a continuous splitting into invariant directions
\[
\mathbb R^2 = E^d(\widehat{\mathbf r}) \oplus E^w(\widehat{\mathbf r}),
\qquad \text{for each \; } \widehat{\mathbf r}\in \widehat\Sigma_C,
\]
such that
\begin{equation} \label{eq: invariance of dominant bundle}
[B_{r_0}]\,E^d(\widehat{\mathbf r})
=
E^d\big(\widehat\sigma(\widehat{\mathbf r})\big),
\qquad
[B_{r_0}]\,E^w(\widehat{\mathbf r})
=
E^w\big(\widehat\sigma(\widehat{\mathbf r})\big).
\end{equation}

Now, just like in the proof of \cite[Proposition 3.5]{Malheiro--Viana}, define a probability measure $\widehat m$ on $\widehat\Sigma_C\times \RP^1$ by
\[
\widehat{m}(D \times G) = \widehat{\mu}_C \left( \left\{ \widehat{\mathbf r} \in D \setcond E^d(\widehat{\mathbf r}) \in G \right\} \right),
\]
for any measurable sets $D \subset \widehat{\Sigma}_C$ and $G \subset \RP^1$. Then $\widehat m$ is invariant under the projective cocycle
\[
\mathbb P(\,\widehat{\boldsymbol{B}}\,): \;\widehat\Sigma_C\times \RP^1\to\widehat\Sigma_C\times \RP^1,
\qquad
\big( \widehat{\mathbf r},[w] \big)
\longmapsto
\big( \widehat\sigma(\widehat{\mathbf r}), [B_{r_0}w] \big).
\]
Consider the one-sided projection $m = (\Pi_C\times\mathrm{id})_*\widehat m$. By the proof of \cite[Proposition~3.5]{Malheiro--Viana}, we have the following.

\begin{claim}
The probability measure $m$ is a skew product
\[
m = \mu_C \ltimes \zeta,
\]
where a unit measure vector $\zeta = (\zeta_r)_{r\in C}$ satisfies
\begin{equation} \label{eq: integral formula intermediate}
\lambda_+\big(B,Q|_C\big)
=
\sum_{r\in C}\pi_r\int_{\RP^1} \log \frac{\|B_r v\|_1}{\|v\|_1} \,d\zeta_r.
\end{equation}
\end{claim}

For completeness, we rephrase part of their proof in our notation.

\begin{proof}[Proof of the Claim]

Let
\[
\widehat{[r]}
\coloneqq
\left\{
\widehat{\mathbf s}=(s_n)_{n\in\mathbb Z}\in \widehat\Sigma_C \setcond s_0=r
\right\}
\qquad\text{for each } r\in C.
\]
Also let $\mathrm{pr}_1:\Sigma_C\times \mathbb{RP}^1 \to \Sigma_C$, and $\widehat{\mathrm{pr}}_1:\widehat\Sigma_C\times \mathbb{RP}^1 \to \widehat\Sigma_C$ be the coordinate projections. Then $m$ is invariant under $\mathbb P(\boldsymbol B)$, and it projects to $\mu_C$. This is by the $\mathbb P(\widehat{\boldsymbol B})$-invariance of $\widehat m$ and the identities $\mathbb P(\boldsymbol B)\circ (\Pi_C\times \mathrm{id}) = (\Pi_C\times \mathrm{id})\circ \mathbb P(\widehat{\boldsymbol B})$ and $\mathrm{pr}_1\circ (\Pi_C\times \mathrm{id}) = \Pi_C\circ \widehat{\mathrm{pr}}_1$.

For each $r\in C$, define the spaces of admissible pasts and futures by
\begin{gather*}
\Sigma_C^-(r)
=
\left\{
(u_n)_{n\leq -1}\in C^{\mathbb Z_{<0}} \setcond Q_{u_nu_{n+1}}>0 \text{ for all } n\leq -2,\;
Q_{u_{-1}r}>0
\right\}, \\
\Sigma_C^+(r)
=
\left\{
(v_n)_{n\geq 1}\in C^{\mathbb Z_{>0}} \setcond Q_{rv_1}>0,\; Q_{v_nv_{n+1}}>0 \text{ for all } n\geq 1
\right\}.
\end{gather*}
Because $\widehat\mu_C$ is the two-sided Markov measure associated to $Q|_C$, for each $r\in C$ there exist probability measures $\widehat\mu_r^-$ and $\widehat\mu_r^+$ on $\Sigma_C^-(r)$ and $\Sigma_C^+(r)$, respectively, such that
\[
\widehat\mu_C|_{\widehat{[r]}}
=
\pi_r \cdot (\widehat\mu_r^- \times \widehat\mu_r^+).
\]
Moreover, since the line $E^d(\widehat{\mathbf r})$ is determined by the negative coordinates of $\widehat{\mathbf r}$ alone, there exists a measurable map $e_r:\Sigma_C^-(r)\to \mathbb{RP}^1$ such that for $(\widehat\mu_r^- \times \widehat\mu_r^+)$-a.e.\ $\big( (u_n)_{n\leq -1}, (v_n)_{n\geq 1} \big)$,
\[
E^d\big((u_n)_{n\leq -1},\, r,\,(v_n)_{n\geq 1}\big)
=
e_r\big((u_n)_{n\leq -1}\big).
\]

Now let $U^+ \subset \Sigma_C^+(r)$ and $D \subset \mathbb{RP}^1$ be measurable. By the definition of $\widehat m$,
\begin{align*}
\widehat m \Big( ( \Sigma_C^-(r) \times \{r\} \times U^+) \times D \Big)
&=
\widehat\mu_C\Big(
\left\{ (u,r,v) \in \Sigma_C^-(r) \times \{r\} \times U^+ \setcond E^d(u,r,v) \in D \right\}
\Big) \\
&=
\pi_r \, \widehat\mu_r^+(U^+)
\int_{\Sigma_C^-(r)} \mathbf 1_D\big(e_r(u)\big)\, d\widehat\mu_r^-(u).
\end{align*}
On the other hand, $\widehat\mu_C\big(\Sigma_C^-(r) \times \{r\} \times U^+\big) = \pi_r\,\widehat\mu_r^+(U^+)$.
Then the quotient
\[
\frac{
\widehat m \Big( ( \Sigma_C^-(r) \times \{r\} \times U^+) \times D \Big)
}{
\widehat\mu_C\big(\Sigma_C^-(r)\times \{r\}\times U^+\big)
}
\;=\;
\int_{\Sigma_C^-(r)} \mathbf 1_D\big(e_r(u)\big)\, d\widehat\mu_r^-(u)
\]
is independent of the choice of $U^+$, provided $\widehat\mu_r^+(U^+)>0$.

We can therefore define a probability measure $\zeta_r$ on $\mathbb{RP}^1$, for each $r\in C$, by
\begin{equation} \label{eq: the definition of zeta r}
\zeta_r(D)
=
\int_{\Sigma_C^-(r)} \mathbf 1_D\big(e_r(u)\big)\, d\widehat\mu_r^-(u).
\end{equation}
It follows that $m = \mu_C \ltimes \zeta$, since for every measurable $E\subset \Sigma_C$ and $D\subset \mathbb{RP}^1$ we have
\[
m(E\times D)
=
\sum_{r\in C} m\big((E\cap [r])\times D\big)
=
\sum_{r\in C} \mu_C(E\cap [r])\, \zeta_r(D).
\]

Next define
\[
\Phi: \Sigma_C \times \mathbb{RP}^1 \to \mathbb R,
\qquad
\big( \mathbf r,[v] \big)
\longmapsto
\log \frac{\|B_{r_0}v\|_1}{\|v\|_1},
\]
and $\Psi: \widehat\Sigma_C\times \mathbb{RP}^1\to \mathbb R$ by $\Psi \big( \widehat{\mathbf r},[v] \big) = \Phi \big( \Pi_C(\widehat{\mathbf r}), [v] \big)$. Then, Birkhoff's theorem gives
\[
\int_{\Sigma_C\times \mathbb{RP}^1} \Phi\,dm
=
\int_{\widehat\Sigma_C\times \mathbb{RP}^1} \Psi\,d\widehat m
=
\int_{\widehat\Sigma_C\times \mathbb{RP}^1} \widehat\Psi\, d\widehat m,
\]
where the time average $\widehat\Psi$ is defined for $\widehat m$-almost every $\big( \widehat{\mathbf r},[v] \big)$ by
\[
\widehat\Psi \big(\widehat{\mathbf r},[v] \big)
=
\lim_{n\to\infty} \frac{1}{n} \sum_{j=0}^{n-1} \Psi\big( \mathbb P(\widehat{\boldsymbol B})^j \big( \widehat{\mathbf r}, [v] \big) \big)
=
\lim_{n\to\infty} \frac{1}{n} \log \frac{\|B^n(\Pi_C(\widehat{\mathbf r}))v\|_1}{\|v\|_1}.
\]

Since $\widehat m$ is concentrated on the graph of directions $E^d$, we have $\widehat\Psi(\widehat{\mathbf r},[v]) = \lambda_+(B,Q|_C)$ for $\widehat m$-a.e.\ $\big( \widehat{\mathbf r},[v] \big)$ by Oseledets' theorem. Therefore, using $m=\mu_C\ltimes \zeta$,
\[
\lambda_+\big(B,Q|_C\big)
=
\int_{\Sigma_C\times \mathbb{RP}^1}\Phi\,dm
=
\sum_{r\in C}\pi_r \int_{\mathbb{RP}^1} \log \frac{\|B_r v\|_1}{\|v\|_1}\, d\zeta_r([v]). \qedhere
\]
\end{proof}

The next aim is to prove that $\zeta_r$ is supported on $\pazocal V_{\rho} \coloneqq \psi([-\rho, \rho])$. For each $\widehat{\mathbf r}\in \widehat\Sigma_C$, define
\[
V_n(\widehat{\mathbf r})
\coloneqq
g_{r_{-1}}\circ g_{r_{-2}}\circ \cdots \circ g_{r_{-n}}(\pazocal V_{\rho}) \quad \text{for \; } n \geq 1,
\qquad
\xi(\widehat{\mathbf r})
\coloneqq
\bigcap_{n\geq 1}V_n(\widehat{\mathbf r}).
\]
Also set
\[
v_n(\widehat{\mathbf r})
\coloneqq
\psi^{-1}\big(V_n(\widehat{\mathbf r})\big)
=
f_{r_{-1}}\circ f_{r_{-2}}\circ \cdots \circ f_{r_{-n}}([-\rho, \rho]).
\]
Each $v_n(\widehat{\mathbf r})$ is a nonempty compact interval, and the family is nested:
\[
v_n(\widehat{\mathbf r}) \supset v_{n+1}(\widehat{\mathbf r})
\qquad\text{for every } n\geq 1.
\]
Moreover, because the family $(f_r)_{r\in C}$ is uniformly contracting in hyperbolic metric, $\bigcap_{n\geq 1}v_n(\widehat{\mathbf r})$ consists of a single point of $[-\rho, \rho]$, and hence $\xi(\widehat{\mathbf r})$ is a point in $\pazocal V_{\rho}$. We will think of $\xi$ as a map $\widehat\Sigma_C \to \pazocal V_\rho$.

Let us show that $\xi$ is invariant under the projective cocycle. Fix $\widehat{\mathbf r}=(r_n)_{n\in\mathbb Z}\in \widehat\Sigma_C$. Then,
\[
V_n\big( \widehat\sigma(\widehat{\mathbf r}) \big)
=
g_{r_0}\circ g_{r_{-1}}\circ\cdots\circ g_{r_{-n+1}}(\pazocal V_{\rho})
=
g_{r_0}\big( V_{n-1}(\widehat{\mathbf r}) \big)
\quad (n\geq 2).
\]
Since the compact sets $V_k(\widehat{\mathbf r})$ are nested and $g_{r_0}: \pazocal V_{\rho} \to \pazocal V_{\rho}$ is a continuous injection, we have
\begin{equation} \label{eq: invariance of xi}
\xi \big( \widehat\sigma(\widehat{\mathbf r}) \big)
=
\bigcap_{n\geq 1}V_n\big(\widehat\sigma(\widehat{\mathbf r})\big)
=
\bigcap_{n\geq 2}V_n\big(\widehat\sigma(\widehat{\mathbf r})\big)
=
\bigcap_{k\geq 1} g_{r_0}\big(V_k(\widehat{\mathbf r})\big)
=
g_{r_0}\left( \bigcap_{k\geq 1}V_k(\widehat{\mathbf r}) \right)
=
g_{r_0}\big(\xi(\widehat{\mathbf r})\big).
\end{equation}

\begin{claim}
We have
\[
E^d(\widehat{\mathbf r})=\xi(\widehat{\mathbf r}) \in \pazocal V_\rho
\qquad\text{for every } \widehat{\mathbf r}\in \widehat\Sigma_C.
\]
\end{claim}

\begin{proof}
First consider a periodic point $\widehat{\mathbf p}=(p_n)_{n\in\mathbb Z}\in \widehat\Sigma_C$ of period $\ell\geq 1$. Define
\[
\pazocal G_{\widehat{\mathbf p}}
\coloneqq
g_{p_{\ell-1}}\circ\cdots\circ g_{p_0}: \pazocal V_{\rho} \to \pazocal V_{\rho}.
\]
In the chart $\psi$, this map is
\[
f_{p_{\ell-1}} \circ \cdots \circ f_{p_0}: [-\rho, \rho] \to [-\rho, \rho].
\]
Since $f_{p_{\ell-1}} \circ \cdots \circ f_{p_0}$ is a $\rho^\ell$-contraction, $\pazocal G_{\widehat{\mathbf p}}$ has a unique attracting fixed point in $\pazocal V_{\rho}$. By applying equation \eqref{eq: invariance of xi} $\ell$ times, we see that this fixed point is $\xi(\widehat{\mathbf p})$.

On the other hand, by \eqref{eq: invariance of dominant bundle}, the line $E^d(\widehat{\mathbf p})$ is invariant under $\pazocal G_{\widehat{\mathbf p}}$:
\[
\pazocal G_{\widehat{\mathbf p}}E^d(\widehat{\mathbf p}) = [B^\ell(\widehat{\mathbf p})]\,E^d(\widehat{\mathbf p}) = E^d \big( \widehat\sigma^\ell(\widehat{\mathbf p}) \big) = E^d(\widehat{\mathbf p}).
\]
Furthermore, this is the unique dominant eigendirection of $\pazocal G_{\widehat{\mathbf p}}$ by projective uniform hyperbolicity. By uniqueness of the attracting fixed point of $\pazocal G_{\widehat{\mathbf p}}$, we conclude that
\[
E^d(\widehat{\mathbf p}) = \xi(\widehat{\mathbf p})
\qquad\text{for every periodic } \widehat{\mathbf p}\in \widehat\Sigma_C.
\]

Since $Q|_C$ is primitive, periodic points are dense in $\widehat\Sigma_C$. Also, the map $\xi: \widehat\Sigma_C \to \pazocal V_\rho$ is continuous; if the $-N, \ldots, -1$ coordinates of the two points $\widehat{\mathbf r}$, $\widehat{\mathbf s}$ coincide, then the points $\psi^{-1}\big( \xi(\widehat{\mathbf r}) \big)$, $\psi^{-1}\big( \xi(\widehat{\mathbf s}) \big)$ differ by at most $O(\rho^N)$ in the hyperbolic distance. Together with the continuity of the dominant bundle $\widehat{\mathbf r}\mapsto E^d(\widehat{\mathbf r})$, we conclude that
\[
E^d(\widehat{\mathbf r})=\xi(\widehat{\mathbf r}) \in \pazocal V_\rho
\qquad\text{for every } \widehat{\mathbf r}\in \widehat\Sigma_C. \qedhere
\]
\end{proof}

Thus, $\zeta_r$ has support on $\pazocal V_\rho$ by equation \eqref{eq: the definition of zeta r}, and can be regarded as a probability measure on $\pazocal V_\rho$. Because $m$ is $\mathbb P(\boldsymbol{B})$-invariant, $\zeta$ is $Q|_C$-stationary by Proposition \ref{prop MV characterization of stationary measure vectors}. Then,
\[
\zeta_{r'}
=
\sum_{r\in C} \frac{\pi_rQ_{r,r'}}{\pi_{r'}}\,(g_r)_*\zeta_r.
\]
Let $\widetilde{\zeta} = \big( (\psi^{-1})_* \zeta_r \big)_{r \in C}$. By the equation above, we have $\mathscr H \widetilde{\zeta} = \widetilde{\zeta}$. We also have $\widetilde{\zeta}\in \mathscr P_1^C$ since $\supp\big(\widetilde{\zeta}\big) \subset [-\rho,\rho]$. Then the uniqueness of the fixed point of $\mathscr H$ implies
\[
\widetilde{\zeta} = \eta^C.
\]

Since ${\|\psi(x)\|}_1 = 1$ for any $x \in (-1,1)$, equation \eqref{eq: integral formula intermediate} implies equation \eqref{eq: integral formula of lambda for the lift} under the chart $\psi^{-1}$.
\end{proof}

\subsection{Kernel expansion} \label{subsection: Kernel expansion}

In this subsection, we define the infinite matrix operator $\mathrm T$ acting on some $V_C \subset \ell^\infty(\mathbb N_0)^C$, and obtain an explicit representation of the integral formula in equation \eqref{eq: integral formula of lambda for the lift} in terms of $\mathrm T$. This is done by generalizing the \emph{Kernel expansion} technique in \cite{Alibabaei26-2} to Markov shift settings.

For a M\"obius transformation $g$, we define its ``transposed'' M\"obius transformation $g^\top$ by
\[
g^\top(x)
=
\frac{
ax + c}{
bx + d}
\quad \text{where} \quad
g(x)
=
\frac{
ax + b}{
cx + d}.
\]

\begin{lemma} \label{lemma: Mobius contraction implies transpose contraction}
For any M\"obius transformation $g$ satisfying $g\big(\closure{\mathbb D_1}\big) \subset \mathbb D_1$, we have $g^\top\big(\closure{\mathbb D_1}\big) \subset \mathbb D_1$.
\end{lemma}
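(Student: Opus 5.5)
We would prove this by a duality argument on the Riemann sphere $\widehat{\mathbb C}$. The idea is to express $g^\top$, up to the symmetry $z\mapsto -z$, as the conjugate of $g^{-1}$ by the inversion $z\mapsto 1/z$. Inversion exchanges the closed unit disk with the exterior region, so the hypothesis on $g$ should transfer directly to $g^\top$. No estimates are needed; only bijectivity of M\"obius maps on $\widehat{\mathbb C}$ and one matrix computation.

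\textbf{Step 1 (complements).} Since $g$ is a bijection of $\widehat{\mathbb C}$, the hypothesis $g\big(\closure{\mathbb D_1}\big)\subset\mathbb D_1$ gives, by taking complements,
\[
g^{-1}\big(\widehat{\mathbb C}\setminus\mathbb D_1\big)\subset \widehat{\mathbb C}\setminus\closure{\mathbb D_1}.
\]
That is, $g^{-1}$ maps $\{|w|\ge 1\}\cup\{\infty\}$ into $\{|w|>1\}\cup\{\infty\}$.

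\textbf{Step 2 (conjugate by inversion).} Let $\iota(z)=1/z$, with $\iota(0)=\infty$ and $\iota(\infty)=0$. Then $\iota$ maps $\closure{\mathbb D_1}$ bijectively onto $\widehat{\mathbb C}\setminus\mathbb D_1$, and it maps $\widehat{\mathbb C}\setminus\closure{\mathbb D_1}$ onto $\mathbb D_1$. Combining with Step 1, the map $h=\iota\circ g^{-1}\circ\iota$ satisfies
\[
h\big(\closure{\mathbb D_1}\big)\subset\mathbb D_1 .
\]

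\textbf{Step 3 (identify $h$).} Write $g(x)=\frac{ax+b}{cx+d}$, so $g$ has matrix $M=\begin{pmatrix} a&b\\ c&d\end{pmatrix}$. Then $g^{-1}$ has matrix proportional to $\begin{pmatrix} d&-b\\ -c&a\end{pmatrix}$, and $\iota$ has matrix $K=\begin{pmatrix}0&1\\1&0\end{pmatrix}$. Conjugating by $K$ swaps both rows and columns, so $h$ has matrix proportional to $\begin{pmatrix} a&-c\\ -b&d\end{pmatrix}$. Hence
\[
h(z)=\frac{az-c}{-bz+d}=-\,\frac{a(-z)+c}{b(-z)+d}=-\,g^\top(-z).
\]
Therefore $g^\top(z)=-h(-z)$. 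Since $z\mapsto -z$ preserves both $\closure{\mathbb D_1}$ and $\mathbb D_1$, we get $g^\top\big(\closure{\mathbb D_1}\big)\subset\mathbb D_1$.

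\textbf{Main point of care.} The only delicate step is Step 1, because the complements must be taken in $\widehat{\mathbb C}$ with $\infty$ included. This is what guarantees that the pole of $h$ lies outside $\closure{\mathbb D_1}$, so that $h$, and hence $g^\top$, is finite on the closed disk. It works because a M\"obius transformation is a bijection of $\widehat{\mathbb C}$.
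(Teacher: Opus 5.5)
Your proof is correct and is essentially the paper's argument. The paper conjugates $g^{-1}$ by the single involution $s(x)=-1/x$ and proves $g^\top = s\circ g^{-1}\circ s$; this is exactly your identity $g^\top(z)=-h(-z)$ with $h=\iota\circ g^{-1}\circ\iota$, since $s(z)=\iota(-z)=-\iota(z)$. The paper also transfers the hypothesis through complements in $\widehat{\mathbb C}$, just as you do.
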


\begin{proof}
We denote by $\widehat{\mathbb{C}} = \mathbb C \cup \{\widehat \infty\}$ the Riemann sphere. Let us define $s: \widehat{\mathbb{C}} \to \widehat{\mathbb{C}}$ by $s(x) = -\frac{1}{x}$ for $x \in \mathbb{C}$ and $s(0) = \widehat{\infty}$, \,$s(\widehat{\infty}) = 0$. Then, direct calculation shows that
\begin{equation} \label{eq: transpose identity}
g^{\top}
= s \circ g^{-1} \circ s.
\end{equation}
Since $g: \widehat{\mathbb{C}} \to \widehat{\mathbb{C}}$ is a homeomorphism, we have
\[
g^\top(\closure{\mathbb D_1}) = s \Big( g^{-1} \big( \widehat{\mathbb{C}} \setminus \mathbb D_1 \big) \Big) \subset s \Big( \widehat{\mathbb{C}} \setminus \closure{\mathbb D_1} \Big) = \mathbb D_1. \qedhere
\]
\end{proof}

By this lemma, we have the following for every $r \in C$.
\begin{equation} \label{eq: bounding the transpose of fr}
f_r^\top([-1,1]) \subset (-1,1).
\end{equation}

For $x\in \closure{\mathbb D_1}$ and $c \in \mathbb{C}$, we define $v(x \, ; \, c) \in \ell^\infty(\mathbb{N}_0)$ by
\begin{equation*}
\left( v(x \, ; \, c) \right)_n =
\begin{dcases}
c & (n = 0), \\
- \frac{(-x)^n}{n} & (n \geq 1)
\end{dcases}.
\end{equation*}
Let $V \subset \ell^\infty(\mathbb{N}_0)$ be the algebraic linear span of vectors of the form $v(x \, ; \, c)$.
\[
V = \mathrm{Span}_{\mathbb{C}} \left\{ \, v(x\,;\,c) \setcond x\in \closure{\mathbb D_1}, \; c \in \mathbb{C}\; \right\}.
\]

We write $f_r'$ to denote the usual differentiation of $f_r$ for each $r\in C$. Define an infinite matrix $T_r=(b^{(r)}_{k,n})_{k,n\in\mathbb N_0}$ by
\begin{align*}
& b^{(r)}_{k,0} = 0 \quad \text{for } k \geq 0, \hspace{45pt}
b^{(r)}_{0,n} = {\big( f_r^{\top}(0) \big)}^n \quad \text{for } n\geq1,\\
& b^{(r)}_{k,n} = \sum_{\ell=1}^{\min\{k,n\}} \binom{n}{\ell}\binom{k-1}{\ell-1} \, \, 
{\big( f_r^{\top}(0) \big)}^{n-\ell}\big(-f_r(0)\big)^{k-\ell} {\big( f_r'(0) \big)}^{\ell}\quad \text{for } k,n\geq1.
\end{align*}

Recall the definition of $F: \GL_2(\mathbb R) \to \GL_2(\mathbb R)$ in equation \eqref{eq: definition of F}. For each $r \in C$, write
\[
F(B_r) = \begin{pmatrix} \alpha_r & \beta_r \\ \gamma_r & \delta_r \end{pmatrix}.
\]

\begin{lemma} \label{lemma: denominator is positive}
Suppose $A$ is an invertible non-negative $2 \times 2$ matrix. Let $F(A) = \begin{pmatrix} a & b \\ c & d \end{pmatrix}$. Then $cx + d > 0$ for any $x \in [-1,1]$.
\end{lemma}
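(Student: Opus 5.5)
Write $A=\begin{pmatrix} a_0 & b_0\\ c_0 & d_0\end{pmatrix}$ with $a_0,b_0,c_0,d_0\ge 0$. The plan is to compute the bottom row of $F(A)$ directly from equation \eqref{eq: definition of F} and then use linearity in $x$.

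From \eqref{eq: definition of F}, the bottom row is
\[
c=\tfrac12(a_0-b_0+c_0-d_0),\qquad d=\tfrac12(a_0+b_0+c_0+d_0).
\]
Hence
\[
cx+d=\tfrac12\big((a_0+c_0)(1+x)+(b_0+d_0)(1-x)\big).
\]
Equivalently, $cx+d=\|A\,2\psi(x)\|_1/2\cdot$ const; that is, $cx+d=\tfrac12\cdot \mathbf 1^\top A\,(1+x,\,1-x)^\top=\mathbf 1^\top A\,\psi(x)$, the column-sum functional applied to $\psi(x)$.

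For $x\in[-1,1]$ both weights $1\pm x$ are nonnegative and sum to $2$. The column sums $a_0+c_0$ and $b_0+d_0$ are nonnegative, so $cx+d\ge 0$. The main step is ruling out zero, and this is where invertibility is used. If some column sum vanished, say $a_0+c_0=0$, then nonnegativity would force $a_0=c_0=0$. The first column would then be zero, contradicting $\det A\ne0$; the same argument handles the second column.

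So both column sums are strictly positive. Since $cx+d$ is a convex combination, with weights $(1+x)/2$ and $(1-x)/2$, of the strictly positive numbers $a_0+c_0$ and $b_0+d_0$, it is at least $\min\{a_0+c_0,\,b_0+d_0\}>0$. This gives $cx+d>0$ on $[-1,1]$.
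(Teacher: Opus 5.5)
Your proof is correct and is essentially the paper's argument: the paper bounds $cx+d \ge d-|c|$ and observes that $|c|=d$ would force $A$ to be singular, and since $d+c=a_0+c_0$ and $d-c=b_0+d_0$, your convex-combination lower bound $\min\{a_0+c_0,\,b_0+d_0\}$ is exactly the paper's quantity $d-|c|$. Your version does spell out why equality forces singularity (a zero column), which the paper leaves implicit; the aside ``$\|A\,2\psi(x)\|_1/2\cdot$ const'' is garbled but inessential, because the identity $cx+d=\mathbf 1^\top A\,\psi(x)$ you then state is correct.
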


\begin{proof}
Write $A=\begin{pmatrix} p & q \\ r & s \end{pmatrix}$ with $p, q, r, s \geq 0$.
By the definition of $F$, we have $|c|=|\tfrac12(p-q+r-s)| \leq \tfrac12(p+q+r+s) = d$, and equality implies that $A$ is singular. Hence $cx+d \geq d - |c| > 0$ for any $x \in [-1,1]$.
\end{proof}

By Lemma \ref{lemma: denominator is positive}, we have $\delta_r > 0$. Since $\left| f_r^\top(0) \right| = \left| \gamma_r /\delta_r \right| < 1$ by equation \eqref{eq: bounding the transpose of fr}, we can define the following map, which is holomorphic on a neighborhood of $\closure{\mathbb{D}_1}$.
\[
\ell_r(x) = \mathrm{Log} \big( \gamma_r \,x + \delta_r \big). \quad \big( x \in \closure{\mathbb{D}_1}, \, r\in C \big).
\]

\begin{lemma} \label{lemma: coboundary identity for Tr}
Let $r \in C$. For any generator $v(x\,;\,c) \in V$,
\[
T_r v(x\,;\,c) \;=\; v\big( f_r(x)\,;\,\ell_r(x) \big) \,-\, v\big( f_r(0)\,;\,\ell_r(0) \big).
\]
Therefore, we can define the linear operator $T_r: V \to V$ via coordinatewise matrix–vector multiplication.
\end{lemma}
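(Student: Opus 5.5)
We verify the identity coordinatewise by generating functions. Write $f_r(x)=\frac{\alpha x+\beta}{\gamma x+\delta}$ with $(\alpha,\beta,\gamma,\delta)=(\alpha_r,\beta_r,\gamma_r,\delta_r)$, so that $f_r(0)=\beta/\delta$, $f_r^\top(0)=\gamma/\delta$ and $f_r'(0)=(\alpha\delta-\beta\gamma)/\delta^2$. The key observation is that the sequence $v(x\,;\,c)$ encodes the function $\log(1+xz)$: for $n\geq1$, $-(-x)^n/n$ is the $n$-th Taylor coefficient of $\log(1+xz)$ in $z$. Hence the right-hand side has $n$-th coordinate, for $n\ge1$, equal to the $z^n$-coefficient of
\[
\log\big(1+f_r(x)z\big)-\log\big(1+f_r(0)z\big)
=\log\frac{(\gamma x+\delta)+(\alpha x+\beta)z}{(\gamma x+\delta)(1+f_r(0)z)} .
\]
The $0$-th coordinate of the right-hand side is $\ell_r(x)-\ell_r(0)=\Log(1+f_r^\top(0)x)$, using $\gamma x+\delta=\delta(1+(\gamma/\delta)x)$ and $\delta>0$ (Lemma~\ref{lemma: denominator is positive}), together with $|f_r^\top(0)|<1$ from \eqref{eq: bounding the transpose of fr}.

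For the left-hand side, we first use $b^{(r)}_{k,0}=0$, so $c$ never enters and $(T_rv)_k=-\sum_{n\ge1}b^{(r)}_{k,n}(-x)^n/n$. For $k=0$ this equals $-\sum_n (f_r^\top(0))^n(-x)^n/n=\log(1+f_r^\top(0)x)$, which matches the right-hand side. For $k\ge1$, we insert the binomial formula for $b^{(r)}_{k,n}$ and exchange the finite $\ell$-sum with the $n$-sum. The identity $\frac{1}{n}\binom{n}{\ell}=\frac1\ell\binom{n-1}{\ell-1}$ then turns the inner sum into $\sum_{n\ge\ell}\binom{n-1}{\ell-1}(-x)^n(f_r^\top(0))^{n-\ell}=\big(-x/(1+f_r^\top(0)x)\big)^\ell$; this needs $|f_r^\top(0)x|<1$, which holds on $\closure{\mathbb D_1}$. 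Writing $w=-x f_r'(0)/(1+f_r^\top(0)x)$, the $k$-th coordinate becomes
\[
-\sum_{\ell=1}^{k}\frac1\ell\binom{k-1}{\ell-1}\big(-f_r(0)\big)^{k-\ell}w^\ell .
\]
Summing over $k$ against $z^k$ and using $\sum_{k\ge\ell}\binom{k-1}{\ell-1}a^{k-\ell}z^k=(z/(1-az))^\ell$ with $a=-f_r(0)$ gives the generating function
\[
\log\Big(1-\frac{wz}{1+f_r(0)z}\Big)\Big|_{\text{with sign}} .
\]
Concretely, it equals $\log\big(1+\tfrac{x f_r'(0)z}{(1+f_r^\top(0)x)(1+f_r(0)z)}\big)$.

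It remains to check the algebraic identity
\[
1+f_r(0)z+\frac{x f_r'(0)z}{1+f_r^\top(0)x}=\frac{(\gamma x+\delta)+(\alpha x+\beta)z}{\gamma x+\delta}.
\]
Multiplying through by $\delta(\gamma x+\delta)$, this reduces to $(\alpha x+\beta)\delta=\beta(\gamma x+\delta)+x(\alpha\delta-\beta\gamma)$, which is immediate. Comparing coefficients of $z^k$ then proves the identity for all $k\ge1$. The expected obstacle is justifying the formal manipulations. Formal power series in $z$ suffice for the coefficient comparison, but the rearrangement in $n$ requires absolute convergence. This holds since $|x|\le1$ and $|f_r^\top(0)|<1$: the rearranged series is dominated by $\sum_n \binom{n}{\ell}|f^\top_r(0)|^{n-\ell}$, which converges. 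The boundary case $|x|=1$ needs no special care for that reason. Finally, since both sides are linear in the generator and the output again consists of generators ($|f_r(x)|<1$ by \eqref{eq: every f is a rho-contraction} and Lemma~\ref{lemma: real contraction implies complex contraction}), $T_r$ maps $V$ into $V$ and is well defined as coordinatewise matrix multiplication.
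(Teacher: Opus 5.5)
Your argument is correct and is essentially the paper's proof. It uses the same steps: the $0$-th coordinate via $\Log(1+f_r^\top(0)x)$ with $\delta_r>0$, absolute convergence to justify swapping the $n$- and $\ell$-sums, the negative-binomial evaluation of the inner sum, and the Möbius identity $f_r(0)+x f_r'(0)/(1+f_r^\top(0)x)=f_r(x)$. The only variation is that you close the $k\geq 1$ case by comparing $z$-coefficients of a logarithmic generating function. The paper instead applies $\frac1\ell\binom{k-1}{\ell-1}=\frac1k\binom{k}{\ell}$ and the binomial theorem coordinate by coordinate, which is the same computation written termwise.
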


\begin{proof}
The proof proceeds in the same way as \cite[Proof of Theorem 1.1]{Alibabaei26-2}. Let $x \in \closure{\mathbb D_1}$ and $c \in \mathbb{C}$. (The calculations below are independent of $c$.) First, we have $\left|f_r^{\top}(0) \,x\right| < 1$, ensuring the absolute convergence of the following sum.
\begin{align} \label{eq: 0th coordinate of Tv}
\big( T_r v(x \, ; \, c) \big)_0
&= - \sum_{n=1}^\infty
{f_r^{\top}(0)}^n \, \frac{(-x)^n}{n}
= \mathrm{Log} \left( 1 + \frac{ \gamma_r}{ \delta_r }x \right)
= \ell_r(x) - \ell_r(0).
\end{align}
Also, for $k \geq 1$,
\begin{flalign*}
\big( T_r v(x \, ; \, c) \big)_k
= - \sum_{n=1}^\infty \sum_{\ell=1}^{\min\{k,n\}} \binom{n}{\ell} \binom{k-1}{\ell-1} {f_r^{\top}(0)}^{n-\ell} \, {\big(- f_r(0) \big)}^{k-\ell} \, {f_r'(0)}^{\ell} \frac{(-x)^n}{n}.
\end{flalign*}

Fix $k \in \mathbb{N}$ and $x \in \closure{\mathbb D_1}$, and let
\[
a_{n, \ell} = \binom{n}{\ell} \binom{k-1}{\ell-1} {f_r^{\top}(0)}^{n-\ell} \, {\big(- f_r(0) \big)}^{k-\ell} \, {f_r'(0)}^{\ell} \frac{(-x)^n}{n}.
\]
Then, for any $N \in \mathbb N$ with $N \geq k$,
\begin{equation} \label{eq: exchange of sum for a}
\sum_{n=1}^N \sum_{\ell=1}^{\min\{k,n\}} a_{n, \ell} = \sum_{\ell = 1}^k \sum_{n = \ell}^N a_{n, \ell}.
\end{equation}
The sum $\sum_{n = \ell}^\infty a_{n, \ell}$ is absolutely convergent for any $1 \leq \ell \leq k$. Indeed, letting $C = |f_r^\top(0)| < 1$,
\begin{align*}
| a_{n, \ell} |
\leq \frac{n^{\ell}}{\ell!} \, \binom{k-1}{\ell-1} \, C^{n-\ell} \, {|f_r(0)|}^{k-\ell} \, {|f_r'(0)|}^{\ell} \, \frac{1}{n}
\leq \Bigg( \binom{k-1}{\ell-1} \frac{ {|f_r(0)|}^{k-\ell} {|f_r'(0)|}^{\ell} }{ \ell! } \Bigg) n^{\ell-1} C^{n-\ell}.
\end{align*}
By equation \eqref{eq: bounding the transpose of fr} we have $0 \leq C < 1$, and so $\sum_{n = \ell}^\infty | a_{n, \ell} |$ is finite for any $1 \leq \ell \leq k$. Combined with equation \eqref{eq: exchange of sum for a}, this absolute convergence implies that
\begin{equation} \label{eq: absolute convergence of a}
\sum_{n=1}^\infty \sum_{\ell=1}^{\min\{k,n\}} |a_{n, \ell}| = \sum_{\ell = 1}^k \sum_{n = \ell}^\infty |a_{n, \ell}| < \infty.
\end{equation}

Therefore, we can reorder the sum and compute as follows.
\begin{align*}
\big( T_r v(x \, ; \, c) \big)_k
&= - \sum_{n=1}^\infty \sum_{\ell=1}^{\min\{k,n\}}
\binom{n}{\ell}\binom{k-1}{\ell-1}
{f_r^{\top}(0)}^{n-\ell}\big(-f_r(0)\big)^{k-\ell}{f_r'(0)}^{\ell}\frac{(-x)^n}{n} \\
&= - \sum_{\ell=1}^{k} \binom{k-1}{\ell-1} \big(-f_r(0)\big)^{k-\ell}{f_r'(0)}^{\ell}
\sum_{n=\ell}^\infty \binom{n}{\ell} {f_r^{\top}(0)}^{n-\ell}\, \frac{(-x)^n}{n} \\
&= - \sum_{\ell=1}^{k} \binom{k-1}{\ell-1} \big(-f_r(0)\big)^{k-\ell}{f_r'(0)}^{\ell}\frac{1}{\ell}
\sum_{n=\ell}^\infty \binom{n-1}{\ell -1} {f_r^{\top}(0)}^{n-\ell} (-x)^n.
\end{align*}

For the inner sum, we have
\begin{align*}
\sum_{n=\ell}^\infty \binom{n-1}{\ell -1} {f_r^{\top}(0)}^{n-\ell} (-x)^n
= \sum_{m=0}^\infty \binom{\ell + m - 1}{ \ell -1 } {f_r^{\top}(0)}^m (-x)^{\ell+m}
= \left( \frac{ -x }{ 1 + f_r^{\top}(0)\, x } \right)^\ell.
\end{align*}

So,
\begin{align}
\big( T_r v(x \, ; \, c) \big)_k
&= - \frac{1}{k} \sum_{\ell=1}^{k} \binom{k}{\ell} \big(-f_r(0)\big)^{k-\ell} \, {f_r'(0)}^{\ell}
\left( \frac{ -x }{ 1 + f_r^{\top}(0)\, x } \right)^\ell \nonumber\\
&= - \frac{1}{k} \sum_{\ell=1}^{k} \binom{k}{\ell} \big(-f_r(0)\big)^{k-\ell}
\left( - \frac{ x\, f_r'(0) }{ 1 + f_r^{\top}(0)\, x } \right)^{\ell} \nonumber \\
&= - \frac{1}{k} \left\{ \left( -f_r(0) - \frac{ x\, f_r'(0) }{ 1 + f_r^{\top}(0)\, x } \right)^k - \big(-f_r(0)\big)^k \right\} \nonumber \\
&= - \frac{1}{k} \left\{ \big( - f_r(x) \big)^k - \big( - f_r(0) \big)^k \right\}. \label{eq: Tvk}
\end{align}
Here, the last line follows from direct calculation. Combining with equation \eqref{eq: 0th coordinate of Tv} we conclude
\begin{equation} \label{eq: Tv}
T_r v(x\,;\,c) \;=\; v\big( f_r(x)\,;\,\ell_r(x) \big) \,-\, v\big( f_r(0)\,;\,\ell_r(0) \big).
\end{equation}
Since $f_r([-1,1]) \subset [-\rho, \rho]$, we have $f_r(\closure{\mathbb D_1}) \subset \closure{\mathbb D_\rho}$ by Lemma \ref{lemma: real contraction implies complex contraction}. Hence $|f_r(x)| \leq \rho <1$. Thus, we have $T_r(V) \subset V$.
\end{proof}

Let
\[
V_C \;\coloneqq\; \bigoplus_{r\in C} V.
\]
The infinite matrix operator $\mathrm T: V_C \to V_C$ is defined by, for $\boldsymbol u =(u_r)_{r\in C}\in V_C$ and $r' \in C$,
\[
\big( \mathrm T \, \boldsymbol u \big)_{r'}
\;\coloneqq\;
\sum_{r\in C}
\frac{\pi_rQ_{r,r'}}{\pi_{r'}}\,T_r u_r.
\]
Also, for $\boldsymbol w =(w_r)_{r\in C}\in \ell^\infty(\mathbb N_0)^C$, the weighted average of the $0$-th coordinates is defined as
\[
[\boldsymbol w]_{\boldsymbol 0} \;\coloneqq\; \sum_{r\in C}\pi_r (\boldsymbol w)_{r,\,0}.
\]
Here, and in what follows, we denote by $(\boldsymbol w)_{r,k}$ the $k$-th entry of $w_r\in \ell^\infty(\mathbb N_0)$.

Define the seed vector $\boldsymbol v = (\boldsymbol v_s)_{s \in C} \in V_C$ by
\[
\boldsymbol v_s
\;\coloneqq\;
\sum_{r\in C} \frac{\pi_r Q_{r,s}}{\pi_s}\, v\big( f_r(0)\,;\,\ell_r(0) \big).
\]

For a finite path $\mathbf r = (r_0, \dots, r_k)\in C^{k+1}$, write the probability of its occurrence by
\[
p(\mathbf r)
\coloneqq
\pi_{r_0} Q_{r_0 r_1} \cdots Q_{r_{k-1} r_k}.
\]
Also define
\[
f_{\mathbf r}^{(0)}
\;\coloneqq\;
\mathrm{id},
\qquad
f_{\mathbf r}^{(m)}
\;\coloneqq\;
f_{r_{m-1}} \circ \cdots \circ f_{r_0}
\quad \text{for } 1 \leq m \leq k+1.
\]

For each $n \geq 1$, define $S_n \in V_C$ by
\[
(S_n)_{r'}
\;\coloneqq\;
\frac{1}{\pi_{r'}} \sum_{\mathbf r=(r_0, \dots, r_{n-1}) \in C^n} p(\mathbf r) Q_{r_{n-1}, r'}\,
v\!\left( f_{\mathbf r}^{(n)}(0) \,;\, \ell_{r_{n-1}} \big( f_{\mathbf r}^{(n-1)}(0) \big) \right).
\]

\begin{proposition} \label{prop: Sn identity}
For every $n \geq 1$,
\[
\sum_{j=0}^{n-1} \mathrm T^j \boldsymbol v = S_n.
\]
\end{proposition}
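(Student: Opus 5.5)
We argue by induction on $n$. The key tool is Lemma \ref{lemma: coboundary identity for Tr}, which converts each application of $T_r$ to a generator $v(x\,;\,c)$ into a difference of two generators. It helps to observe first that the $0$-th coordinate $c$ of a generator plays no role in how $T_r$ acts, since the first column of $T_r$ vanishes. It is also useful to record that $S_{n+1}$ differs from $\mathrm T S_n$ only by a ``boundary'' term, and that this term equals $\boldsymbol v$.

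\emph{Base case.} For $n=1$ the claim reads $\boldsymbol v = S_1$. In $S_1$ the path is $\mathbf r=(r_0)$, with $p(\mathbf r)=\pi_{r_0}$, $f^{(1)}_{\mathbf r}(0)=f_{r_0}(0)$ and $\ell_{r_0}(f^{(0)}_{\mathbf r}(0))=\ell_{r_0}(0)$. So
\[
(S_1)_{r'}=\frac1{\pi_{r'}}\sum_{r_0}\pi_{r_0}Q_{r_0,r'}\,v\big(f_{r_0}(0)\,;\,\ell_{r_0}(0)\big),
\]
which is exactly the definition of $\boldsymbol v_{r'}$.

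\emph{Inductive step.} We show $S_{n+1}=\boldsymbol v+\mathrm T S_n$; then $\sum_{j=0}^{n}\mathrm T^j\boldsymbol v=\boldsymbol v+\mathrm T\sum_{j=0}^{n-1}\mathrm T^j\boldsymbol v=\boldsymbol v+\mathrm T S_n=S_{n+1}$. Apply the definition of $\mathrm T$ to $S_n$:
\[
(\mathrm T S_n)_{s}=\sum_{r'}\frac{\pi_{r'}Q_{r',s}}{\pi_s}\,T_{r'}(S_n)_{r'}.
\]
The factor $\pi_{r'}$ cancels the $1/\pi_{r'}$ in $(S_n)_{r'}$. This leaves
\[
\frac1{\pi_s}\sum_{\mathbf r\in C^n}\sum_{r'}p(\mathbf r)\,Q_{r_{n-1},r'}\,Q_{r',s}\;T_{r'}\,v\big(f^{(n)}_{\mathbf r}(0)\,;\,\cdot\big).
\]
Setting $r_n:=r'$ and $\mathbf r^+=(r_0,\dots,r_n)$, we have $p(\mathbf r)Q_{r_{n-1},r_n}=p(\mathbf r^+)$. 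Lemma \ref{lemma: coboundary identity for Tr} with $x=f^{(n)}_{\mathbf r}(0)$ gives
\[
T_{r_n}v\big(f^{(n)}_{\mathbf r}(0)\,;\,\cdot\big)=v\big(f^{(n+1)}_{\mathbf r^+}(0)\,;\,\ell_{r_n}(f^{(n)}_{\mathbf r^+}(0))\big)-v\big(f_{r_n}(0)\,;\,\ell_{r_n}(0)\big).
\]
The first term summed over $\mathbf r^+\in C^{n+1}$ is precisely $(S_{n+1})_s$.

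\emph{The subtracted term.} For the second term, summing over $r_0,\dots,r_{n-1}$ with $r_n$ fixed gives $\sum p(\mathbf r^+)=\pi_{r_n}$, by stationarity of $\pi$ under $Q|_C$ applied $n$ times. The term therefore equals
\[
\frac1{\pi_s}\sum_{r_n}\pi_{r_n}Q_{r_n,s}\,v\big(f_{r_n}(0)\,;\,\ell_{r_n}(0)\big)=\boldsymbol v_s.
\]
Hence $\mathrm T S_n=S_{n+1}-\boldsymbol v$, as needed.

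\emph{Expected obstacles.} The only delicate points are bookkeeping. First, each path is weighted by $Q_{r_{n-1},r'}$ where the ``next'' symbol $r'$ is the index at which $(S_n)_{r'}$ lives; this must align with the $\mathrm T$-weights. Second, the Log-constant in the generator must be irrelevant, which it is because the $0$-th column of $T_r$ vanishes. Third, $|f^{(n)}_{\mathbf r}(0)|\le\rho<1$, so $S_n\in V_C$ and the lemma applies.
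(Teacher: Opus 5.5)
Your proposal is correct and follows essentially the same route as the paper: induction on $n$, with base case $S_1=\boldsymbol v$, and the inductive step $\mathrm T S_n = S_{n+1}-\boldsymbol v$ obtained from linearity of $T_r$ and the coboundary identity of Lemma \ref{lemma: coboundary identity for Tr}. In that step the first term reassembles into $S_{n+1}$, and the second collapses to $\boldsymbol v$ by stationarity of $(\pi_r)_{r\in C}$ under $Q|_C$.
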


\begin{proof}
For $n=1$, we have
\begin{equation*}
(S_1)_{r'}
=
\frac{1}{\pi_{r'}} \sum_{r_0\in C} \pi_{r_0} Q_{r_0, r'}\, v\big( f_{r_0}(0) \,;\, \ell_{r_0}(0) \big)
=
\boldsymbol v_{r'}
\end{equation*}
for every $r' \in C$, by the definition of $\boldsymbol v$. Hence $S_1=\boldsymbol v$.

Now assume that the identity holds for some $n \geq 1$. Then
\[
\sum_{j=0}^{n} \mathrm T^j \boldsymbol v
=
\boldsymbol v + \mathrm T S_n.
\]
Therefore it is enough to prove that $\mathrm T S_n = S_{n+1}-\boldsymbol v$. Fix $r' \in C$. By the definition of $\mathrm T$,
\[
(\mathrm T S_n)_{r'}
=
\sum_{t \in C} \frac{\pi_{t} Q_{t, r'}}{\pi_{r'}}\, T_t \big( (S_n)_t \big).
\]
Using the definition of $(S_n)_t$ and the linearity of $T_t$, (while renaming $t$ to $r_n$,)
\begin{align*}
(\mathrm T S_n)_{r'}
&= \frac{1}{\pi_{r'}} \sum_{t \in C} \pi_t Q_{t, r'} T_t \left( \frac{1}{\pi_t} \sum_{\mathbf s = (r_0,\dots,r_{n-1}) \in C^n} p(\mathbf s) Q_{r_{n-1}, \,t} \; v\!\left( f_{\mathbf s}^{(n)}(0) \,;\, \ell_{r_{n-1}} \big(f_{\mathbf s}^{(n-1)}(0)\big) \right) \right) \\
&=
\frac{1}{\pi_{r'}}
\sum_{\mathbf r = (r_0,\dots,r_n) \in C^{n+1}}
p(\mathbf r) Q_{r_n, r'} \, T_{r_n} v\!\left( f_{\mathbf r}^{(n)}(0) \,;\, \ell_{r_{n-1}} \big(f_{\mathbf r}^{(n-1)}(0)\big) \right).
\end{align*}
By Lemma \ref{lemma: coboundary identity for Tr}, for any $\mathbf r = (r_0,\dots,r_n) \in C^{n+1}$,
\begin{align*}
T_{r_n}
v\Big( f_{\mathbf r}^{(n)}(0) \,;\, \ell_{r_{n-1}}\big( f_{\mathbf r}^{(n-1)}(0) \big) \Big)
&\;=\;
v\Big( f_{\mathbf r}^{(n+1)}(0) \,;\, \ell_{r_n}\big( f_{\mathbf r}^{(n)}(0) \big) \Big)
- v\Big( f_{r_n}(0) \,;\, \ell_{r_n}(0) \Big).
\end{align*}
Hence $(\mathrm T S_n)_{r'}$ is the difference of two terms. The first term is,
\[
\frac{1}{\pi_{r'}}
\sum_{\mathbf r = (r_0,\dots,r_n) \in C^{n+1}}
p(\mathbf r) Q_{r_n, r'} \,
v\!\left( f_{\mathbf r}^{(n+1)}(0) \,;\, \ell_{r_n}\big( f_{\mathbf r}^{(n)}(0) \big) \right)
=
(S_{n+1})_{r'}.
\]
The second term computes as, by using the stationarity of the vector $(\pi_r)_{r\in C}$,
\begin{align*}
&- \frac{1}{\pi_{r'}} \sum_{\mathbf r = (r_0,\dots,r_n)\in C^{n+1}} p(\mathbf r)Q_{r_n,r'} \; v\big(f_{r_n}(0) \,;\, \ell_{r_n}(0)\big)  \\
&=
- \frac{1}{\pi_{r'}} \sum_{r_n \in C}
\left( \sum_{\mathbf s = (r_0,\dots,r_{n-1})\in C^n} p(\mathbf s) Q_{r_{n-1}, r_n}\right)
Q_{r_n,r'} \;v\big(f_{r_n}(0) \,;\, \ell_{r_n}(0)\big) \\
&=
- \frac{1}{\pi_{r'}} \sum_{r_n \in C}
\Big( \left(\pi_{r_0}\right)_{r_0\in C} {(Q|_C)}^n\Big)_{r_n}
Q_{r_n,r'} \;v\big(f_{r_n}(0) \,;\, \ell_{r_n}(0)\big) \\
&=
- \frac{1}{\pi_{r'}} \sum_{r_n \in C}
\pi_{r_n}
Q_{r_n,r'}\; v\big(f_{r_n}(0) \,;\, \ell_{r_n}(0)\big)
=
-\boldsymbol v_{r'}.
\end{align*}
Thus $(\mathrm T S_n)_{r'} = (S_{n+1})_{r'}-\boldsymbol v_{r'}$, and $\mathrm T S_n = S_{n+1}-\boldsymbol v$.
\end{proof}

The identity obtained in the proposition above states that, in fact, the finite sum of iterates $\mathrm T^j \boldsymbol v$ is exactly the ``discrete approximation'' of $\eta^C$. Hence, taking the limit we recover the integral.

\begin{proposition} \label{prop: infinite sum equals the integral}
We have
\[
\sum_{n=0}^\infty \big[ \mathrm T^n \boldsymbol v\big]_{\boldsymbol 0}
=
\sum_{r\in C} \pi_r \int_{(-1,1)} \log {\|B_r \psi(x)\|}_1 \,d\eta_r^C(x).
\]
\end{proposition}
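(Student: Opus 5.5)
By Proposition \ref{prop: Sn identity}, the partial sums satisfy $\sum_{j=0}^{n-1}[\mathrm T^j\boldsymbol v]_{\boldsymbol 0}=[S_n]_{\boldsymbol 0}$. So the plan is to compute $[S_n]_{\boldsymbol 0}$ explicitly as an integral against an iterate of $\mathscr H$, and then pass to the limit using the contraction of Proposition \ref{prop: contraction of H}.

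First I take the $0$-th coordinate of $S_n$. Since $(v(x;c))_0=c$, weighting by $\pi_{r'}$ and summing over $r'$ kills the factor $1/\pi_{r'}$, and $\sum_{r'}Q_{r_{n-1},r'}=1$. This gives
\[
[S_n]_{\boldsymbol 0}=\sum_{\mathbf r=(r_0,\dots,r_{n-1})\in C^n}p(\mathbf r)\,\ell_{r_{n-1}}\big(f^{(n-1)}_{\mathbf r}(0)\big).
\]
Next I identify this with an integral. Let $\delta=(\delta_0)_{r\in C}\in\mathscr P_1^C$. An induction on the definition of $\mathscr H$, using $p(\mathbf r)$ and stationarity exactly as in the proof of Proposition \ref{prop: Sn identity}, shows
\[
\pi_{s}\,(\mathscr H^{n-1}\delta)_{s}=\sum_{\mathbf r\in C^n,\ r_{n-1}=s}p(\mathbf r)\,\delta_{f^{(n-1)}_{\mathbf r}(0)}.
\]
Hence
\[
[S_n]_{\boldsymbol 0}=\sum_{r\in C}\pi_r\int \ell_r\,d(\mathscr H^{n-1}\delta)_r.
\]
The integrand also matches. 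For real $x\in[-1,1]$ we have $\|B_r\psi(x)\|_1=\tfrac12\big((a_r-b_r+c_r-d_r)x+(a_r+b_r+c_r+d_r)\big)=\gamma_rx+\delta_r$, which is positive by Lemma \ref{lemma: denominator is positive}. So $\ell_r(x)=\log\|B_r\psi(x)\|_1$ on $(-1,1)$.

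It then suffices to show that $\int\ell_r\,d(\mathscr H^{n-1}\delta)_r\to\int\ell_r\,d\eta^C_r$ for each $r$. By Proposition \ref{prop: contraction of H}, $\pazocal W(\mathscr H^{n-1}\delta,\eta^C)\le\rho^{n-1}\pazocal W(\delta,\eta^C)\to0$. By Kantorovich--Rubinstein duality, it is enough that each $\ell_r$ is Lipschitz with respect to $d_{\mathrm{hyp}}$ on the relevant set. Both $\mathscr H^{n-1}\delta$ (for $n\ge2$) and $\eta^C$ are supported on $[-\rho,\rho]$, by \eqref{eq: every f is a rho-contraction} and the proof of Proposition \ref{prop: integral formula of lambda for the lift}. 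On $[-\rho,\rho]$, $\ell_r$ is smooth with bounded Euclidean derivative, and $d_{\mathrm{hyp}}(x,y)\ge 2|x-y|$. So $\ell_r$ is Lipschitz there with some constant $K$, and
\[
\Big|[S_n]_{\boldsymbol 0}-\sum_r\pi_r\int\ell_r\,d\eta^C_r\Big|\le K\,\pazocal W(\mathscr H^{n-1}\delta,\eta^C)\le K\rho^{n-1}\pazocal W(\delta,\eta^C).
\]
This proves convergence of the series, at a geometric rate, to the stated integral.

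The main obstacle is bookkeeping rather than analysis: verifying the induction formula for $\mathscr H^{n-1}\delta$, with the right indices, so that the last letter $r_{n-1}$ labels both the component and the $\ell$ used. The Lipschitz step only needs the observation that all supports stay in $[-\rho,\rho]$, which avoids the blow-up of $\ell_r$ near the boundary.
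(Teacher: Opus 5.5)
Your proof is correct and follows the paper's route. Both arguments identify the partial sums with $[S_n]_{\boldsymbol 0}=\sum_{r\in C}\pi_r\int\ell_r\,d(\mathscr H^{n-1}\delta^0)_r$ via the explicit formula for the iterates of $\mathscr H$ on Dirac masses, and then pass to the limit using the contraction of Proposition \ref{prop: contraction of H}.

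The only difference is the final step. The paper converts $\pazocal W$-convergence into weak convergence and uses that $\ell_r$ is bounded and continuous. You instead apply the $d_{\mathrm{hyp}}$-Lipschitz coupling bound directly, which is the argument the paper saves for Lemma \ref{lemma: hyperbolic lipschitz constant} and Proposition \ref{prop: cutting the tail of the Neumann series}, and this gives the geometric rate at once. Your restriction to $[-\rho,\rho]$ is not needed: since $\gamma_r x+\delta_r\ge\delta_r-|\gamma_r|>0$ on $[-1,1]$, $\ell_r$ is already globally $d_{\mathrm{hyp}}$-Lipschitz, so there is no boundary blow-up to avoid.
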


\begin{proof}
Let
\[
\delta^0
\coloneqq
(\delta_0)_{r\in C},
\qquad
\boldsymbol \nu^{(n)}
\coloneqq
\mathscr H^n(\delta^0)
\quad \text{for } n\geq 0,
\]
where $\delta_a$ denotes the Dirac mass at $a \in (-1,1)$.

Write $\boldsymbol \nu^{(n)} = \big( \nu^{(n)}_r \big)_{r \in C}$. We first prove that for every $n\geq 1$ and every $r\in C$,
\begin{equation} \label{eq: explicit formula for nu-n}
\nu_r^{(n)}
=
\frac{1}{\pi_r} \sum_{\mathbf r = (r_0,\dots,r_{n-1})\in C^n} p(\mathbf r) Q_{r_{n-1}, r}\, \delta_{f_{\mathbf r}^{(n)}(0)}.
\end{equation}
For $n=1$, this is immediate from the definition of $\delta^0$ and $\mathscr H$. Assume that the formula holds for some $n\geq 1$. Then, for any $r' \in C$,
\begin{align*}
\nu_{r'}^{(n+1)}
&=
(\mathscr H \boldsymbol \nu^{(n)})_{r'}
=
\sum_{r\in C}
\frac{\pi_r Q_{r,r'}}{\pi_{r'}}\,
(f_r)_* \nu_r^{(n)} \\
&=
\sum_{r\in C}
\frac{\pi_r Q_{r,r'}}{\pi_{r'}}\,
(f_r)_*
\left(
\frac{1}{\pi_r} \sum_{\mathbf s = (r_0,\dots,r_{n-1})\in C^n} p(\mathbf s) Q_{r_{n-1}, r}\, \delta_{f_{\mathbf s}^{(n)}(0)}
\right) \\
&=
\frac{1}{\pi_{r'}} \sum_{\mathbf r = (r_0,\dots,r_n)\in C^{n+1}} p(\mathbf r) Q_{r_n, r'}\, \delta_{f_{\mathbf r}^{(n+1)}(0)}.
\end{align*}
This proves equation \eqref{eq: explicit formula for nu-n} by induction.

Let $n \geq 2$. By the definition of $S_n$ and equation \eqref{eq: explicit formula for nu-n},

\begin{align*}
[S_n]_{\boldsymbol 0}
&=
\sum_{r_n\in C}\pi_{r_n} \big(S_n\big)_{r_n,\,0}
=
\sum_{r_n \in C} \; \sum_{\mathbf s = (r_0,\dots,r_{n-1})\in C^n} p(\mathbf s) Q_{r_{n-1}, r_n}\, \ell_{r_{n-1}}\big(f_{\mathbf s}^{(n-1)}(0) \big) \\
&=
\sum_{\mathbf s = (r_0,\dots,r_{n-1})\in C^n} p(\mathbf s)
\left(
\sum_{r_n \in C} Q_{r_{n-1}, r_n}
\right)
\ell_{r_{n-1}}\big(f_{\mathbf s}^{(n-1)}(0) \big) \\
&=
\sum_{r \in C} \; \sum_{\mathbf r = (r_0,\dots,r_{n-2})\in C^{n-1}} p(\mathbf r) Q_{r_{n-2}, r} \; \ell_r\big(f_{\mathbf r}^{(n-1)}(0) \big)
\;=\;
\sum_{r \in C} \pi_r \int_{(-1,1)} \ell_r(x) d\nu^{(n-1)}_r(x).
\end{align*}
Same identity holds for $n=1$. Therefore, by Proposition \ref{prop: Sn identity}, for every $n\geq 1$,
\begin{equation} \label{eq: partial sum formula for Lambda-C}
\left[
\sum_{j=0}^{n-1} \mathrm T^j \boldsymbol v
\right]_{\boldsymbol 0}
=
\sum_{r\in C}
\pi_r \int_{(-1,1)}\!\ell_r(x)\,d\nu_r^{(n-1)}\!(x).
\end{equation}

Since $\eta^C=(\eta_r^C)_{r\in C}$ is the unique fixed point of $\mathscr H$, Proposition \ref{prop: contraction of H} implies that
\[
\boldsymbol \nu^{(n)}
=
\mathscr H^n(\delta^0)
\;\xrightarrow[]{n \to \infty}\;
\eta^C
\qquad \text{in } (\mathscr P_1^C,\pazocal W).
\]
In particular, $\nu_r^{(n)}$ converges weakly to $\eta_r^C$ for each $r\in C$ by \cite[Theorem 6.9]{Villani}. Since each $\ell_r$ is bounded and continuous on $[-1,1]$, we conclude that
\[
\sum_{j=0}^\infty \big[ \mathrm T^j \boldsymbol v \big]_{\boldsymbol 0}
=
\lim_{n\to\infty}
\sum_{r\in C}
\pi_r \int_{(-1,1)} \ell_r(x)\,d\nu_r^{(n-1)}(x)
=
\sum_{r\in C}
\pi_r \int_{(-1,1)} \ell_r(x)\,d\eta_r^C(x).
\]
Finally, we use the following identity and conclude the proof.
\[
\ell_r(x)
\;=\;
\Log(\gamma_r x + \delta_r)
\;=\;
\log {\|B_r \psi(x)\|}_1.
\]
\end{proof}

\begin{remark}
The argument above also proved that for any $r \in C$,
\begin{align*}
\sum_{j=0}^{n-1} \big( \mathrm T^j \boldsymbol v \big)_{r,\,0}
&\;=\;
\sum_{t \in C} \frac{\pi_t Q_{t, r} }{\pi_r} \int_{(-1,1)} \log {\|B_t \psi(x)\|}_1 \,d\nu^{(n-1)}_t(x) \\
&\;\xrightarrow[]{n \to \infty}\;
\sum_{t \in C} \frac{\pi_t Q_{t, r} }{\pi_r} \int_{(-1,1)} \log {\|B_t \psi(x)\|}_1 \,d\eta_t^C(x),
\end{align*}
and for any $k \geq 1$,
\begin{align*}
\sum_{j=0}^{n-1} \big( \mathrm T^j \boldsymbol v \big)_{r,\,k}
\;=\;
\frac{(-1)^{k-1}}{k} \int_{(-1,1)} x^k \,d\nu^{(n)}_r(x)
\;\xrightarrow[]{n \to \infty}\;
\frac{(-1)^{k-1}}{k} \int_{(-1,1)} x^k \,d\eta_r^C(x).
\end{align*}
\end{remark}

Recall that $d$ is the original period of $Q|_C$, before acceleration.
\begin{theorem} \label{thm: infinite sum equals lambda}
We have
\begin{equation} \label{eq: precise infinite series representation of the Lyapunov exponent}
\lambda_+(A, P)
\;=\;
\sum_{n=0}^\infty \big[ \mathrm T^n \widehat{\boldsymbol v}]_{\boldsymbol 0}
\quad \text{where} \quad
\widehat{\boldsymbol v} = \frac1d \boldsymbol v.
\end{equation}
\end{theorem}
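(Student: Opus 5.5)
The proof chains together results already established, and only the bookkeeping of the aperiodic reduction needs care.

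First, I apply Proposition \ref{prop: infinite sum equals the integral} and Proposition \ref{prop: integral formula of lambda for the lift} to the (possibly accelerated) data. The objects $C$, $Q|_C$, $\Sigma_C$, $B_r$, $\pi$ are those obtained after the replacement, and these results were proved for them. Together they give
\[
\sum_{n=0}^\infty \big[\mathrm T^n \boldsymbol v\big]_{\boldsymbol 0} \;=\; \lambda_+\big(B,Q|_C\big),
\]
where $B$ and $Q|_C$ denote the accelerated cocycle and chain. Since $\mathrm T$ is linear and $[\,\cdot\,]_{\boldsymbol 0}$ is linear, replacing $\boldsymbol v$ by $\widehat{\boldsymbol v}=\frac1d\boldsymbol v$ scales every term by $1/d$. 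The series still converges, and its sum becomes $\frac1d$ times the accelerated exponent.

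Second, I relate the accelerated exponent to the original one. Let $\widetilde\lambda$ denote the exponent of the lifted system before acceleration, that is, $\lambda_+(B,Q|_C)$ for the original $B_r$ and the irreducible $Q|_C$ of period $d$. I apply Proposition \ref{prop: aperiodic reduction} to $(\Sigma_C,Q|_C)$ with cocycle $B$. Part (3) says the accelerated exponent, computed with respect to the Markov measure of $\widetilde P$, equals $d\,\widetilde\lambda$.

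One point needs checking here. After the replacement, the "$\pi$" and the cocycle used in Propositions \ref{prop: integral formula of lambda for the lift} and \ref{prop: infinite sum equals the integral} must be exactly the stationary vector of $\widetilde P$ and the products $B_{r_{d-1}}\cdots B_{r_0}$. They must also still satisfy the positivity hypothesis. Products of positive matrices are positive, and the products of the $f_r$ are still $\rho$-contractions, so the earlier arguments apply verbatim. Hence $\frac1d\cdot d\,\widetilde\lambda=\widetilde\lambda$. If $d=1$, this step is vacuous.

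Third, Lemma \ref{lemma: Lyapunov exponent identity for the lift} gives $\widetilde\lambda=\lambda_+(A,P)$, which concludes the proof.

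The main obstacle is the second step: making sure the accelerated system, with alphabet $\widetilde X\subset C^d$, fits all the hypotheses used in the Malheiro--Viana argument. The hypotheses are a primitive transition matrix, positive matrices with a uniform $\rho$, and the existence of a recurrent class equal to the whole alphabet. Proposition \ref{prop: aperiodic reduction}(1) gives primitivity and (2) gives the product form. If $\widetilde P$ is only irreducible on part of $C^d$, I would restrict to one cyclic class as in the Appendix. Its Markov measure still yields exponent $d\widetilde\lambda$ by (3), so the identity is unaffected.
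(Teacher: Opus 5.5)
Your proof is correct and follows the paper's own argument. The paper's proof simply chains Proposition~\ref{prop: infinite sum equals the integral} and Proposition~\ref{prop: integral formula of lambda for the lift} (applied to the accelerated data), then Proposition~\ref{prop: aperiodic reduction}(3), then Lemma~\ref{lemma: Lyapunov exponent identity for the lift}, absorbing the factor $1/d$ by linearity exactly as you do. Your closing fallback about restricting to a cyclic class is unnecessary, because Proposition~\ref{prop: aperiodic reduction}(1) already makes $\widetilde P$ irreducible and aperiodic on all of $\widetilde X$.
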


\begin{proof}
This follows from Proposition \ref{prop: aperiodic reduction}, Lemma \ref{lemma: Lyapunov exponent identity for the lift}, Proposition \ref{prop: integral formula of lambda for the lift}, and Proposition \ref{prop: infinite sum equals the integral}.
\end{proof}

With Theorem \ref{thm: infinite sum equals lambda}, we have completed the proof of Theorem \ref{thm: explicit representation of the Lyapunov exponent in the introduction}.

\section{Evaluation and Polynomial-time Convergence} \label{section: Evaluation and Polynomial-time Convergence}

\noindent\textbf{Overview of this section.}
In this section we prove Theorem \ref{thm: polynomial computation of the Lyapunov exponent in introduction} from the introduction. Let $(\Sigma, P)$ be a transitive Markov shift, $A$ be the locally constant function on $\Sigma$ defined in equation \eqref{eq: definition of locally constant function A}, and write $\lambda = \lambda_+(A, P)$. All other notation is retained from the previous section, including the aperiodic system $(\Sigma_C, Q|_C)$ obtained after the aperiodic reduction. The approximation of $\lambda$ is done by truncating the infinite series in equation \eqref{eq: precise infinite series representation of the Lyapunov exponent} to its first $n$ terms, and then replacing the infinite matrix $\mathrm T$ by its finite truncation $\mathrm T_{m\times m}$. This decomposes the error into two terms of different nature,
\[
\left|
\lambda - \sum_{\ell=0}^{n-1} \big[ \mathrm T_{m\times m}^\ell \widehat{\boldsymbol v} \big]_{\boldsymbol 0} 
\right|
\;\leq\;
\left|
\sum_{\ell = n}^\infty \big[ \mathrm T^\ell \widehat{\boldsymbol v} \big]_{\boldsymbol 0}
\right|
\;+\;
\left|
\sum_{\ell=0}^{n-1} \Big[ \mathrm T^\ell \widehat{\boldsymbol v} \;-\; \mathrm T_{m\times m}^\ell \widehat{\boldsymbol v} \Big]_{\boldsymbol 0}
\right|.
\]
Subsection \ref{subsection: Cutting the tail of the series} treats the first term. This is a straightforward application of the contraction property of the Markov operator $\mathscr H$ proved earlier. The second term requires more delicate analysis: to control how the truncation error accumulates through iteration, we introduce integral representations for $\mathrm T$ and $\mathrm T_{m\times m}$ in Subsection \ref{subsection: Integral representations}. These representations are extended to the iterates $\mathrm T^n$ and $\mathrm T_{m\times m}^n$ in Subsection \ref{subsection: Integral representations for iterates}. Finally, Subsection \ref{subsection: Integral operators and telescoping} defines the corresponding holomorphic integral operators, proves their oscillation bounds, and combines them with a telescoping identity to obtain an explicit estimate for the second term.

\medskip

For each $r \in C$ and $m \in \mathbb N$, let $T_{r,m}$ denote the infinite matrix obtained by keeping the upper-left $m \times m$ submatrix of $T_r$ and setting all other entries to zero. More precisely, writing $T_{r,m} = \left(c^{(r)}_{k,n}\right)_{k,n\in \mathbb N_0}$,
\[
c^{(r)}_{k,n}
\coloneqq
\begin{dcases}
b^{(r)}_{k,n} & \text{if } 0 \leq k, n \leq m-1, \\
0 & \text{otherwise.}
\end{dcases}
\]
The map $T_{r,m}: \ell^\infty(\mathbb N_0) \to \ell^\infty(\mathbb N_0)$ is defined by coordinatewise matrix-vector multiplication. Then, we define $\mathrm T_{m \times m} : \ell^\infty(\mathbb N_0)^C \to \ell^\infty(\mathbb N_0)^C$ by
\[
\big( \mathrm T_{m \times m} \boldsymbol u \big)_{r'}
\;\coloneqq\;
\sum_{r\in C} \frac{\pi_r Q_{r,r'}}{\pi_{r'}} \, T_{r,m}u_r
\qquad \text{for } \boldsymbol u=(u_r)_{r\in C}\in \ell^\infty(\mathbb N_0)^C,\;\; r' \in C.
\]
Theorem \ref{thm: precise bounds} presents a self-contained statement of the final bound.

\medskip

This section follows the same strategy as \cite[Section 4]{Alibabaei26-2}, extended here to the Markov-shift setting.

\subsection{Cutting the tail of the series} \label{subsection: Cutting the tail of the series}

We will work with $\boldsymbol v$, and at the very end we divide by $d$ to obtain a bound in terms of $\widehat{\boldsymbol v}$.
\begin{proposition} \label{prop: cutting the tail of the Neumann series}
Let $0 < \rho < 1$ be as in equation \eqref{eq: every f is a rho-contraction}. Define
\[
E_C
=
\frac{1}{1-\rho} \left(
\max_{r\in C}
\frac{\left| f_r^{\top}(0) \right|}{1+\sqrt{1-{\left| f_r^{\top}(0) \right|}^2}}
\right) \left(
\sum_{r\in C}\pi_r\, d_{\mathrm{hyp}}\big( f_r(0),0 \big)
\right).
\]
Then, for any $n \in \mathbb N$,
\[
\Bigg|
\sum_{k=0}^\infty \big[ \mathrm T^k \boldsymbol v \big]_{\boldsymbol 0}
-\sum_{k=0}^{n-1} \big[ \mathrm T^k \boldsymbol v \big]_{\boldsymbol 0}
\Bigg|
\leq
E_C \rho^{n-1}.
\]
\end{proposition}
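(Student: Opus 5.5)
The plan is to use equation \eqref{eq: partial sum formula for Lambda-C} from the proof of Proposition \ref{prop: infinite sum equals the integral}. It expresses the $n$-th partial sum as $\sum_{r}\pi_r\int \ell_r\,d\nu_r^{(n-1)}$, and the full sum as $\sum_r \pi_r\int \ell_r\,d\eta_r^C$. The tail therefore equals
\[
\sum_{r\in C}\pi_r\left(\int \ell_r\,d\eta^C_r-\int \ell_r\,d\nu^{(n-1)}_r\right).
\]
We bound it by a Lipschitz constant of $\ell_r$ with respect to $d_{\mathrm{hyp}}$ times $\pazocal W(\eta^C,\boldsymbol\nu^{(n-1)})$. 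Kantorovich duality gives $\big|\int \ell_r\,d\eta-\int\ell_r\,d\nu\big|\le \mathrm{Lip}_{d_{\mathrm{hyp}}}(\ell_r)\,W_1(\eta,\nu)$; this is elementary via any coupling.

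\textbf{Step 1 (Lipschitz constant).} Since $\ell_r(x)=\log\delta_r+\Log(1+c_r x)$ with $c_r=f_r^\top(0)\in(-1,1)$, we have $\ell_r'(x)=c_r/(1+c_rx)$. The hyperbolic length element on $(-1,1)$ is $2\,dx/(1-x^2)$, so
\[
\mathrm{Lip}_{d_{\mathrm{hyp}}}(\ell_r)=\sup_{x\in(-1,1)}\frac{|c_r|(1-x^2)}{2|1+c_rx|}.
\]
Take $c=|c_r|$ and, by symmetry, maximize $g(x)=(1-x^2)/(1-cx)$. Setting the derivative to zero gives $cx^2-2x+c=0$, so $x=(1-\sqrt{1-c^2})/c$. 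Substituting, the maximum is $2(1-\sqrt{1-c^2})/c^2=2/(1+\sqrt{1-c^2})$. Hence $\mathrm{Lip}(\ell_r)=c/(1+\sqrt{1-c^2})$, which is exactly the factor appearing in $E_C$. Let $K$ denote the maximum of this quantity over $r$.

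\textbf{Step 2 (Wasserstein distance).} Apply Proposition \ref{prop: contraction of H} repeatedly:
\[
\pazocal W(\boldsymbol\nu^{(n-1)},\eta^C)\le\rho^{n-1}\,\pazocal W(\delta^0,\eta^C),
\]
and
\[
\pazocal W(\delta^0,\eta^C)\le\sum_{k\ge0}\pazocal W(\boldsymbol\nu^{(k)},\boldsymbol\nu^{(k+1)})\le\frac{1}{1-\rho}\,\pazocal W(\delta^0,\mathscr H\delta^0).
\]
Since $\delta^0$ consists of Dirac masses, $W_1(\delta_0,(\mathscr H\delta^0)_{r'})=\sum_r \frac{\pi_rQ_{r,r'}}{\pi_{r'}}d_{\mathrm{hyp}}(f_r(0),0)$. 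Multiplying by $\pi_{r'}$ and summing with $\sum_{r'}Q_{r,r'}=1$ gives $\pazocal W(\delta^0,\mathscr H\delta^0)=\sum_r\pi_r d_{\mathrm{hyp}}(f_r(0),0)$.

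\textbf{Step 3 (combine).} The tail is at most $\sum_r\pi_r K\,W_1(\eta_r^C,\nu_r^{(n-1)})=K\,\pazocal W\le E_C\rho^{n-1}$. The case $n=1$ is consistent, since the partial-sum formula holds for $n\ge1$.

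The main obstacle is Step 1. We must handle complex-valued $\Log$ correctly, but on the real interval $\ell_r$ is real since $1+c_rx>0$. We also need to confirm that the $d_{\mathrm{hyp}}$-Lipschitz constant is the supremum of the ratio of derivative to length density, which holds because $((-1,1),d_{\mathrm{hyp}})$ is a geodesic (length) space. The elementary maximization then gives the stated constant.
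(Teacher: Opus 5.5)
Your proposal is correct and follows essentially the same route as the paper. In both, the tail is written via the partial-sum formula as $\sum_r\pi_r\big(\int\ell_r\,d\eta^C_r-\int\ell_r\,d\nu^{(n-1)}_r\big)$ and bounded by $\max_r \frac{|f_r^\top(0)|}{1+\sqrt{1-|f_r^\top(0)|^2}}$ times $\pazocal W(\eta^C,\boldsymbol\nu^{(n-1)})$; the $\rho$-contraction of $\mathscr H$ then bounds this by $\frac{\rho^{n-1}}{1-\rho}\sum_r\pi_r\,d_{\mathrm{hyp}}(f_r(0),0)$. The only cosmetic difference is that you first contract $\pazocal W(\delta^0,\eta^C)$ and then apply the Banach a priori estimate, whereas the paper directly sums $\pazocal W(\boldsymbol\nu^{(k+1)},\boldsymbol\nu^{(k)})$ over $k\ge n-1$; both give the same constant.
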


Before giving the proof, we first record the following lemma.
\begin{lemma} \label{lemma: hyperbolic lipschitz constant}
Let $g \in C^1(-1,1)$. Define
\[
L(g) = \sup_{x \in (-1,1)} \frac{1-x^2}{2} \left| g'(x) \right|.
\]
If $L(g)$ is finite, then for any $\nu_1, \nu_2 \in \mathscr{P}_1(-1,1)$,
\[
\left| \int g \, d\nu_1 - \int g \, d\nu_2 \right| \leq L(g) \, W_1(\nu_1, \nu_2).
\]
\end{lemma}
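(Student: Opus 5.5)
The plan is to show that $g$ is $L(g)$-Lipschitz with respect to $d_{\mathrm{hyp}}$ on $(-1,1)$, and then integrate this bound against an almost-optimal coupling.

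\textbf{Step 1: Lipschitz bound in the hyperbolic metric.} Substitute $x = \tanh(u/2)$, so that $u = 2\,\mathrm{artanh}(x)$. By the formula recorded earlier, $d_{\mathrm{hyp}}(x,y) = |u - v|$ for real $x,y \in (-1,1)$ with $u = 2\,\mathrm{artanh}(x)$, $v = 2\,\mathrm{artanh}(y)$. Set $h(u) = g(\tanh(u/2))$. Then
\[
h'(u) = g'(x)\,\frac{1-x^2}{2},
\]
since $\frac{d}{du}\tanh(u/2) = \frac{1}{2}\big(1-\tanh^2(u/2)\big)$. Hence $|h'| \leq L(g)$ on $\mathbb R$. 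By the mean value theorem,
\[
|g(x) - g(y)| = |h(u) - h(v)| \leq L(g)\,|u - v| = L(g)\, d_{\mathrm{hyp}}(x,y).
\]

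\textbf{Step 2: integrability.} From Step 1, $|g(x)| \leq |g(0)| + L(g)\, d_{\mathrm{hyp}}(x,0)$. Since $\nu_1, \nu_2 \in \mathscr P_1(-1,1)$, the function $g$ is integrable against both measures, so both integrals in the statement are finite.

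\textbf{Step 3: coupling.} Take any coupling $\kappa \in \Pi(\nu_1,\nu_2)$. Because the marginals of $\kappa$ are $\nu_1$ and $\nu_2$,
\[
\int g\,d\nu_1 - \int g\,d\nu_2 = \int \big(g(x) - g(y)\big)\,d\kappa(x,y).
\]
Both pieces are integrable by Step 2, so this splitting is legitimate. Taking absolute values and applying Step 1 gives
\[
\left| \int g\,d\nu_1 - \int g\,d\nu_2 \right| \leq L(g) \int d_{\mathrm{hyp}}(x,y)\,d\kappa(x,y).
\]
Taking the infimum over $\kappa \in \Pi(\nu_1,\nu_2)$ yields the bound $L(g)\,W_1(\nu_1,\nu_2)$. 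This is the easy direction of Kantorovich--Rubinstein duality, so no duality theorem is needed.

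\textbf{Main obstacle.} There is no real obstacle. The only point that needs care is the integrability check in Step 2, which is what justifies splitting the coupling integral into the two separate integrals.
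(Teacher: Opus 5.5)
Your proposal is correct and follows essentially the same route as the paper. The paper obtains the pointwise bound $|g(x)-g(y)|\leq L(g)\,d_{\mathrm{hyp}}(x,y)$ by writing $g(x)-g(y)=\int_x^y \frac{1-t^2}{2}g'(t)\,\frac{2\,dt}{1-t^2}$, which is your substitution $x=\tanh(u/2)$ in integral form; it then gets integrability by setting $y=0$, integrates against an arbitrary coupling, and takes the infimum, exactly as you do.
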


\begin{proof}
First, for any $x, y \in (-1, 1)$,
\begin{align*}
\left| g(x) - g(y) \right|
&=
\left| \int_x^y g'(t)\,dt \right|
=
\left| \int_x^y \frac{1-t^2}{2} g'(t)\,\frac{2\,dt}{1-t^2} \right|
\leq
L(g) \, d_{\mathrm{hyp}}(x,y).
\end{align*}
In particular, $\int g\,d\nu$ exists for any $\nu \in \mathscr{P}_1(-1,1)$ by letting $y=0$.
Take any coupling $\eta \in \Pi(\nu_1,\nu_2)$. Then
\[
\left| \int g\,d\nu_1 - \int g\,d\nu_2 \right|
=
\left| \int \big( g(x)-g(y) \big)\,d\eta(x,y) \right|
\leq
L(g) \int d_{\mathrm{hyp}}(x,y)\,d\eta(x,y).
\]
Taking the infimum over $\eta$ proves the desired inequality.
\end{proof}

\begin{proof}[Proof of Proposition \ref{prop: cutting the tail of the Neumann series}]
We recall the notations: for each $r\in C$, write
\[
F(B_r)
=
\begin{pmatrix}
\alpha_r & \beta_r \\
\gamma_r & \delta_r
\end{pmatrix},
\qquad
\ell_r(x)
=
\mathrm{Log}\big( \gamma_r x + \delta_r \big)
\quad \text{for \;} x \in (-1,1).
\]
Also recall that
$
\boldsymbol \nu^{(n)}
=
\big( \nu_r^{(n)} \big)_{r\in C}
=
\mathscr H^n(\delta^0)
$
for each $n \geq 0$, where $\delta^0 = (\delta_0)_{r\in C}$ and $\delta_0$ denotes the Dirac measure at $0$.

A direct calculus computation shows that for $|a|<1$, $\sup_{x\in(-1,1)} \frac{1-x^2}{1+ax} = \frac{2}{1+\sqrt{1-a^2}}$. Therefore, for each $r\in C$,
\[
L(\ell_r)
=
\sup_{x\in(-1,1)}
\frac{1-x^2}{2}
\left| \frac{\gamma_r}{\gamma_r x+\delta_r} \right|
=
\frac{1}{2}\cdot \left| f_r^\top(0) \right| \sup_{x\in(-1,1)} \frac{1-x^2}{1+\left| f_r^\top(0) \right|x}
=
\frac{\left| f_r^\top(0) \right|}{1+\sqrt{1-{\left| f_r^\top(0) \right|}^2}}.
\]

Since $\ell_r(x)=\log \|B_r\psi(x)\|_1$ for $x\in[-1,1]$, Proposition
\ref{prop: infinite sum equals the integral} and equation
\eqref{eq: partial sum formula for Lambda-C} give
\begin{align*}
\Bigg| \sum_{k=0}^\infty \big[ \mathrm T^k \boldsymbol v \big]_{\boldsymbol 0} - \sum_{k=0}^{n-1} \big[ \mathrm T^k \boldsymbol v \big]_{\boldsymbol 0} \Bigg|
&=
\left| \sum_{r\in C} \pi_r \left( \int \ell_r(x)\,d\eta_r^C(x) - \int \ell_r(x)\,d\nu_r^{(n-1)}(x) \right) \right| \\
&\leq
\sum_{r\in C} \pi_r L(\ell_r) W_1\big( \eta_r^C, \nu_r^{(n-1)} \big)
\leq
\left( \max_{r\in C} L(\ell_r) \right) \pazocal W \big( \eta^C, \boldsymbol \nu^{(n-1)} \big),
\end{align*}
where in the second line we used Lemma \ref{lemma: hyperbolic lipschitz constant}.

Recall that by Proposition \ref{prop: contraction of H}, the map $\mathscr H$ is a $\rho$-contraction on $(\mathscr P_1^C, \pazocal W)$, and $\boldsymbol \nu^{(m)} \to \eta^C$ in $\pazocal W$. Therefore, by applying the triangle inequality to $\pazocal W(\boldsymbol \nu^{(m)}, \boldsymbol \nu^{(n-1)})$ and letting $m\to\infty$,
\[
\pazocal W \big( \eta^C, \boldsymbol \nu^{(n-1)} \big)
\leq
\sum_{k=n-1}^\infty \pazocal W \big( \boldsymbol \nu^{(k+1)}, \boldsymbol \nu^{(k)} \big).
\]
Using $\boldsymbol \nu^{(k)}=\mathscr H^k(\delta^0)$,
\begin{align*}
\pazocal W\big( \eta^C\!, \boldsymbol \nu^{(n-1)} \big) \hspace{-2pt}
\leq
\hspace{-3pt}\sum_{k=n-1}^\infty \!\pazocal W\big( \mathscr H^k(\boldsymbol \nu^{(1)}), \mathscr H^k(\delta^0) \big)
\leq
\hspace{-3pt}\sum_{k=n-1}^\infty \!\rho^k \, \pazocal W\big( \boldsymbol \nu^{(1)}\!, \delta^0 \big)
=
\frac{\rho^{n-1}}{1-\rho}\, \pazocal W\big( \boldsymbol \nu^{(1)}\!, \delta^0 \big).
\end{align*}

It remains to compute $\pazocal W\big( \boldsymbol \nu^{(1)}, \delta^0 \big)$. For each
$r' \in C$,
\[
\nu_{r'}^{(1)}
=
(\mathscr H\delta^0)_{r'}
=
\sum_{r \in C} \frac{\pi_rQ_{r,r'}}{\pi_{r'}}\,\delta_{f_r(0)}.
\]
Since $\delta_0$ is a Dirac measure,
\[
W_1\big( \nu_{r'}^{(1)}, \delta_0 \big)
=
\int d_{\mathrm{hyp}}(x,0)\,d\nu_{r'}^{(1)}(x)
=
\sum_{r \in C} \frac{\pi_rQ_{r,r'}}{\pi_{r'}}\,d_{\mathrm{hyp}}\big( f_r(0),0 \big).
\]
Hence
\begin{align*}
\pazocal W\big( \boldsymbol \nu^{(1)}, \delta^0 \big)
=
\sum_{r' \in C} \pi_{r'} W_1\big( \nu_{r'}^{(1)}, \delta_0 \big)
=
\sum_{r' \in C}\sum_{r\in C} \pi_rQ_{r,r'}\, d_{\mathrm{hyp}}\big( f_r(0),0 \big)
=
\sum_{r \in C} \pi_r\, d_{\mathrm{hyp}}\big( f_r(0),0 \big).
\end{align*}

Combining all the estimates, we obtain
\begin{align*}
\Bigg|
\sum_{k=0}^\infty \big[ \mathrm T^k \boldsymbol v \big]_{\boldsymbol 0}
-\sum_{k=0}^{n-1} \big[ \mathrm T^k \boldsymbol v \big]_{\boldsymbol 0}
\Bigg|
&\leq
\left( \max_{r\in C} \frac{\left| f_r^\top(0) \right|}{1+\sqrt{1-{\left| f_r^\top(0) \right|}^2}}
\right) \frac{\rho^{n-1}}{1-\rho} \left( \sum_{r\in C}\pi_r\, d_{\mathrm{hyp}}\big( f_r(0),0 \big) \right).
\end{align*}
\end{proof}

\subsection{Integral representations} \label{subsection: Integral representations}

Here we introduce the integral representations for the full operator and the truncated operator. Let $i = \sqrt{-1}$.

\begin{proposition} \label{prop: integral representation}
Fix $r \in C$. Let $m\geq 2$ be a natural number, $x \in \mathbb{D}_\rho$ and $c \in \mathbb{C}$. Then, for any $k \geq 1$,
\begin{equation} \label{eq: integral representation of Tr}
\bigg( T_r v( x \, ; \, c ) \bigg)_k
=
- \frac{1}{2\pi i} \oint_{\partial \mathbb D_\rho} \frac{1}{1-z} \frac{\left( - f_r \left( \frac{x}{z} \right) \right)^k}{k}\, dz,
\end{equation}
and for any $1 \leq k \leq m-1$,
\begin{equation} \label{eq: integral representation of Trm}
\bigg( T_{r,m} v( x \, ; \, c ) \bigg)_k
=
- \frac{1}{2\pi i} \oint_{\partial \mathbb D_\rho} \frac{1-z^{m-1}}{1-z} \frac{\left( - f_r \left( \frac{x}{z} \right) \right)^k}{k}\, dz.
\end{equation}
\end{proposition}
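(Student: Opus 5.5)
The plan is to work directly from the matrix entries: view the $k$-th row of $T_r$ as Taylor coefficients of a holomorphic function and extract them by Cauchy's formula. Fix $k\geq 1$. By Lemma \ref{lemma: coboundary identity for Tr} (equation \eqref{eq: Tvk}), the $k$-th row acts on the $n\geq 1$ coordinates $-(-x)^n/n$ to produce $-\frac1k\big((-f_r(x))^k-(-f_r(0))^k\big)$. The column $n=0$ of $T_r$ vanishes, so $c$ plays no role. The row-$k$ entries are therefore determined by the generating identity
\[
\sum_{n\ge 1} b^{(r)}_{k,n}\Big(-\frac{(-w)^n}{n}\Big) \;=\; -\frac1k\Big(\big(-f_r(w)\big)^k-\big(-f_r(0)\big)^k\Big),
\]
valid for $|w|\le 1$, and in fact on a disk of radius larger than $1$ since $|f_r^\top(0)|<1$. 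Thus $h_k(w):=-\frac1k(-f_r(w))^k$ is holomorphic on a neighborhood of $\closure{\mathbb D_1}$: its pole lies outside $\closure{\mathbb D_1}$, because $f_r(\closure{\mathbb D_1})\subset\closure{\mathbb D_\rho}$ by Lemma \ref{lemma: real contraction implies complex contraction}. Its Taylor coefficients are $h_k(w)=\sum_{n\ge0} a_{k,n}w^n$ with $a_{k,n}=-b^{(r)}_{k,n}(-1)^n/n$ for $n\ge1$.

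For $x\in\mathbb D_\rho$, evaluating at $w=x$ gives $\big(T_r v(x;c)\big)_k=\sum_{n\ge1}a_{k,n}x^n=h_k(x)-h_k(0)$. I then plan to write this as a contour integral. Consider $\phi(z)=h_k(x/z)$ for $|z|\ge\rho>|x|$, which gives $|x/z|<1$. Its Laurent expansion is $\sum_n a_{k,n}x^n z^{-n}$. Now use $\frac{1}{1-z}=\sum_{j\ge0}z^j$ for $|z|=\rho<1$. The residue computation $\frac{1}{2\pi i}\oint_{\partial\mathbb D_\rho} z^{j-n}\,dz=\delta_{j,n-1}$ picks out, for each $n\ge1$, exactly one term with $j=n-1$. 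This gives
\[
\frac{1}{2\pi i}\oint_{\partial\mathbb D_\rho}\frac{1}{1-z}\,h_k\!\Big(\frac xz\Big)\,dz=\sum_{n\ge1}a_{k,n}x^n .
\]
This is exactly \eqref{eq: integral representation of Tr}. Termwise integration is justified by uniform convergence on the circle: $|x/z|\le|x|/\rho<1$ and $|z|=\rho<1$.

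For the truncation with $1\le k\le m-1$, $T_{r,m}$ keeps only the columns $1\le n\le m-1$, giving $\sum_{n=1}^{m-1}a_{k,n}x^n$. Replacing $\frac1{1-z}$ by $\frac{1-z^{m-1}}{1-z}=\sum_{j=0}^{m-2}z^j$ in the same residue computation retains exactly the terms $n=j+1\in\{1,\dots,m-1\}$. This yields \eqref{eq: integral representation of Trm}.

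The main obstacle is not the algebra but the holomorphy and convergence bookkeeping. One must confirm that $h_k(x/z)$ is holomorphic on a neighborhood of $\partial\mathbb D_\rho$ for $|x|<\rho$, which holds because $x/z$ stays in $\mathbb D_1$. One must also identify the row coefficients with Taylor coefficients for complex $x$, not just on the segment used in Lemma \ref{lemma: coboundary identity for Tr}. That lemma already covers all $x\in\closure{\mathbb D_1}$, so uniqueness of power series coefficients settles the identification.
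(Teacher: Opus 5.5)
Your proof is correct. For \eqref{eq: integral representation of Tr} it is the paper's argument: the paper also combines \eqref{eq: Tvk} with Lemma~\ref{lemma: coboundary property}, whose proof is exactly your geometric-series residue computation.

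For \eqref{eq: integral representation of Trm}, your route is genuinely different and much shorter. The paper works from the explicit binomial entries $b^{(r)}_{k,n}$ and computes the tail $(T_r-T_{r,m})v(x\,;\,c)$ in several steps:
\begin{enumerate}
\item the binomial tail identity \eqref{eq: binomial tail identity};
\item a summation by parts;
\item a Cauchy coefficient extraction on the shifted circle $|y+(1-y)z|=\rho$, with $y=-f_r^\top(0)x$;
\item the substitution $\zeta=y+(1-y)z$, under which the M\"obius identity $f_r(0)+\big(f_r(x)-f_r(0)\big)/z=f_r(x/\zeta)$ gives the remainder representation \eqref{eq: integral representation of Tr minus Trm} with kernel $\zeta^{m-1}/(1-\zeta)$.
\end{enumerate}
The truncated formula then follows by subtraction.

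You instead read \eqref{eq: Tvk} as an identity of power series in $x$ convergent on $\closure{\mathbb D_1}$. Uniqueness of coefficients then shows that row $k$ of $T_r$ is, up to the factor $-(-1)^n/n$, the Taylor coefficient sequence of $h_k=-\tfrac1k(-f_r)^k$. Column truncation is therefore Taylor truncation, and $\frac{1-z^{m-1}}{1-z}=\sum_{j=0}^{m-2}z^j$ extracts exactly the coefficients of degrees $1,\dots,m-1$.

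Your argument explains why the truncation kernel has this form, and it gives \eqref{eq: integral representation of Tr minus Trm} for free by subtraction; the only extra ingredient is uniqueness of power-series coefficients. The paper's computation verifies the remainder formula directly from the explicit entries without this identification. It uses the same analytic inputs, however, and produces nothing that the later sections use beyond the two integral formulas.
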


\begin{proof}
Recall that
\[
F(B_r)
=
\begin{pmatrix}
\alpha_r & \beta_r \\
\gamma_r & \delta_r
\end{pmatrix},
\qquad
f_r(t)
=
\frac{\alpha_r t + \beta_r}{\gamma_r t + \delta_r}.
\]
Also, by equation \eqref{eq: bounding the transpose of fr}, we have $\left| f_r^\top(0) \right| < 1$.

Fix $1 \leq k \leq m-1$. We first prove an integral representation for $T_r - T_{r,m}$. Since $x \in \mathbb{D}_\rho$ and $\left| f_r^\top(0) \right| < 1$, we have $\left| f_r^\top(0)\, x \right| < \rho < 1$. Exactly as in the proof of Lemma \ref{lemma: coboundary identity for Tr}, the double series defining
$\big( (T_r - T_{r,m}) v(x;c) \big)_k$ is absolutely convergent, so we may reorder the summation. Thus,
\begin{align*}
\bigg( ( T_r - T_{r,m} ) v( x \, ; \, c ) \bigg)_k
&= - \sum_{n=m}^\infty \sum_{\ell=1}^{\min\{k,n\}} \binom{n}{\ell} \binom{k-1}{\ell-1} {f_r^{\top}(0)}^{n-\ell} \big( - f_r(0) \big)^{k-\ell} \big( f_r'(0) \big)^{\ell} \frac{(-x)^n}{n} \\
&= - \frac{1}{k} \sum_{\ell=1}^{k} \binom{k}{\ell} \big( - f_r(0) \big)^{k-\ell} \big( f_r'(0) \big)^{\ell} (-x)^\ell \sum_{n=m}^\infty \binom{n-1}{\ell-1} \big( - f_r^\top(0)x \big)^{n-\ell}.
\end{align*}

Let $y = - f_r^\top(0)x$. We claim that
\begin{equation} \label{eq: binomial tail identity}
\sum_{n=m}^\infty \binom{n-1}{\ell-1} y^{n-\ell}
=
\frac{1}{(1-y)^\ell}
\sum_{j=0}^{\ell-1}
\binom{m-1}{j} (1-y)^j y^{m-1-j}.
\end{equation}
Indeed, using coefficient extraction, where $[\xi^j]G(\xi)$ denotes the coefficient of $\xi^j$ in the power series of $G$, we have $\binom{n-1}{\ell-1} = [\xi^{\ell-1}] (1+\xi)^{n-1}$. Since $|y|<\rho<1$, for sufficiently small $\xi$ we have
\begin{align*}
\sum_{n=m}^\infty \binom{n-1}{\ell-1} y^{n-\ell}
&= [\xi^{\ell-1}] \sum_{n=m}^\infty y^{n-\ell} (1+\xi)^{n-1}
= [\xi^{\ell-1}] y^{m-\ell} (1+\xi)^{m-1} \sum_{t=0}^\infty \big( y(1+\xi) \big)^t \\
&= [\xi^{\ell-1}] \frac{ y^{m-\ell} (1+\xi)^{m-1} }{ 1 - y(1+\xi) }
= [\xi^{\ell-1}] \frac{ y^{m-\ell} }{ 1 - y } \left( \sum_{j = 0}^{m-1} \binom{m-1}{j} \xi^j \right) 
\sum_{s = 0}^\infty \left( \frac{y}{1-y} \right)^s \xi^s \\
&= \frac{1}{(1-y)^\ell} \sum_{j=0}^{\ell-1} \binom{m-1}{j} (1-y)^j y^{m-1-j}.
\end{align*}

Now let
\[
a = - f_r(0),
\qquad
b = \frac{- f_r'(0)x}{1-y}
= - \frac{ f_r'(0)x }{ 1 + f_r^\top(0)x } = - f_r(x) + f_r(0).
\]
Substituting \eqref{eq: binomial tail identity} into the previous formula, we get
\[
\bigg( ( T_r - T_{r,m} ) v( x \, ; \, c ) \bigg)_k
=
- \frac{1}{k} \sum_{\ell=1}^{k} \binom{k}{\ell} a^{k-\ell} b^\ell \sum_{j=0}^{\ell-1} \binom{m-1}{j}(1-y)^j y^{m-1-j}.
\]

Let $d_0 = 0$, and for $1 \leq \ell \leq k$ define
\[
U_\ell = \sum_{s=\ell}^{k} \binom{k}{s} a^{k-s} b^s,
\qquad
d_\ell = \sum_{j=0}^{\ell-1} \binom{m-1}{j} (1-y)^j y^{m-1-j}.
\]
Then, by summation by parts,
\begin{align}
\bigg( ( T_r - T_{r,m} ) v( x \, ; \, c ) \bigg)_k
&=
- \frac{1}{k} \sum_{\ell=1}^{k} \binom{k}{\ell} a^{k-\ell} b^\ell d_\ell \nonumber
=
- \frac{1}{k} \sum_{\ell=1}^{k} U_\ell \big( d_\ell - d_{\ell-1} \big) \nonumber\\
&=
- \frac{1}{k} \sum_{\ell=1}^{k} U_\ell \binom{m-1}{\ell-1} (1-y)^{\ell-1} y^{m-\ell}. \label{eq: Tr minus Trm intermediate}
\end{align}

Define
\[
\xi(z) = y + (1-y)z,
\qquad
\Gamma = \left\{ z \in \mathbb{C} \;:\; \left| \xi(z) \right| = \rho \right\}.
\]
Since $|y|<\rho<1$, the contour $\Gamma$ is a positively oriented circle which encloses $0$ but not $1$.
Also,
\[
\binom{m-1}{\ell-1}(1-y)^{\ell-1}y^{m-\ell}
=
[z^{\ell-1}] \xi(z)^{m-1}
=
\frac{1}{2\pi i} \oint_\Gamma \xi(z)^{m-1} z^{-\ell}\, dz.
\]
Therefore, equation \eqref{eq: Tr minus Trm intermediate} becomes
\[
\bigg( ( T_r - T_{r,m} ) v( x \, ; \, c ) \bigg)_k
=
- \frac{1}{2\pi k i} \oint_\Gamma \, \xi(z)^{m-1} \left( \sum_{\ell=1}^{k} U_\ell z^{-\ell} \right) dz.
\]

Next we compute the sum inside the integral:
\begin{align*}
\sum_{\ell=1}^{k} U_\ell z^{-\ell}
&=
\sum_{\ell=1}^{k} \sum_{s=\ell}^{k} \binom{k}{s} a^{k-s} b^s z^{-\ell}
=
\sum_{s=1}^{k} \binom{k}{s} a^{k-s} b^s \sum_{\ell=1}^{s} z^{-\ell}
=
\sum_{s=0}^{k} \binom{k}{s} a^{k-s} b^s \frac{1-z^{-s}}{z-1} \\
&=
\frac{(a+b)^k}{z-1} - \frac{(a+b/z)^k}{z-1}
=
\frac{\big( - f_r(x) \big)^k}{z-1} - \frac{\big\{ - f_r(0) - \big( f_r(x)-f_r(0) \big) z^{-1} \big\}^k}{z-1}.
\end{align*}
Thus,
\begin{align*}
\bigg( ( T_r - T_{r,m} ) v( x \, ; \, c ) \bigg)_k
=
- \frac{1}{2\pi k i} \oint_\Gamma \xi(z)^{m-1} \frac{ \big( - f_r(x) \big)^k - \big\{ - f_r(0) - \big( f_r(x)-f_r(0) \big) z^{-1} \big\}^k}{z-1}\, dz.
\end{align*}
The term involving $\big( - f_r(x) \big)^k$ is holomorphic inside $\Gamma$, because $1$ lies outside $\Gamma$. Hence its integral vanishes.

Now let $\zeta = \xi(z)$, then $\zeta \neq 0$ on $\Gamma$ and $z = \frac{\zeta-y}{1-y}$. Using
\[
f_r[t] = \frac{\alpha_r t+\beta_r}{\gamma_r t+\delta_r}
\qquad\text{and}\qquad
y = - \frac{\gamma_r}{\delta_r}x,
\]
a direct algebraic computation gives
\[
f_r(0) + \frac{ f_r(x)-f_r(0) }{z}
\;=\;
\frac{ \alpha_r x + \beta_r \zeta }{ \gamma_r x + \delta_r \zeta }
\;=\;
f_r\left( \frac{x}{\zeta} \right).
\]
Therefore,
\[
\bigg( ( T_r - T_{r,m} ) v( x \, ; \, c ) \bigg)_k
=\;
- \frac{1}{2\pi i} \oint_\Gamma \frac{\xi(z)^{m-1}}{1-z} \frac{ \left( - f_r \left( \frac{x}{\xi(z)} \right) \right)^k}{k}\, dz.
\]
Under the change of variable $\zeta = \xi(z)$, we have
\[
dz = \frac{d\zeta}{1-y},
\qquad
\frac{1}{1-z} = \frac{1-y}{1-\zeta}.
\]
Hence
\begin{equation} \label{eq: integral representation of Tr minus Trm}
\bigg( ( T_r - T_{r,m} ) v( x \, ; \, c ) \bigg)_k
=
- \frac{1}{2\pi i} \oint_{|\zeta|=\rho} \frac{\zeta^{m-1}}{1-\zeta} \frac{ \left( - f_r \left( \frac{x}{\zeta} \right) \right)^k}{k}\, d\zeta.
\end{equation}

Next we prove the formula for $T_r$ itself.

\begin{lemma} \label{lemma: coboundary property}
Let $H$ be a holomorphic function on $\mathbb{D}_1$. Then, for any $w \in \mathbb{D}_\rho$,
\[
\frac{1}{2\pi i} \oint_{|z|=\rho} \frac{1}{1-z} H\left( \frac{w}{z} \right)\, dz
=
H(w) - H(0).
\]
\end{lemma}

\begin{proof}
Choose $\rho_0$ so that $|w|/\rho < \rho_0 < 1$. Since $H$ is holomorphic on $\mathbb{D}_1$, its Taylor series $H(\zeta) = \sum_{\ell=0}^\infty a_\ell \zeta^\ell$ converges uniformly on $\mathbb{D}_{\rho_0}$. Hence, for $|z|=\rho$,
\[
H\left( \frac{w}{z} \right)
=
\sum_{\ell=0}^\infty a_\ell w^\ell z^{-\ell},
\]
uniformly on the contour. Also, $\frac{1}{1-z} = \sum_{q=0}^\infty z^q$ converges uniformly on $|z|=\rho$. Therefore
\[
\frac{1}{1-z} H\left( \frac{w}{z} \right)
=
\sum_{\ell,q \geq 0} a_\ell w^\ell z^{q-\ell},
\]
uniformly on $|z|=\rho$. Integrating termwise, we obtain
\[
\frac{1}{2\pi i} \oint_{|z|=\rho} \frac{1}{1-z} H\left( \frac{w}{z} \right)\, dz
=
\sum_{\ell=1}^\infty a_\ell w^\ell
=
H(w)-H(0). \qedhere
\]
\end{proof}

Now let
\[
H(\zeta) = \frac{\big( - f_r(\zeta) \big)^k}{k}.
\]
Since $f_r(\mathbb{D}_1) \subset \mathbb{D}_\rho$ by Lemma \ref{lemma: real contraction implies complex contraction}, the function $H$ is holomorphic on $\mathbb{D}_1$. Thus, by equation \eqref{eq: Tvk} and Lemma \ref{lemma: coboundary property},
\begin{align*}
\bigg( T_r v( x \, ; \, c ) \bigg)_k
=
- \left\{ \frac{\big( - f_r(x) \big)^k}{k} - \frac{\big( - f_r(0) \big)^k}{k} \right\}
=
- \frac{1}{2\pi i} \oint_{|z|=\rho} \frac{1}{1-z} \frac{ \left( - f_r \left( \frac{x}{z} \right) \right)^k}{k}\, dz.
\end{align*}
This proves equation \eqref{eq: integral representation of Tr}.

Finally, subtracting equation \eqref{eq: integral representation of Tr minus Trm} from equation \eqref{eq: integral representation of Tr}, we obtain equation \eqref{eq: integral representation of Trm}.
\end{proof}

\subsection{Integral representations for iterates} \label{subsection: Integral representations for iterates}

The integral representations above extend naturally to iterates. For $r \in C$ and $z \in \partial \mathbb{D}_\rho$, define
\[
f_{r,z}: \mathbb{D}_\rho \to \mathbb{D}_\rho,
\qquad
f_{r,z}(x) = f_r\!\left( \frac{x}{z} \right)
\qquad \big( x \in \mathbb{D}_\rho \big).
\]
This is well-defined because $|x|<\rho=|z|$ implies $x/z \in \mathbb{D}_1$, and hence
$f_r(x/z) \in \mathbb{D}_\rho$ by Lemma \ref{lemma: real contraction implies complex contraction}.

For $\mathbf r = (r_0, r_1, \ldots, r_n) \in C^{n+1}$ and $(z_0, \ldots, z_n) \in (\partial \mathbb{D}_\rho)^{n+1}$, define
\begin{equation} \label{eq: definition of mathscr F}
\mathscr{F}_{z_n, \ldots, z_0}^{r_n, \ldots, r_0}: \mathbb{D}_\rho \to \mathbb{D}_\rho,
\qquad
\mathscr{F}_{z_n, \ldots, z_0}^{r_n, \ldots, r_0}(x) = f_{r_n, z_n} \circ \cdots \circ f_{r_0, z_0} (x).
\end{equation}

\begin{proposition} \label{prop: integral representation for iterates}
Let $x \in \mathbb{D}_\rho$, $c \in \mathbb{C}$, $n \geq 0$, $m\geq 2$, and $(r_0, r_1, \ldots, r_n) \in C^{n+1}$. Under the notations above, we have for any $k \geq 1$,
\begin{equation} \label{eq: integral representation of power of T}
\bigg( T_{r_n} \cdots T_{r_0} \, v(x \, ; \, c) \bigg)_k
=
- \frac{1}{ \big( 2\pi i \big)^{n+1} } \oint_{\partial \mathbb D_\rho} \cdots \oint_{\partial \mathbb D_\rho} \left( \prod_{\ell = 0}^n \frac{ 1 }{ 1 - z_\ell } \right) \frac{ \left( - \mathscr{F}_{z_n, \ldots, z_0}^{r_n, \ldots, r_0}(x) \right)^k }{k} dz_n \cdots dz_0,
\end{equation}
and for any $1 \leq k \leq m-1$,
\begin{equation} \label{eq: integral representation of power of Tm}
\bigg( T_{r_n,m} \cdots T_{r_0,m} \, v(x \, ; \, c) \!\!\bigg)_k
\!\!=
- \frac{1}{ \big( 2\pi i \big)^{n+1} } \oint_{\partial \mathbb D_\rho} \!\!\!\! \cdots \oint_{\partial \mathbb D_\rho} \!\left( \prod_{\ell = 0}^n \frac{ 1 - {z_\ell}^{m-1} }{ 1 - z_\ell }\!\! \right) \frac{ \left( - \mathscr{F}_{z_n, \ldots, z_0}^{r_n, \ldots, r_0}(x) \right)^k }{k} dz_n \cdots dz_0.
\end{equation}
Here, all contour integrals are taken over the positively oriented circle $\partial \mathbb D_\rho$.
\end{proposition}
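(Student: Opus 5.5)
Induction on $n$ is the natural route, with Proposition \ref{prop: integral representation} as the base case $n=0$. The one real difficulty is that $T_{r_{n-1}}\cdots T_{r_0}v(x\,;\,c)$ is no longer a single generator $v(y\,;\,c')$. I therefore want a version of the one-step formula that applies to an \emph{integral} of generators. From Lemma \ref{lemma: coboundary identity for Tr} and its iterates, every coordinate $k\geq 1$ of $T_{r_{n-1}}\cdots T_{r_0}v(x\,;\,c)$ equals the $k$-th coordinate of
\[
\frac{1}{(2\pi i)^{n}}\oint\cdots\oint \Big(\prod_{\ell=0}^{n-1}\frac{1}{1-z_\ell}\Big)\, v\big(\mathscr F^{r_{n-1},\dots,r_0}_{z_{n-1},\dots,z_0}(x)\,;\,c(z)\big)\,dz_{n-1}\cdots dz_0 ,
\]
for a suitable zeroth coordinate. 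This is just the inductive hypothesis read coordinatewise: $-(-w)^k/k=(v(w;c))_k$. The zeroth coordinates should play no role, because $b^{(r)}_{k,0}=0$ for every $k$, so the $k$-th entry of $T_r u$ ignores $u_0$.

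With that in hand, the step is to apply $T_{r_n}$. For $k\geq1$,
\[
(T_{r_n}u)_k=\sum_{p\ge1} b^{(r_n)}_{k,p}u_p .
\]
I would insert the integral formula for each $u_p$ and interchange the sum over $p$ with the $n$-fold contour integral. The justification is that $|\mathscr F(x)|\le\rho'<\rho$ uniformly on the compact contour product. Each $f_{r,z}$ maps $\closure{\mathbb D_\rho}$ into $f_r(\closure{\mathbb D_1})\subset\closure{\mathbb D_\rho}$, and I would first check that the image actually lies in a strictly smaller disk. If that fails, a slight enlargement of the contour radius, via Cauchy's theorem, avoids the boundary issue. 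Meanwhile the row $k$ of $T_{r_n}$ has entries that grow at most polynomially in $p$ times $|f_{r_n}^\top(0)|^{p}$, as in the proof of Lemma \ref{lemma: coboundary identity for Tr}. Together these give absolute and uniform convergence, so Fubini applies. After the interchange, the integrand becomes $\big(T_{r_n}v(\mathscr F(x)\,;\,\cdot)\big)_k$. Since $\mathscr F(x)\in\mathbb D_\rho$, equation \eqref{eq: integral representation of Tr} applies at the point $\mathscr F(x)$ and produces one more contour integral with factor $1/(1-z_n)$ and argument $f_{r_n}(\mathscr F(x)/z_n)=\mathscr F^{r_n,\dots,r_0}_{z_n,\dots,z_0}(x)$. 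This is exactly \eqref{eq: integral representation of power of T}.

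The truncated identity \eqref{eq: integral representation of power of Tm} follows the same induction with two modifications. First, since $T_{r,m}$ only uses coordinates $1\le p\le m-1$, the sum is finite and the interchange is trivial. Second, for $1\le k\le m-1$ the entries of $T_{r_n,m}$ on row $k$ agree with those of $T_{r_n}$ for $p\le m-1$, so the inductive hypothesis is needed only for coordinates $\le m-1$. That is precisely the range in which \eqref{eq: integral representation of power of Tm} holds at the previous step. The final step then uses \eqref{eq: integral representation of Trm} at the point $\mathscr F(x)$, which contributes the factor $(1-z_n^{m-1})/(1-z_n)$.

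I expect the main obstacle to be the uniform control needed for the interchange in the untruncated case, namely showing that the composed maps $\mathscr F$ land strictly inside $\mathbb D_\rho$ uniformly in $(z_0,\dots,z_{n-1})$, or else arranging a radius shift that handles this. The algebra itself is inherited from Proposition \ref{prop: integral representation}.
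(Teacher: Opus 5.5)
Your proof is correct. It runs the same induction as the paper, with Proposition \ref{prop: integral representation} as base case and the coefficient bound \eqref{eq: bound on bkn for Tr} justifying the interchange of a row sum of $T_r$ with contour integration. The difference is that you peel the word from the opposite end.

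The paper removes the first-applied operator. It writes $T_{r_0}v(x\,;\,c)$ as $\pazocal I_\Phi w$ with $w(z_0)=v\big(f_{r_0,z_0}(x)\,;\,c\big)$. It then pushes $T_{r_1},\dots,T_{r_n}$ one at a time inside the $z_0$-integral via Lemma \ref{lemma: exchange lemma for Tr}, checking at each stage that the integrand stays in $\pazocal Y$. Finally it applies the inductive hypothesis at the frozen point $f_{r_0,z_0}(x)$.

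You instead keep the $n$-fold integral supplied by the inductive hypothesis and move only the last operator $T_{r_n}$ inside it. You then apply the one-step formula at $\mathscr F(x)$. Your route needs a single Fubini--Tonelli interchange per step and no auxiliary space $\pazocal Y$. The paper's route packages the interchange as a reusable lemma about sequence-valued integrands. Your treatment of the truncated case, where only coordinates $\le m-1$ are needed at each step, is also spelled out more explicitly than in the paper.

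The obstacle you anticipate does not actually arise. The interchange only needs $|\mathscr F(x)|\le\rho$ together with $D\rho<1$, since $\sum_{p}|b^{(r_n)}_{k,p}|\rho^p/p<\infty$. The one-step formula \eqref{eq: integral representation of Tr} at the point $\mathscr F(x)$ only needs $\mathscr F(x)\in\mathbb D_\rho$ pointwise. That holds because $f_r(\mathbb D_1)\subset\mathbb D_\rho$ by Lemma \ref{lemma: real contraction implies complex contraction}, so no strictly smaller disk and no radius shift are required.
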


In preparation for proving this proposition, we enlarge the domain of each $T_r$. For $r \in C$ and $k \geq 0$, define
\[
\mathcal{D}_{r,k}
\coloneqq
\left\{ a=(a_n)_{n \geq 0} \in \mathbb C^{\mathbb{N}_0} \setcond \sum_{n=0}^\infty \left| b^{(r)}_{k,n} a_n \right| < \infty \right\},
\qquad
\mathcal{D}_r \coloneqq \bigcap_{k \geq 0} \mathcal{D}_{r,k}.
\]
Then $T_r : \mathcal{D}_r \to \mathbb C^{\mathbb{N}_0}$ is well-defined for each $r \in C$.

We define $\pazocal{Y}$ to be the space of sequence-valued functions that are entrywise continuous and uniformly bounded by powers of $\rho$:
\begin{equation*}
\pazocal{Y}
=
\left\{
\!u = (u_n)_{n\in \mathbb N_0}: \partial \mathbb{D}_\rho \to \mathbb{C}^{\mathbb{N}_0}
\setcond \!\!\!
\begin{array}{l}
\text{$u_n : \partial \mathbb{D}_\rho \to \mathbb{C}$ is continuous for all $n \in \mathbb{N}_0$, and there is} \\[2pt]
\text{$U > 0$ s.t. $|u_n(z)| \leq U \rho^n$ for every $n \in \mathbb{N}_0$ and $z \in \partial \mathbb{D}_\rho$.}
\end{array}
\!\!\! \right\}.
\end{equation*}

Set
\begin{equation*}
D = \max_{r \in C} \left| f_r^{\top}(0) \right| < 1,
\qquad
D_1 = \max_{r \in C} \left| f_r(0) \right| < 1,
\qquad
D_2 = \max \left\{ 1, \, \max_{r \in C} \left| f_r'(0) \right| \right\}.
\end{equation*}
For every $k \geq 1$ and $n \geq 1$, we have, by Vandermonde's identity,
\begin{align}
\left| b^{(r)}_{k,n} \right|
&\leq
\sum_{\ell=1}^{\min\{k,n\}} \binom{n}{\ell}\binom{k-1}{\ell-1} \, D^{n-\ell} D_1^{k-\ell} D_2^{\ell} \nonumber
\leq
D_2^k D^{\max\{n-k,0\}} \sum_{\ell=1}^{\min\{k,n\}} \binom{n}{\ell}\binom{k-1}{\ell-1} \nonumber \\
&=
D_2^k D^{\max\{n-k,0\}} \binom{n+k-1}{k}. \label{eq: bound on bkn for Tr}
\end{align}

\begin{lemma} \label{lemma: uniform convergence of Tru for u in Y}
Let $r \in C$ and $u \in \pazocal{Y}$. Then, $u(z) \in \mathcal{D}_r$ for every $z \in \partial \mathbb{D}_\rho$. Furthermore, for each fixed $k \geq 0$, the series defining $\big( T_r u(z) \big)_k$ converges absolutely and uniformly for all $z \in \partial \mathbb{D}_\rho$.
\end{lemma}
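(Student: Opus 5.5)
The argument is a Weierstrass M-test built on the coefficient bound \eqref{eq: bound on bkn for Tr}. Fix $r\in C$, $u\in\pazocal Y$ with constant $U$, and $k\geq 0$. For every $z\in\partial\mathbb D_\rho$ and $n\geq 0$ we have $|b^{(r)}_{k,n}u_n(z)|\leq U\,|b^{(r)}_{k,n}|\,\rho^n$, and the right-hand side does not depend on $z$. So it is enough to prove $\sum_{n\geq 0}|b^{(r)}_{k,n}|\rho^n<\infty$ for each fixed $k$. This single bound shows that $u(z)\in\mathcal D_{r,k}$ for every $k$, hence $u(z)\in\mathcal D_r$. It also gives absolute convergence, uniform in $z$. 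Since each $u_n$ is continuous, the continuity of $z\mapsto (T_ru(z))_k$ then follows as a uniform limit of continuous partial sums.

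For $k=0$ we have $b^{(r)}_{0,0}=0$ and $|b^{(r)}_{0,n}|=|f_r^\top(0)|^n\leq D^n$ for $n\geq1$. The series is therefore dominated by $U\sum_n (D\rho)^n<\infty$, because $D<1$ by \eqref{eq: bounding the transpose of fr} and $\rho<1$.

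For $k\geq1$, the column $n=0$ vanishes. For $n\geq1$, \eqref{eq: bound on bkn for Tr} gives
\[
|b^{(r)}_{k,n}|\rho^n\leq D_2^k\, D^{\max\{n-k,0\}}\binom{n+k-1}{k}\rho^n .
\]
With $k$ fixed, the binomial coefficient is a polynomial in $n$ of degree $k$, bounded by $(n+k)^k/k!$. For $n>k$ the geometric factor is $D^{n-k}\rho^n\leq D^{-k}(D\rho)^n$. It is simpler still to drop $D^{\max\{n-k,0\}}\leq1$ and keep only $\rho^n$ with $\rho<1$. In either case the terms are a polynomial in $n$ times a geometric factor of ratio less than $1$, so the series converges; the finitely many terms with $n\leq k$ cause no trouble. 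This gives a summable majorant $M_{k,n}=U D_2^k\binom{n+k-1}{k}\rho^n$ that is independent of $z$.

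I do not expect any real obstacle. The only point needing care is that the majorant must be chosen independently of $z$, which holds because the definition of $\pazocal Y$ provides a bound $U\rho^n$ that is uniform over $\partial\mathbb D_\rho$. One should also note that the convergence is only uniform for each fixed $k$, not uniform in $k$, since the majorant grows like $D_2^k$ times a polynomial of degree $k$; the statement only asks for fixed $k$.
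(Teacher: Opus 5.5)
Your proposal is correct and follows essentially the same route as the paper. The paper proves the lemma the same way: it combines the uniform bound $|u_n(z)|\leq U\rho^n$ with the coefficient estimate \eqref{eq: bound on bkn for Tr} for $k\geq1$ and with $|b^{(r)}_{0,n}|\leq D^n$ for $k=0$, obtains a summable majorant independent of $z$, and concludes by the Weierstrass M-test.
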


\begin{proof}

Let $u = \big( u_n \big)_{n=0}^\infty \in \pazocal{Y}$, and take $U > 0$ such that $|u_n(z)| \leq U \rho^n$ for every $n \in \mathbb{N}_0$ and $z \in \partial \mathbb{D}_\rho$. Then, for every $z \in \partial \mathbb{D}_\rho$ and $k \geq 1$, equation \eqref{eq: bound on bkn for Tr} gives
\begin{equation} \label{eq: abs convergence of sum k geq 1 for Tr}
\left| \sum_{n=0}^{\infty} b^{(r)}_{k,n} \, u_n(z) \right|
\leq U D_2^k \sum_{n=1}^\infty \binom{n+k-1}{k} D^{\max \{n-k, 0\}} \rho^n
< \infty.
\end{equation}
Also, since $b^{(r)}_{0,0}=0$ and $b^{(r)}_{0,n} = \big( f_r^\top(0) \big)^n$ for $n \geq 1$,
\begin{equation} \label{eq: abs convergence of sum k = 0 for Tr}
\left| \sum_{n=0}^{\infty} b^{(r)}_{0,n} \, u_n(z) \right|
\leq U \sum_{n=1}^{\infty} D^n \rho^n
< \infty.
\end{equation}
Thus $u(z) \in \mathcal{D}_r$. The bounds in \eqref{eq: abs convergence of sum k geq 1 for Tr} and \eqref{eq: abs convergence of sum k = 0 for Tr} are independent of $z$, so the series defining $\big( T_r u(z) \big)_k$ converges absolutely and uniformly in $z \in \partial \mathbb{D}_\rho$ by the Weierstrass M-test.
\end{proof}

For a continuous function $\Phi : \partial \mathbb{D}_\rho \to \mathbb{C}$, define $\pazocal{I}_\Phi : \pazocal{Y} \to \mathbb{C}^{\mathbb{N}_0}$ by entrywise integration:
\[
\big( \pazocal{I}_\Phi u \big)_n
\coloneqq
\frac{1}{2\pi i} \oint_{\partial \mathbb D_\rho} \Phi(z) \, u_n(z) \, dz
\qquad
\text{for each } n \geq 0.
\]

\begin{lemma} \label{lemma: exchange lemma for Tr}
Fix $r \in C$. Let $u = (u_n)_{n=0}^\infty \in \pazocal{Y}$, and suppose $\Phi : \partial \mathbb{D}_\rho \to \mathbb{C}$ is continuous. Then $\pazocal{I}_\Phi u \in \mathcal{D}_r$, and for each $k \geq 0$,
\[
\big( T_r ( \pazocal{I}_\Phi u ) \big)_k
=
\frac{1}{2 \pi i} \oint_{\partial \mathbb D_\rho} \Phi(z) \, \big( T_r u(z) \big)_k \, dz.
\]
\end{lemma}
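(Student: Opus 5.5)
The proof is a Fubini interchange of the $n$-sum and the contour integral, justified by a uniform absolute bound of exactly the kind established in Lemma \ref{lemma: uniform convergence of Tru for u in Y}.

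First we show that $\pazocal I_\Phi u\in\mathcal D_r$. Let $U>0$ satisfy $|u_n(z)|\le U\rho^n$ for all $n$ and $z$, and set $\|\Phi\|_\infty=\max_{\partial\mathbb D_\rho}|\Phi|$, which is finite by continuity on a compact set. The circle $\partial\mathbb D_\rho$ has length $2\pi\rho$, so each entry satisfies
\[
\big|(\pazocal I_\Phi u)_n\big|\le \tfrac{1}{2\pi}\cdot 2\pi\rho\,\|\Phi\|_\infty U\rho^n=\rho\|\Phi\|_\infty U\rho^n .
\]
Thus the sequence $\pazocal I_\Phi u$, viewed as a constant function of $z$, satisfies the same geometric bound $|a_n|\le U'\rho^n$ with $U'=\rho\|\Phi\|_\infty U$. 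The estimates \eqref{eq: abs convergence of sum k geq 1 for Tr} and \eqref{eq: abs convergence of sum k = 0 for Tr} use only this bound. They therefore give $\sum_n|b^{(r)}_{k,n}(\pazocal I_\Phi u)_n|<\infty$ for every $k\ge0$, so $\pazocal I_\Phi u\in\mathcal D_r$.

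Next we prove the identity. For fixed $k$,
\[
\big(T_r(\pazocal I_\Phi u)\big)_k=\sum_{n\ge0} b^{(r)}_{k,n}\,\frac{1}{2\pi i}\oint \Phi(z)u_n(z)\,dz .
\]
We want to move the sum inside the integral. The terms $|b^{(r)}_{k,n}\Phi(z)u_n(z)|$ are bounded by $\|\Phi\|_\infty U|b^{(r)}_{k,n}|\rho^n$, which is independent of $z$. By \eqref{eq: bound on bkn for Tr} these bounds are summable in $n$: the series $\sum_n\binom{n+k-1}{k}D^{\max\{n-k,0\}}\rho^n$ converges because $D\rho<1$, and the $k=0$ row is handled by the geometric series $\sum_n D^n\rho^n$. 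Each partial sum is a continuous function on $\partial\mathbb D_\rho$, since every $u_n$ is continuous. By the Weierstrass M-test, as in Lemma \ref{lemma: uniform convergence of Tru for u in Y}, the partial sums converge uniformly on the contour to $\Phi(z)\big(T_ru(z)\big)_k$. Uniform convergence on a compact contour of finite length permits exchanging the limit with the integral, equivalently applying Fubini to counting measure times arclength. This gives
\[
\big(T_r(\pazocal I_\Phi u)\big)_k=\frac{1}{2\pi i}\oint\Phi(z)\big(T_ru(z)\big)_k\,dz .
\]

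The only step requiring care is the uniform domination justifying the interchange. It is already supplied by the bound \eqref{eq: bound on bkn for Tr} together with $D\rho<1$, so we expect no genuine obstacle. The bound being independent of $z$ also shows that $z\mapsto (T_ru(z))_k$ is continuous, so the integral on the right is well defined.
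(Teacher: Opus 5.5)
Your proposal is correct and follows essentially the same route as the paper. The entrywise bound $|(\pazocal I_\Phi u)_n|\le \rho\,\|\Phi\|_\infty U\rho^n$ together with the estimate on $b^{(r)}_{k,n}$ gives $\pazocal I_\Phi u\in\mathcal D_r$. The identity then follows by exchanging the contour integral with the partial sums, which converge uniformly on $\partial\mathbb D_\rho$ by the Weierstrass M-test, exactly as in the paper.
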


\begin{proof}
Since $u \in \pazocal{Y}$, there is $U > 0$ such that $|u_n(z)| \leq U \rho^n$ for every $n \in \mathbb{N}_0$ and $z \in \partial \mathbb{D}_\rho$. Let $L = \max_{z \in \partial \mathbb D_\rho} |\Phi(z)| < \infty$. Then, for each $n \geq 0$,
\begin{equation} \label{eq: Iphi_bound for Tr}
\big| (\pazocal{I}_\Phi u)_n \big|
=
\left| \frac{1}{2\pi i} \oint_{\partial \mathbb D_\rho} \Phi(z) \, u_n(z) \, dz \right|
\leq
\frac{1}{2\pi} \cdot (2\pi \rho) \cdot \max_{z \in \partial \mathbb D_\rho} |\Phi(z)u_n(z)|
\leq
\rho L U \rho^n.
\end{equation}

We first show that $\pazocal{I}_\Phi u \in \mathcal{D}_r$. For $k = 0$, by \eqref{eq: Iphi_bound for Tr},
\[
\sum_{n=0}^\infty \big| b^{(r)}_{0,n} (\pazocal{I}_\Phi u)_n \big|
\leq
\rho L U \sum_{n=1}^\infty D^n \rho^n
< \infty.
\]
For each $k \geq 1$, combining \eqref{eq: bound on bkn for Tr} and \eqref{eq: Iphi_bound for Tr},
\begin{align*}
\sum_{n=0}^\infty \big| b^{(r)}_{k,n} (\pazocal{I}_\Phi u)_n \big|
\leq
\rho L U \sum_{n=1}^\infty \left| b^{(r)}_{k,n} \right| \rho^n
\leq
\rho L U \, D_2^k \sum_{n=1}^\infty \binom{n+k-1}{k} D^{\max \{n-k, 0\}} \rho^n
< \infty.
\end{align*}

Hence $\pazocal{I}_\Phi u \in \mathcal{D}_r$. Next, fix $k \geq 0$, and for $N \geq 1$ define
\[
B_N^{(k)}(z)
\coloneqq
\sum_{n=0}^{N-1} b^{(r)}_{k,n} u_n(z)
\qquad
\big( z \in \partial \mathbb{D}_\rho \big).
\]
By Lemma \ref{lemma: uniform convergence of Tru for u in Y}, the series $\sum_{n=0}^\infty b^{(r)}_{k,n}u_n(z)$ converges absolutely and uniformly on $\partial \mathbb{D}_\rho$. Therefore, $B_N^{(k)} \longrightarrow \big( T_r u(\cdot) \big)_k$ uniformly on $\partial \mathbb{D}_\rho$. Hence
\[
\lim_{N\to\infty} \frac{1}{2\pi i} \oint_{\partial \mathbb D_\rho} \Phi(z)\, B_N^{(k)}(z)\, dz
=
\frac{1}{2\pi i} \oint_{\partial \mathbb D_\rho} \Phi(z)\, \big( T_r u(z) \big)_k\, dz.
\]
On the other hand, for each $N$,
\[
\frac{1}{2\pi i} \oint_{\partial \mathbb D_\rho} \Phi(z)\, B_N^{(k)}(z)\, dz
=
\sum_{n=0}^{N-1} b^{(r)}_{k,n} \left( \frac{1}{2\pi i} \oint_{\partial \mathbb D_\rho} \Phi(z)\, u_n(z)\, dz \right)
=
\sum_{n=0}^{N-1} b^{(r)}_{k,n} \big( \pazocal{I}_\Phi u \big)_n.
\]
Since $\pazocal{I}_\Phi u \in \mathcal{D}_r$, the series $\sum_{n=0}^{\infty} b^{(r)}_{k,n} \big( \pazocal{I}_\Phi u \big)_n$ converges absolutely. Therefore,
\[
\big( T_r ( \pazocal{I}_\Phi u ) \big)_k
=
\sum_{n=0}^{\infty} b^{(r)}_{k,n} \big( \pazocal{I}_\Phi u \big)_n
=
\frac{1}{2\pi i} \oint_{\partial \mathbb D_\rho} \Phi(z)\, \big( T_r u(z) \big)_k\, dz. \qedhere
\]
\end{proof}

\begin{proof}[Proof of Proposition \ref{prop: integral representation for iterates}]
Define
\[
\Phi(z) = \frac{1}{1-z} \qquad \text{and} \qquad \Phi_m(z) = \frac{1-z^{m-1}}{1-z} \qquad \big( z \in \partial \mathbb D_\rho \big).
\]
Since $0 < \rho < 1$, both $\Phi$ and $\Phi_m$ are continuous on $\partial \mathbb D_\rho$.

Because each $f_{r,z}$ maps $\mathbb D_\rho$ into itself, for every $n \geq 1$, $(r_0,\ldots,r_{n-1}) \in C^n$, $(z_0,\ldots,z_{n-1}) \in (\partial \mathbb D_\rho)^n$, $x \in \mathbb D_\rho$, and $k \geq 1$,
\begin{equation} \label{eq: bound for F over k in branch setting}
\left| \frac{1}{k}\left( -\mathscr F_{z_{n-1},\ldots,z_0}^{r_{n-1},\ldots,r_0}(x) \right)^k \right|
\leq \frac{\rho^k}{k}.
\end{equation}
In particular, all iterated contour integrals below are absolutely convergent, and Fubini's theorem applies.

\smallskip

We first prove equation \eqref{eq: integral representation of power of T}. For each $n \geq 1$, let $\mathfrak p_n$ denote the following assertion:

\smallskip

\emph{For every $x \in \mathbb D_\rho$, every $c \in \mathbb C$, every
$(r_0,\ldots,r_{n-1}) \in C^n$, and every $k \geq 1$,}
\[
\bigg( T_{r_{n-1}} \cdots T_{r_0} v(x \, ; \, c) \bigg)_k
=
- \frac{1}{(2\pi i)^n} \oint_{\partial \mathbb D_\rho} \cdots \oint_{\partial \mathbb D_\rho}
\left( \prod_{\ell=0}^{n-1} \Phi(z_\ell) \! \right)
\frac{ \left( - \mathscr F_{z_{n-1},\ldots,z_0}^{r_{n-1},\ldots,r_0}(x) \right)^k}{k}\, dz_{n-1}\cdots dz_0.
\]

\smallskip

It suffices to prove $\mathfrak p_n$ for every $n\geq 1$. We proceed by induction.

For $n=1$, $\mathfrak p_1$ is exactly Proposition \ref{prop: integral representation}, since $\mathscr F_{z_0}^{r_0}(x) = f_{r_0,z_0}(x) = f_{r_0}\!\left( \frac{x}{z_0} \right)$.

Now assume $\mathfrak p_n$ for some $n \geq 1$, and fix $x \in \mathbb D_\rho$, $c \in \mathbb C$, and $(r_0,\ldots,r_n) \in C^{n+1}$. Define
\[
w : \partial \mathbb D_\rho \to \mathbb C^{\mathbb N_0},
\qquad
w(z) = v\big( f_{r_0,z}(x) \, ; \, c \big).
\]
Since $f_{r_0,z}(x) \in \mathbb D_\rho$ for every $z \in \partial \mathbb D_\rho$, for each $j\geq 1$ the $j$-th entry of $w$ is bounded by $\rho^j$ by the definition of $v(\cdot\,;\,\cdot)$. Also, the $0$-th entry is the constant $c$, and $w$ is continuous entrywise, so $w\in \pazocal Y$. 

Next, a sequence-valued function $u^{\, r_n,\ldots,r_1} : \partial \mathbb D_\rho \to \mathbb C^{\mathbb N_0}$ is defined by
\[
u^{\, r_n,\ldots,r_1}(z_0)
=
\frac{1}{(2\pi i)^n} \oint_{\partial \mathbb D_\rho} \cdots \oint_{\partial \mathbb D_\rho}
\left( \prod_{\ell=1}^{n} \Phi(z_\ell) \right)
v\!\left( \mathscr F_{z_n,\ldots,z_1}^{r_n,\ldots,r_1}\big( f_{r_0,z_0}(x) \big) \, ; \, c \right)
dz_n \cdots dz_1.
\]
Then $u^{\, r_n,\ldots,r_1}$ is continuous entrywise since the integrand is continuous for each index. For every $j \geq 1$ and every $z_0 \in \partial \mathbb D_\rho$, the bound
\eqref{eq: bound for F over k in branch setting} gives
\[
\left| u^{\, r_n,\ldots,r_1}_j(z_0) \right|
\leq
\frac{1}{(2\pi)^n} (2\pi \rho)^n \frac{1}{(1-\rho)^n} \frac{\rho^j}{j}
\leq
\left( \frac{\rho}{1-\rho} \right)^n \rho^j.
\]
Also, the $0$-th coordinate of $u^{\, r_n,\ldots,r_1}$ is constant in $z_0$. Therefore
$u^{\, r_n,\ldots,r_1} \in \pazocal Y$.

By the induction hypothesis $\mathfrak p_n$, for every $z_0 \in \partial \mathbb D_\rho$ and every $k \geq 1$,
\begin{equation} \label{eq: induction hypothesis with frozen z0}
\bigg(
T_{r_n}\cdots T_{r_1} v\big( f_{r_0,z_0}(x) \, ; \, c \big) \bigg)_k
=
u^{\, r_n,\ldots,r_1}_k(z_0).
\end{equation}

Now Proposition \ref{prop: integral representation} implies that for every $k \geq 1$,
\[
\big( T_{r_0} v(x \, ; \, c) \big)_k
=
\big( \pazocal I_{\Phi} w \big)_k.
\]
Since $b^{(r)}_{k,0}=0$ for every $r \in C$ and every $k \geq 1$, the $k$-th coordinate of
$T_r a$ depends only on the coordinates $a_n$ with $n \geq 1$. Hence, for every $k \geq 1$,
\[
\bigg( T_{r_n} \cdots T_{r_1} T_{r_0} v(x \, ; \, c) \bigg)_k
=
\bigg( T_{r_n} \cdots T_{r_1} \big( \pazocal I_{\Phi} w \big) \bigg)_k.
\]
Applying Lemma \ref{lemma: exchange lemma for Tr} successively to
$T_{r_1},\ldots,T_{r_n}$, and noting at each stage that the same estimate as above places the corresponding sequence-valued integrand in $\pazocal Y$, we obtain
\begin{align*}
\bigg( T_{r_n} \cdots T_{r_0} v(x \, ; \, c) \bigg)_k
=
\frac{1}{2\pi i} \oint_{\partial \mathbb D_\rho} \Phi(z_0) \bigg( T_{r_n}\cdots T_{r_1} v\big( f_{r_0,z_0}(x) \, ; \, c \big) \bigg)_k
\, dz_0
\end{align*}
By equation \eqref{eq: induction hypothesis with frozen z0}, we get
\begin{align*}
\bigg( T_{r_n} \cdots T_{r_0} v(x \, ; \, c) \bigg)_k
&=
- \frac{1}{(2\pi i)^{n+1}} \oint_{\partial \mathbb D_\rho} \cdots \oint_{\partial \mathbb D_\rho}
\left( \prod_{\ell=0}^{n} \Phi(z_\ell) \right)
\frac{ \left( -\mathscr F_{z_n,\ldots,z_0}^{r_n,\ldots,r_0}(x) \right)^k}{k}\, dz_n \cdots dz_0.
\end{align*}
This proves $\mathfrak p_{n+1}$, and therefore proves
\eqref{eq: integral representation of power of T}.

\smallskip

The proof of equation \eqref{eq: integral representation of power of Tm} is identical, replacing $\Phi$ by $\Phi_m$ and each $T_r$ by $T_{r,m}$. Whenever we move $T_{r,m}$ past a contour integral, we only exchange a finite sum with the integral, which is always permitted. Carrying out the same induction yields \eqref{eq: integral representation of power of Tm}.
\end{proof}

\subsection{Integral operators and telescoping} \label{subsection: Integral operators and telescoping}

We now express the operator difference $\mathrm T^\ell \boldsymbol v - \mathrm T_{m\times m}^\ell \boldsymbol v$ in terms of integral operators on holomorphic functions, and estimate the resulting sum by telescoping.

Denote by $\pazocal{H}(D)$ the set of holomorphic functions on a domain $D$. For $r \in C$ and $m \in \mathbb N$, define
$
\pazocal{L}_r,\; \pazocal{R}_{r,m},\; \pazocal{K}_{r,m} : \;
\pazocal{H}(\mathbb{D}_\rho) \to \pazocal{H}(\mathbb{D}_\rho)
$
by
\begin{align*}
\pazocal{L}_r h(w)
&\;\coloneqq\;
\frac{1}{2\pi i} \oint_{\partial \mathbb D_\rho} \frac{1}{1-z}\, h\left( f_r\!\left( \frac{w}{z} \right) \right)\,dz, \\[4pt]
\pazocal{R}_{r,m} h(w)
&\;\coloneqq\;
\frac{1}{2\pi i} \oint_{\partial \mathbb D_\rho} \frac{z^{m-1}}{1-z}\, h\left( f_r\!\left( \frac{w}{z} \right) \right)\,dz, \\[4pt]
\pazocal{K}_{r,m} h
&\;\coloneqq\;
\pazocal{L}_r h - \pazocal{R}_{r,m} h.
\end{align*}
These return holomorphic functions on $\mathbb{D}_\rho$. Indeed, if $w \in \mathbb D_\rho$ and $z \in \partial \mathbb D_\rho$, then
$w/z \in \mathbb D_1$, and hence $f_r(w/z) \in \mathbb D_\rho$ by Lemma \ref{lemma: real contraction implies complex contraction}. For each fixed $z \in \partial \mathbb D_\rho$, the map
\[
w \;\;\longmapsto\;\; \frac{1}{1-z}\, h\left( f_r\!\left( \frac{w}{z} \right) \right)
\]
is holomorphic on $\mathbb D_\rho$. If $\gamma$ is any closed piecewise $C^1$-curve inside a compact
set $K \subset \mathbb D_\rho$, then using the boundedness of $h$ on the compact set $\left\{ f_r(w/z) \setcond w \in K,\; z \in \partial \mathbb D_\rho \right\}$ and Fubini's theorem,
\[
\oint_\gamma \pazocal{L}_r h(w)\,dw
=
\frac{1}{2\pi i} \oint_{\partial \mathbb D_\rho} \oint_\gamma \frac{1}{1-z}\, h\left( f_r\!\left( \frac{w}{z} \right) \right)\,dw\,dz = 0.
\]
Thus Morera's theorem implies that $\pazocal{L}_r h$ is holomorphic on $\mathbb D_\rho$. The same argument applies to $\pazocal{R}_{r,m}$, and hence also to $\pazocal{K}_{r,m}$.

For any $h \in \pazocal{H}(\mathbb D_\rho)$, applying Lemma \ref{lemma: coboundary property} to $h\circ f_r$ we obtain
\begin{equation} \label{eq: operator L identity}
\pazocal{L}_r h(w) = h\big( f_r(w) \big) - h\big( f_r(0) \big).
\end{equation}

We define the oscillation seminorm $\| \cdot \|_{\mathrm{osc}}$ on $\pazocal{H}(\mathbb D_\rho)$ by
\[
\| h \|_{\mathrm{osc}}
\;\coloneqq\;
\sup_{u,v \in \mathbb D_\rho} |h(u)-h(v)|.
\]
Also, define a positive number $K(\rho)$ by
\begin{equation} \label{eq: definition of K-rho}
K(\rho)
\;\coloneqq\;
\frac{1}{2\pi} \oint_{\partial \mathbb D_\rho} \frac{|dz|}{|1-z|}.
\end{equation}

\begin{lemma} \label{lemma: basic properties of LRK}
Suppose $h \in \pazocal{H}(\mathbb D_\rho)$ satisfies $\|h\|_{\mathrm{osc}} < \infty$. Then for any
$r \in C$ and $m \geq 1$,
\[
\| \pazocal{L}_r h \|_{\mathrm{osc}} \leq \|h\|_{\mathrm{osc}},
\quad
\| \pazocal{R}_{r,m} h \|_{\mathrm{osc}} \leq \rho^{m-1} K(\rho)\,\|h\|_{\mathrm{osc}},
\quad
\| \pazocal{K}_{r,m} h \|_{\mathrm{osc}} \leq \big( 1+\rho^{m-1}K(\rho) \big)\|h\|_{\mathrm{osc}}.
\]
Also,
\begin{equation} \label{eq: LRKh0 is 0}
\pazocal{L}_r h(0)
=
\pazocal{R}_{r,m} h(0)
=
\pazocal{K}_{r,m} h(0)
=
0.
\end{equation}
\end{lemma}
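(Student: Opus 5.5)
For $\pazocal L_r$ I would use the explicit formula \eqref{eq: operator L identity}, $\pazocal L_r h(w)=h(f_r(w))-h(f_r(0))$. For $u,v\in\mathbb D_\rho$ we get
\[
\pazocal L_r h(u)-\pazocal L_r h(v)=h(f_r(u))-h(f_r(v)).
\]
Since $f_r(\mathbb D_1)\subset\mathbb D_\rho$ by Lemma \ref{lemma: real contraction implies complex contraction}, both $f_r(u)$ and $f_r(v)$ lie in $\mathbb D_\rho$. Hence $\|\pazocal L_r h\|_{\mathrm{osc}}\leq\|h\|_{\mathrm{osc}}$. Evaluating the same formula at $w=0$ gives $\pazocal L_r h(0)=0$ immediately.

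For $\pazocal R_{r,m}$, the key observation is that $\frac{1}{2\pi i}\oint_{\partial\mathbb D_\rho}\frac{z^{m-1}}{1-z}\,dz=0$ when $m\geq 1$, because the integrand is holomorphic on $\overline{\mathbb D_\rho}$. So we may subtract any constant from $h$ inside the integral. Fix $u,v\in\mathbb D_\rho$ and subtract the constant $h(f_r(v/z_*))$, or more simply compare directly:
\[
\pazocal R_{r,m}h(u)-\pazocal R_{r,m}h(v)=\frac{1}{2\pi i}\oint_{\partial\mathbb D_\rho}\frac{z^{m-1}}{1-z}\Big(h\big(f_r(u/z)\big)-h\big(f_r(v/z)\big)\Big)dz.
\]
Both arguments of $h$ lie in $\mathbb D_\rho$, so the bracket is at most $\|h\|_{\mathrm{osc}}$ in modulus. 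On the contour $|z^{m-1}|=\rho^{m-1}$, so the integral is at most $\rho^{m-1}\cdot\frac{1}{2\pi}\oint\frac{|dz|}{|1-z|}\cdot\|h\|_{\mathrm{osc}}=\rho^{m-1}K(\rho)\|h\|_{\mathrm{osc}}$. This is the claimed bound.

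For $\pazocal R_{r,m}h(0)=0$, the integrand at $w=0$ is $\frac{z^{m-1}}{1-z}\,h(f_r(0))$, which is a constant times a function holomorphic on $\overline{\mathbb D_\rho}$ (as $\rho<1$). Its integral vanishes by Cauchy's theorem. The statement for $\pazocal K_{r,m}$ then follows from linearity, subadditivity of the seminorm, and $\pazocal K_{r,m}h(0)=\pazocal L_rh(0)-\pazocal R_{r,m}h(0)=0$.

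The only point needing care is boundedness: $h$ need not be bounded on $\mathbb D_\rho$ a priori. However, $\|h\|_{\mathrm{osc}}<\infty$ forces $|h|\leq|h(0)|+\|h\|_{\mathrm{osc}}$, so all integrals are finite and no further obstacle arises.
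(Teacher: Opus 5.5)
Your proposal is correct and essentially the paper's own proof: the $\pazocal L_r$ bound comes from the identity $\pazocal L_r h(w)=h(f_r(w))-h(f_r(0))$, the $\pazocal R_{r,m}$ bound from writing $\pazocal R_{r,m}h(u)-\pazocal R_{r,m}h(v)$ as one contour integral and using $|z^{m-1}|=\rho^{m-1}$, the $\pazocal K_{r,m}$ bound from the triangle inequality, and the vanishing at $0$ from Cauchy's theorem. The aside about subtracting constants (the unexplained $z_*$) and the boundedness remark are harmless but not needed, since for fixed $w$ the integrand is already continuous on the contour.
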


\begin{proof}
The first inequality follows immediately from equation \eqref{eq: operator L identity}. For any $u,v \in \mathbb D_\rho$,
\begin{align*}
\left| \pazocal{R}_{r,m} h(u) - \pazocal{R}_{r,m} h(v) \right|
&=
\left| \frac{1}{2\pi i} \oint_{\partial \mathbb D_\rho} \frac{z^{m-1}}{1-z}
\Big( h\big( f_r(u/z) \big) - h\big( f_r(v/z) \big) \Big)\,dz \right| \\
&\leq
\frac{\|h\|_{\mathrm{osc}}}{2\pi} \oint_{\partial \mathbb D_\rho} \frac{\rho^{m-1}|dz|}{|1-z|}
=
\rho^{m-1}K(\rho)\,\|h\|_{\mathrm{osc}}.
\end{align*}
The estimate for $\pazocal{K}_{r,m}$ follows from the triangle inequality. Finally, equation \eqref{eq: LRKh0 is 0} is an immediate consequence of Cauchy's integral
theorem.
\end{proof}

The next lemma rewrites the difference between the products of the full operator $\pazocal L_r$ and the truncated operator $\pazocal K_{r,m}$ as a sum of single-insertions of the remainder operator $\pazocal R_{r,m}$.

\begin{lemma} \label{lemma: telescoping of L and K}
Fix $m \in \mathbb N$, and write $\pazocal{K}_r = \pazocal{K}_{r,m}$ and $\pazocal{R}_r = \pazocal{R}_{r,m}\,$ for $r\in C$. Then for any $(r_1,\ldots,r_n) \in C^n$,
\[
\pazocal{L}_{r_1}\cdots \pazocal{L}_{r_n} - \pazocal{K}_{r_1}\cdots \pazocal{K}_{r_n}
\;=\;
\sum_{\ell=1}^n \pazocal{L}_{r_1}\cdots \pazocal{L}_{r_{\ell-1}} \pazocal{R}_{r_\ell} \pazocal{K}_{r_{\ell+1}}\cdots \pazocal{K}_{r_n}.
\]
\end{lemma}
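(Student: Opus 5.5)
The identity is purely algebraic. It uses only that $\pazocal L_r = \pazocal K_r + \pazocal R_r$ for every $r \in C$, which holds by the definition $\pazocal K_{r,m} = \pazocal L_r - \pazocal R_{r,m}$, together with the linearity of all three operators on $\pazocal H(\mathbb D_\rho)$. No analytic input (contraction, oscillation bounds) is needed. The only point to check first is that every composition on both sides makes sense. Each of $\pazocal L_r$, $\pazocal R_{r,m}$, $\pazocal K_{r,m}$ maps $\pazocal H(\mathbb D_\rho)$ into itself, as verified via Morera's theorem just before Lemma \ref{lemma: basic properties of LRK}. Each is also linear in $h$, since $h$ enters the contour integral linearly.

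The plan is a standard telescoping sum. For $0 \le \ell \le n$ set
\[
P_\ell \coloneqq \pazocal L_{r_1}\cdots \pazocal L_{r_\ell}\,\pazocal K_{r_{\ell+1}}\cdots \pazocal K_{r_n},
\]
so that $P_n$ is the full $\pazocal L$-product and $P_0$ is the full $\pazocal K$-product. Then the left-hand side equals $\sum_{\ell=1}^n (P_\ell - P_{\ell-1})$. For each $\ell$, the two products $P_\ell$ and $P_{\ell-1}$ share the prefix $\pazocal L_{r_1}\cdots\pazocal L_{r_{\ell-1}}$ and the suffix $\pazocal K_{r_{\ell+1}}\cdots\pazocal K_{r_n}$. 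They differ only in the $\ell$-th slot, where one has $\pazocal L_{r_\ell}$ and the other $\pazocal K_{r_\ell}$.

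Next, linearity of the prefix operators justifies factoring:
\[
P_\ell - P_{\ell-1} = \pazocal L_{r_1}\cdots\pazocal L_{r_{\ell-1}}\big(\pazocal L_{r_\ell}-\pazocal K_{r_\ell}\big)\pazocal K_{r_{\ell+1}}\cdots\pazocal K_{r_n}.
\]
Substituting $\pazocal L_{r_\ell} - \pazocal K_{r_\ell} = \pazocal R_{r_\ell}$ turns this into exactly the $\ell$-th summand of the claimed formula. Summing over $\ell$ gives the lemma.

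Alternatively, one can argue by induction on $n$. The case $n=1$ is the definition of $\pazocal K_r$. For the inductive step, write
\[
\pazocal L_{r_1}X - \pazocal K_{r_1}Y = \pazocal L_{r_1}(X-Y) + \pazocal R_{r_1}Y,
\]
with $X = \pazocal L_{r_2}\cdots\pazocal L_{r_n}$ and $Y = \pazocal K_{r_2}\cdots\pazocal K_{r_n}$, and apply the induction hypothesis to $X-Y$. There is no real obstacle here. The only care needed is to keep the ordering of the noncommuting operators correct, with $\pazocal L$'s on the left and $\pazocal K$'s on the right of the inserted $\pazocal R$.
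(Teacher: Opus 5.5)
Your proof is correct. Both arguments are sound: the direct telescoping over the hybrid products $P_\ell$, and the alternative induction via $\pazocal L_{r_1}X - \pazocal K_{r_1}Y = \pazocal L_{r_1}(X-Y) + \pazocal R_{r_1}Y$, which is exactly the paper's proof; the telescoping is just the non-inductive unrolling of the same identity, resting only on linearity and $\pazocal L_r - \pazocal K_{r,m} = \pazocal R_{r,m}$.
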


\begin{proof}
We argue by induction on $n$. For $n=1$, the identity is simply
\[
\pazocal{L}_{r_1} - \pazocal{K}_{r_1} = \pazocal{R}_{r_1}.
\]
Assume the statement holds for words of length $n$. Then
\begin{align*}
\pazocal{L}_{r_1} \cdots \pazocal{L}_{r_{n+1}} - \pazocal{K}_{r_1} \cdots \pazocal{K}_{r_{n+1}}
\;&=\; \pazocal{L}_{r_1} \left( \pazocal{L}_{r_2} \cdots \pazocal{L}_{r_{n+1}} - \pazocal{K}_{r_2} \cdots \pazocal{K}_{r_{n+1}} \right) + \left( \pazocal{L}_{r_1} - \pazocal{K}_{r_1} \right) \pazocal{K}_{r_2} \cdots \pazocal{K}_{r_{n+1}}
\\
&=\; \pazocal{L}_{r_1} \left( \sum_{\ell = 1}^n \pazocal{L}_{r_2} \cdots \pazocal{L}_{r_{\ell}} \pazocal{R}_{r_{\ell+1}} \pazocal{K}_{r_{\ell+2}} \cdots \pazocal{K}_{r_{n+1}} \right) + \pazocal{R}_{r_1} \pazocal{K}_{r_2} \cdots \pazocal{K}_{r_{n+1}} \\
&=\; \left( \sum_{\ell = 1}^n \pazocal{L}_{r_1} \cdots \pazocal{L}_{r_{\ell}} \pazocal{R}_{r_{\ell+1}} \pazocal{K}_{r_{\ell+2}} \cdots \pazocal{K}_{r_{n+1}} \right) + \pazocal{R}_{r_1} \pazocal{K}_{r_2} \cdots \pazocal{K}_{r_{n+1}} \\
&=\; \sum_{\ell = 1}^{n+1} \pazocal{L}_{r_1} \cdots \pazocal{L}_{r_{\ell-1}} \pazocal{R}_{r_{\ell}} \pazocal{K}_{r_{\ell+1}} \cdots \pazocal{K}_{r_{n+1}}. \qedhere
\end{align*}
\end{proof}

We are now ready to estimate the truncation error.

\begin{proposition} \label{prop: evaluating the truncation}
Let $D = \max_{r \in C} \left| f_r^\top(0) \right|$ and $\rho_m = \rho^{m-1} K(\rho)$. For any $N,m \in \mathbb N$ with $m \geq 2$,
\begin{equation*}
\left| \sum_{n=0}^{N-1} \Big[ \mathrm T^n \boldsymbol v - \mathrm T_{m\times m}^n \boldsymbol v \Big]_{\boldsymbol 0} \right|
\leq
2 \left( \!\log \frac{1}{1-\rho D} \right)
\left( \frac{(1+\rho_m)^{N-1} - 1 - (N-1)\rho_m}{\rho_m} \right)
+\frac{2(N-1)\rho^m D^m}{m(1-\rho D)}.
\end{equation*}
\end{proposition}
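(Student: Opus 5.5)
The plan is to reduce each term $[\mathrm T^n\boldsymbol v-\mathrm T^n_{m\times m}\boldsymbol v]_{\boldsymbol 0}$ to a probability-weighted average over paths $(r_0,\dots,r_{n})\in C^{n+1}$ of a difference of compositions of the operators $\pazocal L_r$ and $\pazocal K_{r,m}$. That difference will then be estimated by Lemma \ref{lemma: telescoping of L and K} together with the oscillation bounds of Lemma \ref{lemma: basic properties of LRK}.

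\textbf{Step 1: path expansion.} Expanding $\mathrm T^n$ and the seed $\boldsymbol v$ and using stationarity of $(\pi_r)$ exactly as in Proposition \ref{prop: Sn identity}, I get
\[
[\mathrm T^n\boldsymbol v]_{\boldsymbol 0}=\sum_{\mathbf r\in C^{n+1}}p(\mathbf r)\,\big(T_{r_n}\cdots T_{r_1}v(x_0;c_0)\big)_0,
\qquad x_0=f_{r_0}(0),\; c_0=\ell_{r_0}(0),
\]
and the same identity with $T_{r,m}$ for the truncated operator. The weights $p(\mathbf r)$ sum to $1$, so it suffices to bound each path term uniformly. The case $n=0$ contributes nothing, since the $0$-th coordinates coincide.

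\textbf{Step 2: rewrite one path term via $\pazocal L$ and $\pazocal K$.} The $0$-th row of $T_{r}$ applied to a vector whose $k$-th entry is $-(-y)^k/k$ for $k\geq1$ gives $h_r(y):=\Log(1+f_r^\top(0)y)=\ell_r(y)-\ell_r(0)$. This follows from equation \eqref{eq: 0th coordinate of Tv}, and the row only reads coordinates $k\geq1$. Combining this with Proposition \ref{prop: integral representation for iterates} applied to $T_{r_{n-1}}\cdots T_{r_1}$, and exchanging the $0$-th row with the contour integrals via Lemma \ref{lemma: exchange lemma for Tr}, I expect
\[
\big(T_{r_n}\cdots T_{r_1}v(x_0;c_0)\big)_0=\big(\pazocal L_{r_1}\cdots\pazocal L_{r_{n-1}}h_{r_n}\big)(x_0).
\]
For the truncated product there are two effects. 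First, each $T_{r,m}$ only reads and writes coordinates $<m$, so the inner factors give exactly the kernel $\Phi_m=(1-z^{m-1})/(1-z)$ of \eqref{eq: integral representation of power of Tm}; that is, $\pazocal K_{r,m}$ replaces $\pazocal L_r$. Second, the final $0$-th row is truncated to $1\le k\le m-1$. The truncated row differs from $h_{r_n}$ by the tail
\[
\sum_{k\geq m}\frac{|f^\top_{r_n}(0)|^k\rho^k}{k}\le\frac{\rho^mD^m}{m(1-\rho D)}.
\]
This tail error appears once in the $0$-th coordinate. A second, analogous tail enters because $h_{r_n}$ is evaluated at $0$-th-row level inside the truncated chain. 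Summed over $1\le n\le N-1$, these give the term $2(N-1)\rho^mD^m/(m(1-\rho D))$.

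\textbf{Step 3: telescoping.} With the tails removed, apply Lemma \ref{lemma: telescoping of L and K}:
\[
\pazocal L_{r_1}\cdots\pazocal L_{r_{n-1}}h-\pazocal K_{r_1}\cdots\pazocal K_{r_{n-1}}h=\sum_{\ell}\pazocal L\cdots\pazocal L\,\pazocal R_{r_\ell}\,\pazocal K\cdots\pazocal K\,h.
\]
Every function in the chain vanishes at $0$ by \eqref{eq: LRKh0 is 0}, so its supremum on $\mathbb D_\rho$ is at most its oscillation. Lemma \ref{lemma: basic properties of LRK} bounds the $\ell$-th term by $\rho_m(1+\rho_m)^{n-1-\ell}\|h_{r_n}\|_{\mathrm{osc}}$. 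Moreover $|h_r(y)|\le\log\frac1{1-\rho D}$ on $\mathbb D_\rho$, which gives $\|h_r\|_{\mathrm{osc}}\le 2\log\frac1{1-\rho D}$. Summing first over $\ell$ gives $(1+\rho_m)^{n-1}-1$. Summing then over $n$ from $1$ to $N-1$ gives the geometric expression $\big((1+\rho_m)^{N-1}-1-(N-1)\rho_m\big)/\rho_m$.

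\textbf{Main obstacle.} I expect the main difficulty to be Step 2: justifying the operator identities rigorously for the truncated chain. This means checking that restricting to coordinates $<m$ at each stage matches the $\pazocal K_{r,m}$ kernel exactly, with no extra boundary terms. It also means keeping exact track of where the two tail contributions arise, so that the constant $2$ comes out as stated.
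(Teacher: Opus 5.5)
Your overall route is the paper's: the path expansion, the operator formulas \eqref{eq: operator formula for full iterate}--\eqref{eq: operator formula for truncated iterate}, Lemma~\ref{lemma: telescoping of L and K}, and the oscillation bounds. These formulas already settle your ``main obstacle'': Proposition~\ref{prop: integral representation for iterates} gives exactly the $\pazocal K_{r,m}$-chain for $1\le k\le m-1$, with no boundary terms. You fold the last $0$-th row into $h_{r_n}=\Log(1+f_{r_n}^\top(0)\,\cdot\,)$, whereas the paper telescopes each $h^{(k)}(w)=-(-w)^k/k$ and then uses the recursion \eqref{eq: zero coordinate recursion}. By linearity this is only a repackaging.

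The gap is in the tail bookkeeping, which is exactly what produces the constant. The truncated path term is $\big(\pazocal K_{r_1,m}\cdots\pazocal K_{r_{n-1},m}h^{<m}_{r_n}\big)(x_0)$, where $h^{<m}_{r_n}=\sum_{k=1}^{m-1}(f^\top_{r_n}(0))^k h^{(k)}$. You must split
\[
\pazocal L_{r_1}\cdots\pazocal L_{r_{n-1}}h_{r_n}-\pazocal K_{r_1,m}\cdots\pazocal K_{r_{n-1},m}h^{<m}_{r_n}
=\pazocal L_{r_1}\cdots\pazocal L_{r_{n-1}}\big(h_{r_n}-h^{<m}_{r_n}\big)
+\big(\pazocal L_{r_1}\cdots\pazocal L_{r_{n-1}}-\pazocal K_{r_1,m}\cdots\pazocal K_{r_{n-1},m}\big)h^{<m}_{r_n}.
\]
That is, telescope on $h^{<m}_{r_n}$, whose oscillation is still at most $2\log\frac{1}{1-\rho D}$, and send the tail through the $\pazocal L$-chain. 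By \eqref{eq: operator L identity}, that chain evaluates to $g(F(x_0))-g(F(0))$, where $g=h_{r_n}-h^{<m}_{r_n}$ and $F=f_{r_{n-1}}\circ\cdots\circ f_{r_1}$. Hence it is bounded by $\|g\|_{\mathrm{osc}}\le 2\sum_{k\ge m}(\rho D)^k/k$. So the factor $2$ comes from the oscillation (the paper's bound \eqref{eq: kth coordinate bound for full iterate}), not from a second tail.

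As you wrote it, you telescope on $h_{r_n}$ and let a tail sit ``inside the truncated chain''. Then the leftover is $\pazocal K_{r_1,m}\cdots\pazocal K_{r_{n-1},m}\big(h_{r_n}-h^{<m}_{r_n}\big)$, and its bound from Lemma~\ref{lemma: basic properties of LRK} carries a factor $(1+\rho_m)^{n-1}$. Summing over $n$ then gives $\sum_{n=1}^{N-1}(1+\rho_m)^{n-1}$ in place of $N-1$, and the stated inequality does not follow.
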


\begin{proof}
For each $k \geq 1$, define $h^{(k)} \in \pazocal H(\mathbb D_\rho)$ by $h^{(k)}(w) = - \frac{(-w)^k}{k}$. Then $\| h^{(k)} \|_{\mathrm{osc}} \leq \frac{2\rho^k}{k}$.

For $n \geq 0$, $s \in C$, and with the convention that the empty product is the identity,
\begin{align}
\big( \mathrm T^n \boldsymbol v \big)_s
&=
\frac{1}{\pi_s} \sum_{\mathbf r = (r_0,\ldots,r_n) \in C^{n+1}} p(\mathbf r)\,Q_{r_n,s}\, T_{r_n}\cdots T_{r_1} v\big( f_{r_0}(0)\,;\,\ell_{r_0}(0) \big), \label{eq: path expansion for Tn v} \\
\big( \mathrm T_{m\times m}^n \boldsymbol v \big)_s
&=
\frac{1}{\pi_s} \sum_{\mathbf r = (r_0,\ldots,r_n) \in C^{n+1}} p(\mathbf r)\,Q_{r_n,s}\, T_{r_n,m}\cdots T_{r_1,m} v\big( f_{r_0}(0)\,;\,\ell_{r_0}(0) \big). \label{eq: path expansion for Tmn v}
\end{align}
These identities follow by induction on $n$ from the definitions of $\boldsymbol v$, $\mathrm T$,
and $\mathrm T_{m \times m}$.

Now fix $n \geq 1$ and $s \in C$. By Proposition \ref{prop: integral representation for iterates}, for every $k\geq 1$,
\begin{equation} \label{eq: operator formula for full iterate}
\left(
T_{r_n}\cdots T_{r_1} v\big( f_{r_0}(0)\,;\,\ell_{r_0}(0) \big)
\right)_k
=
\big( \pazocal{L}_{r_1}\cdots \pazocal{L}_{r_n} h^{(k)} \big)\big( f_{r_0}(0) \big),
\end{equation}
and for every $1 \leq k \leq m-1$,
\begin{equation} \label{eq: operator formula for truncated iterate}
\left(
T_{r_n,m}\cdots T_{r_1,m} v\big( f_{r_0}(0)\,;\,\ell_{r_0}(0) \big)
\right)_k
=
\big( \pazocal{K}_{r_1,m}\cdots \pazocal{K}_{r_n,m} h^{(k)} \big)\big( f_{r_0}(0) \big).
\end{equation}

Let
\[
\pazocal S
=
\pazocal{L}_{r_1}\cdots \pazocal{L}_{r_{\ell-1}}
\pazocal{R}_{r_\ell,m}
\pazocal{K}_{r_{\ell+1},m}\cdots \pazocal{K}_{r_n,m}.
\]
Since each of the operators $\pazocal{L}_{r}$, $\pazocal{R}_{r,m}$, and $\pazocal{K}_{r,m}$ sends
every holomorphic function to a new function vanishing at $0$ by equation \eqref{eq: LRKh0 is 0}, we have $\pazocal Sg(0)=0$ for every $g \in \pazocal{H}(\mathbb D_\rho)$. Hence,
\[
|\pazocal Sg(w)|
=
|\pazocal Sg(w)- \pazocal Sg(0)|
\leq
\|\pazocal Sg\|_{\mathrm{osc}}
\qquad \text{for every } w \in \mathbb D_\rho.
\]

Therefore, by Lemma \ref{lemma: basic properties of LRK} and Lemma \ref{lemma: telescoping of L and K}, for every $\mathbf r=(r_0,\ldots,r_n)\in C^{n+1}$ and $w\in \mathbb D_\rho$,
\begin{align}
&\left|
\big( \pazocal{L}_{r_1}\cdots \pazocal{L}_{r_n} h^{(k)} \big)(w)
-
\big( \pazocal{K}_{r_1,m}\cdots \pazocal{K}_{r_n,m} h^{(k)} \big)(w)
\right| \nonumber \\
&\leq
\left\| \big( \pazocal{L}_{r_1}\cdots \pazocal{L}_{r_n}
-
\pazocal{K}_{r_1,m}\cdots \pazocal{K}_{r_n,m} \big) h^{(k)} \right\|_{\mathrm{osc}}
\leq
\sum_{\ell=1}^{n}
\left\|
\pazocal{L}_{r_1}\cdots \pazocal{L}_{r_{\ell-1}}
\pazocal{R}_{r_\ell,m}
\pazocal{K}_{r_{\ell+1},m}\cdots \pazocal{K}_{r_n,m}
\, h^{(k)}
\right\|_{\mathrm{osc}} \nonumber \\
&\leq
\sum_{\ell=1}^{n} 1^{\ell-1} \, \rho_m (1+\rho_m)^{n-\ell}\, \|h^{(k)}\|_{\mathrm{osc}}
\leq
\frac{2\rho^k}{k}\Big( (1+\rho_m)^n - 1 \Big). \label{eq: bound for the difference of L and K compositions}
\end{align}
Combining this with equations \eqref{eq: path expansion for Tn v}, \eqref{eq: path expansion for Tmn v}, \eqref{eq: operator formula for full iterate}, \eqref{eq: operator formula for truncated iterate}, and using
\[
\frac{1}{\pi_s} \sum_{\mathbf r = (r_0,\ldots,r_n)\in C^{n+1}} p(\mathbf r)\,Q_{r_n,s} = 1,
\]
we obtain the following for every $s \in C$, $n \geq 1$, and $1 \leq k \leq m-1$.
\begin{equation} \label{eq: kth coordinate bound for difference}
\left| \Big(
\mathrm T^n \boldsymbol v - \mathrm T_{m\times m}^n \boldsymbol v
\Big)_{\!\!s,\,k} \right|
\leq
\frac{2\rho^k}{k}\Big( (1+\rho_m)^n - 1 \Big).
\end{equation}

Similarly, by equations \eqref{eq: operator formula for full iterate}, \eqref{eq: LRKh0 is 0}, and Lemma \ref{lemma: basic properties of LRK},
\[
\left| \big( \pazocal{L}_{r_1}\cdots \pazocal{L}_{r_n} h^{(k)} \big)\big( f_{r_0}(0) \big) \right|
\leq
\|h^{(k)}\|_{\mathrm{osc}}
\leq
\frac{2\rho^k}{k}.
\]
Using equation \eqref{eq: path expansion for Tn v} again, we get
\begin{equation} \label{eq: kth coordinate bound for full iterate}
\left| \big( \mathrm T^n \boldsymbol v \big)_{\!\!s,\,k} \right|
\leq
\frac{2\rho^k}{k}
\qquad
\text{for every } n \geq 0,\;\; k \geq 1.
\end{equation}

We now estimate the $0$-th coordinate. For $n \geq 1$ and $s \in C$, by the definitions of $\mathrm T$, $\mathrm T_{m\times m}$, $T_r$, and $T_{r,m}$,
\begin{flalign}
& \Big(
\mathrm T^n \boldsymbol v - \mathrm T_{m\times m}^n \boldsymbol v
\Big)_{\!\!s,0} & \nonumber \\
&=
\sum_{r\in C}
\frac{\pi_rQ_{r,s}}{\pi_s} \left\{ \sum_{k=1}^{m-1} \big( f_r^\top(0) \big)^k
\Big(
\mathrm T^{n-1} \boldsymbol v - \mathrm T_{m\times m}^{n-1} \boldsymbol v
\Big)_{\!\!r,\,k}
+
\sum_{k=m}^{\infty} \big( f_r^\top(0) \big)^k
\Big( \mathrm T^{n-1}\boldsymbol v \Big)_{\!\!r,\,k}
\right\}. \label{eq: zero coordinate recursion} \raisetag{25pt} &
\end{flalign}
Hence, using equations \eqref{eq: kth coordinate bound for difference}, \eqref{eq: kth coordinate bound for full iterate}, and the definition of $D$,
\begin{align*}
&\left| \sum_{n=0}^{N-1} \Big[ \mathrm T^n \boldsymbol v - \mathrm T_{m\times m}^n \boldsymbol v \Big]_{\boldsymbol 0} \right|
\leq
\sum_{n=1}^{N-1} \sum_{s\in C}\pi_s
\left| \Big(
\mathrm T^n \boldsymbol v - \mathrm T_{m\times m}^n \boldsymbol v
\Big)_{\!\!s,\,0} \right| \\
&\leq
\sum_{n=1}^{N-1} \sum_{s\in C}\pi_s \sum_{r\in C}
\frac{\pi_rQ_{r,s}}{\pi_s} \left\{ \sum_{k=1}^{m-1}
D^k \left| \Big(
\mathrm T^{n-1} \boldsymbol v - \mathrm T_{m\times m}^{n-1} \boldsymbol v
\Big)_{\!\!r,\,k} \right|
+
\sum_{k=m}^{\infty}
D^k \left| \Big( \mathrm T^{n-1}\boldsymbol v \Big)_{\!\!r,\,k} \right|
\right\} \\
&=
\sum_{n=1}^{N-1} \sum_{r\in C}\pi_r \left( \sum_{s\in C} Q_{r,s} \right)
\left\{
\sum_{k=1}^{m-1}
D^k \left| \Big(
\mathrm T^{n-1} \boldsymbol v - \mathrm T_{m\times m}^{n-1} \boldsymbol v
\Big)_{\!\!r,\,k} \right|
+
\sum_{k=m}^{\infty}
D^k \left| \Big( \mathrm T^{n-1}\boldsymbol v \Big)_{\!\!r,\,k} \right|
\right\} \\
&\leq
\sum_{n=2}^{N-1} \sum_{k=1}^{m-1}
\frac{2\rho^k D^k}{k} \Big( (1+\rho_m)^{n-1} - 1 \Big)
+
(N-1)\sum_{k=m}^{\infty} \frac{2\rho^k D^k}{k} \\
&\leq
2\left( \log \frac{1}{1-\rho D} \right) \frac{(1+\rho_m)^{N-1} - 1 - (N-1)\rho_m}{\rho_m}
+
2(N-1)\frac{\rho^m D^m}{m(1-\rho D)}.
\end{align*}
\end{proof}

The following bound for $K(\rho)$ follows from an elementary calculus argument using Cauchy–Schwarz and elliptic integrals.
\begin{lemma}[{\cite[Lemma 4.11]{Alibabaei26-2}}] \label{lemma: bound for Kr}
We have
\[
K(\rho) \leq \min \left\{ \frac{\rho}{\sqrt{1-\rho^2}}, \hspace{8pt} \frac{2\rho}{\pi(1+\rho)} \left( \frac{\pi}{2} + \log \frac{1+\rho}{1-\rho} \right) \right\}.
\]
\end{lemma}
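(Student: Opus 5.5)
The task is to bound $K(\rho)=\frac{1}{2\pi}\oint_{|z|=\rho}\frac{|dz|}{|1-z|}$ by each of the two quantities separately. Parametrize $z=\rho e^{i\theta}$, so that $|dz|=\rho\,d\theta$ and $|1-z|^2=1-2\rho\cos\theta+\rho^2$. Then
\[
K(\rho)=\frac{\rho}{2\pi}\int_0^{2\pi}\frac{d\theta}{\sqrt{1-2\rho\cos\theta+\rho^2}}.
\]

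For the first bound, apply Cauchy--Schwarz to the normalized measure $d\theta/2\pi$:
\[
K(\rho)\le \rho\Big(\frac{1}{2\pi}\int_0^{2\pi}\frac{d\theta}{1-2\rho\cos\theta+\rho^2}\Big)^{1/2}.
\]
The inner average is the Poisson-type integral $\frac{1}{2\pi}\oint_{|z|=\rho}\frac{|dz|}{\rho|1-z|^2}=\frac{1}{1-\rho^2}$. One can check this by residues, writing $|1-z|^2=(1-z)(1-\rho^2/z)$ on the circle, or by the standard formula $\int_0^{2\pi}\frac{d\theta}{a-b\cos\theta}=\frac{2\pi}{\sqrt{a^2-b^2}}$ with $a=1+\rho^2$, $b=2\rho$, since $a^2-b^2=(1-\rho^2)^2$. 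This gives $K(\rho)\le\rho/\sqrt{1-\rho^2}$.

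For the second bound, use symmetry to reduce to $\frac{\rho}{\pi}\int_0^\pi\frac{d\theta}{\sqrt{(1-\rho)^2+4\rho\sin^2(\theta/2)}}$. This is a complete elliptic integral of the first kind, $\frac{2\rho}{\pi(1+\rho)}\mathcal K(k)$ with modulus $k=\frac{2\sqrt\rho}{1+\rho}$ and complementary modulus $k'=\frac{1-\rho}{1+\rho}$. To obtain it, substitute $\phi=(\pi-\theta)/2$, which turns the denominator into $(1+\rho)\sqrt{1-k^2\sin^2\phi}$. It then suffices to prove the elementary inequality
\[
\mathcal K(k)\le \frac{\pi}{2}+\log\frac{1}{k'} .
\]
My plan for this is to write $\mathcal K(k)=\int_0^{\pi/2}\frac{d\phi}{\sqrt{k'^2+k^2\cos^2\phi}}$ and compare the integrand with a simpler one. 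A clean route is the substitution $t=\cos\phi$, giving $\int_0^1\frac{dt}{\sqrt{1-t^2}\sqrt{k'^2+k^2t^2}}$. Then split $\frac{1}{\sqrt{1-t^2}}\le 1+\big(\frac{1}{\sqrt{1-t^2}}-1\big)$ and bound $\sqrt{k'^2+k^2t^2}\ge\max(k',t)$ up to harmless constants, keeping track of which factor each piece uses so that the constants stay exact. The first piece gives $\int_0^1\frac{dt}{\sqrt{k'^2+t^2}}$ with $k^2\le1$ handled via $k'^2+k^2t^2\ge k'^2+k^2t^2$; this is $\operatorname{arsinh}(1/k')\le\log(2/k')$-type. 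The remainder piece, using the bound $\ge t$, gives $\int_0^1\frac{1-\sqrt{1-t^2}}{t\sqrt{1-t^2}}dt=\log 2$-type. Together these yield $\log(1/k')+\text{const}$, and I would need the constant to be at most $\pi/2$.

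The main obstacle is getting the additive constant exactly $\le\pi/2$. If the above splitting loses too much, the fallback is the monotonicity fact that $\mathcal K(k)-\log(4/k')$ decreases from $\pi/2-\log4$ at $k=0$ to $0$ as $k\to1$. This gives $\mathcal K(k)\le\log(4/k')$ when $k$ is near $1$, combined with $\mathcal K\le\pi/2\cdot(\text{something})$ for small $k$. Since $\log 4<\pi/2$, the monotonicity fact alone gives
\[
\mathcal K(k)\le \log\frac{4}{k'}+\Big(\frac{\pi}{2}-\log 4\Big)^+ \le \frac{\pi}{2}+\log\frac{1}{k'},
\]
after proving the monotonicity by differentiating with the standard derivative formula for $\mathcal K$. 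Finally, substituting $1/k'=\frac{1+\rho}{1-\rho}$ gives the stated bound, and taking the minimum of the two bounds completes the plan.
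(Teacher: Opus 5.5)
Your first bound and your reduction $K(\rho)=\frac{2\rho}{\pi(1+\rho)}\mathcal K(k)$, with $k=\frac{2\sqrt\rho}{1+\rho}$ and $k'=\frac{1-\rho}{1+\rho}$, are correct. This is the route the paper indicates (Cauchy--Schwarz plus elliptic integrals). The second bound then rests entirely on the inequality $\mathcal K(k)\le\frac\pi2+\log\frac1{k'}$, and that part of your proposal needs repair.

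\textbf{The splitting plan cannot work; drop it.} The target inequality is an equality at $k=0$, and its slack is only $(\tfrac12-\tfrac\pi8)k^2+O(k^4)$. Any comparison argument therefore has to be sharp to second order near $k=0$. Your two pieces tend to $1$ and $\log 2$ as $k\to0$, giving $1+\log2>\frac\pi2$. In addition, $k'^2+k^2t^2\le k'^2+t^2$, so passing to $\int_0^1(k'^2+t^2)^{-1/2}\,dt$ lowers the integral instead of bounding it from above.

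\textbf{One remark in your fallback is false.} You say monotonicity gives $\mathcal K(k)\le\log\frac4{k'}$ near $k=1$. In fact $\mathcal K(k)-\log\frac4{k'}$ decreases to $0$, so it is positive on $[0,1)$ and the opposite inequality holds. Delete that sentence.

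\textbf{The step you leave unproved is short.} All you need is that $\mathcal K(k)+\log k'$ is nonincreasing on $[0,1)$: its value at $k=0$ is $\frac\pi2$, and the limit as $k\to1$ plays no role. Write $\mathcal E(k)=\int_0^{\pi/2}\sqrt{1-k^2\sin^2\phi}\,d\phi$ and use the standard formula $\frac{d\mathcal K}{dk}=\frac{\mathcal E-k'^2\mathcal K}{kk'^2}$. Then
\[
\frac{d}{dk}\bigl(\mathcal K(k)+\log k'\bigr)=\frac{\mathcal E-k'^2\mathcal K-k^2}{kk'^2},
\qquad
\mathcal E-k'^2\mathcal K=k^2\int_0^{\pi/2}\frac{\cos^2\phi\,d\phi}{\sqrt{1-k^2\sin^2\phi}}\le k^2\int_0^{\pi/2}\cos\phi\,d\phi=k^2 .
\]
The inequality holds because $1-k^2\sin^2\phi=\cos^2\phi+k'^2\sin^2\phi\ge\cos^2\phi$. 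Hence $\mathcal K(k)\le\frac\pi2-\log k'$. With this written in, and the splitting paragraph and the false remark removed, your proof is complete.
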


The following theorem gives an explicit error bound. We recall that $d$ is the period of the recurrent-class transition matrix before aperiodic reduction.

\begin{theorem} \label{thm: precise bounds}
Let $0 < \rho < 1$ be as in equation \eqref{eq: every f is a rho-contraction}, and define
\[ 
E_C
=
\frac{1}{1-\rho}
\left(
\max_{r\in C} \frac{\left| f_r^{\top}(0) \right|}{1+\sqrt{1-{\left| f_r^{\top}(0) \right|}^2}}
\right)
\left(
\sum_{r\in C}\pi_r\, d_{\mathrm{hyp}}\big( f_r(0),0 \big)
\right).
\]
Let $D = \max_{r \in C} \left| f_r^{\top}(0) \right|$. Then $0 \leq D < 1$, and for any $n, m \in \mathbb N$ with $m \geq 2$,
\begin{flalign*}
&\left|
\lambda_+(A,P)
-
\sum_{\ell=0}^{n-1} \Big[ \mathrm T_{m\times m}^\ell \widehat{\boldsymbol v} \Big]_{\boldsymbol 0}
\right| & \\
& \hspace{40pt} \leq
\frac{E_C}{d} \rho^{n-1}
+
\frac2d \left( \log \frac{1}{1-\rho D} \right)
\left( \frac{(1+\rho_m)^{n-1} - 1 - (n-1)\rho_m}{\rho_m} \right)
+
\frac{2(n-1)\rho^m D^m}{md(1-\rho D)}. &
\end{flalign*}
Here, $\rho_m = \rho^{m-1} K(\rho)$, and
\[
0 < K(\rho) \leq \min \left\{ \frac{\rho}{\sqrt{1-\rho^2}}, \hspace{8pt} \frac{2\rho}{\pi(1+\rho)} \left( \frac{\pi}{2} + \log \frac{1+\rho}{1-\rho} \right) \right\}.
\]
\end{theorem}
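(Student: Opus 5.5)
The plan is to assemble Theorem \ref{thm: precise bounds} from the two error estimates already proved, after one triangle inequality and a rescaling by $1/d$. By Theorem \ref{thm: infinite sum equals lambda}, $\lambda_+(A,P) = \sum_{\ell\ge 0}[\mathrm T^\ell \widehat{\boldsymbol v}]_{\boldsymbol 0}$. Both $\mathrm T$ and $\mathrm T_{m\times m}$ are linear, and $[\cdot]_{\boldsymbol 0}$ is linear, so every term scales by $1/d$ when $\boldsymbol v$ is replaced by $\widehat{\boldsymbol v} = \boldsymbol v/d$. It therefore suffices to prove the bound for $\boldsymbol v$ with the factor $d$ removed, and then divide by $d$.

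We split the error as
\[
\left|\sum_{\ell\ge 0}[\mathrm T^\ell \boldsymbol v]_{\boldsymbol 0}-\sum_{\ell=0}^{n-1}[\mathrm T_{m\times m}^\ell \boldsymbol v]_{\boldsymbol 0}\right|
\le
\left|\sum_{\ell\ge n}[\mathrm T^\ell \boldsymbol v]_{\boldsymbol 0}\right|
+
\left|\sum_{\ell=0}^{n-1}[\mathrm T^\ell \boldsymbol v-\mathrm T_{m\times m}^\ell \boldsymbol v]_{\boldsymbol 0}\right|.
\]
The first term is at most $E_C\rho^{n-1}$ by Proposition \ref{prop: cutting the tail of the Neumann series}. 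The second term is bounded by Proposition \ref{prop: evaluating the truncation} with $N=n$ and $m\ge 2$, which produces exactly the second and third summands of the claimed bound with $d=1$. Dividing the whole inequality by $d$ then gives the stated estimate.

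The remaining side claims are routine.
\begin{itemize}
\item $0\le D<1$ follows from equation \eqref{eq: bounding the transpose of fr}.
\item $K(\rho)>0$ holds because the integrand $1/|1-z|$ in \eqref{eq: definition of K-rho} is strictly positive on $\partial\mathbb D_\rho$ and $\rho>0$.
\item The upper bound on $K(\rho)$ is Lemma \ref{lemma: bound for Kr}.
\end{itemize}

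The one place that needs care is the degenerate case $n=1$. There the truncation sum contains only the $\ell=0$ term, $[\boldsymbol v-\boldsymbol v]_{\boldsymbol 0}=0$, so the error is zero. This matches the right-hand side of Proposition \ref{prop: evaluating the truncation}, which also vanishes when $N=1$, so that proposition can be applied directly or the case can be noted separately. No other obstacle is expected, since all the analytic work was done in the two propositions above.
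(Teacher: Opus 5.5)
Your proposal is correct and follows the paper's proof essentially verbatim. The paper likewise uses Theorem \ref{thm: infinite sum equals lambda} and splits the error by the triangle inequality with the factor $1/d$ pulled out. It bounds the tail by Proposition \ref{prop: cutting the tail of the Neumann series} and the truncation by Proposition \ref{prop: evaluating the truncation} with $N=n$, and takes $0\le D<1$ from \eqref{eq: bounding the transpose of fr} and the bound on $K(\rho)$ from Lemma \ref{lemma: bound for Kr}. Your remarks on $K(\rho)>0$ and on the case $n=1$ are harmless details that the paper leaves implicit.
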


\begin{proof}
By equation \eqref{eq: bounding the transpose of fr}, we have $f_r^\top([-1,1]) \subset (-1,1)$ for every $r\in C$. Hence $0 \leq D < 1$.

Also, $\lambda_+(A,P) = \sum_{\ell=0}^{\infty} \Big[ \mathrm T^\ell \widehat{\boldsymbol v} \Big]_{\boldsymbol 0}$ by Theorem \ref{thm: infinite sum equals lambda}. Therefore, by the triangle inequality,
\begin{align*}
\left|
\lambda_+(A,P)
-
\sum_{\ell=0}^{n-1} \Big[ \mathrm T_{m\times m}^\ell \widehat{\boldsymbol v} \Big]_{\boldsymbol 0}
\right|
&=
\left|
\sum_{\ell=0}^{\infty} \Big[ \mathrm T^\ell \widehat{\boldsymbol v} \Big]_{\boldsymbol 0}
-
\sum_{\ell=0}^{n-1} \Big[ \mathrm T_{m\times m}^\ell \widehat{\boldsymbol v} \Big]_{\boldsymbol 0}
\right| \\
&\leq
\frac1d \left|
\sum_{\ell=0}^{\infty} \Big[ \mathrm T^\ell \boldsymbol v \Big]_{\boldsymbol 0}
-
\sum_{\ell=0}^{n-1} \Big[ \mathrm T^\ell \boldsymbol v \Big]_{\boldsymbol 0}
\right|
+
\frac1d \left|
\sum_{\ell=0}^{n-1} \Big[ \mathrm T^\ell \boldsymbol v \Big]_{\boldsymbol 0}
-
\sum_{\ell=0}^{n-1} \Big[ \mathrm T_{m\times m}^\ell \boldsymbol v \Big]_{\boldsymbol 0}
\right|.
\end{align*}

The inequality follows by applying Proposition \ref{prop: cutting the tail of the Neumann series} to the first term, and Proposition \ref{prop: evaluating the truncation} to the second term. The bound for $K(\rho)$ is by Lemma \ref{lemma: bound for Kr}.
\end{proof}

\begin{theorem} \label{thm: complexity statement}
Under the assumptions of Theorem \ref{thm: precise bounds}, for any $\vep > 0$ there exist
natural numbers $n$ and $m$ such that $n = O \! \left( \log (1/\vep) \right)$, $m = O \! \left( \log (n/\vep) \right)$, and
\[
\left|
\lambda_+(A,P)
-
\sum_{\ell=0}^{n-1} \Big[ \mathrm T_{m\times m}^{\ell} \widehat{\boldsymbol v} \Big]_{\boldsymbol 0}
\right|
< \vep.
\]
In particular, $\lambda_+(A,P)$ can be approximated with $O \! \left( \big( \log ( 1/\vep ) \big)^3 \right)$ arithmetic operations.
\end{theorem}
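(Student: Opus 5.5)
The plan is to start from the explicit bound of Theorem \ref{thm: precise bounds} and choose $n$ and $m$ so that each of the three terms is at most $\vep/3$. All constants $\rho$, $D$, $E_C$, $d$, $K(\rho)$ depend only on the fixed system, so they can be absorbed into the $O(\cdot)$.

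First, the tail term $\frac{E_C}{d}\rho^{n-1}$ is below $\vep/3$ as soon as $n-1 \geq \log(3E_C/(d\vep))/\log(1/\rho)$. So $n = O(\log(1/\vep))$.

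Second, the middle term needs more care, because $(1+\rho_m)^{n-1}$ grows with $n$. I will impose $(n-1)\rho_m \leq 1$. Using $(1+x)^{N}-1-Nx \leq \sum_{j\geq2}\frac{(Nx)^j}{j!}\leq (Nx)^2$ for $Nx\leq 1$, the quotient $\frac{(1+\rho_m)^{n-1}-1-(n-1)\rho_m}{\rho_m}$ is at most $(n-1)^2\rho_m = (n-1)^2K(\rho)\rho^{m-1}$. This is below a constant times $\vep$ once $\rho^{m-1}\leq c\,\vep/n^2$, that is, once $m-1 \geq \log(n^2/(c\vep))/\log(1/\rho)$. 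The same choice also enforces $(n-1)\rho_m\leq 1$ for small $\vep$.

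Third, the last term satisfies $\frac{2(n-1)\rho^mD^m}{md(1-\rho D)} \leq C' n\rho^m$, which is below $\vep/3$ when $m \geq \log(3C'n/\vep)/\log(1/\rho)$. Taking $m$ to be the maximum of the two requirements gives $m = O(\log(n/\vep)) = O(\log(1/\vep))$, since $n = O(\log(1/\vep))$.

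For the complexity count, the entries $b^{(r)}_{k,n}$ for $0\leq k,n\leq m-1$ are computed from $f_r(0)$, $f_r^\top(0)$, $f_r'(0)$ and binomial coefficients. The binomials come from Pascal's triangle in $O(m^2)$ operations, and the powers can be tabulated. A naive evaluation of each entry costs $O(m)$, giving $O(m^3)$ per $r$ and $O(|C|m^3)$ in total. The seed $\widehat{\boldsymbol v}$ truncated to $m$ coordinates costs $O(|C|^2 m)$. Each application of $\mathrm T_{m\times m}$ is $|C|^2$ matrix–vector products of size $m$, i.e.\ $O(|C|^2m^2)$. Forming $\mathrm T_{m\times m}^\ell\widehat{\boldsymbol v}$ iteratively for $\ell<n$ therefore costs $O(nm^2)$, and taking the weighted averages $[\cdot]_{\boldsymbol 0}$ adds $O(n|C|)$. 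The total is $O(m^3 + nm^2) = O((\log(1/\vep))^3)$.

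The only point needing care is the middle term: one must pick $m$ depending on $n$ so that $(1+\rho_m)^{n-1}$ stays controlled. The choice $m \sim \log(n^2/\vep)$ handles this, so I expect no real obstacle. One still has to note that computing the data $\pi$, $f_r$ and the $B_r$ is a one-time cost independent of $\vep$.
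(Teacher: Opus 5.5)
Your proposal is correct and follows essentially the paper's argument: choose $n=O(\log(1/\vep))$ from the tail term, then take $m=O(\log(n/\vep))$ with $(n-1)\rho_m\le 1$ so that the middle term is $O\big((n-1)^2\rho_m\big)$, and count $O(m^3)+O(nm^2)$ arithmetic operations. The differences are cosmetic. You split the error as $\vep/3$ rather than $\vep/2$, and you use the binomial bound $(Nx)^2$ where the paper uses $e^t-1-t\le\tfrac12 t^2e^t$ to get $\tfrac{e}{2}(n-1)^2\rho_m$.
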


\begin{proof}
Recall the notations of Theorem \ref{thm: precise bounds}. Let $\vep > 0$ be given. Since $0 < \rho < 1$, we can choose a natural number
$n = O\!\left( \log(1/\vep) \right)$ such that
\[
\frac{E_C}{d} \rho^{n-1} < \frac{\vep}{2}.
\]

Next, we can take a natural number
$m = O\!\left( \log(n/\vep) \right)$ such that
\[
(n-1)\rho_m \leq 1,
\quad \text{and} \quad
e \cdot \left( \log \frac{1}{1-\rho D} \right) (n-1)^2 \rho_m
+
\frac{2(n-1)(\rho D)^m}{m(1-\rho D)}
<\frac{\vep}{2} \cdot d.
\]

Since $(n-1)\rho_m \leq 1$, for any $t \geq 0$ we have
$(1+t)^{n-1} \leq e^{(n-1)t}$ and
$e^t - 1 - t \leq \frac12 t^2 e^t$, and hence
\begin{align*}
\frac{(1+\rho_m)^{n-1} - 1 - (n-1)\rho_m}{\rho_m}
&\leq
\frac{e^{(n-1)\rho_m} - 1 - (n-1)\rho_m}{\rho_m} \\
&\leq
\frac{(n-1)^2 \rho_m^2 e^{(n-1)\rho_m}}{2\rho_m}
\leq
\frac{e}{2}(n-1)^2 \rho_m.
\end{align*}

Applying Theorem \ref{thm: precise bounds}, we obtain
\begin{align*}
&\left|
\lambda_+(A,P)
-
\sum_{\ell=0}^{n-1} \Big[ \mathrm T_{m\times m}^{\ell} \widehat{\boldsymbol v} \Big]_{\boldsymbol 0}
\right| \\
&\hspace{30pt}\leq
\frac{E_C}{d} \rho^{n-1}
+
\frac{2}{d} \left( \log \frac{1}{1-\rho D} \right)
\left(
\frac{(1+\rho_m)^{n-1} - 1 - (n-1)\rho_m}{\rho_m}
\right)
+
\frac{2(n-1)\rho^m D^m}{md(1-\rho D)} \\
&\hspace{30pt}\leq
\frac{E_C}{d} \rho^{n-1}
+
\frac{e}{d} \left( \log \frac{1}{1-\rho D} \right) (n-1)^2 \rho_m
+
\frac{2(n-1)(\rho D)^m}{md(1-\rho D)}
<
\vep.
\end{align*}
This proves the approximation statement.

\smallskip

It remains to estimate the computational cost. The non-zero entries of $\mathrm T_{m\times m}$ can be assembled in
\[
O\!\left( \sum_{r,r' \in C} \sum_{k=0}^{m-1} \sum_{n=0}^{m-1} \min\{k,n\} \right)
=
O\!\left( m^3 \right)
\]
arithmetic operations.

Next, repeated matrix-vector multiplication costs $O(m^2)$ arithmetic operations per step. Therefore, computing the vectors
\[
\widehat{\boldsymbol v},\;\;
\mathrm T_{m\times m} \widehat{\boldsymbol v},\;\;
\mathrm T_{m\times m}^2 \widehat{\boldsymbol v},\;\;
\dots\;,\;\;
\mathrm T_{m\times m}^{n-1} \widehat{\boldsymbol v},
\]
costs $O\!\left( nm^2 \right)$ arithmetic operations. Thus the total complexity is
\[
O\!\left( m^3 \right) + O\!\left( nm^2 \right)
=
O\!\left( \big( \log(1/\vep) \big)^3 \right). \qedhere
\]
\end{proof}

This completes the proof of Theorem \ref{thm: polynomial computation of the Lyapunov exponent in introduction} from the introduction.

\section{Analytic dependence on the underlying data} \label{section: Analytic dependence on the underlying data}

This section proves real analyticity of the Lyapunov exponent by considering the holomorphic extensions of the relevant data. The explicit domain on which the extensions are holomorphic is given in Remark \ref{rmk: condition on domain of holomorphic extension}.

\begin{theorem} \label{thm: analytic dependence on parameter}
Let $\Sigma \subset X^{\mathbb N_0}$ be a transitive topological Markov shift, and $J \subset \mathbb{R}$ be an open interval. Assume the following:
\begin{enumerate}
\item For each $t\in J$, $\pazocal P(t) = \big( P_{ij}(t) \big)_{i,j\in X}$ is a row-stochastic matrix whose support is the edge set of $\Sigma$, and $t \longmapsto P_{ij}(t)$ is real analytic for every $i,j\in X$.
\item For each $t\in J$, the map $\pazocal A(t): \Sigma \to \GL_2(\mathbb R)$ is locally constant: $\pazocal A(t)(x) = A_{x_0}(t)$. In addition, $t \longmapsto A_i(t)$ is real analytic for every $i\in X$.
\end{enumerate}
Then, if $\pazocal A(t_0)$ is projectively uniformly hyperbolic with respect to $\Sigma$, the map
\[
t \;\;\longmapsto\;\;
\lambda_+\big( \pazocal A(t), \pazocal P(t) \big)
\]
is real analytic around $t_0$.
\end{theorem}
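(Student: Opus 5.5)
The plan is to keep the combinatorial data from Section \ref{section: Explicit representation of the Lyapunov exponent} fixed in $t$, complexify the rest, and show that the partial sums of the series in Theorem \ref{thm: infinite sum equals lambda} converge uniformly on a complex neighbourhood of $t_0$. The holomorphic limit is then given by Weierstrass' theorem.

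\textbf{Step 1: the combinatorial data is locally constant.} Fix a multicone $\{M_i\}$ for $\pazocal A(t_0)$ from Theorem \ref{thm: multicone criterion}. The condition $\overline{[A_j(t)](M_i)}\subset M_j$ compares the image of the compact set $\overline{M_i}$ with an open set, so it persists for $t$ near $t_0$. The same family is therefore a multicone for $\pazocal A(t)$, and the branch map $\beta$ does not change. Since the support of $\pazocal P(t)$ is fixed, the set $R$, the support of $Q(t)$, the recurrent class $C$ and the period $d$ are all independent of $t$. The aperiodic reduction (Proposition \ref{prop: aperiodic reduction}) then uses the same cyclic decomposition for every $t$. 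The charts $L_{i,a}$ can be kept fixed as well. Then $B_r(t)=\vep(r)L_{t(r)}^{-1}A_{\tau(r)}(t)L_{s(r)}$ stays positive, with constant sign $\vep(r)$, and is real analytic in $t$. Consequently the coefficients of $f_r[t]$, $f_r^\top[t]$ and $\ell_r[t]$ are analytic in $t$.

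\textbf{Step 2: complexification.} Next I would extend $Q$, $B_r$, $f_r$ and $\ell_r$ holomorphically to a small complex neighbourhood $\Omega$ of $t_0$.
\begin{itemize}
\item The eigenvalue $1$ of $Q|_C(t_0)$ is simple, so $\pi(z)$ can be chosen holomorphic, normalised by $\sum_r\pi_r(z)=1$, with $\pi(z)Q(z)=\pi(z)$ and $\pi_r(z)\neq 0$.
\item By Lemma \ref{lemma: real contraction implies complex contraction}, $f_r[t_0](\overline{\mathbb D_1})\subset\overline{\mathbb D_{\rho}}$. Compactness and continuity then give $f_r[z](\overline{\mathbb D_1})\subset\mathbb D_{\bar\rho}$ for some $\bar\rho<1$ and all $z\in\Omega$.
\item Similarly, $\gamma_r(z)x+\delta_r(z)$ stays away from $0$ on $\overline{\mathbb D_1}$, so each $\ell_r[z]$ is holomorphic and uniformly Lipschitz on $\overline{\mathbb D_{\bar\rho}}$.
\item At $z=t_0$ the maximal absolute row sum of $Q$ equals $1$, so after shrinking $\Omega$ we get $\bar Q\bar\rho<1$, where $\bar Q=\max_r\sum_{r'}|Q_{r,r'}(z)|$.
\end{itemize}
Each term $[\mathrm T^n\widehat{\boldsymbol v}]_{\boldsymbol 0}$ is a finite polynomial expression in $\pi_r$, the entries of $Q$, and the values $\ell_r(f_{\mathbf r}^{(k)}(0))$. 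This follows from the path expansion underlying Propositions \ref{prop: Sn identity} and \ref{prop: infinite sum equals the integral}, whose algebra uses only stationarity and Lemma \ref{lemma: coboundary identity for Tr}, and both survive complexification. Hence each term is holomorphic on $\Omega$.

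\textbf{Step 3: uniform geometric convergence (main obstacle).} The real proof used the Wasserstein contraction of $\mathscr H$, and this no longer applies because the weights $p(\mathbf r)$ are complex. Instead I would use equation \eqref{eq: partial sum formula for Lambda-C} directly in path form. The $(n+1)$-st partial sum is $\sum_{\mathbf r}p(\mathbf r)\,\ell_{r_n}\big(f_{r_{n-1}}\circ\cdots\circ f_{r_0}(0)\big)$. By stationarity of $\pi(z)$, the $n$-th partial sum equals the same sum with the innermost map $f_{r_0}$ removed.

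The $n$-th term is therefore
\[
\sum_{\mathbf r}p(\mathbf r)\Big[\ell_{r_n}\big(f^{(n)}_{\mathbf r}(0)\big)-\ell_{r_n}\big(f_{r_{n-1}}\circ\cdots\circ f_{r_1}(0)\big)\Big].
\]
The two arguments are images of the points $f_{r_0}(0)$ and $0$ under the same composition of $n-1$ holomorphic self-maps of $\mathbb D_1$ with image in $\mathbb D_{\bar\rho}$. By Lemma \ref{lemma: small image implies contraction}, their hyperbolic distance is at most $\bar\rho^{\,n-1}d_{\mathrm{hyp}}(f_{r_0}(0),0)$. Both points lie in $\overline{\mathbb D_{\bar\rho}}$, where the hyperbolic and Euclidean metrics are comparable and $\ell_r$ is Lipschitz. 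Summing the absolute values of the weights gives $\sum_{\mathbf r}|p(\mathbf r)|\le\max_r|\pi_r|\,|C|\,\bar Q^{\,n}$. Hence the $n$-th term is $O\big((\bar Q\bar\rho)^n\big)$ uniformly on $\Omega$.

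\textbf{Conclusion.} The series converges uniformly on $\Omega$ to a holomorphic function. Its restriction to real $t$ equals $\lambda_+(\pazocal A(t),\pazocal P(t))$, by Theorem \ref{thm: infinite sum equals lambda} applied at each real $t$ with the same fixed data. This proves real analyticity near $t_0$.
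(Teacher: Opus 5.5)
Your proposal is correct and follows the paper's proof essentially step for step. You freeze the multicone, branch states, recurrent class and charts on a real interval, complexify $Q$, $\pi$, $f_r$, $\ell_r$, rewrite the $n$-th term via stationarity as $\sum_{\mathbf r}p_z(\mathbf r)\Delta_{\mathbf r}(z)$, and bound it by $O\big((\bar\rho\,\overline Q)^n\big)$ using the hyperbolic $\bar\rho$-contraction and the Lipschitz bound on $\ell_r$. The differences are cosmetic: the paper obtains $\pi(z)$ by inverting a bordered matrix $M(z)$ rather than by simple-eigenvalue perturbation. You should also state explicitly, as the paper does via the identity theorem, that $Q(z)|_C$ keeps row sums equal to $1$. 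Both the exact eigenvalue $1$ and your partial-sum formula depend on this.
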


\begin{proof}
We will write $\lambda(t) \coloneqq \lambda_+\big( \pazocal A(t), \pazocal P(t) \big)$. Fix a multicone $\{M_i\}_{i\in X}$ for $\pazocal A(t_0)$ with respect to $\Sigma$, and write it as a finite disjoint union of connected components,
\[
M_i = \bigsqcup_{a=1}^{m(i)} M_{i,a},
\qquad i\in X.
\]
Now, for each $i,a$ and $j$ with $P_{ij}(t_0)>0$, we define $\beta(i,a,j)$ as in Subsection \ref{subsection: Branch-state extension} to be the unique index satisfying
\[
\big[A_j(t_0)\big]\big( \overline{M_{i,a}} \big) \subset M_{j, \beta(i,a,j)}.
\]
By the continuity of each $A_j(t)$ and assumption (1), there is an open interval $J_0$ containing $t_0$ such that the following is true for every $t\in J_0$.
\begin{equation} \label{eq: condition that a multicone is valid for t}
\big[A_j(t)\big]\big( \overline{M_{i,a}} \big) \subset M_{j, \beta(i,a,j)}
\quad
\text{for every } i\in X, \; 1\leq a\leq m(i), \text{ and } j\in X \text{ with } P_{ij}(t) > 0.
\end{equation}

By assumption (1), the branch-state set
\[
R
=
\left\{
\big( (i,a),(j,b) \big)
\;\middle|\;
P_{ij}(t_0)>0
\text{ and }
b=\beta(i,a,j)
\right\}
\]
and a fixed recurrent class $C\subset R$ remain unchanged on $J_0$.

Recall that $\pazocal V$ denotes the non-negative cone. For $s=(i,a)$, let $L_s\in \GL_2(\mathbb R)$ be a local chart for $\overline{M_{i,a}}$, that is, $[L_s](\pazocal V) = \overline{M_{i,a}}$. For $r\in C$, consider
\[
B_r(t)
\;\coloneqq\;
\vep(r)\,L_{t(r)}^{-1} A_{\tau(r)}(t) L_{s(r)},
\]
where $\vep(r)\in \{\pm1\}$ is chosen so that $B_r(t_0)$ is a positive matrix. Now, using equation \eqref{eq: condition that a multicone is valid for t} and the same argument used to prove equation \eqref{eq: g maps non-negative cone to positive cone}, the matrix $L_{t(r)}^{-1} A_{\tau(r)}(t) L_{s(r)}$ is either entrywise positive or negative, for each $t\in J_0$. Then, by the continuity of $\pazocal A$ and connectedness of $J_0$,  $B_r(t)$ is a positive matrix for every $t\in J_0$.

\medskip

Now, the cyclic decomposition in the proof of Proposition \ref{prop: aperiodic reduction} and the period $d$ depend only on the support of $Q(t)|_C$, and the support is independent of $t$ by assumption (1). Therefore, we may perform aperiodic reduction simultaneously for every $t\in J_0$, and as before, we continue to use the same symbols. The resulting $Q(t)|_C$ is irreducible and aperiodic for every $t\in J_0$.

\medskip

Choose an open neighborhood $\Omega\subset \mathbb C$ of $t_0$ on which every entry of each $A_i(t)$ and $\pazocal P(t)$ extend holomorphically, and $\Omega \cap \mathbb R \subset J_0$. We continue to denote the holomorphic extensions by the same symbols: $A_i(z)$, $P_{ij}(z)$, $Q(z)|_C$, and $B_r(z)$.

Each entry of $Q(z)|_C$ is holomorphic on $\Omega$. Since $Q(t)|_C$ is row-stochastic for every real $t\in J_0\cap \Omega$, the identity theorem gives
\begin{equation} \label{eq: Qz is row stochastic}
\sum_{r'\in C} Q_{r,r'}(z)=1
\qquad
\text{for every } r\in C \text{ and every } z\in \Omega.
\end{equation}

\begin{claim} \label{claim: Mt0 is invertible}
Fix $r_*\in C$, and let $M(z)$ be the matrix obtained by replacing the $r_*$-th column of $I-Q(z)|_C$ with $(1,\dots,1)^\top$. Then, $M(t_0)$ is invertible.
\end{claim}
\begin{proof}[Proof of Claim \ref{claim: Mt0 is invertible}]
Let $\eta$ be a column vector such that $\eta^{\!\top}\,M(t_0) = 0$. This means that
\begin{equation} \label{eq: M stationary row vector condition}
\sum_{r\in C} \eta_r = 0,
\quad \text{and} \quad
\Big( \eta^{\!\top} \big(I-Q(t_0)|_C\big) \Big)_j = 0
\quad
\text{for every } j\neq r_*.
\end{equation}
Also, since $Q(t_0)|_C$ is row-stochastic,
$
\big(I-Q(t_0)|_C\big)\,\mathbf 1 = 0
$,
where $\mathbf 1 = (1,\dots,1)^\top$. Therefore
\[
\sum_{j\in C} \Big( \eta^{\!\top} \big(I-Q(t_0)|_C\big) \Big)_j
=
\eta^{\!\top} (I-Q(t_0)|_C)\mathbf 1
=
0.
\]
Combining with equation \eqref{eq: M stationary row vector condition}, this implies that the $r_*$-th component of $\eta^{\!\top} \big(I-Q(t_0)|_C\big)$ is also $0$. Thus it is a zero vector, and $\eta^{\!\top}\,Q(t_0)|_C = \eta^{\!\top}$. So $\eta^{\!\top}$ is a left eigenvector of $Q(t_0)|_C$ for eigenvalue $1$. Since $Q(t_0)|_C$ is irreducible and row-stochastic, the eigenspace for eigenvalue $1$ is spanned by the stationary probability vector (with positive entries) by Perron--Frobenius. Therefore $\eta=0$ by equation \eqref{eq: M stationary row vector condition}. This proves that $M(t_0)$ is invertible.
\end{proof}

After shrinking $\Omega$, we may assume $M(z)$ is invertible for every $z\in \Omega$. Define the column vector $\pi(z)=\big(\pi_r(z)\big)_{r\in C}$ by
\[
\pi(z)^\top M(z) = e_{r_*}^\top,
\]
where $e_{r_*}$ is the column vector whose $r_*$-th entry is $1$ and all others are $0$. Then $z \mapsto \pi(z)$ is holomorphic on $\Omega$ by the holomorphy of $z\mapsto M(z)$. For real $t\in J_0\cap \Omega$, it coincides with the stationary probability vector of $Q(t)|_C$. Since $\pi_r(t_0)>0$ for every $r\in C$, after shrinking $\Omega$, we may assume that $\pi_r(z) \ne 0$ for every $r\in C$ and $z\in \Omega$. By the construction and equation \eqref{eq: Qz is row stochastic},
\begin{equation} \label{eq: complex stationary probability equation}
\sum_{r\in C} \pi_r(z) Q_{r,r'}(z) = \pi_{r'}(z) \; \text{ for } \; r' \in C,
\quad \text{and} \quad
\sum_{r \in C} \pi_r(z) = 1
\end{equation}
for every $z\in \Omega$.

\bigskip

Recall the definition of $F$ in equation \eqref{eq: definition of F}. For each $r\in C$ and $z\in \Omega$, define the M\"obius transformation $f_r[z]$ by
\begin{equation*}
f_r[z](x)
\coloneqq
\frac{
\alpha_r(z)x + \beta_r(z)}{
\gamma_r(z)x + \delta_r(z)}
\quad \text{where} \quad
\begin{pmatrix}
\alpha_r(z) & \beta_r(z) \\
\gamma_r(z) & \delta_r(z)
\end{pmatrix}
\coloneqq
F\big( B_r(z) \big).
\end{equation*}
Here, and from now on, we use $f_r[z](x)$ to denote the value of the M\"obius map $f_r[z]$ evaluated at $x\in \mathbb C$. Since $B_r(t_0)$ is positive, there is a number $0<\rho<1$ such that
$ f_r[t_0]([-1,1]) \subset [-\rho,\rho]$
for every $r\in C$. By Lemma \ref{lemma: real contraction implies complex contraction},
$f_r[t_0]\big( \overline{\mathbb D_1} \big) \subset \closure{\mathbb D_\rho}$. Choose any $\bar{\rho}$ with $\rho<\bar{\rho}<1$. By the continuity of entries of $B_r(z)$, after shrinking $\Omega$ if necessary,
\begin{equation} \label{eq: analytic parameter complex image bound}
f_r[z]\big( \overline{\mathbb D_1} \big)
\subset
\mathbb D_{\bar \rho}
\qquad
\text{for every } r\in C \text{ and } z\in \Omega.
\end{equation}

For each $r\in C$, we have $\delta_r(t_0)>0$ by Lemma \ref{lemma: denominator is positive}. Also, $\left| \gamma_r(t_0) / \delta_r(t_0) \right| = \big| f_r^\top[t_0](0) \big| < 1$ by Lemma \ref{lemma: Mobius contraction implies transpose contraction}. Therefore, for every $x\in \overline{\mathbb D_1}$,
\[
\Re\big( \gamma_r(t_0)x+\delta_r(t_0) \big)
\geq
\delta_r(t_0)-|\gamma_r(t_0)|
=
\delta_r(t_0)\left(
1-\left| \frac{\gamma_r(t_0)}{\delta_r(t_0)} \right|
\right)
>0,
\]
where $\Re w$ is the real part of $w \in \mathbb C$. By continuity and compactness, after shrinking $\Omega$ once more we may assume that
\begin{equation} \label{eq: analytic parameter positive real part}
\Re\big( \gamma_r(z)x+\delta_r(z) \big) >0
\qquad
\text{for every } r\in C,\ z\in \Omega,\ x\in \closure{\mathbb D_1}.
\end{equation}
Then, we can define
\[
\ell_r: \Omega\times \closure{\mathbb D_1} \to \mathbb C,
\qquad
\ell_r(z,x)
=
\mathrm{Log}\big( \gamma_r(z)x+\delta_r(z) \big).
\]
This is holomorphic on $\Omega\times \mathbb D_1$.

Also, by equation \eqref{eq: analytic parameter complex image bound} and Lemma \ref{lemma: Mobius contraction implies transpose contraction},
\begin{equation} \label{eq: analytic parameter transpose bound}
\big| f_r^\top[z](0) \big|
=
\left| \frac{\gamma_r(z)}{\delta_r(z)} \right|
<1
\qquad
\text{for every } r\in C \text{ and } z\in \Omega.
\end{equation}

Since $Q(t_0)|_C$ is a non-negative stochastic matrix, by shrinking $\Omega$ if necessary we assume that
\begin{equation} \label{eq: analytic parameter summability}
\bar{\rho}\widetilde Q < 1
\quad \text{where} \quad
\widetilde Q
\coloneqq
\max_{r\in C}\sup_{z\in \Omega}
\sum_{r'\in C} \big| Q_{r,r'}(z) \big|.
\end{equation}

For each $z\in \Omega$, define $T_r(z)$, $\mathrm T(z)$ and $\boldsymbol v(z)$ by exactly the same formulae as in Section \ref{section: Explicit representation of the Lyapunov exponent}, using the data $Q(z)|_C$, $\pi(z)$, $f_r[z]$, $f_r^{\top}[z]$, and $\ell_r(z,\cdot)$. For each finite path $\mathbf r=(r_0,\dots,r_m)\in C^{m+1}$, $z\in \Omega$, and $0\leq k\leq m+1$, write
\begin{equation} \label{eq: definition of analytic parameter compositions}
p_z(\mathbf r) = \pi_{r_0}(z)\,Q_{r_0,r_1}(z)\cdots Q_{r_{m-1},r_m}(z),
\quad
f_{\mathbf r}^{(0)}[z] = \mathrm{id},
\quad
f_{\mathbf r}^{(k)}[z] = f_{r_{k-1}}[z]\circ\cdots\circ f_{r_0}[z].
\end{equation}
Let $n\geq 1$. Define $S_n(z)\in V_C$ by, for each $r \in C$,
\begin{equation} \label{eq: analytic parameter definition of Sn}
\big( S_n(z) \big)_r
=
\frac{1}{\pi_r(z)} \sum_{\mathbf r=(r_0,\dots,r_{n-1})\in C^n}
p_z(\mathbf r)\,Q_{r_{n-1},r}(z)\,v\left( f_{\mathbf r}^{(n)}[z](0) \, ; \, \ell_{r_{n-1}}\big( z, f_{\mathbf r}^{(n-1)}[z](0) \big) \right).
\end{equation}
Because of equation \eqref{eq: analytic parameter transpose bound}, the proof of Lemma \ref{lemma: coboundary identity for Tr} applies verbatim and gives
\begin{equation} \label{eq: analytic parameter coboundary identity}
T_r(z) v(x\,;\,c)
\;=\;
v\big( f_r[z](x)\,;\,\ell_r(z,x) \big) - v\big( f_r[z](0)\,;\,\ell_r(z,0) \big),
\end{equation}
for every $z\in \Omega$, $r\in C$, and generator $v(x\,;\,c) \in V$. Furthermore, using equation \eqref{eq: complex stationary probability equation} and repeating the proof of Proposition \ref{prop: Sn identity} yield
\begin{equation} \label{eq: analytic parameter Sn-identity}
\sum_{j=0}^{n-1} \mathrm T(z)^j \boldsymbol v(z) = S_n(z)
\qquad\text{for every } z\in \Omega \text{ and } n\geq 1.
\end{equation}

We extend the notation $[\cdot]_{\boldsymbol 0}$, and define
\[
[\boldsymbol u]_{\boldsymbol 0, \,z}
\;\coloneqq\;
\sum_{r\in C}\pi_r(z) (\boldsymbol u)_{r,0}
\quad \text{for} \quad
\boldsymbol u \in \ell^\infty(\mathbb N_0)^C.
\]
For $n\geq 0$, let
\[
a_n(z) \coloneqq \big[ \mathrm T(z)^n \boldsymbol v(z) \big]_{\boldsymbol 0, \,z}.
\]
Theorem \ref{thm: infinite sum equals lambda} applies for real $t\in J_0$, and thus
\begin{equation} \label{eq: analytic parameter real series formula}
\lambda(t)
=
\frac1d \sum_{n=0}^{\infty} a_n(t)
\qquad\text{for every } t\in J_0.
\end{equation}
It remains to prove that the series $\sum_{n=0}^\infty a_n(z)$ converges compactly on $\Omega$.

For $n\geq 1$, by equation \eqref{eq: analytic parameter Sn-identity},
\[
a_n(z)
=
\big[ S_{n+1}(z) \big]_{\boldsymbol 0, \,z} - \big[ S_n(z) \big]_{\boldsymbol 0, \,z}.
\]
By the definition of $S_{n+1}(z)$ and equation \eqref{eq: Qz is row stochastic}, we obtain
\[
\big[ S_{n+1}(z) \big]_{\boldsymbol 0, \,z}
=
\sum_{r \in C} \pi_r(z) \big( S_{n+1}(z) \big)_{r,0}
=
\sum_{\mathbf r=(r_0,\dots,r_n)\in C^{n+1}}
p_z(\mathbf r) \, \ell_{r_n}\left( z, f_{\mathbf r}^{(n)}[z](0) \right).
\]
For $S_n(z)$,
\[
\big[ S_n(z) \big]_{\boldsymbol 0, \,z}
=
\sum_{\mathbf r=(r_0,\dots,r_{n-1})\in C^n}
\pi_{r_0}(z)\,Q_{r_0,r_1}(z)\cdots Q_{r_{n-2},r_{n-1}}(z) \, \ell_{r_{n-1}}\left( z, f_{(r_0,\dots,r_{n-1})}^{(n-1)}[z](0) \right).
\]
By renaming the variable from $(r_0,\dots,r_{n-1})$ to $(r_1,\dots,r_n)$ and using equation \eqref{eq: complex stationary probability equation},
\begin{align*}
\big[ S_n(z) \big]_{\boldsymbol 0, \,z}
&=
\sum_{\mathbf r=(r_1,\dots,r_n)\in C^n}
\pi_{r_1}(z)\,Q_{r_1,r_2}(z)\cdots Q_{r_{n-1},r_n}(z) \, \ell_{r_n}\left( z, f_{(r_1,\dots,r_n)}^{(n-1)}[z](0) \right) \\
&=
\sum_{\mathbf r=(r_0,\dots,r_n)\in C^{n+1}}
\pi_{r_0}(z)\,Q_{r_0,r_1}(z)\,Q_{r_1,r_2}(z)\cdots Q_{r_{n-1},r_n}(z) \, \ell_{r_n}\left( z, f_{(r_1,\dots,r_n)}^{(n-1)}[z](0) \right) \\
&=
\sum_{\mathbf r=(r_0,\dots,r_n)\in C^{n+1}}
p_z(\mathbf r) \, \ell_{r_n}\left( z, f_{(r_1,\dots,r_n)}^{(n-1)}[z](0) \right).
\end{align*}
Therefore
\begin{equation} \label{eq: analytic parameter explicit an}
a_n(z)
\;=
\sum_{\mathbf r=(r_0,\dots,r_n)\in C^{n+1}} p_z(\mathbf r) \, \Delta_{\mathbf r}(z),
\end{equation}
where
\[
\Delta_{(r_0,\dots,r_n)}(z)
\;\coloneqq\;
\ell_{r_n}\left( z, f_{(r_0,\dots,r_n)}^{(n)}[z](0) \right)
-
\ell_{r_n}\left( z, f_{(r_1,\dots,r_n)}^{(n-1)}[z](0) \right).
\]

We now bound $p_z(\mathbf r)$ and $\Delta_{\mathbf r}(z)$. Take any compact set $K\subset \Omega$. Then, the following numbers are finite:
\begin{equation*} %\label{eq: analytic constants that make lambda holomorphic}
\overline M_{\Sigma\pi}(K)
\;\coloneqq\;
\max_{z\in K} \sum_{r\in C} \big| \pi_r(z) \big|,
\qquad
\overline M_\ell(K)
\;\coloneqq\;
\max_{r\in C}\!\!\max_{(z,w)\in K\times \closure{\mathbb D_{\bar{\rho}}}}
\big| \partial_2 \ell_r(z,w) \big|.
\end{equation*}
Here, and in what follows, $\partial_j$ denotes the partial derivative on the $j$-th coordinate.

\begin{claim} \label{claim: estimate of Delta}
For any $n\geq 1$, $\mathbf r\in C^{n+1}$ and $z\in K$,
\begin{equation} \label{eq: analytic parameter Delta bound}
\big| \Delta_{\mathbf r}(z) \big|
\leq
\overline M_\ell(K)\, \mathrm{artanh}(\bar{\rho})\,\bar{\rho}^{\,n-1}.
\end{equation}
\end{claim}
\begin{proof}[Proof of Claim \ref{claim: estimate of Delta}]
Fix $r\in C$ and $z\in K$. Since $f_r[z]\big( \mathbb D_1 \big) \subset \mathbb D_{\bar{\rho}}$ by equation \eqref{eq: analytic parameter complex image bound}, Lemma \ref{lemma: small image implies contraction} implies
\[
d_{\mathrm{hyp}}\big( f_r[z](u), f_r[z](v) \big)
\leq
\bar{\rho}\, d_{\mathrm{hyp}}(u,v)
\qquad
\text{for } u,v\in \mathbb D_1.
\]
Therefore, for every path $\mathbf r=(r_0,\dots,r_n)\in C^{n+1}$,
\begin{equation} \label{eq: analytic parameter hyperbolic path bound}
d_{\mathrm{hyp}}\left( f_{\mathbf r}^{(n)}[z](0), \,f_{(r_1,\dots,r_n)}^{(n-1)}[z](0) \right)
\leq
\bar{\rho}^{\,n-1}\, d_{\mathrm{hyp}}\big( f_{r_0}[z](0), 0 \big)
\leq
2\,\mathrm{artanh}(\bar{\rho})\,\bar{\rho}^{\,n-1}.
\end{equation}

For $u,v\in \mathbb D_{\bar{\rho}}$, by the definition of $\overline M_\ell(K)$, we have
\[
\big| \ell_r(z,u)-\ell_r(z,v) \big|
\leq
\overline M_\ell(K)\, |u-v|.
\]
Also, the density of the hyperbolic metric on $\mathbb D_1$ is
\[
\frac{2\,|dw|}{1-|w|^2} \geq 2\,|dw|,
\]
so $|u-v| \leq \frac12\, d_{\mathrm{hyp}}(u,v)$ for any $u,v\in \mathbb D_1$. Therefore,
\[
\big| \ell_r(z,u)-\ell_r(z,v) \big|
\leq
\frac{\overline M_\ell(K)}{2}\, d_{\mathrm{hyp}}(u,v)
\qquad
\text{for any } z\in K, \;\; r\in C, \;\; u,v\in \mathbb D_{\bar{\rho}}.
\]
Combining this with equation \eqref{eq: analytic parameter hyperbolic path bound}, we obtain equation \eqref{eq: analytic parameter Delta bound}.
\end{proof}

Let
\[
\overline Q(K)
\coloneqq
\max_{r\in C}\max_{z\in K}
\sum_{r'\in C} \big| Q_{r,r'}(z) \big|.
\]
Then $\overline Q(K) < \widetilde Q$, and for any $z\in K$,
\[
\sum_{\mathbf r=(r_0,\dots,r_n)\in C^{n+1}} \big| p_z(\mathbf r) \big|
=
\sum_{r_0\in C} \big| \pi_{r_0}(z) \big|
\sum_{r_1,\dots,r_n\in C}
\big| Q_{r_0r_1}(z) \big| \cdots \big| Q_{r_{n-1}r_n}(z) \big|
\leq
\overline M_{\Sigma\pi}(K)\,\overline Q(K)^n.
\]
Therefore,
\begin{equation} \label{eq: bound on anz}
|a_n(z)|
\leq
\frac{\overline M_{\Sigma\pi}(K)\overline M_\ell(K)\,\mathrm{artanh}(\bar{\rho})}{\bar{\rho}}
\left( \bar{\rho}\overline Q(K) \right)^n
\qquad
\text{for every } n\geq 1 \text{ and } z\in K.
\end{equation}
Combining with $\bar{\rho}\widetilde Q < 1$ from equation \eqref{eq: analytic parameter summability} and the holomorphy of $a_n(z)$, the series
\[
\frac1d \sum_{n=0}^\infty a_n(z)
\]
converges uniformly on $K$ to a holomorphic function. Since $K\subset \Omega$ was an arbitrary compact set, the series is compactly convergent on $\Omega$ and the limit is holomorphic on it.

Finally, by equation \eqref{eq: analytic parameter real series formula}, for every real $t\in J_0\cap \Omega$ this holomorphic limit is equal to $\lambda(t)$. Thus $t \longmapsto \lambda(t)$ is the restriction of a holomorphic function defined on a neighborhood of $t_0$. Therefore it is real analytic around $t_0$.
\end{proof}

\begin{remark} \label{rmk: condition on domain of holomorphic extension}
We give a clean list of conditions for $\Omega\subset \mathbb C$ so that the function
\[
\lambda(t) = \lambda_+\big( \pazocal A(t), \pazocal P(t) \big)
\]
admits a holomorphic extension on $\Omega$. Under the setting of Theorem \ref{thm: analytic dependence on parameter}, take $J_0$ as in the proof to be an open interval containing $t_0$ such that, for every $t\in J_0$, equation \eqref{eq: condition that a multicone is valid for t} is true for a fixed multicone $\{M_i\}_{i\in X}$, $M_i = \bigsqcup_a M_{i,a}$, for $\pazocal A(t_0)$ with respect to $\Sigma$. Fix the local coordinates $L_{i,a}$ and a recurrent class $C$. We can perform aperiodic reduction simultaneously across $t\in J_0$, and after updating the notation, $(\Sigma_C, Q|_C(t))$ is irreducible and aperiodic for every $t\in J_0$. Denote by $B_r(t)$ the local positive matrix for each $r\in C$ and $t\in J_0$.

Suppose that a complex neighborhood $\Omega$ of $t_0$ satisfies the following conditions.
\begin{enumerate}
\item[(\(\Omega\)1)] $\Omega\cap \mathbb R \subset J_0$, and every entry of each $A_i(t)$ and $\pazocal P(t)$ extend holomorphically on $\Omega$.
\item[(\(\Omega\)2)] Fix $r_* \in C$, and complexify the transition matrix $Q(t)|_C$ and write $Q(z)|_C$. If $M(z)$ is obtained by replacing the $r_*$-th column of $I-Q(z)|_C$ with $(1,\dots,1)^\top$, then $M(z)$ is invertible for every $z\in \Omega$. In addition, defining $\pi(z) = \big(\pi_r(z)\big)_{r\in C}$ by $\pi(z)^\top M(z)=e_{r_*}^\top$, we have $\pi_r(z)\neq 0$ for every $r\in C$ and $z\in \Omega$.
\item[(\(\Omega\)3)] For each $r\in C$, complexify $B_r(t)$ and denote by $B_r(z)$. Define the M\"obius map $f_r[z]$ by
\begin{equation*}
f_r[z](x)
\coloneqq
\frac{
\alpha_r(z)x + \beta_r(z)}{
\gamma_r(z)x + \delta_r(z)}
\quad \text{where} \quad
\begin{pmatrix}
\alpha_r(z) & \beta_r(z) \\
\gamma_r(z) & \delta_r(z)
\end{pmatrix}
\coloneqq
F\big( B_r(z) \big).
\end{equation*}
Then there exists numbers $0<\rho<\bar{\rho}<1$ such that
\[
f_r[t_0]\big( \overline{\mathbb D_1} \big)
\subset
\closure{\mathbb D_\rho},
\quad \text{and} \quad
f_r[z]\big( \overline{\mathbb D_1} \big)
\subset
\mathbb D_{\bar \rho}
\quad
\text{for every } r\in C \text{ and } z\in \Omega.
\]
\item[(\(\Omega\)4)] For every $r\in C$, $z\in \Omega$, and $x\in \overline{\mathbb D_1}$,
\[
\Re\big( \gamma_r(z)x+\delta_r(z) \big) >0.
\]
\item[(\(\Omega\)5)] We have $\bar{\rho}\widetilde Q < 1$, where
\[
\widetilde Q
\;\coloneqq\;
\max_{r\in C}\sup_{z\in \Omega}
\sum_{r'\in C} \big| Q_{r,r'}(z) \big|.
\]
\end{enumerate}
Then $Q(z)|_C$, $\pi(z)$, and $B_r(z)$ for each $r\in C$ are entrywise holomorphic on $\Omega$, and the maps
\[
(z,x)
\;\longmapsto\;
f_r[z](x),
\qquad
(z,x)
\;\longmapsto\;
\ell_r(z,x) \coloneqq \Log\big( \gamma_r(z)x+\delta_r(z) \big)
\]
are defined on $\Omega\times \closure{\mathbb D_1}$ and holomorphic on $\Omega\times \mathbb D_1$ for every $r\in C$. Defining $T_r(z)$, $\mathrm T(z)$, and $\boldsymbol v(z)$ by exactly the same formulae as in Section \ref{section: Explicit representation of the Lyapunov exponent}, they are entrywise holomorphic. Furthermore, the series
\[
\lambda(z)
\;\coloneqq\;
\frac1d \sum_{n=0}^\infty \big[ \mathrm T(z)^n \boldsymbol v(z) \big]_{\boldsymbol 0,\,z}
\quad \text{where} \quad
[\boldsymbol u]_{\boldsymbol 0, \,z}
\;\coloneqq\;
\sum_{r\in C}\pi_r(z) (\boldsymbol u)_{r,0}.
\]
converges compactly on $\Omega$, where $d$ is the period of the chosen recurrent class before aperiodic reduction. In particular, $\lambda(z)$ is holomorphic on $\Omega$.
\end{remark}

Theorem \ref{thm: analytic dependence on parameter}, together with Remark \ref{rmk: condition on domain of holomorphic extension}, completes the proof of Theorem \ref{thm: real analyticity of Lyapunov exponent} from the introduction.

\section{Calculating the derivatives} \label{section: Calculating the derivatives}

In the previous section we proved that, around projectively uniformly hyperbolic systems, the Lyapunov exponent is real analytic with respect to the underlying data. We also provided an explicit domain on which the Taylor expansion converges. The purpose of this section is to prove that each derivative, and therefore each Taylor coefficient, can be calculated in polynomial time.

\begin{theorem} \label{theorem: polynomial time calculation of every derivative}
Assume the hypotheses of Theorem \ref{thm: analytic dependence on parameter} and write $\lambda(t) = \lambda_+\big( \pazocal A(t), \, \pazocal P(t) \big)$. Then, for each fixed integer $q\geq1$, the $q$-th derivative $\lambda^{\!(q)}(t_0) = \left.\frac{d^q}{dt^q}\right|_{t=t_0} \lambda(t)$ can be approximated in polynomial time. That is, one can compute $\lambda^{\!(q)}(t_0)$ to error $\vep > 0$ with $O\big( (\log(1/\vep))^3 \big)$ arithmetic operations.
\end{theorem}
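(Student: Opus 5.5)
The approach is to combine the holomorphic extension from Theorem \ref{thm: analytic dependence on parameter} with Cauchy's integral formula, and to approximate the holomorphic function $\lambda(z)$ uniformly on a circle around $t_0$ by truncated finite-matrix sums. Fix $\Omega$ satisfying the conditions of Remark \ref{rmk: condition on domain of holomorphic extension}, and a radius $\varrho>0$ with $U=\closure{\{|z-t_0|<\varrho\}}\subset\Omega$. Then
\[
\lambda^{(q)}(t_0)=\frac{q!}{2\pi i}\oint_{|z-t_0|=\varrho}\frac{\lambda(z)}{(z-t_0)^{q+1}}\,dz .
\]
Define the approximant $\lambda_{n,m}(z)=\frac1d\sum_{\ell=0}^{n-1}[\mathrm T_{m\times m}(z)^\ell\boldsymbol v(z)]_{\boldsymbol 0,z}$. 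It is a polynomial expression in the holomorphic data, and hence holomorphic on $\Omega$. Its $q$-th derivative at $t_0$ can be computed exactly by propagating truncated Taylor jets of order $q$ in $(t-t_0)$ through the matrix–vector iteration, via a Leibniz-type recursion. For fixed $q$ this costs only a constant factor over Theorem \ref{thm: complexity statement}, namely $O(m^3+nm^2)$ operations on jets of length $q+1$. The error then satisfies
\[
|\lambda^{(q)}(t_0)-\lambda_{n,m}^{(q)}(t_0)|\le q!\,\varrho^{-q}\sup_{z\in\partial U}|\lambda(z)-\lambda_{n,m}(z)|,
\]
so it suffices to establish a complex-parameter analogue of Theorem \ref{thm: precise bounds}, uniform over $z\in U$, with exponentially small dependence on both $n$ and $m$.

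The tail bound should be essentially free. The bound \eqref{eq: bound on anz} from the proof of Theorem \ref{thm: analytic dependence on parameter} already gives $|a_n(z)|\le \mathrm{const}\cdot(\bar\rho\,\overline Q(U))^n$ uniformly on $U$, and condition ($\Omega$5) gives $\bar\rho\,\overline Q(U)<1$. So $\sum_{\ell\ge n}a_\ell(z)$ is $O(\theta^n)$ with $\theta=\bar\rho\,\overline Q(U)<1$, and $n=O(\log(1/\vep))$ terms suffice.

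The truncation error $\sum_{\ell<n}[\mathrm T(z)^\ell\boldsymbol v(z)-\mathrm T_{m\times m}(z)^\ell\boldsymbol v(z)]_{\boldsymbol 0,z}$ is where the main obstacle lies. The real-parameter argument used that $\pi_sQ_{r,s}/\pi_s$ are probability weights; for complex $z$ these weights are no longer positive, and their absolute sums grow like $\overline Q(U)^\ell$. My plan is to redo Subsections \ref{subsection: Integral representations}–\ref{subsection: Integral operators and telescoping} with $\rho$ replaced by $\bar\rho$ on the contour $\partial\mathbb D_{\bar\rho}$. This is legitimate because ($\Omega$3) gives $f_r[z](\mathbb D_1)\subset\mathbb D_{\bar\rho}$, and ($\Omega$4) together with \eqref{eq: analytic parameter transpose bound} gives $|f_r^\top[z](0)|<1$. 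Proposition \ref{prop: integral representation for iterates}, Lemma \ref{lemma: basic properties of LRK} and Lemma \ref{lemma: telescoping of L and K} should then go through verbatim, and the path expansions \eqref{eq: path expansion for Tn v}–\eqref{eq: path expansion for Tmn v} should hold with $p_z(\mathbf r)$.

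Taking absolute values, the $k$-th coordinate bound \eqref{eq: kth coordinate bound for difference} acquires a factor $\overline M_{\Sigma\pi}\overline Q^{\,n}/\overline m_\pi$, where $\overline m_\pi=\min_{r,z}|\pi_r(z)|$. This factor is harmless only if it is beaten by the geometric decay. I would therefore avoid the crude bound $\|h^{(k)}\|_{\mathrm{osc}}\le 2\bar\rho^k/k$ for the \emph{full} iterate. Instead I would exploit, as in the proof of Theorem \ref{thm: analytic dependence on parameter}, that compositions $\pazocal L_{r_1}\cdots\pazocal L_{r_j}h$ equal $h\circ f^{(j)}-h\circ f^{(j-1)}$-type differences. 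These are of size $O(\bar\rho^{\,j})$, which is compensated by ($\Omega$5).

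With that decay in hand, the telescoping sum gives an error of order $\mathrm{poly}(n)\,\theta^{\,\cdot}\,\bar\rho^{\,m}K(\bar\rho)$ plus a tail $O(n(\bar\rho D)^m)$. Choosing $m=O(\log(n/\vep))$ then closes the estimate, and the constants can be recorded in terms of $\overline Q,\overline D,\overline M_\ell,\overline M_{\Sigma\pi},\overline m_\pi$, as in Proposition \ref{prop: precise bounds for higher order derivatives}. If the growth $\overline Q^{\,n}$ turns out to be uncontrollable in the truncation term, the fallback is to shrink $\varrho$ so that $\overline Q(U)$ is as close to $1$ as needed. This only affects the constant $q!\varrho^{-q}$ and keeps the complexity $O((\log(1/\vep))^3)$.
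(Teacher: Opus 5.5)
This is the paper's proof of Proposition~\ref{prop: precise bounds for higher order derivatives}. It uses Cauchy's formula on a disk $U\subset\Omega$, the tail bound \eqref{eq: bound on anz}, the $\pazocal L,\pazocal R,\pazocal K$ telescoping on $\mathbb D_{\bar\rho}$ with the crude path-weight factor $\overline M_{\Sigma\pi}\overline Q^{\,n}/\overline m_\pi$, and the Leibniz recursion \eqref{eq: inductive identity of finite approximation of general derivatives of lambda} for $\Lambda^{(q)}_{n,m}(t_0)$. The $\overline Q^{\,n}$ growth you flag is not a real obstacle, so you need neither shrinking $\varrho$ nor your refined decay argument. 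That argument, as sketched, would not even cover the telescoping terms with an early $\pazocal R$ insertion followed by $\pazocal K$'s, since those do not collapse to composition differences. Since $\overline Q\geq 1$ and $\bar\rho\,\overline Q<1$, taking $m\geq n$ gives $\overline Q^{\,n}\bar\rho^{\,m}\leq(\bar\rho\,\overline Q)^m$, with $m$ still $O(\log(1/\vep))$; this is exactly how the paper closes the estimate.
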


We work under the hypotheses and notation of Theorem \ref{thm: analytic dependence on parameter} and Remark \ref{rmk: condition on domain of holomorphic extension}. We fix $J_0$ and a complex neighborhood $\Omega$ of $t_0$ satisfying the conditions in Remark \ref{rmk: condition on domain of holomorphic extension}. Let $d$ denote the period of the chosen recurrent class before aperiodic reduction.

For each $z\in \Omega$ and integers $m\geq 2$ and $q\geq 0$, let the infinite matrix $T_{r,m}(z)$ be the upper left $m\times m$ truncation of $T_r(z)$, and define $\mathrm T_{m\times m}(z)$ as in Section \ref{section: Evaluation and Polynomial-time Convergence}. For natural numbers $n\geq 1$, $m\geq 2$, and $z\in \Omega$, define
\[
\Lambda_{n,m}(z)
\coloneqq
\frac1d \sum_{\ell=0}^{n-1} \Big[ \mathrm T_{m\times m}(z)^\ell \boldsymbol v(z) \Big]_{\boldsymbol 0,\,z}
\quad \text{where} \quad
[\boldsymbol u]_{\boldsymbol 0, \,z}
\;\coloneqq\;
\sum_{r\in C}\pi_r(z) (\boldsymbol u)_{r,0}.
\]
For each integer $q \geq 0$, define
\[
\Lambda_{n,m}^{\!(q)}(t_0)
\;\coloneqq\;
\left.\frac{d^q}{dz^q}\right|_{z = t_0} \Lambda_{n,m}(z).
\]

We will prove that $\Lambda_{n,m}^{\!(q)}(t_0)$ approximates $\lambda^{\!(q)}(t_0)$ exponentially fast as $n,m$ grow large. In what follows, we will prove two kinds of error bounds. In Subsection \ref{First-order derivative I: gathering bounds}, we present a bound for the first-order derivative $q=1$ that is independent of the domain of holomorphic extension, whereas a bound via the Cauchy integral formula is given for general $q$ in Subsection \ref{subsection: Higher-order derivatives}. Although it is possible to extend the $q=1$ bounds for general $q$ using the Fa\`a di Bruno formula, we will omit this for brevity.

\medskip

In practice, one computes $\Lambda_{n,m}^{\!(q)}(t_0)$ using the following construction. Fix $m\geq 2$. Denote by $T_{r,m}^{(q)}(z)$ the entrywise $q$-th derivative of $T_{r,m}(z)$. We define $\mathrm T_{m\times m}^{(q)}(z): \ell^\infty(\mathbb N_0)^C \to \ell^\infty(\mathbb N_0)^C$ by
\[
\big( \mathrm T_{m\times m}^{(q)}(z) \boldsymbol u \big)_{r'}
\;\coloneqq\;
\sum_{r\in C} \sum_{j=0}^q \binom{q}{j} \left( \frac{d^{q-j}}{dz^{q-j}} \frac{\pi_r(z) Q_{r,r'}(z)}{\pi_{r'}(z)} \right) T_{r,m}^{(j)}(z) u_r
\]
for $\boldsymbol u=(u_r)_{r\in C}\in \ell^\infty(\mathbb N_0)^C$ and $r' \in C$. We define $\boldsymbol w^{(q)}_n \in \ell^\infty(\mathbb N_0)^C$ for integers $q, n \geq 0$ by
\[
\boldsymbol w^{(q)}_0
\;\coloneqq\;
\left. \frac{d^q}{dz^q}\right|_{z = t_0} \boldsymbol v(z)
\qquad \text{and} \qquad
\boldsymbol w^{(q)}_n
\;\coloneqq\;
\sum_{\ell=0}^q \binom{q}{\ell}
\mathrm T_{m\times m}^{(\ell)}(t_0)\, \boldsymbol w^{(q-\ell)}_{n-1}
\; \text{ for } n\geq 1.
\]
Here, the derivative of $\boldsymbol v(z)$ is performed entrywise. Then, by Leibniz’s rule and induction on $n$,
\[
\boldsymbol w_n^{(q)}
=
\left.\frac{d^q}{dz^q}\right|_{z=t_0}
\big( \mathrm T_{m\times m}(z)^n \boldsymbol v(z) \big).
\]
Hence
\begin{equation} \label{eq: inductive identity of finite approximation of general derivatives of lambda}
\Lambda_{n,m}^{\!(q)}(t_0)
=
\frac1d \sum_{r\in C} \sum_{\ell=0}^{n-1} \sum_{j=0}^q
\binom{q}{j} \left( \left.\frac{d^{q-j}}{dz^{q-j}}\right|_{z = t_0} \pi_r(z) \right) \big( \boldsymbol w^{(j)}_\ell \big)_{r,0}.
\end{equation}

\subsection{First-order derivative \RN 1: gathering bounds} \label{First-order derivative I: gathering bounds}

For the first derivative, equation \eqref{eq: inductive identity of finite approximation of general derivatives of lambda} is
\begin{equation} \label{eq: inductive identity of finite approximation of first derivative of lambda}
\Lambda_{n,m}'(t_0)
=
\Lambda_{n,m}^{\!(1)}(t_0)
=
\frac1d \sum_{\ell=0}^{n-1}
\left(
\sum_{r\in C} \pi_r'(t_0)\, \big( \boldsymbol w^{(0)}_\ell \big)_{r,0}
+ \sum_{r\in C} \pi_r(t_0)\, \big( \boldsymbol w^{(1)}_\ell \big)_{r,0}
\right).
\end{equation}

We define the following constants.
\begin{gather*}
m_\pi
\;=\;
\min_{r\in C} \pi_r(t_0),
\qquad
M_\pi^{(1)}
\;=\;
\max_{r\in C}
|\pi_r'(t_0)|,
\qquad
M_{\Sigma \pi}^{(1)}
\;=\;
\sum_{r\in C} |\pi_r'(t_0)|,
\\
M_Q^{(1)}
\;=\;
\max\left\{
\max_{r\in C} \sum_{r'\in C} |Q_{r,r'}'(t_0)|,\;
\max_{r'\in C} \sum_{r\in C} |Q_{r,r'}'(t_0)|
\right\},
\qquad
M_\top^{(1)}
\;=\;
\max_{r\in C} \left| \left.\frac{d}{dt}\right|_{t=t_0} f_r^\top[t](0) \right|,
\\
M_f^{(1)}
\;=\;
\max_{r\in C} \sup_{x\in \closure{\mathbb D_1}} \left| \left.\frac{d}{dt}\right|_{t=t_0} f_r[t](x) \right|,
\qquad
M_{\ell,2}
\;=\;\;
\max_{r\in C}\!\!\sup_{x\in [-\rho,\rho]}
\big| \partial_2 \ell_r(t_0,x) \big|,
\\
M_{\ell,22}
\;=\;\;
\max_{r\in C}\!\!\sup_{x\in [-\rho,\rho]}
\big| \partial_2^2 \ell_r(t_0,x) \big|,
\qquad
M_{\ell,12}
\;=\;\;
\max_{r\in C}\!\!\sup_{x\in [-\rho,\rho]}
\big| \partial_1 \partial_2 \ell_r(t_0,x) \big|.
\end{gather*}

Recall the definition of $f_{\mathbf r}^{(n)}[z]$ in equation \eqref{eq: definition of analytic parameter compositions}. Also, let
\[
\Xi \;=\; \frac{M_f^{(1)}}{(1-\rho)(1-\rho^2)}.
\]

\begin{lemma} \label{lemma: pathwise first order derivative bound}
For any path $\mathbf r=(r_0,\dots,r_{n-1})\in C^n$,
\[
\left| \left. \frac{d}{dt}\right|_{t=t_0} f_{\mathbf r}^{(n)}[t](0) \right|
\;\leq\;
\Xi(1-\rho^n).
\]
\end{lemma}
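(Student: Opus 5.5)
We will argue by induction on $n$, differentiating the recursion $f_{\mathbf r}^{(n)}[t](0) = f_{r_{n-1}}[t]\big( f_{\mathbf r}^{(n-1)}[t](0) \big)$ at $t=t_0$. Write $x_k = f_{\mathbf r}^{(k)}[t_0](0)$ and $u_k = \left.\frac{d}{dt}\right|_{t=t_0} f_{\mathbf r}^{(k)}[t](0)$, so $u_0=0$ and $x_k\in[-\rho,\rho]$ for $k\geq 1$ by \eqref{eq: every f is a rho-contraction}. The chain rule gives
\[
u_k \;=\; \left.\frac{\partial}{\partial t}\right|_{t=t_0} f_{r_{k-1}}[t](x_{k-1}) \;+\; f_{r_{k-1}}[t_0]'(x_{k-1})\, u_{k-1}.
\]
The first term is bounded by $M_f^{(1)}$, since $x_{k-1}\in[-1,1]\subset\closure{\mathbb D_1}$. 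So the main task is to bound the derivative $|f_r[t_0]'(x)|$ at the points $x=x_{k-1}$.

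For the derivative factor we will use the hyperbolic contraction. By Lemma \ref{lemma: real contraction implies complex contraction} and Lemma \ref{lemma: small image implies contraction}, each $f_r=f_r[t_0]$ is a $\rho$-contraction for $d_{\mathrm{hyp}}$. Infinitesimally this reads
\[
\frac{2|f_r'(x)|}{1-|f_r(x)|^2}\;\leq\;\rho\,\frac{2}{1-x^2},
\qquad\text{i.e.}\qquad
|f_r'(x)|\;\leq\;\rho\,\frac{1-f_r(x)^2}{1-x^2}\;\leq\;\frac{\rho}{1-x^2}.
\]
At $x=x_0=0$ this gives $|f'|\leq\rho$. At $x_{k-1}\in[-\rho,\rho]$ it gives only $|f'|\leq \rho/(1-\rho^2)$, which may exceed $1$. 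A naive Euclidean recursion therefore fails.

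To fix this, we will run the induction in the hyperbolic norm of the tangent vector instead. Set $\tilde u_k = 2|u_k|/(1-x_k^2)$. Then
\[
\tilde u_k \;\leq\; \frac{2M_f^{(1)}}{1-x_k^2} \;+\; \rho\,\tilde u_{k-1} \;\leq\; \frac{2M_f^{(1)}}{1-\rho^2} \;+\; \rho\,\tilde u_{k-1},
\qquad \tilde u_0=0.
\]
Summing the geometric recursion gives $\tilde u_n \leq \frac{2M_f^{(1)}}{1-\rho^2}\cdot\frac{1-\rho^n}{1-\rho}$. Converting back with $|u_n| = \tfrac12(1-x_n^2)\,\tilde u_n \leq \tfrac12\tilde u_n$ yields $|u_n|\leq \frac{M_f^{(1)}(1-\rho^n)}{(1-\rho)(1-\rho^2)} = \Xi(1-\rho^n)$, exactly the claimed bound.

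The step needing the most care is the infinitesimal form of the contraction. We derive it from Lemma \ref{lemma: small image implies contraction} by taking $v\to u$ in $d_{\mathrm{hyp}}(f(u),f(v))\leq\rho\, d_{\mathrm{hyp}}(u,v)$ along the real line, using that $d_{\mathrm{hyp}}$ has density $2|dw|/(1-|w|^2)$. The remaining points are routine: the differentiability in $t$ of the real data, and the fact that the $t$-derivative of $f_r[t](x)$ is bounded on $\closure{\mathbb D_1}$ via $M_f^{(1)}$.
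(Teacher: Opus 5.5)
Your argument is correct and is essentially the paper's proof in infinitesimal form. The paper runs the same hyperbolic recursion $D_{j+1}\leq \hat{\rho}\,D_j + 2M_f^{(1)}/(1-\hat{\rho}^2)$ for the $\limsup$ of the difference quotients $d_{\mathrm{hyp}}\big(y_j(t_0+u),y_j(t_0)\big)/|u|$, and converts back using that the hyperbolic density is at least $2$. Because you differentiate at $t_0$ and track the hyperbolic norm $2|u_k|/(1-x_k^2)$ via the chain rule, only $f_r[t_0]$ enters, so you can use $\rho$ directly and avoid the paper's auxiliary $\hat{\rho}>\rho$ (needed there to contract $f_r[t_0+u]$) and its final limit $\hat{\rho}\downarrow\rho$.
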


\begin{proof}
Fix a path $\mathbf r=(r_0,\dots,r_{n-1})\in C^n$, and define
\[
y_0(t)\coloneqq0,
\qquad
y_{j+1}(t)\coloneqq f_{r_j}[t]\big(y_j(t)\big)
\qquad
(0\leq j\leq n-1).
\]
Then
\[
y_n(t)=f_{\mathbf r}^{(n)}[t](0).
\]
Take any $\hat{\rho}$ with $\rho<\hat{\rho}<1$ and let $\delta>0$ such that if $|t-t_0|<\delta$, then $f_r[t]\big([-1,1]\big)\subset [-\hat{\rho},\hat{\rho}]$ for every $r\in C$. For $|u|<\delta$, we have $y_j(t)\in[-\hat{\rho},\hat{\rho}]$ for every $j$ and $t\in \big[t_0-|u|,\,t_0+|u|\big]$.

For $u$ with $|u|<\delta$, using the triangle inequality for $d_{\mathrm{hyp}}$,
\begin{align*}
&d_{\mathrm{hyp}}\big(y_{j+1}(t_0+u),y_{j+1}(t_0)\big)
=
d_{\mathrm{hyp}} \Big( f_{r_j}[t_0+u]\big(y_j(t_0+u)\big),\, f_{r_j}[t_0]\big(y_j(t_0)\big) \Big)
\\
&\leq
d_{\mathrm{hyp}} \Big( f_{r_j}[t_0+u]\big(y_j(t_0+u)\big),\, f_{r_j}[t_0+u]\big(y_j(t_0)\big) \Big)
+
d_{\mathrm{hyp}} \Big( f_{r_j}[t_0+u]\big(y_j(t_0)\big),\, f_{r_j}[t_0]\big(y_j(t_0)\big) \Big).
\end{align*}
First term: since $f_{r_j}[t_0+u]\big([-1,1]\big)\subset[-\hat{\rho},\hat{\rho}]$, Lemma \ref{lemma: real contraction implies complex contraction} and Lemma \ref{lemma: small image implies contraction} give
\[
d_{\mathrm{hyp}} \Big( f_{r_j}[t_0+u]\big(y_j(t_0+u)\big),\, f_{r_j}[t_0+u]\big(y_j(t_0)\big) \Big)
\leq
\hat{\rho}\, d_{\mathrm{hyp}}\big(y_j(t_0+u),\, y_j(t_0)\big).
\]
Second term: both points $f_{r_j}[t_0+u]\big(y_j(t_0)\big)$ and $f_{r_j}[t_0]\big(y_j(t_0)\big)$ belong to $[-\hat{\rho},\hat{\rho}]$, so by the integral formula of $d_{\mathrm{hyp}}$,
\begin{align*}
&d_{\mathrm{hyp}} \Big( f_{r_j}[t_0+u]\big(y_j(t_0)\big), f_{r_j}[t_0]\big(y_j(t_0)\big) \Big)
\leq
\left|\int_{f_{r_j}[t_0]\big(y_j(t_0)\big)}^{f_{r_j}[t_0+u]\big(y_j(t_0)\big)} \frac{2}{1-x^2}\,dx\right|
\\
&\hspace{90pt} \leq
\frac{2}{1-\hat{\rho}^2} \left| f_{r_j}[t_0+u]\big(y_j(t_0)\big) - f_{r_j}[t_0]\big(y_j(t_0)\big) \right|
\leq
\frac{2G(u)}{1-\hat{\rho}^2}|u|,
\end{align*}
where
\[
G(u) = \max_{t\in \big[ t_0-|u|, \,t_0+|u| \big]} \max_{r\in C} \max_{x\in \closure{\mathbb D_1}} \left| \partial_t f_r[t](x) \right|.
\]

Therefore, for every $0\leq j\leq n-1$,
\begin{equation} \label{eq: recurrence bound for sequence yj}
d_{\mathrm{hyp}}\big(y_{j+1}(t_0+u),\,y_{j+1}(t_0)\big)
\leq
\hat{\rho}\,d_{\mathrm{hyp}}\big(y_j(t_0+u),\,y_j(t_0)\big)
+ \frac{2G(u)}{1-\hat{\rho}^2}|u|.
\end{equation}

Now define
\[
D_j = \limsup_{u\to 0} \frac{d_{\mathrm{hyp}}\big(y_j(t_0+u),\,y_j(t_0)\big)}{|u|}.
\]
Since $y_0(t)\equiv 0$, we have $D_0=0$. By the analyticity of each $f_r$ and equation \eqref{eq: recurrence bound for sequence yj},
\[
D_{j+1} \leq \hat{\rho}D_j + \frac{2M_f^{(1)}}{1-\hat{\rho}^2}.
\]
By induction,
\[
D_n
\leq
\frac{2M_f^{(1)}}{1-\hat{\rho}^2}\sum_{m=0}^{n-1}\hat{\rho}^m
=
\frac{2M_f^{(1)}}{1-\hat{\rho}^2}\, \frac{1-\hat{\rho}^n}{1-\hat{\rho}}.
\]

Since the density of the hyperbolic metric is bounded below by $2$ on $(-1,1)$, we have
\[
\left| \left. \frac{d}{dt}\right|_{t=t_0} f_{\mathbf r}^{(n)}[t](0) \right|
=
|y_n'(t_0)|
\leq
\frac12 D_n
\leq
\frac{M_f^{(1)}(1-\hat{\rho}^n)}{(1-\hat{\rho})(1-\hat{\rho}^2)}.
\]
Since this is true for every $\hat{\rho}>\rho$, letting $\hat{\rho}\downarrow\rho$ we obtain the desired inequality.
\end{proof}

For $n\geq 0$, $\mathbf r=(r_0,\dots,r_n)\in C^{n+1}$, and $s\in C$, write $(\mathbf r, s)$ for the concatenation and define
\[
c_{\mathbf r,s}(t)
\coloneqq
\frac{p_t\big((\mathbf r, s)\big)}{ \pi_s(t) }
=
\frac{ \pi_{r_0}(t)\,Q_{r_0r_1}(t)\cdots Q_{r_{n-1}r_n}(t)\,Q_{r_n,s}(t) }{ \pi_s(t) }.
\]
Since $Q(t_0)|_C$ is row-stochastic and $\pi(t_0)$ is its stationary probability vector, we have
\begin{equation} \label{eq: path coefficient sums to 1}
\sum_{\mathbf r\in C^{n+1}} p_{t_0}\big((\mathbf r, s)\big) = \pi_s(t_0),
\qquad
\sum_{\mathbf r\in C^{n+1}} c_{\mathbf r,s}(t_0) = 1
\qquad (s\in C).
\end{equation}

\begin{lemma} \label{lemma: bounding the derivative of path coefficient}
For any $n\geq 0$ and $s\in C$,
\begin{equation*}
\sum_{\mathbf r\in C^{n+1}}
\left| \left.\frac{d}{dt}\right|_{t=t_0} p_t(\mathbf r) \right|
\leq
M_{\Sigma\pi}^{(1)} + nM_Q^{(1)},
\qquad
\sum_{\mathbf r\in C^{n+1}}
|c_{\mathbf r,s}'(t_0)|
\leq
\frac{2M_\pi^{(1)} + (n+1)M_Q^{(1)}}{m_\pi}.
\end{equation*}
\end{lemma}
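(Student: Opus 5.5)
The plan is to differentiate the product $p_t(\mathbf r)=\pi_{r_0}(t)Q_{r_0r_1}(t)\cdots Q_{r_{n-1}r_n}(t)$ with the Leibniz rule and sum over paths. Each term of the derivative replaces exactly one factor by its derivative and keeps the others evaluated at $t_0$, where they are non-negative. We then sum over the free indices, using only stochasticity and stationarity at $t_0$: $\sum_{r'}Q_{r,r'}(t_0)=1$ and $\sum_r\pi_r(t_0)Q_{r,r'}(t_0)=\pi_{r'}(t_0)$.

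For the first inequality, consider first the term where $\pi_{r_0}$ is differentiated. Summing $Q_{r_0r_1}\cdots Q_{r_{n-1}r_n}(t_0)$ over $r_n,\dots,r_1$ from the right gives $1$. Summing $|\pi_{r_0}'(t_0)|$ over $r_0$ then gives $M_{\Sigma\pi}^{(1)}$. Next consider the term where $Q_{r_{j-1}r_j}$ is differentiated, for $1\le j\le n$. The factors to the right of position $j$ sum to $1$ over $r_n,\dots,r_{j+1}$. The factors to the left sum over $r_0,\dots,r_{j-2}$, using stationarity repeatedly, to $\pi_{r_{j-1}}(t_0)$. What remains is $\sum_{r_{j-1}}\pi_{r_{j-1}}(t_0)\sum_{r_j}|Q'_{r_{j-1}r_j}(t_0)|\le M_Q^{(1)}\sum\pi=M_Q^{(1)}$, by the row-sum part of $M_Q^{(1)}$. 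Adding the $n$ such terms gives $M_{\Sigma\pi}^{(1)}+nM_Q^{(1)}$.

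For the second inequality, differentiate $c_{\mathbf r,s}=p_t((\mathbf r,s))/\pi_s(t)$ with the quotient rule:
\[
c'_{\mathbf r,s}(t_0)=\frac{\frac{d}{dt}p_t((\mathbf r,s))\big|_{t_0}}{\pi_s(t_0)}-\frac{p_{t_0}((\mathbf r,s))\,\pi_s'(t_0)}{\pi_s(t_0)^2}.
\]
The second piece is easy: by \eqref{eq: path coefficient sums to 1}, summing over $\mathbf r$ gives $|\pi_s'(t_0)|/\pi_s(t_0)\le M_\pi^{(1)}/m_\pi$.

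The first piece is a Leibniz sum over a path of length $n+2$ whose last index $s$ is fixed rather than summed. This is the main obstacle: we cannot sum to the right over row sums, so we sum from the left using stationarity, and the differentiated factor must be controlled with $s$ pinned. We treat the terms as follows.
\begin{itemize}
\item When $\pi_{r_0}$ is differentiated, $\sum_{r_0}|\pi_{r_0}'|\,(Q^{n+1})_{r_0,s}\le M_\pi^{(1)}$, since the entries of $Q(t_0)^{n+1}$ lie in $[0,1]$.
\item When $Q_{r_{j-1}r_j}$ is differentiated, with $1\le j\le n+1$ and $r_{n+1}=s$, summing the left factors gives $\pi_{r_{j-1}}(t_0)\le 1$. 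Bounding this by $1$ leaves $\sum_{r_{j-1},r_j}|Q'_{r_{j-1}r_j}|\,(Q^{n+1-j})_{r_j,s}$. Sum over $r_{j-1}$ first: by the column-sum part of $M_Q^{(1)}$ this is at most $M_Q^{(1)}\sum_{r_j}(Q^{n+1-j})_{r_j,s}$.
\end{itemize}
The remaining column sum of $Q^{k}$ need not be $\le 1$, which is the delicate point. Instead we keep the weight $\pi_{r_{j-1}}$ and use $\sum_{r_j}\pi_{r_j}(Q^k)_{r_j,s}=\pi_s$, dividing by $m_\pi$ where needed, so that the total is at most $\pi_s M_Q^{(1)}/m_\pi$ per term. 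Dividing by $\pi_s(t_0)$, there are $n+1$ such terms. Collecting the three contributions gives at most $(2M_\pi^{(1)}+(n+1)M_Q^{(1)})/m_\pi$, with the second factor $M_\pi^{(1)}$ absorbing the initial-distribution term. We will use $m_\pi\le\pi_s\le1$ freely to match the stated constants.
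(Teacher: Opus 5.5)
Your proof of the first inequality is correct and matches what the paper intends. The problem is in the second inequality, in the bullet where $\pi_{r_0}$ is differentiated. You claim $\sum_{r_0}|\pi_{r_0}'(t_0)|\,(Q(t_0)^{n+1})_{r_0,s}\le M_\pi^{(1)}$, but this does not follow from the entries of $Q(t_0)^{n+1}$ lying in $[0,1]$.
\begin{itemize}
\item Bounding each entry by $1$ only gives $\sum_{r_0}|\pi_{r_0}'(t_0)|=M_{\Sigma\pi}^{(1)}$.
\item Pulling out $\max_r|\pi_r'(t_0)|$ instead leaves the column sum $\sum_{r_0}(Q(t_0)^{n+1})_{r_0,s}$. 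As you note yourself in the next bullet, this can exceed $1$.
\end{itemize}
The claim is false in general. Take $|C|=2$ and $n=0$. Then $\pi_1+\pi_2\equiv 1$ forces $|\pi_1'|=|\pi_2'|=M_\pi^{(1)}$, so the left side equals $M_\pi^{(1)}\bigl(Q_{1s}(t_0)+Q_{2s}(t_0)\bigr)$. This exceeds $M_\pi^{(1)}$ whenever $\pi'(t_0)\neq 0$ and column $s$ of $Q(t_0)$ sums to more than $1$. Carried out honestly, your route gives $M_{\Sigma\pi}^{(1)}/m_\pi$ for this term, not the stated $M_\pi^{(1)}/m_\pi$.

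The fix is to restore the weight before summing, as the paper does. First write
\[
|\pi_{r_0}'(t_0)|\le \Bigl(\max_r |\pi_r'(t_0)|/\pi_r(t_0)\Bigr)\pi_{r_0}(t_0)\le (M_\pi^{(1)}/m_\pi)\,\pi_{r_0}(t_0).
\]
Then stationarity gives $\sum_{r_0}\pi_{r_0}(t_0)(Q(t_0)^{n+1})_{r_0,s}=\pi_s(t_0)$. After dividing by $\pi_s(t_0)$, this term is at most $M_\pi^{(1)}/m_\pi$.

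For the terms with a differentiated $Q$-factor, your ``delicate point'' is avoidable. The paper proceeds as follows:
\begin{itemize}
\item It keeps the weight $\pi_{r_{j-1}}(t_0)$ and bounds $(Q(t_0)^{n+1-j})_{r_j,s}\le 1$.
\item It sums over $r_j$ using the row-sum part of $M_Q^{(1)}$, which gives $M_Q^{(1)}$; it then divides by $\pi_s(t_0)\ge m_\pi$.
\item Only the last factor $Q_{r_n,s}$, where no free row index remains, needs the column-sum part.
\end{itemize}
Your column-sum route also works, but not as you phrase it. The identity $\sum_{r_j}\pi_{r_j}(Q^k)_{r_j,s}=\pi_s$ needs the weight $\pi_{r_j}$, not $\pi_{r_{j-1}}$, so ``keeping the weight $\pi_{r_{j-1}}$'' does not connect to it. What works is:
\begin{itemize}
\item drop $\pi_{r_{j-1}}(t_0)\le 1$;
\item apply the column-sum bound;
\item insert $1\le \pi_{r_j}(t_0)/m_\pi$ and then use stationarity.
\end{itemize}
This gives $\pi_s(t_0)M_Q^{(1)}/m_\pi$ per term, which is what you need.
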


\begin{proof}
Write $p_{t_0}'(\mathbf r) \coloneqq \left.\frac{d}{dt}\right|_{t=t_0} p_t(\mathbf r)$. We have
\[
c_{\mathbf r,s}'(t_0)
=
\frac{p_{t_0}'\big((\mathbf r, s)\big)}{\pi_s(t_0)}
-
\frac{\pi_s'(t_0)}{\pi_s(t_0)^2}\,p_{t_0}\big((\mathbf r, s)\big).
\]
Therefore, using $\pi_s(t_0)\geq m_\pi$ and equation \eqref{eq: path coefficient sums to 1}, we obtain
\begin{equation*}
\sum_{\mathbf r\in C^{n+1}} |c_{\mathbf r,s}'(t_0)|
\leq
\frac{1}{\pi_s(t_0)}
\sum_{\mathbf r\in C^{n+1}} |p_{t_0}'\big((\mathbf r, s)\big)|
+
\frac{|\pi_s'(t_0)|}{\pi_s(t_0)}
\leq
\frac{1}{\pi_s(t_0)}
\sum_{\mathbf r\in C^{n+1}} |p_{t_0}'\big((\mathbf r, s)\big)|
+
\frac{M_\pi^{(1)}}{m_\pi}.
\end{equation*}

We now bound the first term. For the derivative falling on $\pi_{r_0}(t)$, we have
\begin{align*}
&\frac{1}{\pi_s(t_0)} \sum_{\mathbf r\in C^{n+1}}
|\pi_{r_0}'(t_0)|\,Q_{r_0r_1}(t_0)\cdots Q_{r_n,s}(t_0) \\
&\leq
\frac{1}{\pi_s(t_0)} \left( \max_{r\in C}\frac{|\pi_r'(t_0)|}{\pi_r(t_0)} \right)
\sum_{\mathbf r\in C^{n+1}} \pi_{r_0}(t_0)\,Q_{r_0r_1}(t_0)\cdots Q_{r_n,s}(t_0)
=
\frac{1}{\pi_s(t_0)} \max_{r\in C}\frac{|\pi_r'(t_0)|}{\pi_r(t_0)}\,\pi_s(t_0)
\leq
\frac{M_\pi^{(1)}}{m_\pi}.
\end{align*}
Next fix $0\leq j\leq n-1$, and consider the term with derivative on $Q_{r_j,r_{j+1}}(t)$. By the stationarity of $\pi(t_0)$ and the row-stochasticity of $Q(t_0)$,
\begin{align*}
&\frac{1}{\pi_s(t_0)}
\sum_{\mathbf r\in C^{n+1}}
\pi_{r_0}(t_0)\,Q_{r_0r_1}(t_0)\cdots
Q_{r_{j-1}r_j}(t_0)\,
|Q_{r_j,r_{j+1}}'(t_0)|\,
Q_{r_{j+1}r_{j+2}}(t_0)\cdots Q_{r_n,s}(t_0) \\
&\leq
\frac{1}{\pi_s(t_0)}
\sum_{r_j\in C}\pi_{r_j}(t_0)\sum_{r_{j+1}\in C}|Q_{r_j,r_{j+1}}'(t_0)| \big(Q(t_0)^{n-j}\big)_{r_{j+1},s}
\leq
\frac{M_Q^{(1)}}{m_\pi}.
\end{align*}
Finally, the term with derivative on $Q_{r_n,s}(t)$ is bounded by
\[
\frac{1}{\pi_s(t_0)} \sum_{r_n\in C} \pi_{r_n}(t_0) |Q_{r_n,s}'(t_0)|
\leq
\frac{1}{m_\pi} \max_{s'\in C} \sum_{r\in C} |Q_{r, s'}'(t_0)|
\leq
\frac{M_Q^{(1)}}{m_{\pi}}.
\]
This completes the proof for $c_{\mathbf r,s}$. The proof for $p_t(\mathbf r)$ is similar.
\end{proof}

For each $r\in C$, let
\[
g_r: J_0\times \mathbb R \to \mathbb R,
\qquad
g_r(t,s)
\coloneqq
\mathrm{artanh}\big( f_r[t](\tanh s) \big).
\]
Define
\[
M_{g,12}
\coloneqq
\max_{r\in C} \max_{|s|\leq \mathrm{artanh}(\rho)}
\big| \partial_1\partial_2 g_r(t_0,s) \big|,
\qquad
M_{g,22}
\coloneqq
\max_{r\in C} \max_{|s|\leq \mathrm{artanh}(\rho)}
\big| \partial_2^2 g_r(t_0,s) \big|,
\]
and
\[
L_g
\coloneqq
M_{g,12} + \frac{\Xi}{1-\rho^2}\,M_{g,22}.
\]

\begin{lemma} \label{lemma: first order derivative of pathwise difference of iterates}
Let $n\geq 1$ and $\mathbf r=(r_0,\dots,r_n)\in C^{n+1}$. Define
\[
d_{\mathbf r}(t)
\;\coloneqq\;
f_{(r_0,\dots,r_n)}^{(n)}[t](0)
\;-\;
f_{(r_1,\dots,r_n)}^{(n-1)}[t](0).
\]
Then
\[
|d_{\mathbf r}(t_0)|
\leq
\mathrm{artanh}(\rho)\,\rho^{n-1},
\quad
\big| d_{\mathbf r}'(t_0) \big|
\leq
\left(
\frac{M_f^{(1)}}{1-\rho^2}
+ \frac{n-1}{\rho}\,L_g\,\mathrm{artanh}(\rho)
+ \frac{\Xi\,\mathrm{artanh}(\rho)}{1-\rho^2}
\right)\rho^{n-1}.
\]
\end{lemma}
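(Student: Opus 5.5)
We work in the coordinate $s=\mathrm{artanh}(x)$, in which $d_{\mathrm{hyp}}(x,y)=2|\mathrm{artanh}(x)-\mathrm{artanh}(y)|$ is twice the Euclidean distance and each $f_r[t]$ becomes $g_r(t,\cdot)$. Both points in $d_{\mathbf r}$ arise by applying the same maps $f_{r_1},\dots,f_{r_{n-1}}$, once starting from $f_{r_0}[t](0)$ and once from $0$. So we define
\[
X_0(t)=\mathrm{artanh}\big(f_{r_0}[t](0)\big),\qquad Y_0(t)=0,
\]
\[
X_j(t)=g_{r_j}\big(t,X_{j-1}(t)\big),\qquad Y_j(t)=g_{r_j}\big(t,Y_{j-1}(t)\big)\qquad (1\le j\le n-1).
\]
Then $d_{\mathbf r}(t)=\tanh X_{n-1}(t)-\tanh Y_{n-1}(t)$. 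Every $X_j(t_0)$ and $Y_j(t_0)$ lies in $[-\mathrm{artanh}(\rho),\mathrm{artanh}(\rho)]$, since each is either $0$ or an image under some $f_r[t_0]$ and $f_r[t_0]([-1,1])\subset[-\rho,\rho]$. This justifies evaluating the constants $M_{g,12}$ and $M_{g,22}$ at these points.

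\emph{Value bound.} By Lemma \ref{lemma: real contraction implies complex contraction} and Lemma \ref{lemma: small image implies contraction}, each $g_r(t_0,\cdot)$ is a $\rho$-contraction in the Euclidean metric on the $s$-line. In particular $|\partial_2 g_r(t_0,s)|\le\rho$. Hence
\[
|X_{n-1}-Y_{n-1}|\le\rho^{n-1}|X_0|\le\rho^{n-1}\,\mathrm{artanh}(\rho).
\]
Since $\tanh$ is $1$-Lipschitz, $|d_{\mathbf r}(t_0)|\le\mathrm{artanh}(\rho)\rho^{n-1}$.

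\emph{Derivative bound.} Differentiate and split:
\[
d_{\mathbf r}'=\mathrm{sech}^2X_{n-1}\,(X_{n-1}'-Y_{n-1}')+(\mathrm{sech}^2X_{n-1}-\mathrm{sech}^2Y_{n-1})\,Y_{n-1}'.
\]
For the second term, $\mathrm{sech}^2$ is $1$-Lipschitz, because $|(\mathrm{sech}^2)'|=2\,\mathrm{sech}^2\,|\tanh|\le 4/(3\sqrt3)<1$. Also $Y_{n-1}'=y'/(1-y^2)$ with $y=f^{(n-1)}_{(r_1,\dots,r_n)}[t](0)\in[-\rho,\rho]$, so Lemma \ref{lemma: pathwise first order derivative bound} gives $|Y_{n-1}'(t_0)|\le\Xi/(1-\rho^2)$. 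Together with the value bound, this term contributes $\frac{\Xi\,\mathrm{artanh}(\rho)}{1-\rho^2}\rho^{n-1}$.

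For the first term we bound $E_j=|X_j'(t_0)-Y_j'(t_0)|$ by a recursion. The chain rule gives
\[
X_j'-Y_j'=\big[\partial_1g(X_{j-1})-\partial_1g(Y_{j-1})\big]+\partial_2g(X_{j-1})\,(X_{j-1}'-Y_{j-1}')+\big[\partial_2g(X_{j-1})-\partial_2g(Y_{j-1})\big]\,Y_{j-1}'.
\]
We bound the three pieces by the mean value theorem, the contraction $|\partial_2 g|\le\rho$, and $|Y_{j-1}'|\le\Xi/(1-\rho^2)$ (Lemma \ref{lemma: pathwise first order derivative bound} again). Using $|X_{j-1}-Y_{j-1}|\le\mathrm{artanh}(\rho)\rho^{j-1}$, this yields
\[
E_j\le\rho E_{j-1}+L_g\,\mathrm{artanh}(\rho)\,\rho^{j-1}.
\]
The initial value satisfies $E_0=|X_0'(t_0)|\le M_f^{(1)}/(1-\rho^2)$, since $f_{r_0}[t_0](0)\in[-\rho,\rho]$. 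Unrolling $n-1$ steps gives
\[
E_{n-1}\le\rho^{n-1}\frac{M_f^{(1)}}{1-\rho^2}+(n-1)\rho^{n-2}L_g\,\mathrm{artanh}(\rho).
\]
Since $\mathrm{sech}^2\le1$, this accounts for the first two terms of the claimed bound.

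The main care point is the index bookkeeping: the two orbits share exactly $n-1$ maps, so the exponent is $n-1$ and the recursion produces the factor $(n-1)/\rho$. The other point needing care is verifying that all intermediate points lie in the range $|s|\le\mathrm{artanh}(\rho)$, which is where $M_{g,12}$ and $M_{g,22}$ are defined.
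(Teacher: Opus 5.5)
Your proof is correct and follows essentially the same route as the paper. You pass to the coordinate $s=\mathrm{artanh}(x)$, get the value bound by $n-1$ applications of the $\rho$-contraction, derive the recursion $E_j\le\rho E_{j-1}+L_g\,\mathrm{artanh}(\rho)\,\rho^{j-1}$ from the same three-piece chain-rule split, and return to Euclidean coordinates using $|\tanh'|\le 1$ and $|\tanh''|<1$; the index bookkeeping and the resulting constants match the paper's exactly.
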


\begin{proof}
Let $\eta \coloneqq \mathrm{artanh}(\rho)$. Let $u_0(t)\coloneqq f_{r_0}[t](0)$ and $v_0(t)\coloneqq0$. For $0\leq j\leq n-2$, we define $u_j(t), v_j(t)$ recursively by
\[
u_{j+1}(t)\coloneqq f_{r_{j+1}}[t]\big( u_j(t) \big),
\qquad
v_{j+1}(t)\coloneqq f_{r_{j+1}}[t]\big( v_j(t) \big).
\]
Then $u_{n-1}(t) = f_{(r_0,\dots,r_{n-1})}^{(n)}[t](0)$ and $v_{n-1}(t) = f_{(r_1,\dots,r_{n-1})}^{(n-1)}[t](0)$. Also, $u_j(t_0), v_j(t_0)\in [-\rho,\rho]$ for every $j$. We pass to hyperbolic coordinates: for $0\leq j\leq n-1$,
\[
U_j(t)\coloneqq\mathrm{artanh}\big( u_j(t) \big),
\qquad
V_j(t)\coloneqq\mathrm{artanh}\big( v_j(t) \big),
\qquad
\delta_j(t) \coloneqq U_j(t)-V_j(t).
\]
Then $U_j(t_0),V_j(t_0)\in[-\eta,\eta]$ for every $j$. Also, by the definition of $g_r$, we have $U_0(t)=g_{r_0}(t,0)$, $V_0(t)=0$, and for $0\leq j\leq n-2$,
\[
U_{j+1}(t)=g_{r_{j+1}}\big( t,U_j(t) \big),
\qquad
V_{j+1}(t)=g_{r_{j+1}}\big( t,V_j(t) \big).
\]

We first prove that
\begin{equation} \label{eq: bound on hyperbolic zeroth order difference}
|\delta_j(t_0)|
\leq
\eta\,\rho^j
\qquad
(0\leq j\leq n-1).
\end{equation}
Indeed, since $|u_0(t_0)|\leq \rho$, we have $|\delta_0(t_0)|=|U_0(t_0)|=\big| \mathrm{artanh}(u_0(t_0)) \big|\leq\eta$. Let $0\leq j\leq n-2$. Notice that $2|\delta_j(t_0)| = d_{\mathrm{hyp}}\big( u_j(t_0),v_j(t_0) \big)$. By
$
f_r[t_0]\big( \overline{\mathbb D_1} \big)
\subset
\closure{\mathbb D_\rho}
$ and Lemma \ref{lemma: small image implies contraction},
\begin{align*}
|\delta_{j+1}(t_0)|
=
\frac{1}{2} d_{\mathrm{hyp}}\big( u_{j+1}(t_0),v_{j+1}(t_0) \big)
&=
\frac{1}{2} d_{\mathrm{hyp}}
\Big( f_{r_{j+1}}[t_0]\big( u_j(t_0) \big), f_{r_{j+1}}[t_0]\big( v_j(t_0) \big) \Big) \\
&\leq
\frac{\rho}{2} d_{\mathrm{hyp}}\big( u_j(t_0),v_j(t_0) \big)
=
\rho\,|\delta_j(t_0)|.
\end{align*}
This proves \eqref{eq: bound on hyperbolic zeroth order difference} by induction.

Next, we claim that
\begin{equation} \label{eq: derivative of delta j bound}
|\delta_j'(t_0)|
\leq
\left(
\frac{M_f^{(1)}}{1-\rho^2} + \frac{j}{\rho}\,L_g\,\eta
\right)\rho^j
\qquad
(0\leq j\leq n-1).
\end{equation}
For $j=0$, since $f_r[t_0]\big([-1,1]\big)\subset[-\rho,\rho]$,
\[
|\delta_0'(t_0)|
=
\left| U_0'(t_0) \right|
=
\left| \partial_1 g_{r_0}(t_0,0) \right|
=
\left| \frac{\left.\frac{d}{dt}\right|_{t=t_0} f_{r_0}[t](\tanh 0)}{1-\big(f_{r_0}[t_0](\tanh 0)\big)^2} \right|
\leq
\frac{M_f^{(1)}}{1-\rho^2}.
\]
Now assume that \eqref{eq: derivative of delta j bound} holds for some $j$. From $\delta_{j+1}(t) = g_{r_{j+1}}\big( t,U_j(t) \big) - g_{r_{j+1}}\big( t,V_j(t) \big)$,
\begin{align*}
\delta_{j+1}'(t_0)
&=
\partial_1 g_{r_{j+1}}\big( t_0,U_j(t_0) \big)
- \partial_1 g_{r_{j+1}}\big( t_0,V_j(t_0) \big) \\
&\hspace{-30pt}+
\partial_2 g_{r_{j+1}}\big( t_0,U_j(t_0) \big)\,
\big( U_j'(t_0) - V_j'(t_0) \big)
+
\Big(
\partial_2 g_{r_{j+1}}\big( t_0,U_j(t_0) \big)
- \partial_2 g_{r_{j+1}}\big( t_0,V_j(t_0) \big)
\Big)
V_j'(t_0).
\end{align*}
Now, note that for every $r\in C$ and every $s,s'\in[-\eta,\eta]$,
\begin{align*}
\big| g_r(t_0,s)-g_r(t_0,s') \big|
&=
\frac12 d_{\mathrm{hyp}}\Big( f_r[t_0](\tanh s), f_r[t_0](\tanh s') \Big) \\
&\leq
\frac{\rho}{2} d_{\mathrm{hyp}}\big( \tanh s,\tanh s' \big)
=
\rho\,|s-s'|.
\end{align*}
Therefore $g_r(t_0,\cdot)$ is $\rho$-Lipschitz on $[-\eta,\eta]$, and hence
\begin{align} \label{eq: recurrence for delta j intermediate}
|\delta_{j+1}'(t_0)|
&\leq
M_{g,12}\,|\delta_j(t_0)| + \rho\,|\delta_j'(t_0)| + M_{g,22}\,|\delta_j(t_0)|\,|V_j'(t_0)|.
\end{align}
Since $|v_j(t_0)|\leq \rho$, Lemma \ref{lemma: pathwise first order derivative bound} gives
\begin{equation} \label{eq: bound of derivative of V}
|V_j'(t_0)|
=
\frac{|v_j'(t_0)|}{1-v_j(t_0)^2}
\leq
\frac{\Xi(1-\rho^j)}{1-\rho^2}
\leq
\frac{\Xi}{1-\rho^2}.
\end{equation}
Substituting this and \eqref{eq: bound on hyperbolic zeroth order difference} into
\eqref{eq: recurrence for delta j intermediate}, we get
\[
|\delta_{j+1}'(t_0)|
\leq
\rho\,|\delta_j'(t_0)|
+
L_g\,\eta\,\rho^j.
\]
Using the induction hypothesis, we see that \eqref{eq: derivative of delta j bound} holds for $j+1$:
\begin{align*}
|\delta_{j+1}'(t_0)|
\leq
\rho
\left(
\frac{M_f^{(1)}}{1-\rho^2} + \frac{j}{\rho}\,L_g\,\eta
\right)\rho^j
+
L_g\,\eta\,\rho^j
=
\left(
\frac{M_f^{(1)}}{1-\rho^2} + \frac{j+1}{\rho}\,L_g\,\eta
\right)\rho^{j+1}.
\end{align*}

We now return to Euclidean coordinates. Since
$
d_{\mathbf r}(t) = \tanh\big( U_{n-1}(t) \big) - \tanh\big( V_{n-1}(t) \big),
$
the zeroth-order bound follows from \eqref{eq: bound on hyperbolic zeroth order difference}, because $\tanh$ is $1$-Lipschitz:
\[
|d_{\mathbf r}(t_0)|
\leq
|\delta_{n-1}(t_0)|
\leq
\eta\,\rho^{n-1}.
\]

For the derivative, we compute
\[
d_{\mathbf r}'(t_0)
=
\tanh'\big( U_{n-1}(t_0) \big)\,\delta_{n-1}'(t_0)
+
\Big(
\tanh'\big( U_{n-1}(t_0) \big) - \tanh'\big( V_{n-1}(t_0) \big)
\Big)
V_{n-1}'(t_0).
\]
Since $|\tanh'|\leq 1$ on $\mathbb R$, and since $\sup_{s\in\mathbb R} |\tanh''(s)| = 4/3\sqrt{3} < 1$, we have
\[
|d_{\mathbf r}'(t_0)|
\leq
|\delta_{n-1}'(t_0)|
+
|\delta_{n-1}(t_0)|\,|V_{n-1}'(t_0)|.
\]
Using equations \eqref{eq: bound on hyperbolic zeroth order difference}, \eqref{eq: derivative of delta j bound}, and \eqref{eq: bound of derivative of V}, we conclude that
\[
|d_{\mathbf r}'(t_0)|
\leq
\left(
\frac{M_f^{(1)}}{1-\rho^2}
+ \frac{n-1}{\rho}\,L_g\,\eta
+ \frac{\Xi\eta}{1-\rho^2}
\right)\rho^{n-1}. \qedhere
\]
\end{proof}

Define
\[
a_n(t) = \Big[ \mathrm T(t)^n \boldsymbol v(t) \Big]_{\boldsymbol 0, \,t},
\qquad
\Lambda_n(t) = \frac1d \sum_{\ell=0}^{n-1} a_\ell(t)
\qquad
(n\geq 0,\ t\in J_0).
\]

\begin{lemma} \label{lemma: derivative tail bound}
For every $n\geq 1$,
\[
|a_n'(t_0)|
\leq
\bigg( nA_{\mathrm{tail}}+ B_{\mathrm{tail}} \bigg) \rho^{n-1},
\]
where
\[
A_{\mathrm{tail}}
=
\left( \frac{1}{\rho} L_g + M_Q^{(1)} \right) M_{\ell,2} \mathrm{artanh}(\rho),
\]
and
\[
B_{\mathrm{tail}}
=
\left(
M_{\Sigma\pi}^{(1)} M_{\ell,2} + M_{\ell,12} + \Xi M_{\ell,22}
+\frac{\Xi\,M_{\ell,2}}{1-\rho^2}
\right)\mathrm{artanh}(\rho)
+ \frac{M_f^{(1)} M_{\ell,2}}{1-\rho^2}.
\]
Consequently, for every $n\in \mathbb N$,
\[
\left|
\lambda'(t_0)- \Lambda_n'(t_0)
\right|
\;\leq\;
\frac1d \left(
\left( \frac{n}{1-\rho}+\frac{\rho}{(1-\rho)^2} \right)A_{\mathrm{tail}}
+ \frac{B_{\mathrm{tail}}}{1-\rho}
\right) \rho^{n-1}.
\]
\end{lemma}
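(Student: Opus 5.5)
The plan is to differentiate, at $t=t_0$, the explicit path expansion \eqref{eq: analytic parameter explicit an}:
\[
a_n(t)=\sum_{\mathbf r\in C^{n+1}}p_t(\mathbf r)\,\Delta_{\mathbf r}(t),\qquad
\Delta_{\mathbf r}(t)=\ell_{r_n}\big(t,u_{\mathbf r}(t)\big)-\ell_{r_n}\big(t,v_{\mathbf r}(t)\big),
\]
where $u_{\mathbf r}(t)=f^{(n)}_{(r_0,\dots,r_n)}[t](0)$ and $v_{\mathbf r}(t)=f^{(n-1)}_{(r_1,\dots,r_n)}[t](0)$. Then $u_{\mathbf r}-v_{\mathbf r}=d_{\mathbf r}$ is exactly the quantity in Lemma \ref{lemma: first order derivative of pathwise difference of iterates}. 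The product rule gives
\[
a_n'(t_0)=\sum_{\mathbf r}p_{t_0}'(\mathbf r)\,\Delta_{\mathbf r}(t_0)+\sum_{\mathbf r}p_{t_0}(\mathbf r)\,\Delta_{\mathbf r}'(t_0).
\]
At $t_0$ the points $u_{\mathbf r}(t_0)$ and $v_{\mathbf r}(t_0)$ lie in $[-\rho,\rho]$. The mean value theorem along the real segment between them therefore gives $|\Delta_{\mathbf r}(t_0)|\le M_{\ell,2}|d_{\mathbf r}(t_0)|\le M_{\ell,2}\,\mathrm{artanh}(\rho)\rho^{n-1}$. Combined with the first bound of Lemma \ref{lemma: bounding the derivative of path coefficient}, $\sum|p'_{t_0}(\mathbf r)|\le M^{(1)}_{\Sigma\pi}+nM^{(1)}_Q$, the first sum contributes
\[
\big(M^{(1)}_{\Sigma\pi}+nM_Q^{(1)}\big)M_{\ell,2}\,\mathrm{artanh}(\rho)\,\rho^{n-1}.
\]

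For the second sum, since $p_{t_0}(\mathbf r)\ge0$ and $\sum_{\mathbf r}p_{t_0}(\mathbf r)=1$, it suffices to bound $|\Delta'_{\mathbf r}(t_0)|$ uniformly in $\mathbf r$. I decompose
\[
\Delta_{\mathbf r}'=\big[\partial_1\ell(t,u)-\partial_1\ell(t,v)\big]+\partial_2\ell(t,u)\,d_{\mathbf r}'+\big[\partial_2\ell(t,u)-\partial_2\ell(t,v)\big]\,v_{\mathbf r}',
\]
with everything evaluated at $t_0$, and bound the three pieces as follows.
\begin{enumerate}
\item The first piece is at most $M_{\ell,12}|d_{\mathbf r}|$, by the mean value theorem on $[-\rho,\rho]$.
\item The second piece is at most $M_{\ell,2}|d'_{\mathbf r}|$. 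I then insert the derivative bound of Lemma \ref{lemma: first order derivative of pathwise difference of iterates}.
\item The third piece is at most $M_{\ell,22}|d_{\mathbf r}|\,|v'_{\mathbf r}|$, and Lemma \ref{lemma: pathwise first order derivative bound} gives $|v'_{\mathbf r}|\le\Xi$.
\end{enumerate}
Using $|d_{\mathbf r}(t_0)|\le\mathrm{artanh}(\rho)\rho^{n-1}$ and replacing $n-1$ by $n$, the contributions are as follows. The terms linear in $n$ collect into $\big(\tfrac1\rho L_g+M^{(1)}_Q\big)M_{\ell,2}\,\mathrm{artanh}(\rho)=A_{\mathrm{tail}}$. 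The remaining terms are
\[
\big(M^{(1)}_{\Sigma\pi}M_{\ell,2}+M_{\ell,12}+\Xi M_{\ell,22}+\tfrac{\Xi M_{\ell,2}}{1-\rho^2}\big)\mathrm{artanh}(\rho)+\tfrac{M_f^{(1)}M_{\ell,2}}{1-\rho^2}=B_{\mathrm{tail}}.
\]
This yields $|a'_n(t_0)|\le(nA_{\mathrm{tail}}+B_{\mathrm{tail}})\rho^{n-1}$.

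For the consequence, I use Theorem \ref{thm: analytic dependence on parameter}: the series $\frac1d\sum a_\ell(z)$ converges compactly on $\Omega$ to a holomorphic extension of $\lambda$. Hence termwise differentiation is legitimate (Weierstrass), and $\lambda'(t_0)-\Lambda_n'(t_0)=\frac1d\sum_{\ell\ge n}a'_\ell(t_0)$. Summing the bound gives
\[
\sum_{\ell\ge n}(\ell A_{\mathrm{tail}}+B_{\mathrm{tail}})\rho^{\ell-1}=A_{\mathrm{tail}}\Big(\frac{n\rho^{n-1}}{1-\rho}+\frac{\rho^{n}}{(1-\rho)^2}\Big)+\frac{B_{\mathrm{tail}}\rho^{n-1}}{1-\rho},
\]
which is exactly the claimed estimate. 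The main obstacle is the bookkeeping in the decomposition of $\Delta'_{\mathbf r}$. In particular, one must apply the Lipschitz/mean-value bounds for $\partial_1\ell$ and $\partial_2\ell$ only on the real interval $[-\rho,\rho]$ where the constants $M_{\ell,\cdot}$ are defined. This is valid because all iterates at the real parameter $t_0$ stay in $[-\rho,\rho]$.
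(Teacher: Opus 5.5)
Your proposal is correct and follows essentially the same route as the paper. Both arguments use the path expansion of $a_n$, the product-rule split into $\sum p'_{t_0}\Delta_{\mathbf r}(t_0)$ and $\sum p_{t_0}\Delta'_{\mathbf r}(t_0)$, and the same three-term decomposition of $\Delta'_{\mathbf r}(t_0)$ bounded by $M_{\ell,12}$, $M_{\ell,2}$, $M_{\ell,22}$ together with the lemmas on $d_{\mathbf r}$ and $v'_{\mathbf r}$. Both then pass to the tail by termwise differentiation of the compactly convergent holomorphic series. The only cosmetic difference is how you bound $|\Delta_{\mathbf r}(t_0)|$: you use the mean value theorem plus the zeroth-order estimate on $d_{\mathbf r}$, whereas the paper reruns the hyperbolic-metric argument of Claim \ref{claim: estimate of Delta}; both give $M_{\ell,2}\,\mathrm{artanh}(\rho)\,\rho^{n-1}$.
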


\begin{proof}
For $n\geq 1$, exactly as in the proof of Theorem \ref{thm: analytic dependence on parameter}, we have
\[
a_n(t)
=
\sum_{\mathbf r=(r_0,\dots,r_n)\in C^{n+1}} p_t(\mathbf r)\,\Delta_{\mathbf r}(t),
\]
where
\[
p_t(\mathbf r)
=
\pi_{r_0}(t)\,Q_{r_0r_1}(t)\cdots Q_{r_{n-1}r_n}(t),
\]
and
\[
\Delta_{(r_0,\dots,r_n)}(t)
=
\ell_{r_n}\!\left( t, f_{(r_0,\dots,r_n)}^{(n)}[t](0) \right)
- \ell_{r_n}\!\left( t, f_{(r_1,\dots,r_n)}^{(n-1)}[t](0) \right).
\]

Also, by rewriting the same argument in Claim \ref{claim: estimate of Delta} with real parameter $t$,
\begin{equation} \label{eq: zeroth order Delta bound at t0}
\big| \Delta_{\mathbf r}(t_0) \big|
\leq
M_{\ell,2} \,\mathrm{artanh}(\rho)\,\rho^{n-1}
\qquad
\text{for every } \mathbf r\in C^{n+1}.
\end{equation}

Now fix $\mathbf r=(r_0,\dots,r_n)\in C^{n+1}$, and define
\[
u_{\mathbf r}(t)
\coloneqq f_{(r_0,\dots,r_n)}^{(n)}[t](0),
\qquad
v_{\mathbf r}(t)
\coloneqq f_{(r_1,\dots,r_n)}^{(n-1)}[t](0).
\]
Then
\[
\Delta_{\mathbf r}(t)
=
\ell_{r_n}\big( t,u_{\mathbf r}(t)\big)
- \ell_{r_n}\big( t,v_{\mathbf r}(t)\big).
\]
Differentiating at $t=t_0$, we obtain
\begin{align*}
\Delta_{\mathbf r}'(t_0)
&=
\partial_1 \ell_{r_n}\big( t_0,u_{\mathbf r}(t_0)\big)
- \partial_1 \ell_{r_n}\big( t_0,v_{\mathbf r}(t_0)\big)
\\
&\quad
+
\Big(
\partial_2 \ell_{r_n}\big( t_0,u_{\mathbf r}(t_0)\big)
- \partial_2 \ell_{r_n}\big( t_0,v_{\mathbf r}(t_0)\big)
\Big)\,v_{\mathbf r}'(t_0)
+ \partial_2 \ell_{r_n}\big( t_0,u_{\mathbf r}(t_0)\big)\, \big( u_{\mathbf r}'(t_0)-v_{\mathbf r}'(t_0) \big).
\end{align*}
Therefore,
\begin{align*}
\big| \Delta_{\mathbf r}'(t_0) \big|
\leq
M_{\ell,12} \big| u_{\mathbf r}(t_0)-v_{\mathbf r}(t_0) \big|
+ M_{\ell,22} \big| u_{\mathbf r}(t_0)-v_{\mathbf r}(t_0) \big|\, \big| v_{\mathbf r}'(t_0) \big|
+ M_{\ell,2} \big| u_{\mathbf r}'(t_0)-v_{\mathbf r}'(t_0) \big|.
\end{align*}

By Lemma \ref{lemma: first order derivative of pathwise difference of iterates},
\[
\big| u_{\mathbf r}(t_0)-v_{\mathbf r}(t_0) \big|
\leq
\mathrm{artanh}(\rho)\,\rho^{n-1},
\]
and
\[
\big| u_{\mathbf r}'(t_0)-v_{\mathbf r}'(t_0) \big|
\leq
\left(
\frac{M_f^{(1)}}{1-\rho^2}
+ \frac{n-1}{\rho}\,L_g\,\mathrm{artanh}(\rho)
+ \frac{\Xi\,\mathrm{artanh}(\rho)}{1-\rho^2}
\right)\rho^{n-1}.
\]
We also have $\big| v_{\mathbf r}'(t_0) \big| \leq \Xi$ by Lemma \ref{lemma: pathwise first order derivative bound}. Thus,
\begin{align}
&\big| \Delta_{\mathbf r}'(t_0) \big| \\
&\leq
\left(
\Big(
M_{\ell,12} + \Xi M_{\ell,22}
\Big) \mathrm{artanh}(\rho) + \left(
\frac{M_f^{(1)}}{1-\rho^2}
+ \frac{n}{\rho}\,L_g\,\mathrm{artanh}(\rho)
+ \frac{\Xi\,\mathrm{artanh}(\rho)}{1-\rho^2}
\right) M_{\ell,2}
\right)\rho^{n-1}. \label{eq: derivative bound for Delta sharp}
\end{align}

Therefore, using Lemma \ref{lemma: bounding the derivative of path coefficient}, equations \eqref{eq: zeroth order Delta bound at t0}, \eqref{eq: derivative bound for Delta sharp}, and $\sum_{\mathbf r\in C^{n+1}} p_{t_0}(\mathbf r) = 1$,
\begin{align*}
|a_n'(t_0)|
&\leq
\sum_{\mathbf r\in C^{n+1}}
\left(
|p_{t_0}'(\mathbf r)|\,|\Delta_{\mathbf r}(t_0)| + |p_{t_0}(\mathbf r)|\,|\Delta_{\mathbf r}'(t_0)|
\right) \\
&\leq
\Bigg\{
n \times \left( \frac{1}{\rho} L_g + M_Q^{(1)} \right) M_{\ell,2} \mathrm{artanh}(\rho)
\\
&\hspace{50pt}+
\left(
M_{\Sigma\pi}^{(1)} M_{\ell,2} + M_{\ell,12} + \Xi M_{\ell,22}
+\frac{\Xi\,M_{\ell,2}}{1-\rho^2}
\right)
\mathrm{artanh}(\rho)
+ \frac{M_f^{(1)} M_{\ell,2}}{1-\rho^2}
\Bigg\} \rho^{n-1}.
\end{align*}
The tail bound follows by summing the geometric-arithmetic series, using termwise differentiation ensured by the compact convergence of the holomorphic series $\sum a_n(z)$.
\end{proof}

\begin{lemma} \label{lemma: first order derivative of full operator}
For any $s\in C$ and $n,k\geq 1$,
\begin{equation*}
\left|
\left.\frac{d}{dt}\right|_{t=t_0}\;\sum_{\ell=0}^{n-1}\big(\mathrm T(t)^\ell \boldsymbol v(t) \big)_{s,k}
\right|
\leq
\frac{2M_\pi^{(1)} + nM_Q^{(1)}}{m_\pi}\frac{\rho^k}{k}
\;+\; \Xi(1-\rho^n) \rho^{k-1}.
\end{equation*}
\end{lemma}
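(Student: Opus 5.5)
The plan is to get an explicit closed form for the partial sum via Proposition \ref{prop: Sn identity} (in its parameter version \eqref{eq: analytic parameter Sn-identity}), and then differentiate it path by path. For $k\geq 1$, $S_n(t)$ has $k$-th coordinate
\[
\big(S_n(t)\big)_{s,k}
\;=\;
\sum_{\mathbf r\in C^n} c_{\mathbf r',s}(t)\,\Big(-\frac{(-f^{(n)}_{\mathbf r}[t](0))^k}{k}\Big),
\]
where $\mathbf r=(r_0,\dots,r_{n-1})$, $\mathbf r'$ denotes $\mathbf r$ viewed in $C^{n}$ (so $\mathbf r'\in C^{(n-1)+1}$), and $c_{\mathbf r',s}(t)=p_t((\mathbf r,s))/\pi_s(t)$. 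This holds because the coefficient in front of $v(\cdot\,;\cdot)$ in the definition \eqref{eq: analytic parameter definition of Sn} is exactly $p_t(\mathbf r)Q_{r_{n-1},s}(t)/\pi_s(t)$. The $0$-th entry of $v$ never enters, since $k\geq1$.

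Differentiating at $t_0$ by the product rule gives two terms. The first is $\sum_{\mathbf r}|c'_{\mathbf r',s}(t_0)|\cdot \rho^k/k$, using $|f^{(n)}_{\mathbf r}[t_0](0)|\leq\rho$, which holds because every $f_r[t_0]$ maps $[-1,1]$ into $[-\rho,\rho]$. Lemma \ref{lemma: bounding the derivative of path coefficient}, applied with path length $n-1$ (that is, $\mathbf r\in C^{(n-1)+1}$), bounds $\sum_{\mathbf r}|c'_{\mathbf r',s}(t_0)|$ by $(2M_\pi^{(1)}+nM_Q^{(1)})/m_\pi$. This yields the first summand of the claimed bound.

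The second term is
\[
\sum_{\mathbf r} c_{\mathbf r',s}(t_0)\,\big|f^{(n)}_{\mathbf r}[t_0](0)\big|^{k-1}\,\Big|\tfrac{d}{dt}\big|_{t=t_0} f^{(n)}_{\mathbf r}[t](0)\Big|,
\]
because the derivative of $-(-y)^k/k$ is $(-y)^{k-1}y'$. Here I use three facts: $|f^{(n)}_{\mathbf r}[t_0](0)|^{k-1}\leq\rho^{k-1}$; Lemma \ref{lemma: pathwise first order derivative bound} bounds the derivative by $\Xi(1-\rho^n)$; and the $c_{\mathbf r',s}(t_0)$ are nonnegative and sum to $1$ by \eqref{eq: path coefficient sums to 1}. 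Together these give $\Xi(1-\rho^n)\rho^{k-1}$.

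The main point requiring care is justifying that the left-hand side (a finite sum of entries of $\mathrm T(t)^\ell\boldsymbol v(t)$) equals this finite path expression for real $t$ near $t_0$, so that it may be differentiated termwise. This needs \eqref{eq: analytic parameter Sn-identity}, or the real-parameter version of Proposition \ref{prop: Sn identity}, to hold for all $t$ in a neighborhood. That identity holds on all of $\Omega\cap\mathbb R$, and each side is a finite sum of differentiable functions, so there is no convergence issue.

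A secondary issue is index bookkeeping: I must check that $S_n$ involves paths of length $n$ followed by the final state $s$, so that Lemma \ref{lemma: bounding the derivative of path coefficient} is applied with $n-1$ in place of $n$, which produces the factor $nM_Q^{(1)}$ rather than $(n+1)M_Q^{(1)}$.
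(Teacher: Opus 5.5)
Your proposal is correct and matches the paper's proof essentially step for step. The paper also rewrites the partial sum as $(S_n(t))_{s,k}$ via \eqref{eq: analytic parameter Sn-identity}, differentiates the finite path sum by the product rule, and bounds the two resulting pieces using Lemma \ref{lemma: bounding the derivative of path coefficient} (with $n-1$ in place of $n$, which gives the factor $nM_Q^{(1)}$), Lemma \ref{lemma: pathwise first order derivative bound}, and $\sum_{\mathbf r\in C^n} c_{\mathbf r,s}(t_0)=1$.
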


\begin{proof}
By equation \eqref{eq: analytic parameter Sn-identity},
\begin{align*}
&\left|
\left.\frac{d}{dt}\right|_{t=t_0}\;\sum_{\ell=0}^{n-1}\big(\mathrm T(t)^\ell \boldsymbol v(t) \big)_{s,k}
\right|
=
\left|
\left.\frac{d}{dt}\right|_{t=t_0}\big(S_n(t)\big)_{s,k}
\right|
=
\left|
- \left.\frac{d}{dt}\right|_{t=t_0} \sum_{\mathbf r\in C^n}c_{\mathbf r,s}(t)\frac{\big(-f_{\mathbf r}^{(n)}[t](0)\big)^k}{k}
\right| \\
&\leq
\sum_{\mathbf r\in C^n} \left| c_{\mathbf r,s}'(t_0) \right| \frac{\big|-f_{\mathbf r}^{(n)}[t_0](0)\big|^k}{k}
+\sum_{\mathbf r\in C^n}c_{\mathbf r,s}(t_0) \frac1k\, \left| \left.\frac{d}{dt}\right|_{t=t_0} \left(-f_{\mathbf r}^{(n)}[t](0)\right)^k \right|
\end{align*}
Note that $f_{\mathbf r}^{(n)}[t_0](0) \in [-\rho,\rho]$. By $\sum_{\mathbf r\in C^n} c_{\mathbf r,s}(t_0) = 1$, Lemma \ref{lemma: pathwise first order derivative bound} and Lemma \ref{lemma: bounding the derivative of path coefficient},

\[
\left|
\left.\frac{d}{dt}\right|_{t=t_0}\;\sum_{\ell=0}^{n-1}\big(\mathrm T(t)^\ell \boldsymbol v(t) \big)_{s,k}
\right|
\leq
\frac{2M_\pi^{(1)} + nM_Q^{(1)}}{m_\pi}\frac{\rho^k}{k}
\;+\; \Xi(1-\rho^n) \rho^{k-1}. \qedhere
\]
\end{proof}

\subsection{First-order derivative \RN 2: integral operators and final bounds} \label{First-order derivative II: integral operators and final bounds}

Let $J_1 = \Omega \cap \mathbb R$. For $r\in C$, $m\geq 2$, and $t\in J_1$, we define
$
\pazocal{L}_r(t),\; \pazocal{R}_{r,m}(t),\; \pazocal{K}_{r,m}(t) : \;
\pazocal{H}(\mathbb D_{\bar{\rho}}) \to \pazocal{H}(\mathbb D_{\bar{\rho}})
$
in the same way as in subsection \ref{subsection: Integral operators and telescoping}:
\begin{align*}
\big(\pazocal{L}_r(t) h\big)(w)
&\;\coloneqq\;
\frac{1}{2\pi i} \oint_{\partial \mathbb D_{\bar{\rho}}} \frac{1}{1-z}\, h\left( f_r[t]\!\left( \frac{w}{z} \right) \right)\,dz, \\[4pt]
\big(\pazocal{R}_{r,m}(t) h\big)(w)
&\;\coloneqq\;
\frac{1}{2\pi i} \oint_{\partial \mathbb D_{\bar{\rho}}} \frac{z^{m-1}}{1-z}\, h\left( f_r[t]\!\left( \frac{w}{z} \right) \right)\,dz, \\[4pt]
\pazocal{K}_{r,m}(t) h
&\;\coloneqq\;
\pazocal{L}_r(t) h - \pazocal{R}_{r,m}(t) h.
\end{align*}

For $m\geq 2$, let
\[
\bar{\rho}_m = \bar{\rho}^{\,m-1} K(\bar{\rho}),
\qquad \text{where} \qquad
K(\bar{\rho}) = \frac{1}{2\pi} \oint_{\partial \mathbb D_{\bar{\rho}}} \frac{|dz|}{|1-z|}.
\]
Also,
\begin{equation} \label{eq: definition of delta the first order bound}
\delta \coloneqq \frac{2M_f^{(1)}}{\bar{\rho}-\rho}.
\end{equation}
In what follows, we use the oscillation seminorm on $\pazocal{H}(\mathbb D_{\bar{\rho}})$ defined by
\[
\| h \|_{\mathrm{osc}}
\;\coloneqq\;
\sup_{u,v \in \mathbb D_{\bar{\rho}}} |h(u)-h(v)|.
\]
\begin{lemma} \label{lemma: first order derivative of LRK}
Suppose $h\in \pazocal H(\mathbb D_{\bar{\rho}})$ satisfies $h(0) = 0$ and $\|h\|_{\mathrm{osc}} < \infty$. Then for each $r\in C$,
\[
\left.\partial_t\right|_{t=t_0} \big( \pazocal L_r(t)h(0) \big)
= \left.\partial_t\right|_{t=t_0}\big( \pazocal R_{r,m}(t)h(0) \big)
= \left.\partial_t\right|_{t=t_0} \big( \pazocal K_{r,m}(t)h(0) \big)
=0.
\]
Furthermore,
\begin{align*}
&\left\| \left.\partial_t\right|_{t=t_0} \big( \pazocal L_r(t)h \big) \right\|_{\mathrm{osc}}
\leq \delta \|h\|_{\mathrm{osc}}, 
\qquad
\left\| \left.\partial_t\right|_{t=t_0} \big( \pazocal R_{r,m}(t)h \big) \right\|_{\mathrm{osc}}
\leq \delta \bar{\rho}_m \|h\|_{\mathrm{osc}},
\\
&\left\| \left.\partial_t\right|_{t=t_0} \big( \pazocal K_{r,m}(t)h \big) \right\|_{\mathrm{osc}}
\leq \delta (1+\bar{\rho}_m)\|h\|_{\mathrm{osc}}.
\end{align*}
\end{lemma}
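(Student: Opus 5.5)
The plan is to differentiate under the integral sign and reduce everything to a bound on $\partial_t h(f_r[t](w/z))$, controlled by Cauchy's estimate for $h'$ on the gap between the disks $\mathbb D_\rho$ and $\mathbb D_{\bar\rho}$.

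First, the vanishing at $0$. For every $t\in J_1$, the point $f_r[t](0/z)=f_r[t](0)$ does not depend on $z$. Hence $\pazocal L_r(t)h(0) = h(f_r[t](0))\cdot\frac{1}{2\pi i}\oint_{\partial\mathbb D_{\bar\rho}}\frac{dz}{1-z}=0$, since $1/(1-z)$ is holomorphic on a neighborhood of $\overline{\mathbb D_{\bar\rho}}$. The same argument with $z^{m-1}/(1-z)$ handles $\pazocal R_{r,m}(t)$. These functions of $t$ are identically zero, so their $t$-derivatives vanish, and by subtraction the same holds for $\pazocal K_{r,m}$.

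Next, the oscillation bounds.
\begin{itemize}
\item Fix $w\in\mathbb D_{\bar\rho}$ and $z\in\partial\mathbb D_{\bar\rho}$. Then $w/z\in\mathbb D_1$, and by (\(\Omega\)3) the image $f_r[t](w/z)$ lies in $\mathbb D_{\bar\rho}$ for $t$ near $t_0$. At $t=t_0$ it lies in $\closure{\mathbb D_\rho}$.
\item The integrand is smooth in $t$ and uniformly bounded on the compact contour, so we may differentiate under the integral:
\[
\partial_t|_{t_0}\big(\pazocal L_r(t)h\big)(w)=\frac{1}{2\pi i}\oint \frac{1}{1-z}\,h'\big(f_r[t_0](w/z)\big)\,\partial_t|_{t_0} f_r[t](w/z)\,dz .
\]
\item Replacing $h$ by $h-h(c)$ for a constant $c$ changes neither $h'$ nor $\|h\|_{\mathrm{osc}}$. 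Assume $\|h\|_{\mathrm{osc}}<\infty$; this gives $|h-h(c)|\le\|h\|_{\mathrm{osc}}$ on $\mathbb D_{\bar\rho}$. For $|\xi|\le\rho$, Cauchy's estimate on the disk of radius $\bar\rho-\rho$ about $\xi$ then gives $|h'(\xi)|\le \|h\|_{\mathrm{osc}}/(\bar\rho-\rho)$.
\item By the definition of $M_f^{(1)}$, $|\partial_t f_r[t](w/z)|\le M_f^{(1)}$ at $t_0$, since $w/z\in\closure{\mathbb D_1}$.
\end{itemize}

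For the $\pazocal L_r$ bound, I would not go through the contour at this step, since that would bring in a factor $K(\bar\rho)$. Instead I would use equation \eqref{eq: operator L identity}. Applying Lemma \ref{lemma: coboundary property} with radius $\bar\rho$ to $h\circ f_r[t]$, which is holomorphic on $\mathbb D_1$ for $t$ near $t_0$, gives $\pazocal L_r(t)h(w)=h(f_r[t](w))-h(f_r[t](0))$. Differentiating in $t$ yields
\[
\partial_t|_{t_0}\big(\pazocal L_r(t)h\big)(w)=h'\big(f_r[t_0](w)\big)\,\dot f(w)-h'\big(f_r[t_0](0)\big)\,\dot f(0),
\qquad \dot f(x):=\partial_t|_{t_0}f_r[t](x).
\]
Each of the two terms is bounded by $M_f^{(1)}\|h\|_{\mathrm{osc}}/(\bar\rho-\rho)$. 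So the oscillation of the difference is at most $4M_f^{(1)}\|h\|_{\mathrm{osc}}/(\bar\rho-\rho)$. The cruder estimate $2\sup|\cdot|$ loses a factor $2$ against $\delta$. To recover it, I would bound the oscillation directly:
\[
\big|\partial_t(\pazocal L_r h)(u)-\partial_t(\pazocal L_r h)(v)\big| = \big|h'(f(u))\dot f(u)-h'(f(v))\dot f(v)\big|\le 2M_f^{(1)}\|h\|_{\mathrm{osc}}/(\bar\rho-\rho)=\delta\|h\|_{\mathrm{osc}},
\]
because the $w$-independent terms cancel.

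For $\pazocal R_{r,m}$, I would use the contour formula directly. The $z$-independent part integrates to zero against $z^{m-1}/(1-z)$, so only the variation in $w$ matters. This gives
\[
\big|\partial_t(\pazocal R_{r,m}h)(u)-\partial_t(\pazocal R_{r,m}h)(v)\big|\le \frac{1}{2\pi}\oint\frac{\bar\rho^{\,m-1}|dz|}{|1-z|}\cdot 2\sup|h'\dot f| \le \bar\rho_m\,\delta\,\|h\|_{\mathrm{osc}} .
\]
The bound for $\pazocal K_{r,m}$ then follows by the triangle inequality.

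The main obstacle is justifying differentiation under the integral and the use of the coboundary identity at radius $\bar\rho$ uniformly for $t$ near $t_0$. Both rest on (\(\Omega\)3) together with compactness of $\partial\mathbb D_{\bar\rho}\times\{|t-t_0|\le\epsilon\}$. A further point to check is that the Cauchy estimate really is applied only at points of $\closure{\mathbb D_\rho}$: this holds because at $t=t_0$ all images $f_r[t_0](\cdot)$ lie in $\closure{\mathbb D_\rho}$, and this is exactly where the denominator $\bar\rho-\rho$ in $\delta$ comes from.
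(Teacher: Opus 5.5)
Your proposal is correct and follows essentially the same route as the paper. Both arguments use the coboundary identity for $\pazocal L_r(t)$, with the $w$-independent terms cancelling in the oscillation, and a Cauchy estimate for $h'$ on disks of radius $\bar\rho-\rho$ about points of $\mathbb D_\rho$ combined with the bound $M_f^{(1)}$. For $\pazocal R_{r,m}(t)$ both differentiate under the contour integral to obtain the factor $\bar\rho^{\,m-1}K(\bar\rho)=\bar\rho_m$, and both finish $\pazocal K_{r,m}(t)$ by the triangle inequality. The differences are cosmetic: you derive the vanishing at $0$ for every $t$ from the $z$-independence of the integrand, and you note that the hypothesis $h(0)=0$ can be replaced by subtracting a constant.
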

\begin{proof}
Suppose $h$ satisfies $h(0) = 0$ and $\|h\|_{\mathrm{osc}} < \infty$. Since
\[
\pazocal L_r(t)h(w)
\;=\;
h\big( f_r[t](w) \big) - h\big( f_r[t](0) \big),
\]
we have $\pazocal L_r(t)h(0) = 0$ for all $t\in J_1$. Thus $\left.\partial_t\right|_{t=t_0} \big( \pazocal L_r(t)h(0) \big) = 0$. Next,
\[
\partial_t \big( \pazocal L_r(t)h(w) \big)
=
h'\big( f_r[t](w) \big)\,\partial_t f_r[t](w)
-
h'\big( f_r[t](0) \big)\,\partial_t f_r[t](0).
\]
Since $f_r[t_0](\xi)\in \mathbb D_\rho$ for each $\xi\in \mathbb D_1$, the circle centered at $f_r[t_0](\xi)$ with radius $\bar{\rho} - \rho$ is contained in $\mathbb D_{\bar{\rho}}$. Thus, by Cauchy's derivative formula and $h(0)=0$, for every $\xi\in \mathbb D_1$,
\begin{equation} \label{eq: bound for derivative of h}
\left| h'\big( f_r[t_0](\xi) \big) \right|
\leq
\frac{1}{\bar{\rho}-\rho}\sup_{u\in \mathbb D_{\bar{\rho}}} |h(u)|
\leq
\frac{1}{\bar{\rho}-\rho}\|h\|_{\mathrm{osc}}.
\end{equation}
Then,
\begin{align*}
\left\|
\left.\partial_t\right|_{t=t_0} \big( \pazocal L_r(t)h \big)
\right\|_{\mathrm{osc}}
&\leq
\sup_{u,v\in \mathbb D_{\bar{\rho}}}
\left(
\Big| h'\big( f_r[t_0](u) \big)\!\left.\partial_t\right|_{t=t_0} f_r[t](u) \Big|
+
\Big| h'\big( f_r[t_0](v) \big)\!\left.\partial_t\right|_{t=t_0} f_r[t](v) \Big|
\right) \\
&\leq
2\frac{\|h\|_{\mathrm{osc}}}{\bar{\rho}-\rho}M_f^{(1)}
=\delta \|h\|_{\mathrm{osc}}.
\end{align*}

For $\pazocal R_{r,m}(t)$, since we can exchange the derivative and the integral, for any $w\in \mathbb D_{\bar{\rho}}$,
\[
\partial_t \big( \pazocal R_{r,m}(t)h(w) \big)
=
\frac{1}{2\pi i}
\oint_{\partial \mathbb D_{\bar{\rho}}}
\frac{z^{m-1}}{1-z}\,
h'\left( f_r[t]\!\left( \frac{w}{z} \right) \right)
\partial_t f_r[t]\!\left( \frac{w}{z} \right)\,dz.
\]
Taking $w=0$, we obtain $\left.\partial_t\right|_{t=t_0} \big( \pazocal R_{r,m}(t)h(0) \big)=0$ by Cauchy's integral theorem.

Now we evaluate the differentiated formula at $t=t_0$. Note that $w/z \in \mathbb D_1$ for every $w\in \mathbb D_{\bar{\rho}}$ and $z\in \partial \mathbb D_{\bar{\rho}}$. Therefore, using equation \eqref{eq: bound for derivative of h},
\begin{align*}
\left|
\left.\partial_t\right|_{t=t_0} \big( \pazocal R_{r,m}(t)h(w) \big)
\right|
&\leq
\frac{1}{2\pi} \oint_{\partial \mathbb D_{\bar{\rho}}} \frac{|z|^{m-1}}{|1-z|}\,
\left| h'\left( f_r[t_0]\!\left( \frac{w}{z} \right) \right) \right|
\left| \left.\partial_t\right|_{t=t_0} f_r[t]\!\left( \frac{w}{z} \right) \right|\,|dz| \\
&\hspace{-30pt}\leq
\frac{M_f^{(1)}}{\bar{\rho}-\rho}\, \|h\|_{\mathrm{osc}}\,
\frac{1}{2\pi} \oint_{\partial \mathbb D_{\bar{\rho}}} \frac{\bar{\rho}^{m-1}}{|1-z|}\,|dz|
=
\frac{M_f^{(1)}}{\bar{\rho}-\rho}\, \bar{\rho}^{m-1} K(\bar{\rho}) \|h\|_{\mathrm{osc}}
=
\frac{\delta}{2} \bar{\rho}_m \|h\|_{\mathrm{osc}}.
\end{align*}
This implies
$
\left\|
\left.\partial_t\right|_{t=t_0} \big( \pazocal R_{r,m}(t)h \big)
\right\|_{\mathrm{osc}}
\leq
\delta \bar{\rho}_m \|h\|_{\mathrm{osc}}
$. The assertion for $\pazocal K_{r,m}$ follows with the triangle inequality.
\end{proof}

\begin{lemma} \label{lemma: first order derivative of difference of LK compositions}
For $n\geq 1$ and a word $(r_1,\dots,r_n)\in C^n$, define
\[
\pazocal D_{r_1,\dots,r_n}^{(m)}(t)
\;=\;
\pazocal L_{r_1}(t)\cdots \pazocal L_{r_n}(t)
- \pazocal K_{r_1,m}(t)\cdots \pazocal K_{r_n,m}(t).
\]
Suppose $h\in \pazocal H(\mathbb D_{\bar{\rho}})$ satisfies $h(0) = 0$ and $\|h\|_{\mathrm{osc}} < \infty$. Then, for every $w \in \mathbb D_{\bar{\rho}}$,
\begin{equation*}
\left|
\left.\partial_t\right|_{t=t_0} \Big( \pazocal D_{r_1,\dots,r_n}^{(m)}(t)h(w) \Big)
\right|
\;\leq\;
n\,\delta\,\Big( (1+\bar{\rho}_m)^n - 1 \Big)\,\|h\|_{\mathrm{osc}}.
\end{equation*}
\end{lemma}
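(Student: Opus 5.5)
The plan is to differentiate the telescoping identity of Lemma \ref{lemma: telescoping of L and K}, which holds for each fixed $t\in J_1$ with $\bar\rho$ in place of $\rho$:
\[
\pazocal D_{r_1,\dots,r_n}^{(m)}(t)
=
\sum_{\ell=1}^n \pazocal L_{r_1}(t)\cdots \pazocal L_{r_{\ell-1}}(t)\, \pazocal R_{r_\ell,m}(t)\, \pazocal K_{r_{\ell+1},m}(t)\cdots \pazocal K_{r_n,m}(t).
\]
Each summand is an $n$-fold product, so the Leibniz rule splits $\partial_t$ at $t_0$ of the $\ell$-th summand applied to $h$ into $n$ terms. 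In each term exactly one factor is differentiated and the others are evaluated at $t_0$.

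\textbf{Tools.} First, every operator maps functions with finite oscillation to functions with finite oscillation vanishing at $0$, by \eqref{eq: LRKh0 is 0} (with $\bar\rho$ in place of $\rho$). So Lemma \ref{lemma: first order derivative of LRK} applies to each differentiated factor: its input vanishes at $0$, either because it is $h$ (where $h(0)=0$ is assumed) or because it is an output of an undifferentiated operator. Second, the differentiated factor's output $\partial_t(\cdot)$ also vanishes at $0$, by the first assertion of Lemma \ref{lemma: first order derivative of LRK}. Hence all subsequent undifferentiated factors act on functions vanishing at $0$. The oscillation bounds of Lemma \ref{lemma: basic properties of LRK}, with $\bar\rho$ and $\bar\rho_m$, then apply at $t_0$: $\pazocal L$ has norm at most $1$, $\pazocal R$ at most $\bar\rho_m$, and $\pazocal K$ at most $1+\bar\rho_m$. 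Note that Lemma \ref{lemma: basic properties of LRK} needs $f_r[t_0](\mathbb D_1)\subset\mathbb D_{\bar\rho}$, which holds. Finally, since the composite output vanishes at $0$, its absolute value at $w$ is bounded by its oscillation, as in the proof of Proposition \ref{prop: evaluating the truncation}.

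\textbf{Counting.} For the $\ell$-th summand, if the derivative falls on any factor, that factor contributes at most $\delta$ times the norm it would have undifferentiated. This uses Lemma \ref{lemma: first order derivative of LRK}: $\delta$ for $\pazocal L$, $\delta\bar\rho_m$ for $\pazocal R$, and $\delta(1+\bar\rho_m)$ for $\pazocal K$. Each of the $n$ terms is therefore bounded by $\delta\,\bar\rho_m(1+\bar\rho_m)^{n-\ell}\|h\|_{\mathrm{osc}}$. Summing the $n$ terms and then over $\ell$ gives
\[
n\delta\sum_{\ell=1}^n\bar\rho_m(1+\bar\rho_m)^{n-\ell}\|h\|_{\mathrm{osc}} = n\delta\big((1+\bar\rho_m)^n-1\big)\|h\|_{\mathrm{osc}}.
\]

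\textbf{Main obstacle.} The delicate step is justifying the Leibniz rule for $t$-dependent compositions of integral operators. One must show that $t\mapsto \pazocal X(t)g$ is differentiable in a sense compatible with composition, and that $\partial_t[\pazocal X(t)\pazocal Y(t)h]=(\partial_t\pazocal X)\pazocal Y h+\pazocal X(\partial_t \pazocal Y h)$. I would handle this by noting that everything is jointly holomorphic in $(t,w)$ for $t\in\Omega$ and $w\in\mathbb D_{\bar\rho}$, uniformly on compacts, which follows from \eqref{eq: analytic parameter complex image bound}. Then differentiation under the contour integral is justified by dominated convergence, and a difference-quotient argument with uniform bounds gives the product rule.

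A secondary check concerns Lemma \ref{lemma: first order derivative of LRK} when applied to an intermediate function $g$ produced by earlier operators. Its hypotheses are $g(0)=0$ and $\|g\|_{\mathrm{osc}}<\infty$, and both hold, which is exactly what was verified above.
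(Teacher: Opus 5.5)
Your proposal is correct and follows the paper's proof essentially step for step: you telescope $\pazocal D^{(m)}_{r_1,\dots,r_n}(t)$ into single insertions of $\pazocal R_{r_\ell,m}(t)$, apply the product rule to each $n$-fold composition so that the differentiated factor costs an extra $\delta$ by Lemma~\ref{lemma: first order derivative of LRK} (using that every intermediate function vanishes at $0$), bound each of the $n$ resulting terms by $\delta\bar\rho_m(1+\bar\rho_m)^{n-\ell}\|h\|_{\mathrm{osc}}$, and sum over $\ell$. The only difference is how the product rule is justified: the paper uses a telescoping difference-quotient identity, while you use joint holomorphy in $(t,w)$ and differentiation under the contour integral, which works equally well given $f_r[z]\big(\overline{\mathbb D_1}\big)\subset\mathbb D_{\bar\rho}$ on $\Omega$.
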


\begin{proof}
Fix $n\geq 1$ and a word $(r_1,\dots,r_n)\in C^n$. By Lemma \ref{lemma: telescoping of L and K}, for every $t\in J_1$,
\begin{equation*}
\pazocal D_{r_1,\dots,r_n}^{(m)}(t)
=
\sum_{\ell=1}^n \pazocal U_\ell(t),
\quad \text{where} \quad
\pazocal U_\ell(t)
=
\pazocal L_{r_1}(t)\cdots \pazocal L_{r_{\ell-1}}(t)\,
\pazocal R_{r_\ell, m}(t)\,
\pazocal K_{r_{\ell+1}, m}(t)\cdots \pazocal K_{r_n, m}(t).
\end{equation*}

It suffices to show that for every $1\leq \ell\leq n$ and $w \in \mathbb D_{\bar{\rho}}$,
\begin{equation} \label{eq: single U derivative bound}
\left|
\left.\partial_t\right|_{t=t_0} \big( \pazocal U_\ell(t) h(w) \big)
\right|
\leq
n\,\delta\,\bar{\rho}_m\,(1+\bar{\rho}_m)^{\,n-\ell}\,\|h\|_{\mathrm{osc}}.
\end{equation}
Indeed, summing the right-hand side over $\ell$ yields the desired bound.

We now fix $\ell\in\{1,\dots,n\}$. For $1\leq j\leq n$, define
\[
\pazocal B_j(t)
=
\begin{dcases}
\pazocal L_{r_j}(t) & \text{if } 1\leq j\leq \ell-1, \\
\pazocal R_{r_\ell,m}(t) & \text{if } j=\ell, \\
\pazocal K_{r_j,m}(t) & \text{if } \ell+1\leq j\leq n.
\end{dcases}
\]
Then $\pazocal U_\ell(t) = \pazocal B_1(t)\cdots \pazocal B_n(t)$. For each $j\in\{1,\dots,n\}$, set
\[
g_j
\;=\;
\pazocal B_{j+1}(t_0)\cdots \pazocal B_n(t_0)h,
\]
with the convention that $g_n=h$. Since each of the operators $\pazocal L_r(t_0)$, $\pazocal R_{r,m}(t_0)$, and $\pazocal K_{r,m}(t_0)$ maps functions vanishing at $0$ to functions vanishing at $0$, we have $g_j(0)=0$ for every $1\leq j\leq n$.

By the same telescoping argument of Lemma \ref{lemma: telescoping of L and K}, and by the definition of $g_j$,
\begin{align*}
\pazocal U_\ell(t)h - \pazocal U_\ell(t_0)h
&=
\sum_{j=1}^n
\pazocal B_1(t)\cdots \pazocal B_{j-1}(t)
\big( \pazocal B_j(t)g_j -\pazocal B_j(t_0)g_j \big)
\quad \text{for each } \; t\in J_1 \setminus \{t_0\}.
\end{align*}
We divide by $t-t_0$ and let $t\to t_0$. By the continuity and differentiability at $t_0$ of each $\pazocal B_i(t)u$ for any $u\in \pazocal H(\mathbb D_{\bar{\rho}})$, we obtain, for every $w\in \mathbb D_{\bar{\rho}}$,
\begin{align}
\left.\partial_t \right|_{t=t_0} \big( \pazocal U_\ell(t)h \big)(w)
&=
\sum_{j=1}^n
\left(
\pazocal B_1(t_0)\cdots \pazocal B_{j-1}(t_0)
	\left(
	\left.\partial_t \right|_{t=t_0} \big( \pazocal B_j(t)\,g_j \big)
	\right)
\right)(w) \nonumber \\
&=
\sum_{j=1}^n
\left(
\pazocal B_1(t_0)\cdots \pazocal B_{j-1}(t_0)
	\left(
	\left.\partial_t \right|_{t=t_0} \big( \pazocal B_j(t)\pazocal B_{j+1}(t_0)\cdots \pazocal B_n(t_0)h\big)
	\right)
\right)(w). \label{eq: derivative of U operator}
\end{align}

Now, by the same proof as in Lemma \ref{lemma: basic properties of LRK}, now on $\mathbb D_{\bar{\rho}}$, we have for $u\in \pazocal H(\mathbb D_{\bar{\rho}})$ and $t\in J_1$,
\[
\|\pazocal L_r(t)u\|_{\mathrm{osc}} \leq \|u\|_{\mathrm{osc}},
\qquad
\|\pazocal R_{r,m}(t)u\|_{\mathrm{osc}} \leq \bar{\rho}_m \|u\|_{\mathrm{osc}},
\qquad
\|\pazocal K_{r,m}(t)u\|_{\mathrm{osc}} \leq (1+\bar{\rho}_m)\|u\|_{\mathrm{osc}}.
\]
Note that every operator appearing above sends functions vanishing at $0$ to functions vanishing at $0$ by Lemma \ref{lemma: first order derivative of LRK}. Combining with the bound in Lemma \ref{lemma: first order derivative of LRK}, we see that wherever the derivative is at, it yields an extra $\delta$ compared to the usual bound above. Then, each term in the summand of equation \eqref{eq: derivative of U operator} is bounded by
\[
\delta\,\bar{\rho}_m\,(1+\bar{\rho}_m)^{\,n-\ell}\,\|h\|_{\mathrm{osc}}.
\]
Since there are $n$ such terms, we obtain equation \eqref{eq: single U derivative bound}.
\end{proof}

Let $\beta(0) = 0$, and for $n\geq 1$,
\[
\beta(n)
=
2\left(
\frac{2M_\pi^{(1)} + (n+1)M_Q^{(1)}}{m_\pi}
+
\frac{2n+1}{\bar{\rho}-\rho}M_f^{(1)}
\right).
\]

\begin{lemma} \label{lemma: first order derivative of difference of iterates}
Let $n\geq 0$, $m\geq 2$, and $s\in C$. For every $1\leq k\leq m-1$,
\begin{equation*}
\left| \left.\frac{d}{dt}\right|_{t=t_0}
\bigg(
\mathrm T(t)^n \boldsymbol v(t) - \mathrm T_{m\times m}(t)^n \boldsymbol v(t)
\bigg)_{\!\!s,k\,}
\right|
\;\leq\;
\beta(n) \,\Big( (1+\bar{\rho}_m)^n - 1 \Big)\frac{\bar{\rho}^k}{k}.
\end{equation*}
\end{lemma}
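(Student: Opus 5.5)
We differentiate the path expansions \eqref{eq: path expansion for Tn v} and \eqref{eq: path expansion for Tmn v} in $t$ at $t_0$. The case $n=0$ is trivial, since both sides vanish. For $n\geq 1$, write the $(s,k)$ entry of the difference as
\[
\sum_{\mathbf r=(r_0,\dots,r_n)\in C^{n+1}} c_{\mathbf r,s}(t)\,\big(\pazocal D^{(m)}_{r_1,\dots,r_n}(t)h^{(k)}\big)\big(f_{r_0}[t](0)\big),
\]
where $c_{\mathbf r,s}(t)=p_t((\mathbf r,s))/\pi_s(t)$ and $h^{(k)}(w)=-(-w)^k/k$. This representation follows from the integral representations of Proposition \ref{prop: integral representation for iterates}, applied now with contours $\partial\mathbb D_{\bar\rho}$ instead of $\partial\mathbb D_\rho$. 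That change is legitimate because, for real $t$ near $t_0$, $f_r[t](\mathbb D_1)\subset\mathbb D_{\bar\rho}$, so the whole proof of Section \ref{section: Evaluation and Polynomial-time Convergence} goes through with $\rho$ replaced by $\bar\rho$. Note that $h^{(k)}(0)=0$ and $\|h^{(k)}\|_{\mathrm{osc}}\leq 2\bar\rho^k/k$ on $\mathbb D_{\bar\rho}$.

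By the product rule, $\frac{d}{dt}$ at $t_0$ splits into three pieces.
\begin{enumerate}
\item The derivative falls on $c_{\mathbf r,s}$. The value $|\pazocal D^{(m)}(t_0)h^{(k)}(w)|\leq \|\cdot\|_{\mathrm{osc}}\leq \frac{2\bar\rho^k}{k}((1+\bar\rho_m)^n-1)$, by the telescoping bound \eqref{eq: bound for the difference of L and K compositions} on $\mathbb D_{\bar\rho}$. Lemma \ref{lemma: bounding the derivative of path coefficient} then contributes the factor $\frac{2M_\pi^{(1)}+(n+1)M_Q^{(1)}}{m_\pi}$.
\item The derivative falls on the operator $\pazocal D^{(m)}(t)$. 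Lemma \ref{lemma: first order derivative of difference of LK compositions} bounds this by $n\delta((1+\bar\rho_m)^n-1)\frac{2\bar\rho^k}{k}$. We then sum against $c_{\mathbf r,s}(t_0)$, which totals $1$ by \eqref{eq: path coefficient sums to 1}. With $\delta=2M_f^{(1)}/(\bar\rho-\rho)$ this gives $\frac{4nM_f^{(1)}}{\bar\rho-\rho}$.
\item The derivative falls on the evaluation point $f_{r_0}[t](0)$. This gives $(\pazocal D^{(m)}(t_0)h^{(k)})'(f_{r_0}[t_0](0))\cdot\partial_t f_{r_0}[t](0)$. The second factor is at most $M_f^{(1)}$. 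For the first factor, $g=\pazocal D^{(m)}(t_0)h^{(k)}$ is holomorphic on $\mathbb D_{\bar\rho}$ with $g(0)=0$ and $\sup|g|\leq\|g\|_{\mathrm{osc}}$. Since $f_{r_0}[t_0](0)\in[-\rho,\rho]$, Cauchy's estimate on a disc of radius $\bar\rho-\rho$ yields $|g'|\leq \|g\|_{\mathrm{osc}}/(\bar\rho-\rho)$, giving $\frac{M_f^{(1)}}{\bar\rho-\rho}\cdot\frac{2\bar\rho^k}{k}((1+\bar\rho_m)^n-1)$.
\end{enumerate}

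Summing the three pieces gives a bound of the form
\[
2\Big(\tfrac{2M_\pi^{(1)}+(n+1)M_Q^{(1)}}{m_\pi}+\tfrac{(2n+1)M_f^{(1)}}{\bar\rho-\rho}\Big)\big((1+\bar\rho_m)^n-1\big)\tfrac{\bar\rho^k}{k}=\beta(n)\big((1+\bar\rho_m)^n-1\big)\tfrac{\bar\rho^k}{k},
\]
as claimed, since the $2n$ and $1$ multiples of $M_f^{(1)}/(\bar\rho-\rho)$ combine exactly.

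The main obstacle is the bookkeeping in the first step: justifying the path/operator representation with $t$-dependent data, including $t$-dependent $\pi(t)$ and seed $\boldsymbol v(t)$, on the enlarged disc $\mathbb D_{\bar\rho}$. We must also check that differentiation in $t$ commutes with the contour integrals and the infinite sums. This should follow from the uniform holomorphy in $z\in\Omega$, by repeating Lemma \ref{lemma: exchange lemma for Tr} with uniform bounds.
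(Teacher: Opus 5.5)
Your proposal is correct and follows the paper's proof essentially step for step. It uses the same path expansion with contours on $\partial\mathbb D_{\bar\rho}$, the bound on derivatives of the path coefficients $c_{\mathbf r,s}$, the derivative bound for the telescoped $\pazocal L/\pazocal K$ compositions, and Cauchy's estimate on a disc of radius $\bar\rho-\rho$; the paper merely groups your pieces (2) and (3) into one chain-rule term for $t\mapsto\big(\pazocal D^{(m)}_{r_1,\dots,r_n}(t)h_k\big)\big(f_{r_0}[t](0)\big)$, and the constants combine into $\beta(n)$ exactly as you show.
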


\begin{proof}
The bound is immediate for $n=0$, so let $n\geq 1$. For $k\geq 1$, define $h_k\in \pazocal H(\mathbb D_{\bar{\rho}})$ by $h_k(w) = -\frac{(-w)^k}{k}$. Then $h_k(0)=0$, and $\|h_k\|_{\mathrm{osc}} \leq \frac{2{\bar{\rho}}^k}{k}$. The same path expansion and operator formulas used in the proof of Proposition \ref{prop: evaluating the truncation} apply verbatim for each fixed $t\in J_1$, and we have
\begin{equation*}
\bigg( 
\mathrm T(t)^n \boldsymbol v(t) - \mathrm T_{m\times m}(t)^n \boldsymbol v(t)
\bigg)_{s,k}
=
\sum_{\mathbf r=(r_0,\dots,r_n)\in C^{n+1}} c_{\mathbf r,s}(t)\,
\Big( \pazocal D_{r_1,\dots,r_n}^{(m)}(t) h_k \Big) \big( f_{r_0}[t](0) \big).
\end{equation*}

We differentiate this equation at $t=t_0$. There are two contributions.

\smallskip

\emph{(i) The derivative hits the path coefficient.} First note that by the same oscillation argument as in the proof of Proposition \ref{prop: evaluating the truncation}, now with $\bar{\rho}$, we obtain
\begin{equation} \label{eq: undifferentiated telescoping bound for D}
\left| \Big( \pazocal D_{r_1,\dots,r_n}^{(m)}(t_0) h_k \Big)(w) \right|
\leq
\Big( (1+\bar{\rho}_m)^n - 1 \Big)\frac{2{\bar{\rho}}^k}{k}
\quad \text{for } \; w\in \mathbb D_{\bar{\rho}}.
\end{equation}
Using Lemma \ref{lemma: bounding the derivative of path coefficient} and equation \eqref{eq: undifferentiated telescoping bound for D} yields
\begin{equation} \label{eq: the derivative hits path coefficient in lemma: first order derivative of difference of iterates}
\sum_{\mathbf r\in C^{n+1}} |c_{\mathbf r,s}'(t_0)|
\left|
\Big( \pazocal D_{r_1,\dots,r_n}^{(m)}(t_0) h_k \Big) \big( f_{r_0}[t_0](0) \big)
\right|
\leq
\frac{2M_\pi^{(1)} + (n+1)M_Q^{(1)}}{m_\pi}
\,\Big( (1+\bar{\rho}_m)^n - 1 \Big)\frac{2{\bar{\rho}}^k}{k}.
\end{equation}

\smallskip

\emph{(ii) The derivative hits the operator word.} First, let $\pazocal F(t,w) = \Big( \pazocal D_{r_1,\dots,r_n}^{(m)}(t) h_k \Big)(w)$. Then, 
\begin{align*}
&\left.\frac{d}{dt}\right|_{t=t_0} \Big( \pazocal D_{r_1,\dots,r_n}^{(m)}(t) h_k \Big) \big( f_{r_0}[t](0) \big)
\;=\; \left.\frac{d}{dt}\right|_{t=t_0}\pazocal F\big( t,f_{r_0}[t](0) \big) \\
%\;=\; \partial_1\pazocal F\big(t_0,f_{r_0}[t_0](0)\big) \;+\; \partial_2\pazocal F\big(t_0,f_{r_0}[t_0](0)\big) \times \left.\frac{d}{ds}\right|_{s=t_0} f_{r_0}[s](0)\\
&=\; \left.\frac{d}{dt}\right|_{t=t_0} \!\!\Big( \pazocal D_{r_1,\dots,r_n}^{(m)}(t) h_k \Big) \big( f_{r_0}[t_0](0) \big)
+
\left.\frac{d}{dw}\right|_{w=f_{r_0}[t_0](0)} \!\!\!\Big( \pazocal D_{r_1,\dots,r_n}^{(m)}(t_0) h_k \Big)(w)\left.\frac{d}{dt}\right|_{t=t_0} f_{r_0}[t](0).
\end{align*}

Since $f_{r_0}[t_0](0)\in \mathbb D_\rho$, the circle centered at $f_{r_0}[t_0](0)$ with radius $\bar{\rho} - \rho$ is contained in $\mathbb D_{\bar{\rho}}$. Thus, by Cauchy's derivative formula,
\[
\left|
\left.\frac{d}{dw}\right|_{w=f_{r_0}[t_0](0)} \!\!\!\Big( \pazocal D_{r_1,\dots,r_n}^{(m)}(t_0) h_k \Big)(w)
\right|
\leq
\frac{1}{\bar{\rho}-\rho} \sup_{w\in \mathbb D_{\bar{\rho}}}
\left| \Big( \pazocal D_{r_1,\dots,r_n}^{(m)}(t_0) h_k \Big)(w) \right|
\]
Combining with equation \eqref{eq: undifferentiated telescoping bound for D} and Lemma \ref{lemma: first order derivative of difference of LK compositions}, we obtain
\begin{align*}
\left|
\left.\frac{d}{dt}\right|_{t=t_0} \Big( \pazocal D_{r_1,\dots,r_n}^{(m)}(t) h_k \Big) \big( f_{r_0}[t](0) \big)
\right|
\leq
\left( n\,\delta + \frac{M_f^{(1)}}{\bar{\rho}-\rho} \right) \Big( (1+\bar{\rho}_m)^n - 1 \Big) \frac{2{\bar{\rho}}^k}{k}.
\end{align*}
This and equation \eqref{eq: the derivative hits path coefficient in lemma: first order derivative of difference of iterates} yield the desired inequality, using the definition of $\delta$ in equation \eqref{eq: definition of delta the first order bound} and $\sum c_{\mathbf r, s}(t_0) = 1$ by equation \eqref{eq: path coefficient sums to 1}.
\end{proof}

\bigskip

We are now ready to prove Theorem \ref{theorem: polynomial time calculation of every derivative} for the first derivative while obtaining a bound that does not depend on the radius of the complex disk centered at $t_0$ and contained in $\Omega$.

\begin{proof}[Proof of Theorem \ref{theorem: polynomial time calculation of every derivative} for $q=1$]
Let $n,m \geq 2$. First, let us bound $\Lambda_n'(t_0) - \Lambda_{n,m}'(t_0)$.
\begin{flalign*}
\Lambda_n'(t_0) - \Lambda_{n,m}'(t_0)
&=
\frac1d \left.\frac{d}{dt}\right|_{t=t_0} \;\sum_{\ell=0}^{n-1}
\left\{ \sum_{s\in C} \pi_s(t) \left(\mathrm T(t)^\ell \boldsymbol v(t)-\mathrm T_{m\times m}(t)^\ell \boldsymbol v(t)\right)_{\!\!s,0} \right\} &\\
&=
\frac1d \Bigg( \;\; \underbrace{
\sum_{\ell=0}^{n-1}
\left\{
\sum_{s\in C} \pi_s'(t_0)
\left(\mathrm T(t_0)^\ell \boldsymbol v(t_0)-\mathrm T_{m\times m}(t_0)^\ell \boldsymbol v(t_0)\right)_{\!\!s,0}
\right\}
}_{=(\RN 1)}\\
&\hspace{80pt}+
\underbrace{
\sum_{s\in C} \pi_s(t_0) \left.\frac{d}{dt}\right|_{t=t_0} \sum_{\ell=0}^{n-1} \left(\mathrm T(t)^\ell \boldsymbol v(t)-\mathrm T_{m\times m}(t)^\ell \boldsymbol v(t)\right)_{\!\!s,0}
}_{=(\RN 2)} \;\; \Bigg).
\end{flalign*}

Let us start with the term $(\RN 1)$. For each $s\in C$ and $1\leq \ell \leq n-1$, the following bound is obtained with the same argument as Proposition \ref{prop: evaluating the truncation}.

\begin{align*}
&\left| \left(\mathrm T(t_0)^\ell \boldsymbol v(t_0)-\mathrm T_{m\times m}(t_0)^\ell \boldsymbol v(t_0)\right)_{\!\!s,0} \right| \\
&\leq
\sum_{r\in C} c_{r,s}(t_0)
\Bigg\{
\sum_{k=1}^{m-1}\left|f_r^\top[t_0](0)\right|^k
\left| \left(\mathrm T(t_0)^{\ell-1} \boldsymbol v(t_0)-\mathrm T_{m\times m}(t_0)^{\ell-1} \boldsymbol v(t_0)\right)_{\!\!r,\,k} \right|
\\
&\hspace{220pt}+
\sum_{k=m}^\infty \left|f_r^\top[t_0](0)\right|^k
\left| \left(\mathrm T(t_0)^{\ell-1} \boldsymbol v(t_0)\right)_{\!\!r,\,k} \right|
\Bigg\}
\\
&\leq
\sum_{k=1}^{m-1}D^k \frac{2\bar{\rho}^k}{k}\,\Big( (1+\bar{\rho}_m)^{\ell-1} - 1 \Big) + \sum_{k=m}^\infty D^k\frac{2\rho^k}{k} \\
&\leq
2\left(\log\frac{1}{1-\bar{\rho} D}\right)\,\Big( (1+\bar{\rho}_m)^{\ell-1} - 1 \Big) + \frac{2(\rho D)^m}{m(1-\rho D)}.
\end{align*}

Now, note that we have $\sum_{s\in C}\left|\pi_s'(t_0)\right| \leq M_{\Sigma\pi}^{(1)}$. Thus, we have
\begin{align*}
|(\RN 1)|
\leq
M_{\Sigma\pi}^{(1)} \left\{ 2\left(\log\frac{1}{1-\bar{\rho} D}\right)\bar{\alpha}_{n,m} + \frac{2(n-1)(\rho D)^m}{m(1-\rho D)} \right\},
\end{align*}
where
\[
\bar{\alpha}_{n,m}
=
\sum_{\ell=1}^{n-1}\Big( (1+\bar{\rho}_m)^{\ell-1} - 1 \Big)
=
\frac{(1+\bar{\rho}_m)^{n-1} - 1 - (n-1)\bar{\rho}_m}{\bar{\rho}_m}.
\]

\medskip

We move on to the term $(\RN 2)$. First,
\begin{align*}
&\left| \left.\frac{d}{dt}\right|_{t=t_0} \sum_{\ell=0}^{n-1} \left(\mathrm T(t)^\ell \boldsymbol v(t)-\mathrm T_{m\times m}(t)^\ell \boldsymbol v(t)\right)_{\!\!s,0} \right| \\
&\leq
\underbrace{
\sum_{\ell=1}^{n-1}
\left|
\left.\frac{d}{dt}\right|_{t=t_0}
\left\{
\sum_{r\in C} c_{r,s}(t)
\sum_{k=1}^{m-1} \big(f_r^\top[t](0)\big)^k
\left(\mathrm T(t)^{\ell-1} \boldsymbol v(t)-\mathrm T_{m\times m}(t)^{\ell-1} \boldsymbol v(t)\right)_{\!\!r,\,k}
\right\} \right|
}_{(\RN 3)}\\
&\hspace{120pt}+
\underbrace{
\left|
\left.\frac{d}{dt}\right|_{t=t_0}
\sum_{r\in C} c_{r,s}(t)
\sum_{k=m}^\infty \big(f_r^\top[t](0)\big)^k
\sum_{\ell=1}^{n-1}
\left(\mathrm T(t)^{\ell-1} \boldsymbol v(t)\right)_{\!\!r,\,k}
\right|
}_{(\RN 4)}.
\end{align*}
For the term $(\RN 3)$, there are three kinds of terms.

\smallskip

\emph{(i) The derivative hits the path coefficient $c_{r,s}(t)$.} Note that $\rho_m \leq \bar{\rho}_m$. Using Lemma \ref{lemma: bounding the derivative of path coefficient}, such terms are bounded from above by
\[
\sum_{\ell=1}^{n-1}
\frac{2M_\pi^{(1)} + M_Q^{(1)}}{m_\pi}
\sum_{k=1}^{m-1} D^k \frac{2\rho^k}{k}\,\Big( (1+\rho_m)^{\ell-1} - 1 \Big)
\leq
\frac{2M_\pi^{(1)} + M_Q^{(1)}}{m_\pi} \times 2\left(\log\frac{1}{1-\rho D}\right) \bar{\alpha}_{n,m}.
\]

\emph{(ii) The derivative hits $\big(f_r^\top[t](0)\big)^k$.} Such terms are bounded by
\[
\sum_{\ell=1}^{n-1} \sum_{k=1}^{m-1} kD^{k-1} M_\top^{(1)} \frac{2\rho^k}{k}\,\Big( (1+\rho_m)^{\ell-1} - 1 \Big)
\leq
\frac{2\rho M_\top^{(1)}\bar{\alpha}_{n,m}}{1-\rho D}.
\]

\emph{(iii) The derivative hits the difference of full and truncated operators.} Note that $\beta(\ell)$ is increasing in $\ell$. By Lemma \ref{lemma: first order derivative of difference of iterates}, such terms are bounded by
\[
\sum_{\ell=1}^{n-1} \sum_{k=1}^{m-1} D^k \beta(\ell-1) \,\Big( (1+\bar{\rho}_m)^{\ell-1} - 1 \Big)\frac{\bar{\rho}^k}{k}
\leq
\left(\log\frac{1}{1-\bar{\rho} D}\right) \beta(n-2) \bar{\alpha}_{n,m}.
\]

Therefore, the term $(\RN 3)$ is bounded as
\begin{align*}
\big| (\RN 3) \big|
\leq
\left\{
\left(
\beta(n-2) + \frac{4M_\pi^{(1)} + 2M_Q^{(1)}}{m_\pi}
\right)
\left(\log\frac{1}{1-\bar{\rho} D}\right)
+ \frac{2\rho M_\top^{(1)}}{1-\rho D}
\right\} \bar{\alpha}_{n,m}.
\end{align*}

\bigskip

Let us next bound the term $(\RN 4)$. First note that, by equation \eqref{eq: analytic parameter Sn-identity},
\[
\sum_{\ell=1}^{n-1}
\left(\mathrm T(t)^{\ell-1} \boldsymbol v(t)\right)_{\!\!r,\,k}
=
\left( S_{n-1}(t) \right)_{r,\,k}.
\]
By the definition of $S_n(t)$ in equation \eqref{eq: analytic parameter definition of Sn},
\[
\left| (S_{n-1}(t_0))_{r,\,k}\right| \leq \frac{\rho^k}{k}.
\]

\emph{(i) The derivative hits the path coefficient $c_{r,s}(t)$.} Using Lemma \ref{lemma: bounding the derivative of path coefficient}, such terms are bounded from above by
\[
\frac{2M_\pi^{(1)} + M_Q^{(1)}}{m_\pi} \sum_{k=m}^\infty D^k \frac{\rho^k}{k}
\;\leq\;
\frac{2M_\pi^{(1)} + M_Q^{(1)}}{m_\pi} \frac{(\rho D)^m}{m(1-\rho D)}.
\]

\emph{(ii) The derivative hits $\big(f_r^\top(t)(0)\big)^k$.} Such terms are bounded by
\[
\sum_{k=m}^\infty kD^{k-1} M_\top^{(1)} \frac{\rho^k}{k}
\;\leq\;
\frac{\rho M_\top^{(1)}(\rho D)^{m-1}}{1-\rho D}.
\]

\emph{(iii) The derivative hits the partial sum of the genuine operator $T(t)$.} By Lemma \ref{lemma: first order derivative of full operator}, such terms are bounded by
\begin{align*}
&\sum_{k=m}^\infty D^k
\left(
\frac{2M_\pi^{(1)} + (n-1)M_Q^{(1)}}{m_\pi} \frac{\rho^k}{k}
+ \rho^{k-1} \frac{M_f^{(1)}}{1-\rho^2}\, \frac{1-\rho^{n-1}}{1-\rho}
\right) \\
&\leq\;
\left\{
\frac{2M_\pi^{(1)} + (n-1)M_Q^{(1)}}{m_\pi\cdot m}
\;+\; \frac{M_f^{(1)}(1-\rho^{n-1})}{\rho(1-\rho^2)(1-\rho)}
\right\} \frac{(\rho D)^m}{1-\rho D}.
\end{align*}

We can also see that the derivative series is uniformly summable around $t_0$ by the Weierstrass M-test, so termwise differentiation is allowed. Combining these bounds, the term $(\RN 4)$ is bounded as
\[
\big|(\RN 4)\big|
\leq
\left(
\frac{\rho D}{m} \cdot\frac{4M_\pi^{(1)} + n\,M_Q^{(1)}}{m_\pi} + \rho M_\top^{(1)} + \frac{M_f^{(1)}D(1-\rho^{n-1})}{(1-\rho^2)(1-\rho)} 
\right)
\frac{(\rho D)^{m-1}}{1-\rho D}.
\]

Therefore,
\begin{align*}
&\left| \Lambda_n'(t_0) - \Lambda_{n,m}'(t_0) \right|
\;\leq\;
\frac1d \Big( \big|(\RN 1)\big| + \big|(\RN 2)\big| \Big)
\;\leq\;
\frac1d \Big( \big|(\RN 1)\big| + \big|(\RN 3)\big| + \big|(\RN 4)\big| \Big) \\
&\leq
\frac1d \left\{
\left(
\beta(n-2) + 2M_{\Sigma\pi}^{(1)} + \frac{4M_\pi^{(1)} + 2M_Q^{(1)}}{m_\pi}
\right)
\left(\log\frac{1}{1-\bar{\rho}D}\right) + \frac{2\rho M_\top^{(1)}}{1-\rho D}
\right\} \bar{\alpha}_{n,m} \\
&\hspace{20pt}+
\frac1d \left\{
\frac{\rho D}{m}
\left(
2(n-1)M_{\Sigma\pi}^{(1)} + \frac{4M_\pi^{(1)} + n\,M_Q^{(1)}}{m_\pi}
\right)
+\rho M_\top^{(1)} + \frac{M_f^{(1)}D(1-\rho^{n-1})}{(1-\rho^2)(1-\rho)} 
\right\}
\frac{(\rho D)^{m-1}}{1-\rho D}.
\end{align*}

Thus, by the triangle inequality and Lemma \ref{lemma: derivative tail bound},

\begin{align*}
&\left|
\lambda'(t_0) - \Lambda_{n,m}'(t_0)
\right|
\;\leq\;
\left| \lambda'(t_0) - \Lambda_n'(t_0) \right|
+
\left| \Lambda_n'(t_0) - \Lambda_{n,m}'(t_0) \right| \\
&\leq\;
\frac1d \left\{
\left( \frac{n}{1-\rho}+\frac{\rho}{(1-\rho)^2} \right)A_{\mathrm{tail}}
+ \frac{B_{\mathrm{tail}}}{1-\rho}
\right\} \rho^{n-1} \\
&\hspace{19pt}+
\frac1d \left\{
\left(
\beta(n-2) + 2M_{\Sigma\pi}^{(1)} + \frac{4M_\pi^{(1)} + 2M_Q^{(1)}}{m_\pi}
\right)
\left(\log\frac{1}{1-\bar{\rho}D}\right) + \frac{2\rho M_\top^{(1)}}{1-\rho D}
\right\} \bar{\alpha}_{n,m} \\
&\hspace{19pt}+
\frac1d \left\{
\frac{\rho D}{m}
\left(
2(n-1)M_{\Sigma\pi}^{(1)} + \frac{4M_\pi^{(1)} + n\,M_Q^{(1)}}{m_\pi}
\right)
+\rho M_\top^{(1)} + \frac{M_f^{(1)}D(1-\rho^{n-1})}{(1-\rho^2)(1-\rho)} 
\right\}
\frac{(\rho D)^{m-1}}{1-\rho D}.
\end{align*}

Notice that every term in the braces $\{ \}$ is at most polynomial in $n$. Then, by the same argument as in the proof of Theorem \ref{thm: complexity statement}, the above bound is less than arbitrary $\vep>0$ for some choices of $n=O\big(\log(1/\vep)\big)$ and $m=O\big(\log(n/\vep)\big)$. Since $\Lambda_{n,m}'(t_0)$ can be approximated with $O(m^3)+O(n\,m^2)$ arithmetic operations using equation \eqref{eq: inductive identity of finite approximation of first derivative of lambda}, we conclude that, for a given $\vep>0$, one can compute $\lambda'(t_0)$ to error $\vep$ in $O\big( (\log(1/\vep))^3 \big)$ arithmetic operations.
\end{proof}

\subsection{Higher-order derivatives} \label{subsection: Higher-order derivatives}

Recall that we are working under the hypotheses and notation of Theorem \ref{thm: analytic dependence on parameter} and Remark \ref{rmk: condition on domain of holomorphic extension}. Fix $c>0$ such that $U = \{z\in \mathbb C \,:\, |z-t_0|\leq c\} \subset \Omega$. Let
\begin{gather*}
\overline Q
\coloneqq
\max \Big\{ 1, \; 
\max_{r\in C} \max_{z\in U} \sum_{r'\in C} \big| Q_{r,r'}(z) \big| \Big\},
\qquad
\overline D
\coloneqq
\max_{r\in C} \max_{z\in U} \big| f_r^\top[z](0) \big|,
\\
\overline M_\ell
\;\coloneqq\;\;
\max_{r\in C}\!\!\max_{(z,w)\in U\times \closure{\mathbb D_{\bar{\rho}}}}
\big| \partial_2 \ell_r(z,w) \big|,
\qquad
\overline M_{\Sigma\pi}
\coloneqq
\max_{z\in U} \sum_{r\in C} \big| \pi_r(z) \big|,
\qquad
\overline m_\pi
\coloneqq
\min_{r\in C} \min_{z\in U}
|\pi_r(z)|.
\end{gather*}
We have $\overline D < 1$ by the assumption that $f_r[z]\big( \overline{\mathbb D_1} \big)\subset \mathbb D_{\bar \rho}$ for every $r\in C$ and $z\in \Omega$ and Lemma \ref{lemma: Mobius contraction implies transpose contraction}.

\begin{proposition} \label{prop: precise bounds for higher order derivatives}
Let $q\geq 1$. Under the setting above, for any natural numbers $n,m \geq 2$,
\begin{align*}
&\Big| \lambda^{\!\!(q)}(t_0) - \Lambda^{(q)}_{n,m}(t_0) \Big| \\
&\hspace{20pt}\leq
\frac{q!\overline Q^n}{c^q d} 
\Bigg\{
\frac{\overline M_{\Sigma\pi} \overline M_\ell \; \mathrm{artanh}(\bar{\rho})}{\bar{\rho}\big(1-\bar{\rho}\overline Q\big)} \bar{\rho}^n
+
\frac{2\overline M_{\Sigma\pi}^2}{\overline m_\pi} \left(
\bar{\alpha}_{n,m} \log \frac{1}{1-\bar{\rho} \overline D}
+ \frac{(n-1)\bar{\rho}^m \overline D^m}{m(1-\bar{\rho}\overline D)}
\right)
\Bigg\}.
\end{align*}
Here,
\[
\bar{\alpha}_{n,m}
=
\frac{(1+\bar{\rho}_m)^{n-1} - 1 - (n-1)\bar{\rho}_m}{\bar{\rho}_m},
\qquad
\bar{\rho}_m = \bar{\rho}^{m-1} K(\bar{\rho}).
\]
In particular, one can compute $\lambda^{\!\!(q)}(t_0)$ to error $\vep > 0$ with $O\big( (\log(1/\vep))^3 \big)$ arithmetic operations for each fixed $q$.
\end{proposition}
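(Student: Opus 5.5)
The plan is to reduce everything to uniform bounds on the circle $\partial U$ and then apply Cauchy's integral formula. Both $\lambda(z)=\frac1d\sum_{\ell\ge0}a_\ell(z)$ (Remark \ref{rmk: condition on domain of holomorphic extension}) and $\Lambda_{n,m}(z)$ are holomorphic on $\Omega\supset U$. The latter is a finite sum of products of holomorphic entries: $\pi(z)$, $Q(z)|_C$, the entries of $T_r(z)$, which are polynomials in $f_r^\top[z](0)$, $f_r[z](0)$, $f_r'[z](0)$, and $\boldsymbol v(z)$. Hence
\[
\big|\lambda^{(q)}(t_0)-\Lambda^{(q)}_{n,m}(t_0)\big|\le \frac{q!}{c^q}\max_{|z-t_0|=c}|\lambda(z)-\Lambda_{n,m}(z)|,
\]
and it suffices to bound $|\lambda(z)-\Lambda_{n,m}(z)|$ uniformly for $z\in U$ by $\frac{\overline Q^n}{d}\{\cdots\}$. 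As in Section \ref{section: Evaluation and Polynomial-time Convergence}, I split this into a tail term $\frac1d\sum_{\ell\ge n}a_\ell(z)$ and a truncation term $\frac1d\sum_{\ell<n}[\mathrm T(z)^\ell\boldsymbol v(z)-\mathrm T_{m\times m}(z)^\ell\boldsymbol v(z)]_{\boldsymbol 0,z}$.

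\textbf{Tail term.} I reuse estimate \eqref{eq: bound on anz} with $K=U$:
\[
|a_\ell(z)|\le \frac{\overline M_{\Sigma\pi}\overline M_\ell\,\mathrm{artanh}(\bar\rho)}{\bar\rho}(\bar\rho\overline Q)^\ell .
\]
Summing over $\ell\ge n$ gives $\frac{\overline M_{\Sigma\pi}\overline M_\ell\,\mathrm{artanh}(\bar\rho)}{\bar\rho(1-\bar\rho\overline Q)}\bar\rho^n\,\overline Q^{\,n}$, which is the first term. Condition (\(\Omega\)5) guarantees $\bar\rho\overline Q<1$.

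\textbf{Truncation term.} I redo Proposition \ref{prop: evaluating the truncation} with complex weights. The path expansions \eqref{eq: path expansion for Tn v}--\eqref{eq: path expansion for Tmn v} still hold, now with coefficients $p_z(\mathbf r)Q_{r_n,s}(z)/\pi_s(z)$. The operator identities \eqref{eq: operator formula for full iterate}--\eqref{eq: operator formula for truncated iterate} are obtained by the same proofs on the disk $\mathbb D_{\bar\rho}$, using (\(\Omega\)3) and Lemmas \ref{lemma: real contraction implies complex contraction}, \ref{lemma: small image implies contraction} and \ref{lemma: coboundary property}. The integrals of Proposition \ref{prop: integral representation for iterates} are then taken over $\partial\mathbb D_{\bar\rho}$, and the transpose condition $|f_r^\top[z](0)|\le\overline D<1$ follows from Lemma \ref{lemma: Mobius contraction implies transpose contraction}. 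The oscillation estimates of Lemma \ref{lemma: basic properties of LRK} and the telescoping Lemma \ref{lemma: telescoping of L and K} are purely operator-theoretic, so the pathwise bound \eqref{eq: bound for the difference of L and K compositions} holds with $\bar\rho,\bar\rho_m$ in place of $\rho,\rho_m$.

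The only change is in the coefficients. Their sum is no longer $1$; instead
\[
\sum_{\mathbf r}\big|p_z(\mathbf r)Q_{r_n,s}(z)\big|/|\pi_s(z)|\le \overline M_{\Sigma\pi}\overline Q^{\,n+1}/\overline m_\pi .
\]
This gives the coordinate bounds
\[
|(\mathrm T^\ell\boldsymbol v-\mathrm T^\ell_{m\times m}\boldsymbol v)_{s,k}|\lesssim \frac{\overline M_{\Sigma\pi}\overline Q^{\ell}}{\overline m_\pi}\frac{2\bar\rho^k}{k}\big((1+\bar\rho_m)^{\ell}-1\big)
\]
and $|(\mathrm T^\ell\boldsymbol v)_{s,k}|\lesssim \frac{\overline M_{\Sigma\pi}\overline Q^\ell}{\overline m_\pi}\frac{2\bar\rho^k}{k}$. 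Next I feed these into the zeroth-coordinate recursion \eqref{eq: zero coordinate recursion}. There the factor $|f_r^\top[z](0)|^k\le\overline D^k$ produces the sums $\sum_k\frac{(\bar\rho\overline D)^k}{k}$, that is, $\log\frac1{1-\bar\rho\overline D}$ and $\frac{(\bar\rho\overline D)^m}{m(1-\bar\rho\overline D)}$. Finally I weight by $|\pi_s(z)|$ and sum over $s$, which gives an extra factor $\overline M_{\Sigma\pi}$, and over $\ell<n$, which gives $\bar\alpha_{n,m}$ and $(n-1)$. Since $\overline Q\ge1$, every power $\overline Q^\ell$ with $\ell\le n$ can be bounded by $\overline Q^n$; this explains why $\overline Q$ is defined with a max against $1$. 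The result is the second term, $\frac{2\overline M_{\Sigma\pi}^2}{\overline m_\pi}(\cdots)\overline Q^n$.

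\textbf{Main obstacle and complexity.} The delicate step is the bookkeeping of the complex weights so that exactly one factor $\overline M_{\Sigma\pi}/\overline m_\pi$ from the normalisation by $\pi_s$ and one factor $\overline M_{\Sigma\pi}$ from $[\cdot]_{\boldsymbol 0,z}$ appear, while the remaining products of $|Q|$ telescope into $\overline Q^n$. One also has to check that all estimates on $\mathbb D_{\bar\rho}$ hold uniformly in $z\in U$, and for this I will rely on (\(\Omega\)3)--(\(\Omega\)4).

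For the complexity claim, note that $\overline Q^n\bar\rho^n=(\bar\rho\overline Q)^n$ decays geometrically. In the second term, the factor $\overline Q^n$ is polynomial in $1/\vep$ when $n=O(\log(1/\vep))$. It is absorbed by taking $m=O(\log(1/\vep))$ large enough that $\bar\rho_m$ and $\bar\rho^m$ are smaller than $\vep\,\overline Q^{-n}$ times polynomial factors, exactly as in Theorem \ref{thm: complexity statement}.

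Computing $\Lambda_{n,m}^{(q)}(t_0)$ through \eqref{eq: inductive identity of finite approximation of general derivatives of lambda} costs $O(m^3+nm^2)$ operations for fixed $q$, since the Taylor data of the matrix entries up to order $q$ are finitely many. This gives $O((\log(1/\vep))^3)$ arithmetic operations.
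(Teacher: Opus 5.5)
Your proposal is correct and follows essentially the same route as the paper: you use Cauchy's estimate on $|z-t_0|=c$, the tail bound \eqref{eq: bound on anz} with $K=U$, and a rerun of Proposition \ref{prop: evaluating the truncation} on $\mathbb D_{\bar\rho}$ with complex path weights, whose absolute sums are $\sum_{\mathbf r}|c_{\mathbf r,s}(z)|\le \overline M_{\Sigma\pi}\overline Q^{\,\ell+1}/\overline m_\pi$. One slip: your coordinate bound for $\mathrm T(z)^\ell\boldsymbol v(z)$ should carry $\overline Q^{\,\ell+1}$ rather than $\overline Q^{\,\ell}$, but after the zeroth-coordinate recursion the largest power that appears is still $\overline Q^{\,n}$, so the stated bound is unaffected.
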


\begin{proof}
Since $\lambda(z) - \Lambda_{n,m}(z)$ is holomorphic on $\Omega$, by Cauchy's integral formula,
\begin{align*}
\lambda^{\!\!(q)}(t_0) - \Lambda^{(q)}_{n,m}(t_0)
=
\frac{q!}{2\pi i} \oint_{|z-t_0|=c}
\frac{1}{(z-t_0)^{q+1}} \Big( \lambda(z) - \Lambda_{n,m}(z) \Big) dz.
\end{align*}
Then,
\begin{equation} \label{eq: derivative lyapunov exponent is bounded by the original by analyticity}
\Big| \lambda^{\!\!(q)}(t_0) - \Lambda^{(q)}_{n,m}(t_0) \Big|
\leq
\frac{q!}{c^q} \max_{z\in U} \Big| \lambda(z) - \Lambda_{n,m}(z) \Big|.
\end{equation}

Now, by equation \eqref{eq: bound on anz}, for any $z\in U$
\[
|a_\ell(z)|
=
\left|
\Big[ \mathrm T(z)^\ell \boldsymbol v(z) \Big]_{\boldsymbol 0, \,z}
\right|
\leq
\frac{\overline M_{\Sigma\pi} \overline M_\ell \; \mathrm{artanh}(\bar{\rho})}{\bar{\rho}}\,
\big(\bar{\rho}\overline Q\big)^\ell.
\]
Letting $\Lambda_n(z) = \frac1d \sum_{\ell=0}^{n-1} a_\ell(z)$,
\begin{equation} \label{eq: crude tail estimate}
\Big| \lambda(z) - \Lambda_n(z) \Big|
\leq
\frac1d \cdot \frac{\overline M_{\Sigma\pi} \overline M_\ell \; \mathrm{artanh}(\bar{\rho})}{\bar{\rho}\big(1-\bar{\rho}\overline Q\big)} \big( \bar{\rho}\overline Q \big)^n.
\end{equation}

Fix $z\in U$. For each $r\in C$, we define
$
\pazocal{L}_r(z),\; \pazocal{R}_{r,m}(z),\; \pazocal{K}_{r,m}(z) : \;
\pazocal{H}(\mathbb D_{\bar{\rho}}) \to \pazocal{H}(\mathbb D_{\bar{\rho}})
$
in the same way as in subsection \ref{subsection: Integral operators and telescoping}:
\begin{gather*}
\big(\pazocal{L}_r(z) h\big)(w)
=
\frac{1}{2\pi i} \oint_{\partial \mathbb D_{\bar{\rho}}} \frac{1}{1-\zeta}\, h\left( f_r[z]\!\left( \frac{w}{\zeta} \right) \right)\,d\zeta, \\[4pt]
\big(\pazocal{R}_{r,m}(z) h\big)(w)
=
\frac{1}{2\pi i} \oint_{\partial \mathbb D_{\bar{\rho}}} \frac{\zeta^{m-1}}{1-\zeta}\, h\left( f_r[z]\!\left( \frac{w}{\zeta} \right) \right)\,d\zeta, \\[4pt]
\pazocal{K}_{r,m}(z) h
=
\pazocal{L}_r(z) h - \pazocal{R}_{r,m}(z) h.
\end{gather*}

The same argument as in Proposition \ref{prop: evaluating the truncation} applies: for any $h\in \pazocal H(\mathbb D_{\bar{\rho}})$ with $h(0) = 0$ and $\|h\|_{\mathrm{osc}} < \infty$, and $w\in \mathbb D_{\bar{\rho}}$,
\[
\big| \pazocal L_{r_1}(z)\cdots \pazocal L_{r_n}(z)h(w)
- \pazocal K_{r_1,m}(z)\cdots \pazocal K_{r_n,m}(z)h(w) \big|
\;\leq\;
\Big( (1+\bar{\rho}_m)^n - 1 \Big)\|h\|_{\mathrm{osc}}.
\]

For $k\geq 1$, define $h_k\in \pazocal H(\mathbb D_{\bar{\rho}})$ by $h_k(w) = -\frac{(-w)^k}{k}$. Then $h_k(0)=0$, and $\|h_k\|_{\mathrm{osc}} \leq \frac{2{\bar{\rho}}^k}{k}$. Therefore, for any $s\in C$ and $1\leq k\leq m-1$,
\begin{align*}
&\left| \Big( \mathrm T(z)^n \boldsymbol v(z) \Big)_{s,k} - \Big( \mathrm T_{m\times m}(z)^n \boldsymbol v(z) \Big)_{s,k} \right| \\
&=
\left|
\sum_{\mathbf r = (r_0,\ldots,r_n) \in C^{n+1}} c_{\mathbf r, s}(z)
\bigg(
\pazocal L_{r_1}(z)\cdots \pazocal L_{r_n}(z)h_k\big( f_{r_0}[z](0) \big)
- \pazocal K_{r_1,m}(z)\cdots \pazocal K_{r_n,m}(z)h_k\big( f_{r_0}[z](0) \big)
\bigg)
\right| \\
&\leq\;\;
\frac{\overline M_{\Sigma\pi}\,\overline Q^{n+1}}{\overline m_\pi} \Big( (1+\bar{\rho}_m)^n - 1 \Big) \frac{2\bar{\rho}^k}{k}.
\end{align*}
Also, by the similar argument as in Proposition \ref{prop: evaluating the truncation}, we have, for every $s\in C$, $\ell\geq 0$ and $k\geq 1$,
\begin{align*}
\left| \Big(
\mathrm T(z)^\ell \boldsymbol v(z)
\Big)_{\!\!s,\,k} \right|
&\leq
\sum_{\mathbf r = (r_0,\ldots,r_\ell) \in C^{\ell+1}} \big| c_{\mathbf r,s}(z) \big| \, \left| \Big( T_{r_\ell}(z)\cdots T_{r_1}(z) v\big( f_{r_0}[z](0)\,;\,\ell_{r_0}(z, 0) \big) \Big)_k \right| \\
&\leq
\sum_{\mathbf r = (r_0,\ldots,r_\ell) \in C^{\ell+1}} \big| c_{\mathbf r,s}(z) \big| \,
\left| \big( \pazocal{L}_{r_1}(z)\cdots \pazocal{L}_{r_\ell}(z) h_k \big)\big( f_{r_0}[z](0) \big) \right|
\leq
\frac{\overline M_{\Sigma \pi} \overline Q^{\ell+1}}{\overline m_\pi}
\frac{2\bar{\rho}^k}{k}.
\end{align*}

Therefore,
\begin{align}
&\big| \Lambda_n(z) - \Lambda_{n,m}(z) \big|
\leq
\frac1d \sum_{\ell=1}^{n-1} \sum_{s\in C} | \pi_s(z) |
\left| \Big(
\mathrm T(z)^\ell \boldsymbol v(z) - \mathrm T_{m\times m}(z)^\ell \boldsymbol v(z)
\Big)_{\!\!s,\,0} \right| \nonumber \\
&\hspace{-5pt}\leq
\frac1d\sum_{\ell=1}^{n-1} \sum_{r,s\in C}
\big| \pi_r(z) Q_{r,s}(z) \big| \nonumber \\
&\hspace{30pt}
\left\{ \sum_{k=1}^{m-1}
\overline D^k \left| \Big(
\mathrm T(z)^{\ell-1} \boldsymbol v(z) - \mathrm T_{m\times m}(z)^{\ell-1} \boldsymbol v(z)
\Big)_{\!\!r,\,k} \right|
+
\sum_{k=m}^{\infty}
\overline D^k \left| \Big( \mathrm T(z)^{\ell-1}\boldsymbol v(z) \Big)_{\!\!r,\,k} \right|
\right\} \nonumber \\
&\hspace{-5pt}\leq
\frac1d \sum_{\ell=1}^{n-1} \overline M_{\Sigma\pi}\,\overline Q
\left\{
\sum_{k=1}^{m-1}
\overline D^k \frac{\overline M_{\Sigma\pi}\,\overline Q^{n-1}}{\overline m_\pi} \Big( (1+\bar{\rho}_m)^{\ell-1} - 1 \Big) \frac{2\bar{\rho}^k}{k}
+ \frac{\overline M_{\Sigma \pi} \overline Q^\ell}{\overline m_\pi}\sum_{k=m}^\infty \overline D^k\frac{2\bar{\rho}^k}{k}
\right\} \nonumber\\
&\hspace{-5pt}\leq
\frac1d \cdot \frac{2\overline M_{\Sigma\pi}^2\overline Q^n}{\overline m_\pi} \left(
\left( \log \frac{1}{1-\bar{\rho} \overline D} \right) \bar{\alpha}_{n,m}
+ \frac{(n-1)\bar{\rho}^m \overline D^m}{m(1-\bar{\rho}\overline D)}
\right). \label{eq: second error in crude derivative estimate}
\end{align}

The desired bound follows by combining this with equations \eqref{eq: derivative lyapunov exponent is bounded by the original by analyticity} and \eqref{eq: crude tail estimate} and using the triangle inequality.

\medskip

Next, take any $\vep>0$, and take $n$ large enough so that the right-hand side of \eqref{eq: crude tail estimate} is less than $\vep/2$. As we have seen in the proof of Theorem \ref{thm: complexity statement}, if $m$ is large enough so that $(n-1)\bar{\rho}_m \leq 1$, then $\bar{\alpha}_{n,m} \leq \frac{e}{2}(n-1)^2\bar{\rho}_m$. Then, for $m$ satisfying $(n-1)\bar{\rho}_m \leq 1$ and $m\geq n$, the bound in equation \eqref{eq: second error in crude derivative estimate} is exponentially small in $m$ due to $\bar{\rho} \overline Q < 1$ and $\overline D<1$. Take $m$ so that the bound is less than $\vep/2$. For these choices of $n,m$,
\[
\Big| \lambda^{\!\!(q)}(t_0) - \Lambda^{(q)}_{n,m}(t_0) \Big|
<\frac{q!}{c^q} \vep.
\]
Finally, only $O\big( (\log(1/\vep))^3 \big)$ arithmetic operations are needed to calculate $\Lambda^{(q)}_{n,m}(t_0)$ using equation \eqref{eq: inductive identity of finite approximation of general derivatives of lambda}. This completes the proof.
\end{proof}

This also completes the proof of Theorem \ref{thm: introduction: every derivative can be computed in polynomial time} from the introduction.

\appendix

\section{Appendix} \label{section: appendix}

\begin{proof}[Proof of Proposition \ref{prop: aperiodic reduction}]
Let $P$ be the transition matrix of $\Sigma$. Fix $i_\ast \in X$. For each $u \in \{0,\dots,d-1\}$, define
\[
X_u
\;=\;
\left\{ j \in X \;\setcond\; \text{There is } n\geq 0 \text{ with } (P^n)_{i_\ast\, j} > 0 \text{ and } n \equiv u \;\; (\mathrm{mod} \;\;d) \right\}.
\]

We first show that
\begin{equation} \label{eq: cyclic decomposition of X}
X
\;=\;
\bigsqcup_{u=0}^{d-1} X_u.
\end{equation}
For every $j\in X$, by the irreducibility of $P$, there is $n\geq 0$ such that $(P^n)_{i_\ast j}>0$, so $j\in X_u$ for some $u$. To prove that the union is disjoint, suppose that $j \in X_u \cap X_v$. Then, there are $m,n\geq 0$ such that
\[
(P^m)_{i_\ast j} > 0,
\qquad
(P^n)_{i_\ast j} > 0,
\qquad
m \equiv u \pmod d,
\qquad
n \equiv v \pmod d.
\]
Since $P$ is irreducible, there is $\ell \geq 1$ such that $(P^\ell)_{j i_\ast} > 0$. Then, $(P^{m+\ell})_{i_\ast i_\ast} > 0$ and $(P^{n+\ell})_{i_\ast i_\ast} > 0$. By the definition of the period, every return time to $i_\ast$ is divisible by $d$. Therefore $d \mid (m+\ell)$ and $d \mid (n+\ell)$. Thus $d \mid (m-n)$. This implies $u=v$, and the union is disjoint.

Next, we show that transitions move cyclically through the sets $X_u$. Let $i\in X_u$. By the definition of $X_u$, there is $n\geq 0$ with $(P^n)_{i_\ast i} > 0$ and $n \equiv u \pmod d$. Then if $P_{ij}>0$, we have $(P^{n+1})_{i_\ast j} > 0$, so $j \in X_{u+1}$, where the index is taken modulo $d$. Therefore,
\begin{equation} \label{eq: cyclic decomposition of period classes}
\Big( i\in X_u , \quad P_{ij} > 0 \Big)
\quad\Longrightarrow\quad
j\in X_{u+1}.
\end{equation}

We now consider the $d$-step dynamics on $X_0$. Let $i,j\in X_0$. Since $P$ is irreducible, there is $n\geq 1$ such that $(P^n)_{ij}>0$. By equations \eqref{eq: cyclic decomposition of X} and \eqref{eq: cyclic decomposition of period classes}, every path from $X_0$ to $X_0$ has length divisible by $d$. Thus $n=md$ for some $m\geq 1$, and therefore $\big( (P^d)^m \big)_{ij} = (P^n)_{ij} > 0$. This proves that $P^d|_{X_0}$ is irreducible, since we have $(P^d|_{X_0})^m = (P^{dm})|_{X_0}$.

We claim that $P^d|_{X_0}$ is also aperiodic. Fix $i\in X_0$. Since every return time from $i$ to itself is divisible by $d$,
\[
\left\{ n\geq 1 \setcond (P^n)_{ii}>0 \right\}
=
\left\{ md \setcond m\geq 1,\ (P^{md})_{ii}>0 \right\}.
\]
Since the gcd of the set on the left-hand side is $d$, we obtain
\[
\gcd \left\{ m\geq 1 \setcond (P^{md})_{ii}>0 \right\}
=
1.
\]
Therefore $P^d|_{X_0}$ is aperiodic.

Since $P^d|_{X_0}$ is irreducible and aperiodic, it is primitive. Therefore there exists $N\in \mathbb N$ such that
\begin{equation} \label{eq: P to the power of period is primitive}
(P^{dN})_{ab} > 0
\qquad
\text{for every } a,b\in X_0.
\end{equation}

Define
\[
\widetilde X
=
\left\{
(i_0,\dots,i_{d-1}) \in X^d \;\setcond\; \text{$i_u \in X_u$ for $0\leq u\leq d-1$, and $P_{i_u i_{u+1}} > 0$ for $0\leq u\leq d-2$} \right\}.
\]
For $\widetilde i = (i_0,\dots,i_{d-1}) \in \widetilde X$ and $\widetilde j = (j_0,\dots,j_{d-1}) \in \widetilde X$, define
\[
\widetilde P_{\; \widetilde i,\,\widetilde j}
\;\coloneqq\;
P_{i_{d-1} j_0}\,
P_{j_0 j_1}\cdots P_{j_{d-2} j_{d-1}}.
\]
Also, let
\[
\widetilde \Sigma
=
\left\{
(\widetilde x_n)_{n\geq 0}\in \widetilde X^{\mathbb N_0}
\;\middle|\;
\widetilde P_{\; \widetilde x_n,\widetilde x_{n+1}} > 0
\text{ for every } n\geq 0
\right\}.
\]
Finally, define $\widetilde A : \widetilde \Sigma \to \GL_2(\mathbb R)$ by
\[
\widetilde A(\widetilde x)
=
A_{i_{d-1}} \cdots A_{i_0}
\qquad
\text{whenever } \;
\widetilde x_0 = (i_0,\dots,i_{d-1}).
\]

The matrix $\widetilde P$ is row-stochastic. Indeed, for any $\widetilde i = (i_0,\dots,i_{d-1}) \in \widetilde X$, by equation \eqref{eq: cyclic decomposition of period classes} and row-stochasticity of $P$,
\[
\sum_{\widetilde j\in \widetilde X} \widetilde P_{\; \widetilde i,\widetilde j}
\;=\;
\sum_{j_0\in X_0} P_{i_{d-1} j_0}
\sum_{j_1\in X_1} P_{j_0 j_1}
\cdots
\sum_{j_{d-1}\in X_{d-1}} P_{j_{d-2} j_{d-1}}
\;=\; 1.
\]

Next, we show that $\widetilde P$ is primitive, and hence irreducible and aperiodic. Take arbitrary $\widetilde i = (i_0,\dots,i_{d-1}) \in \widetilde X$ and $\widetilde j = (j_0,\dots,j_{d-1}) \in \widetilde X$. Take any $a\in X_0$ with $P_{i_{d-1} a} > 0$. By equation \eqref{eq: P to the power of period is primitive}, we have $(P^{Nd})_{a j_0} > 0$. Therefore, choose any $P$-admissible path $(u_0,\dots,u_{Nd})\in X^{Nd}$ connecting $a = u_0$ and $j_0 = u_{Nd}$. For each $\ell = 1,\dots,N$, define
$
\widetilde u^{(\ell)}
=
\big(
u_{(\ell-1)d},
u_{(\ell-1)d+1},
\dots,
u_{\ell d -1}
\big)$. Then, each $\widetilde u^{(\ell)}$ belongs to $\widetilde X$, and
\[
\widetilde P_{\;\widetilde i,\,\widetilde u^{(1)}} > 0,
\qquad
\widetilde P_{\;\widetilde u^{(\ell)},\,\widetilde u^{(\ell+1)}} > 0
\quad\text{for } 1\leq \ell \leq N-1,
\quad \text{and} \quad
\widetilde P_{\;\widetilde u^{(N)},\,\widetilde j} > 0.
\]
Thus $(\widetilde P^{N+1})_{\widetilde i,\,\widetilde j} > 0$. Since $\widetilde i,\widetilde j\in \widetilde X$ were arbitrary, $\widetilde P$ is primitive. This proves item \textnormal{(1)}.

We now relate the accelerated system to the original one. Define
\[
E_0
=
\left\{ x=(x_n)_{n\geq 0}\in \Sigma \setcond x_0 \in X_0 \right\}.
\]
Since $p_i>0$ for every $i\in X$, and $X_0 \neq \varnothing$, we have
$
\mu(E_0)
=
\sum_{i\in X_0} p_i > 0$.
Let $\nu = \mu(\,\cdot\,|E_0)$. Because $p$ is stationary, the Markov measure $\mu$ is $\sigma$-invariant, hence $\sigma^d$-invariant. Also, by \eqref{eq: cyclic decomposition of period classes}, the set $E_0$ is $\sigma^d$-invariant. Therefore $\nu$ is $\sigma^d$-invariant.

Define
\[
\Phi : E_0 \to \widetilde \Sigma,
\qquad
\Phi(x)_n
=
(x_{nd},x_{nd+1},\dots,x_{nd+d-1}).
\]
By construction, $\Phi$ is a bijection from $E_0$ onto $\widetilde \Sigma$, and $\Phi \circ \sigma^d = \widetilde \sigma \circ \Phi$. Let
\[
\widetilde \mu \coloneqq \Phi_\ast \nu.
\]
Since $\nu$ is $\sigma^d$-invariant, $\widetilde \mu$ is $\widetilde \sigma$-invariant. We claim that $\widetilde \mu$ is the Markov measure associated to $\widetilde P$. First, let
\[
\widetilde \pi_{(i_0,\dots,i_{d-1})}
=
\frac{1}{\mu(E_0)}
\,
p_{i_0} P_{i_0 i_1}\cdots P_{i_{d-2} i_{d-1}}.
\]
Now, take any $\widetilde i^{(m)} = \big(i^{(m)}_0,\dots,i^{(m)}_{d-1}\big) \in \widetilde X$ for each $0\leq m\leq n$. Then
\begin{align*}
& \widetilde \mu \big( [\widetilde i^{(0)},\dots,\widetilde i^{(n)}] \big)
\\[4pt]
&\qquad =
\nu\Big(
\big[
i^{(0)}_0,\dots,i^{(0)}_{d-1},
i^{(1)}_0,\dots,i^{(1)}_{d-1},
\dots,
i^{(n)}_0,\dots,i^{(n)}_{d-1}
\big]
\Big)
\\[4pt]
&\qquad =
\frac{1}{\mu(E_0)}
\,
p_{i^{(0)}_0}
\Big(
P_{i^{(0)}_0 i^{(0)}_1}
\cdots
P_{i^{(0)}_{d-2} i^{(0)}_{d-1}}
\Big)
\Big(
P_{i^{(0)}_{d-1} i^{(1)}_0}
\cdots
P_{i^{(1)}_{d-2} i^{(1)}_{d-1}}
\Big)
\cdots
\Big(
P_{i^{(n-1)}_{d-1} i^{(n)}_0}
\cdots
P_{i^{(n)}_{d-2} i^{(n)}_{d-1}}
\Big)
\\[4pt]
&\qquad =
\widetilde \pi_{\; \widetilde i^{(0)}}
\,
\widetilde P_{\; \widetilde i^{(0)},\,\widetilde i^{(1)}}
\cdots
\widetilde P_{\; \widetilde i^{(n-1)},\,\widetilde i^{(n)}}.
\end{align*}
In particular, for any $\widetilde j\in \widetilde X$,
\[
\widetilde \pi_{\;\widetilde j}
=
\widetilde \mu\big( \big[ \widetilde j\; \big] \big)
=
\widetilde \mu\big( \widetilde \sigma^{-1} \big[ \widetilde j\; \big] \big)
=
\sum_{\widetilde i\in \widetilde X} \widetilde \mu\big( \big[\widetilde i,\;\widetilde j\;\big] \big)
=
\sum_{\widetilde i\in \widetilde X} \widetilde \pi_{\;\widetilde i} \;\widetilde P_{\;\widetilde i,\, \widetilde j}.
\]
Thus $\widetilde \pi$ is the stationary probability vector of $\widetilde P$, and $\widetilde \mu$ is the Markov measure associated to $\widetilde P$.

Finally, let $x\in E_0$ and write $\widetilde x=\Phi(x)$. Then for every $n\geq 1$,
\[
\widetilde A^n(\widetilde x)
=
A_{x_{nd-1}} \cdots A_{x_0}
=
A^{nd}(x).
\]
Let
\[
F
=
\left\{
x\in \Sigma
\;\setcond\;
\lim_{n\to\infty}
\frac{1}{n}\log \|A^n(x)\|
=
\lambda_+(A,P)
\right\}.
\]
Then $\mu(F)=1$. Since $\nu(F\cap E_0)=1$, for $\nu$-a.e.\ $x\in E_0$,
\[
\lim_{n\to\infty}
\frac{1}{n}
\log \big\| \widetilde A^n(\Phi(x)) \big\|
=
\lim_{n\to\infty}
\frac{1}{n}
\log \|A^{nd}(x)\|
=
d\,\lambda_+(A,P).
\]
Pushing this forward by $\Phi$, we obtain \textnormal{(3)}.
\end{proof}

\section*{Acknowledgements}
I am deeply grateful to my supervisor, Masaki Tsukamoto, for his steady guidance, encouragement, and support throughout this work. I would also like to thank Ryokichi Tanaka for his expert advice.

I am also sincerely thankful to my family for their kindness and support.

\vspace{0.5cm}

\address{
Department of Mathematics, Kyoto University, Kyoto 606-8501, Japan}

\textit{E-mail}: \texttt{alibabaei.nima.28c@st.kyoto-u.ac.jp}

This work was supported by JSPS KAKENHI Grant Number 25KJ1473.


\begin{thebibliography}{99}



%\bibitem[Ali24]{Alibabaei}
%N.~Alibabaei,
%Weighted topological pressure revisited,
%{\it{Ergod. Th. \& Dynam. Sys.}}, \textbf{45} (2025), 34-70.

\bibitem[A26]{Alibabaei26}
N.~Alibabaei,
On the intersection of Cantor sets and products of random matrices,
arXiv:2512.02675 (2026).


\bibitem[A26-2]{Alibabaei26-2}
N.~Alibabaei,
Lyapunov exponents for random products of non-negative matrices,
arXiv:2602.23317 (2026).



\bibitem[ABY10]{Avila--Bochi--Yoccoz}
A.~Avila, J.~Bochi, J.~Yoccoz,
Uniformly hyperbolic finite-valued SL(2,R)-cocycles,
{\it{Comment. Math. Helv.}}, \textbf{85} (2010), no. 4, pp. 813-884.


%\bibitem[Bed84]{Bedford}
%T.~Bedford,
%Crinkly curves, Markov partitions and box dimension in self-similar sets. {\it{Ph.D. Thesis}}, University of Warwick, 1984.


%\bibitem[BF12]{Barral--Feng}
%J.~Barral and D.~J.~Feng,
%Weighted thermodynamic formalism on subshifts and applications,
%{\it{Asian J. Math.}} \textbf{16} (2012), 319–352.


\bibitem[BPS19]{Bochi--Potrie--Sambarino}
J.~Bochi, R.~Potrie, A.~Sambarino,
Anosov representations and dominated splittings,
{\it{J. Eur. Math. Soc.}}, \textbf{21} (2019), no. 11, pp. 3343–3414.


%\bibitem[BKM85]{Boyle--Kitchens--Marcus}
%M.~Boyle, B.~Kitchens and B.~Marcus,
%A note on minimal covers for sofic systems,
%{\it{Proc. Amer. Math. Soc.}} \textbf{95} (1985), 403-411.


%\bibitem[BL]{Bougerol--Lacroix}
%P.~Bougerol, J.~Lacroix,
%{\it{Products of random matrices with applications to Schr\"odinger operators}},
%Boston: Birkh\"auser (1985).


%\bibitem[Din70]{Dinaburg}
%E.~I.~Dinaburg,
%A correlation between topological entropy and metric entropy,
%{\it{Dokl. Akad. Nauk SSSR}} \textbf{190} (1970) 19-22.


%\bibitem[Dow11]{Downarowicz}
%T.~Downarowicz,
%{\it{Entropy in dynamical systems}},
%Cambridge University Press, Cambridge, \textbf{MA} 2011.



\bibitem[DDGK25]{DDGK25}
P.~Duarte, M.~Durães, T.~Graxinha, S.~Klein,
Random 2D Linear Cocycles \RN{1}: Dichotomic Behavior,
arXiv:2503.21050 (2025).


%\bibitem[Fe11]{Feng}
%D.~J.~Feng,
%Equilibrium states for factor maps between subshifts,
%{\it{Adv. Math.}} \textbf{226} (2011), 2470–2502.


%\bibitem[Fe24]{Z.Feng}
%Z.~Feng,
%On the coincidence of the Hausdorff and box dimensions for some affine-invariant sets,
%arXiv:2405.03213

%\bibitem[FH16]{Feng--Huang}
%D.~J.~Feng, W.~Huang, 
%Variational principle for weighted topological pressure,
%{\it{J. Math. Pures Appl.}} \textbf{106} (2016), 411-452.

\bibitem[FKe60]{Furstenberg--Kesten}
H.~Furstenberg, H.~Kesten,
Products of Random Matrices,
{\it{Ann. Math. Stat.}} \textbf{31} (1960), 457-469.

\bibitem[FKi83]{Furstenberg--Kifer}
H.~Furstenberg, Y.~Kifer,
Random matrix products and measures on projective spaces,
{\it{Israel J. Math.}} \textbf{46} (1983) 12-32.


%\bibitem[FV25]{Fan--Vebitskiy}
%A.~H.~Fan, E.~Verbitskiy,
%Computation of Lyapunov exponents of matrix products,
%arXiv:2501.11941 (2025).

%\bibitem[GR85]{Guivarch--Raugi}
%Y.~Guivarc'h, A.~Raugi,
%Frontière de Furstenberg, propriétés de contraction et théorèmes de convergence,
%{\it{Z. Wahrscheinlichkeitstheorie verw. Gebiete}} \textbf{69} (1985), 187–242.


%\bibitem[Gm71]{Goodman}
%T.~N.~T.~Goodman,
%Relating topological entropy and measure entropy, 
%{\it{Bull. Lond. Math. Soc.}} \textbf{3} (1971) 176-180.


%\bibitem[Gw69]{Goodwyn}
%L.~W.~Goodwyn,
%Topological entropy bounds measure-theoretic entropy,
%{\it{Proc. Amer. Math. Soc.}} \textbf{23} (1969) 679-688.


%\bibitem[H75]{Hawkes}
%J.~Hawkes,
%Some algebraic properties of small sets,
%{\it{Q. J. Math. Oxf.}} \textbf{26} (1975), 195-201.

\bibitem[JM19]{Jurga-Morris}
N.~Jurga, I.~Morris,
Effective estimates on the top Lyapunov exponents for random matrix products,
{\it Nonlinearity} \textbf{32} (2019), no.~11, 4117-4147.



\bibitem[K73]{Kingman}
J.~Kingman,
Subadditive ergodic theory,
{\it{Ann. Probab.}} \textbf{1} (1973), 883-909.



%\bibitem[KP91]{Kenyon--Peres: translated Cantor sets}
%R.~Kenyon, Y.~Peres, 
%Intersecting random translates of invariant Cantor sets,
%{\it{Invent. math.}} \textbf{104} (1991), 601-629.



%\bibitem[KP96]{Kenyon--Peres}
%R.~Kenyon, Y.~Peres, 
%Measures of full dimension on affine-invariant sets,
%{\it{Ergod. Th. \& Dynam. Sys.}} \textbf{16} (1996), 307-323.


%\bibitem[KP96-2]{Kenyon--Peres: sofic}
%R.~Kenyon, Y.~Peres, 
%Hausdorff dimensions of sofic affine-invariant sets,
%{\it{Israel J. Math.}} \textbf{94} (1996) 157-178.



%\bibitem[LN]{Lemmens--Nussbaum}
%B.~Lemmens, R.~Nussbaum,
%{\it{Nonlinear Perron–Frobenius Theory}},
%Cambridge: Cambridge University Press (2012)




\bibitem[MV15]{Malheiro--Viana}
E.~C.~Malheiro, M.~Viana,
Lyapunov exponents of linear cocycles over Markov shifts,
{\it Stochastics and Dynamics} \textbf{15} (2015), no.~3, 1550020.



%\bibitem[McM84]{McMullen}
%C.~McMullen,
%The Hausdorff dimension of general Sierpinski carpets,
%{\it{Nagoya Math. J.}} 96 (1984), 1–9.



\bibitem[M05]{Mairesse}
J.~Mairesse,
Random Walks on Groups and Monoids with a Markovian Harmonic Measure,
{\it{Electron. J. Probab.}} \textbf{10} (2005), 1417 - 1441.



\bibitem[Pe92]{Peres92}
Y.~Peres,
Domain of analytic continuation for the top Lyapunov exponent,
{\it{Ann. Inst. H. Poincar\'e Probab. Statist.}} \textbf{28} (1992), 131–148.




\bibitem[Po10]{Pollicott10}
M.~Pollicott,
Maximal Lyapunov exponents for random matrix products,
{\it{ Invent. math. }} \textbf{181} (2010), 209–226.



\bibitem[Po21]{Pollicott20}
M.~Pollicott,
Effective estimates of Lyapunov exponents for random products of positive matrices,
{\it{Nonlinearity}} \textbf{34} (2021), 6705-6718.




%\bibitem[Ru73]{Ruelle73}
%D.~Ruelle,
%Statistical mechanics on a compact set with $Z^v$ action satisfying expansiveness and specification,
%{\it{Trans. Amer. Math. Soc.}} \textbf{187} (1973), 237–251.


%\bibitem[Ru76]{Ruelle76}
%D.~Ruelle,
%Zeta-functions for expanding maps and Anosov flows,
%{\it{Invent. Math.}} \textbf{34} (1976), 231–242.


\bibitem[Ru79]{Ruelle79}
D.~Ruelle,
Analycity Properties of the Characteristic Exponents of Random Matrix Products,
{\it{Adv. Math.}} \textbf{32} (1979), 68-80.




%\bibitem[S00]{Strichartz}
%R. S. Strichartz,
%Evaluating integrals using self-similarity,
%{\it{Amer. Math. Monthly}} \textbf{107(4)} (2000), 316–326


%\bibitem[Tsu22]{Tsukamoto}
%M.~Tsukamoto,
%New approach to weighted topological entropy and pressure,
%{\it{Ergod. Th. \& Dynam. Sys.}} \textbf{43} (2023) 1004-1034.


\bibitem[Vil]{Villani}
C.~Villani,
{\it{Optimal Transport, Old and New}},
Springer Berlin, Heidelberg, 2009.




%\bibitem[Vis00]{Viswanath}
%D.~Viswanath,
%Random Fibonacci Sequences and the Number 1.13198824...,
%{\it{Math. Comp.}} \textbf{69}, no. 231 (2000), 1131–55. 



%\bibitem[Wal75]{Walters75}
%P.~Walters,
%A variational principle for the pressure of continuous transformations, {\it{Amer. J. Math.}} \textbf{97} (1975), 937–971.


%\bibitem[Wal82]{Walters}
%P.~Walters, 
%{\it{An introduction to ergodic theory}},
%Springer-Verlag, New York, 1982.


%\bibitem[We82]{Weiss}
%B.~Weiss,
%Subshifts of finite type and sofic systems,
%{\it{Monatsh. Math.}} \textbf{77} (1973), 462-474


%\bibitem[Ya11]{Yayama}
%Y.~Yayama,
%Applications of a relative variational principle to dimensions of nonconformal expanding maps,
%{\it{Stoch. Dyn.}} \textbf{11} (2011), 643-679.



\end{thebibliography}
\end{document}